\newtheorem{lemma}{Lemma}[section]
\newtheorem{theorem}{Theorem}
\newtheorem{proposition}[theorem]{Proposition}
\newtheorem{corollary}[theorem]{Corollary}
\theoremstyle{definition}
\numberwithin{equation}{section}
\DeclareMathOperator{\supp}{supp}
\DeclareMathOperator{\tr}{tr}
\newcommand{\mb}{\textbf{m}\xspace}
\newcommand{\e}{\ensuremath{\mathrm{e}}\xspace}
\newcommand{\R}{\ensuremath{\mathbb{R}}\xspace}
\newcommand{\Q}{\ensuremath{\mathbb{Q}}\xspace}
\newcommand{\N}{\ensuremath{\mathbb{N}}\xspace}
\newcommand{\Ibb}{\ensuremath{\mathbb{I}}\xspace}
\newcommand{\Tbb}{\ensuremath{\mathbb{T}}\xspace}
\newcommand{\Ubb}{\ensuremath{\mathbb{U}}\xspace}
\newcommand{\Vbb}{\ensuremath{\mathbb{V}}\xspace}
\newcommand{\Nb}{\ensuremath{\mathbf{N}}\xspace}
\newcommand{\Ai}{\ensuremath{\mathcal A}\xspace}
\newcommand{\Bi}{\ensuremath{\mathcal B}\xspace}
\newcommand{\Di}{\ensuremath{\mathcal D}\xspace}
\newcommand{\Ei}{\ensuremath{\mathcal E}\xspace}
\newcommand{\Gi}{\ensuremath{\mathcal G}\xspace}
\newcommand{\Hi}{\ensuremath{\mathcal H}\xspace}
\newcommand{\Ii}{\ensuremath{\mathcal I}\xspace}
\newcommand{\Ji}{\ensuremath{\mathcal J}\xspace}
\newcommand{\Li}{\ensuremath{\mathcal L}\xspace}
\newcommand{\Ni}{\ensuremath{\mathcal N}\xspace}
\newcommand{\Oi}{\ensuremath{\mathcal O}\xspace}
\newcommand{\Pii}{\ensuremath{\mathcal P}\xspace}
\newcommand{\Ti}{\ensuremath{\mathcal T}\xspace}
\newcommand{\Vi}{\ensuremath{\mathcal V}\xspace}
\newcommand{\dimH}{\ensuremath{{\dim}_{\text{\normalfont\tiny H}}}\xspace}
\newcommand{\dimP}{\ensuremath{{\dim}_{\text{\normalfont\tiny P}}}\xspace}
\newcommand{\dimBu}{\ensuremath{\overline{\dim}_{\text{\normalfont\tiny B}}}\xspace}
\newcommand{\vset}{\ensuremath{{\emptyset}}\xspace}
\newcommand{\pth}[1]{(#1)}
\newcommand{\pthb}[1]{\bigl(#1\bigr)}
\newcommand{\pthB}[1]{\Bigl(#1\Bigr)}
\newcommand{\pthbb}[1]{\biggl(#1\biggr)}
\newcommand{\bkt}[1]{[#1]}
\newcommand{\bktb}[1]{\bigl[#1\bigr]}
\newcommand{\bktB}[1]{\Bigl[#1\Bigr]}
\newcommand{\bktbb}[1]{\biggl[#1\biggr]}
\newcommand{\brc}[1]{\{#1\}}
\newcommand{\brcb}[1]{\bigl\{#1\bigr\}}
\newcommand{\brcB}[1]{\Bigl\{#1\Bigr\}}
\newcommand{\brcbb}[1]{\biggl\{#1\biggr\}}
\newcommand{\bk}[1]{\langle#1\rangle}
\newcommand{\dt}{\ensuremath{\mathrm d}\xspace} 
\newcommand{\eqdef}{:=}
\newcommand{\ivoo}[1]{\ensuremath{(#1)}}
\newcommand{\ivoob}[1]{\ensuremath{\bigl(#1\bigr)}}
\newcommand{\ivof}[1]{\ensuremath{(#1]}}
\newcommand{\ivofb}[1]{\ensuremath{\bigl(#1\bigr]}}
\newcommand{\ivfo}[1]{\ensuremath{[#1)}}
\newcommand{\ivfob}[1]{\ensuremath{\bigl[#1\bigr)}}
\newcommand{\ivff}[1]{\ensuremath{[#1]}}
\newcommand{\ivffb}[1]{\ensuremath{\bigl[#1\bigr]}}
\newcommand{\ivffB}[1]{\ensuremath{\Bigl[#1\Bigr]}}
\newcommand{\iivff}[1]{\ensuremath{\llbracket#1\rrbracket}}
\newcommand{\abs}[1]{\lvert#1\rvert}
\newcommand{\ceil}[1]{\lceil#1\rceil}
\newcommand{\ceilb}[1]{\bigl\lceil#1\bigr\rceil}
\newcommand{\ceilB}[1]{\Bigl\lceil#1\Bigr\rceil}
\renewcommand{\Pr}{\ensuremath{\mathbb P}\xspace}
\newcommand{\pr}[2][]{\mathbb{P}#1\pth{#2}}
\newcommand{\prb}[2][]{\mathbb{P}#1\pthb{\hspace{1pt}#2\hspace{1pt}}}
\newcommand{\prB}[2][]{\mathbb{P}#1\pthB{#2}}
\newcommand{\esp}[2][]{\mathbb{E}#1\bkt{#2}}
\newcommand{\espb}[2][]{\mathbb{E}#1\bktb{\hspace{1pt}#2\hspace{1pt}}}
\newcommand{\espB}[2][]{\mathbb{E}#1\bktB{#2}}
\newcommand{\pthc}[2]{\pth{\hspace{1pt}#1\hspace{1.5pt}|\hspace{1.5pt}#2\hspace{1pt}}}
\newcommand{\pthcb}[2]{\pthb{\hspace{1pt}#1\bigm|#2\hspace{1pt}}}
\newcommand{\pthcB}[2]{\pthB{#1\Bigm|#2}}
\newcommand{\pthcbb}[2]{\pthbb{#1\biggm|#2}}
\newcommand{\indi}{\ensuremath{\mathbf{1}}\xspace}
\newcommand{\eps}{\varepsilon}
\newcommand{\littleo}{\text{o}}
\newcommand{\bigo}{\text{O}}
\newcommand{\vsp}{\vspace{.15cm}}
\begin{document}

\begin{frontmatter}

\title{Uniform multifractal structure of stable trees}
\runtitle{Uniform multifractal structure of stable trees}

\begin{aug}
\author{\fnms{Paul} \snm{Balan\c{c}a}%
\thanksref{t2}%
\ead[label=e1]{paul.balanca@gmail.com}%
\ead[label=u1,url]{balancap.github.io}%
}%
\thankstext{t2}{Research supported by the Israel Science Foundation grant 1325/14 and the French Embassy in Israel.}%
\runauthor{Paul Balança}%
\affiliation{Technion}%
\address{
Faculty of Industrial Engineering and Management\\%
Technion Israel Institute of Technology\\%
Haifa 32000, Israël\\[1ex]%
\printead{e1}\\\printead{u1}%
}
\end{aug}

\begin{abstract}
In this work, we investigate the spectrum of singularities of random stable trees with parameter $\gamma\in(1,2)$. We consider for that purpose the scaling exponents derived from two natural measures on stable trees: the local time $\ell^a$ and the mass measure $\mb$, providing as well a purely geometrical interpretation of the latter exponent.
We first characterise the uniform component of the multifractal spectrum which exists at every level $a>0$ of stable trees and corresponds to large masses with scaling index $h\in\ivff{\tfrac{1+\gamma}{\gamma},\tfrac{\gamma}{\gamma-1}}$ for the mass measure (or equivalently $h\in\ivff{\tfrac{1}{\gamma},\tfrac{1}{\gamma-1}}$ for the local time). In addition, we investigate the distribution of vertices appearing at random levels with exceptionally large masses of index $h\in\ivfo{0,\tfrac{1+\gamma}{\gamma}}$. Finally, we discuss more precisely the order of the largest mass existing on any subset $\Ti(F)$ of a stable tree, characterising the former with the packing dimension of the set $F$.
\end{abstract}

\begin{keyword}
  \kwd{Hölder regularity}
  \kwd{multifractal spectrum}
  \kwd{stable trees}
\end{keyword}

\begin{keyword}[class=AMS]
  \kwd{60J55}
  \kwd{60J80}
  \kwd{60G17}
\end{keyword}

\end{frontmatter}


\section{Introduction}

Continuous random trees have been a dynamic research topic in probability in recent years. Following the seminal work of \citet{Aldous-1991,Aldous-1991a} who defined the now celebrated \emph{Continuous Random Tree} (CRT), \citet{Duquesne.LeGall-2002,Duquesne.LeGall-2005} have introduced and developed the theory of (sub)critical Lévy trees, including stable trees, to provide continuous analogues to discrete Galton--Watson trees. Their definition of Lévy trees was later extended by \citet{Duquesne.Winkel-2007} to the supercritical case using a different approach. As pointed out in the initial work of \citet{LeGall.LeJan-1998}, these Lévy trees encode the complete genealogy of continuous state branching processes (CSBPs), and as a consequence, their law is characterised by CSBPs branching mechanism:
\begin{align*}
  \forall \lambda\geq 0;\quad \psi(\lambda) = \alpha\lambda + \beta\lambda^2 + \int_{(0,\infty)} \pi(\dt r) \pthb{ \e^{-\lambda r} - 1 +\lambda r },
\end{align*}
where $\alpha\geq0$, $\beta\geq 0$ and $\int_{\R_+} \pthb{r\wedge r^2} \pi(\dt r)<\infty$. Continuous random trees have proved to be a major research field in probability theory and are deeply connected to several other major topics such as superprocesses \cite{LeGall-1999,Duquesne.LeGall-2005}, fragmentation processes \cite{Haas.Miermont-2004,Goldschmidt.Haas-2010,Abraham.Delmas-2008} and planar maps \cite{LeGall.Miermont-2011,LeGall.Miermont-2012} to name but a few; consequently leading to a significant recent literature on the subject.

Random stable trees are particular instances of Lévy trees whose branching mechanism is given by $\psi(\lambda) = c\,\lambda^\gamma$, $\gamma\in\ivof{1,2}$, the specific case $\gamma=2$ corresponding to the quadratic branching and the CRT. In the framework of continuous trees, or $\R$-trees, the latter are seen as random metric spaces $(\Ti,d)$ where for any two vertices $\sigma$ and $\sigma'$ in $\Ti$, there is a unique arc with endpoints $\sigma$ and $\sigma'$. In addition, this arc is isometric to a compact interval of the real line. We usually denote by $h(\Ti)$ the height of the tree and by $\rho(\Ti)$ the distinguished vertex called the root, if the former is a rooted $\R$-tree. In the rest of this work, we will designate by $\Nb(\dt\Ti)$ the law of random stable trees, $\Nb(\dt\Ti)$ hence being a distribution with infinite mass on $\R$-trees. As presented by \citet{Duquesne.LeGall-2005}, $\Nb(\dt\Ti)$-a.e. at any level $a>0$ can be constructed a finite mass measure $\ell^a(\dt\sigma)$ called the \emph{local time} and carried by the level set
\begin{align*}
  \Ti(a) \eqdef \brcb{\sigma\in\Ti : d(\rho(\Ti),\sigma) = a}.
\end{align*}
Informally, $\ell^a(\dt\sigma)$ represents the mass distribution of the population of generation $a$ in tree. As observed by \citet[Th. 1.4.1]{Duquesne.LeGall-2002}, the local time happens to be closely related to the law of CSBPs through the so-called Ray--Knight theorem. In addition, the mapping $a\mapsto\ell^a$ is weakly càdlàg and its atoms correspond to vertices $\sigma\in\Ti$ with infinite multiplicity. One can derived from the local time a natural measure on the full tree. Namely, we define the \emph{mass measure} $\mb(\dt\sigma)$ on $\Ti$ by:
\begin{align*}
  \textbf{m}(\dt\sigma) = \int_0^\infty \ell^a(\dt\sigma)\,\dt a.
\end{align*}
As a consequence of the càdlàguity of the local time, the measure $\mb(\dt\sigma)$ is diffuse. Moreover, $\mb$ is carried by the set of leaves of $\Ti$, i.e. the set of the vertices $\sigma$ such that $\Ti\setminus\brc{\sigma}$ remains connected, and, on the contrary to the local time, it remains invariant under re-rooting of the tree.
\vsp

Several geometric properties of stable trees have already been discussed in the literature: \citet{Haas.Miermont-2004} and \citet{Duquesne.LeGall-2005} have presented the Hausdorff and packing dimensions of the tree and the level sets. Namely, for every $a>0$,
\begin{align*}
  \dimH \Ti(a) = \frac{1}{\gamma-1}\quad \Nb_a\pth{\dt\Ti}\text{-a.e.}\quad \text{and}\quad \dimH \Ti=\frac{\gamma}{\gamma-1}\quad \Nb(\dt\Ti)\text{-a.e.}
\end{align*}
where $\Nb_a\pth{\dt\Ti}\eqdef\Nb\pthc{\dt\Ti}{h(\Ti)>a}$. The question of the existence of exact Hausdorff and packing measures have been investigated on stable and Lévy trees (including the CRT) by \citet{Duquesne.LeGall-2006}, \citet{Duquesne.Reichmann.ea-2010} and \citet{Duquesne-2012}. In particular, Theorem 1.2 in the latter provides an asymptotic estimate on the small balls of the mass measure at almost every vertex:
\begin{align}  \label{eq:tree_small_balls0}
  \Nb\text{-a.e. for $\mb$ almost all }\sigma;\quad \liminf_{r\rightarrow 0} \frac{\mb(B(\sigma,r))}{g_\gamma(r)} = \gamma-1\quad\text{where } g_\gamma(r)\eqdef \frac{r^{\tfrac{\gamma}{\gamma-1}}}{\pth{\log\log 1/r}^{\tfrac{\gamma}{\gamma-1}}}.
\end{align}
This result has been extended recently by \citet{Duquesne.Wang-2014} who described the exceptionally small balls of the mass measure on the full tree. Namely,
\begin{align}  \label{eq:tree_small_balls1}
  \liminf_{r\rightarrow 0} \frac{1}{f_\gamma(r)} \inf_{\sigma\in\Ti} \mb\pthb{B(\sigma,r)} \geq k_\gamma \quad\text{where } f_\gamma(r) \eqdef \frac{ r^{\tfrac{\gamma}{\gamma-1}} }{ \pth{\log1/r}^{\tfrac{\gamma}{\gamma-1}} }
\end{align}
and $k_\gamma$ is a positive constant only depending on $\gamma$.\vsp

The two previous results give a very precise picture of the small balls asymptotic of the mass measure. In this work, we are interested in investigating an alternative behaviour: the appearance of exceptional large masses on the tree, either described by the local time or the mass measure. As presented in \cite{Duquesne.Wang-2014}, fluctuations of small masses of the mass measure are of a logarithmic order. On the other hand, we will see in the rest of this work that fluctuations of large masses are of bigger order, implying in particular that the interesting and relevant quantity to analyse the former is the pointwise scaling exponent of the local time or the mass measure. More precisely, the latter idea refers to the common notion of \emph{pointwise Hölder exponent} in the fractal geometry literature. It has been introduced to characterise the asymptotic local mass of measures, or the local fluctuations of functions (see for instance \cite{Meyer-1998} for a broad review on the subject). Namely, in the case of the local time and the mass measure, the pointwise exponents of $\ell^a(\dt\sigma)$ and $\mb(\dt\sigma)$ are formally defined at every vertex $\sigma\in\Ti$ by
\begin{align}
  \alpha_\ell(\sigma,\Ti) \eqdef \liminf_{r\rightarrow 0}  \frac{\log \ell^a(B(\sigma,r))}{\log r}\quad\text{and}\quad \alpha_\mb(\sigma,\Ti) \eqdef \liminf_{r\rightarrow 0}  \frac{\log \mb(B(\sigma,r))}{\log r},
\end{align}
where $a=d(\rho(\Ti),\sigma)$.

It is clear from this definition that these pointwise exponents aim to capture the scale of the largest balls as the radius tends to zero. In our opinion, it remains interesting to study the behaviour of both the local time and the mass measure as the two concepts bring different, and complementary, information about the heterogeneous mass distribution on stable trees. From a pure tree point of view, the mass measure $\mb$ may seem more relevant as it remains invariant under re-rooting of the tree (and thus, the exponent $\alpha_\mb(\sigma,\Ti)$ as well) and provides a description of the local mass in the full neighbourhood of a vertex.
Nevertheless, the asymptotic behaviour of the local time is also interesting if one looks at stable trees as the encoding of the genealogy of a continuous state branching process (CSBP). In this case, the pointwise exponent $\alpha_\ell(\sigma,\Ti)$ gives a precise picture of the local mass of a population at a given generation. Moreover, this description also has direct applications on stochastic models such as superprocesses which are derived from continuous Lévy trees (we refer \cite{Duquesne.LeGall-2005} for a more complete construction).\vsp

In addition to the study of the scaling exponents derived from the local time and the mass measure, we are also interested in giving a more geometrical interpretation of the non-homogeneous structure of stable trees by studying the local branching in the neighbourhood of a vertex. \citet{Duquesne.LeGall-2005} already gave an insight of the latter by studying a common graph quantity: the multiplicity $n(\sigma)$ of a vertex. Namely, at every $\sigma\in\Ti$, $n(\sigma)$ is defined by
\begin{align}  \label{eq:multiplicty_trees}
  n(\sigma) \eqdef \#\brcb{ \text{connected components in } \Ti\setminus\brc{\sigma} }.
\end{align}
In the case of stable of trees $\gamma\in\ivoo{1,2}$, they proved that at every $\sigma\in\Ti$, $n(\sigma)\in\brc{1,2,\infty}$, the last case corresponding to the countable atoms of the local time $a\mapsto\ell^a$. Moreover, it happens that almost all vertices are leaves, i.e. $\textbf{m}(\dt\sigma)$-a.e., $n(\sigma) = 1$. The multiplicity describes precisely the branching behaviour at a vertex itself, but gives little information on its neighbourhood, and in particular the existence of large sub-trees branching closely to $\sigma$. To capture a more local information, we extend and modify slightly the original definition of the multiplicity by defining for any $\delta>0$ and at any $\sigma\in\Ti$:
\begin{align}  \label{eq:branching_exponent0}
  n_b(\sigma,\delta) \eqdef \#\brcb{ \text{connected components diameter $>\delta$ in } \Ti\setminus \overline{B}(\sigma,\delta) }.
\end{align}
The quantity $n_b(\sigma,\delta)$ therefore encapsulates information on the local branching behaviour around $\sigma$ at scale $\delta$. Then, similarly to the local time and the mass measure, we may define a \emph{branching exponent} which capture the asymptotic order of $n_b(\sigma,\delta)$ in the neighbourhood of a vertex: for every $\sigma\in\Ti$,
\begin{align}  \label{eq:branching_exponent1}
  \alpha_b(\sigma,\Ti) \eqdef \limsup_{r\rightarrow 0} \frac{\log n_b(\sigma,r)}{\log 1/r}.
\end{align}
The branching exponent $\alpha_b(\sigma,\Ti)$ complements the usual multiplicity $n(\sigma)$ by providing a classification of vertices in the tree which takes into account the local branching structure.\vsp

As we aim to study the fluctuations of the scaling exponents previously introduced, we may first look more closely at how these are related in the case of stable trees.
\begin{proposition}  \label{prop:scaling_exponents}
  Suppose $\gamma\in\ivoo{1,2}$. The local time $\alpha_\ell(\sigma,\Ti)$ and the mass measure $\alpha_\mb(\sigma,\Ti)$ exponents are related as following: $\Nb\pth{ \dt \Ti }$-a.e.
  \begin{align}  \label{eq:ltime_mass_exponents}
    \forall \sigma\in\Ti;\quad \alpha_\ell(\sigma,\Ti) \leq \frac{1}{\gamma-1} \quad\Longrightarrow\quad \alpha_\mb(\sigma,\Ti) \leq \alpha_\ell(\sigma,\Ti) + 1.
  \end{align}
  Moreover, the mass measure $\alpha_\mb(\sigma,\Ti)$ and the branching $\alpha_b(\sigma,\Ti)$ exponents are equivalent on stable trees: $\Nb\pth{ \dt \Ti }$-a.e.
  \begin{align}  \label{eq:branching_mass_exponents}
    \forall \sigma\in\Ti;\quad \alpha_\mb(\sigma,\Ti) = \frac{\gamma}{\gamma-1} - \alpha_b(\sigma,\Ti).
  \end{align}
\end{proposition}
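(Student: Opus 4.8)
The plan is to treat the two inequalities/equalities separately, in both cases relying on a local decomposition of a stable tree around a vertex into a "spine fragment" plus the subtrees grafted along it, which is the standard tool for this kind of pointwise estimate (cf.\ the Bismut/Williams-type decompositions in \cite{Duquesne.LeGall-2005}). For the first implication \eqref{eq:ltime_mass_exponents}, fix a vertex $\sigma$ at level $a=d(\rho(\Ti),\sigma)$ and consider a small radius $r>0$. The idea is that the ball $B(\sigma,r)$ for the mass measure is built from the contributions of the level sets $\Ti(b)$ for $b\in(a-r,a+r)$, each weighted by $\dt b$, and that for $b$ close to $a$ the local time $\ell^b$ restricted to the relevant portion is comparable (up to the fluctuations controlled by the càdlàg regularity of $a\mapsto\ell^a$) to $\ell^a(B(\sigma,r))$. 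This gives heuristically $\mb(B(\sigma,r))\lesssim r\cdot\sup_{|b-a|<r}\ell^b(\text{portion near }\sigma)$, so that $\mb(B(\sigma,r))\le r\cdot\ell^a(B(\sigma,Cr))^{1-o(1)}$ up to a correction, whence $\alpha_\mb(\sigma,\Ti)\le\alpha_\ell(\sigma,\Ti)+1$. The role of the hypothesis $\alpha_\ell(\sigma,\Ti)\le\tfrac1{\gamma-1}$ is to ensure that the local time contribution genuinely dominates: since $\dimH\Ti(a)=\tfrac1{\gamma-1}$, the exponent $\tfrac1{\gamma-1}$ is the generic value, and when $\alpha_\ell$ is smaller than that the mass near $\sigma$ at levels $b\ne a$ cannot be significantly larger than at level $a$ itself. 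The technical content is a uniform-in-$\sigma$ comparison of $\ell^b$ near $\sigma$ with $\ell^a$ near $\sigma$, which one obtains from a modulus-of-continuity estimate for the local time that holds $\Nb$-a.e.\ simultaneously at all vertices (this is where a union bound over a countable dense set plus a chaining/Borel--Cantelli argument enters).

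For the equivalence \eqref{eq:branching_mass_exponents}, the key geometric observation is that the mass measure of $B(\sigma,r)$ is, up to constants and exponent corrections, the sum of the masses of the connected components of $\Ti\setminus\overline B(\sigma,r)$ that hang off the ball, plus the mass of the ball's "trunk" which is negligible; and each such component of diameter $>\delta$ carries a mass bounded below and above by powers of $\delta$ governed by $\dimH\Ti=\dimP\Ti=\tfrac\gamma{\gamma-1}$. Concretely, a subtree of diameter of order $\delta$ grafted at distance $\sim\delta$ from $\sigma$ contributes mass of order $\delta^{\gamma/(\gamma-1)}$ (up to sub-polynomial factors, uniformly over $\sigma$, again by the exact-measure / uniform-dimension results recalled in the excerpt). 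Therefore $\mb(B(\sigma,r))\asymp n_b(\sigma,r)\cdot r^{\gamma/(\gamma-1)}$ modulo $r^{o(1)}$, and taking $\log/\log r$ and the appropriate $\liminf$/$\limsup$ yields $\alpha_\mb(\sigma,\Ti)=\tfrac\gamma{\gamma-1}-\alpha_b(\sigma,\Ti)$. One direction ($\alpha_\mb\le\tfrac\gamma{\gamma-1}-\alpha_b$) is the easy one: many large subtrees force large mass, using only a uniform \emph{lower} bound on the mass of a subtree in terms of its diameter. The reverse direction needs a uniform \emph{upper} bound: if $\mb(B(\sigma,r))$ is large then it must be accounted for by many grafted subtrees, which requires controlling the mass of a single subtree of given diameter from above, uniformly over the tree — i.e.\ an upper bound of the form $\mb(\Ti')\le \diam(\Ti')^{\gamma/(\gamma-1)-\eps}$ for all subtrees simultaneously, which is essentially the content of the uniform packing-dimension statement for $\Ti$ together with a covering argument.

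The main obstacle I anticipate is making these comparisons \emph{uniform in $\sigma\in\Ti$} while only paying a sub-polynomial (hence exponent-neutral) price. Pointwise-at-a-fixed-vertex statements follow easily from the results quoted in the excerpt, but promoting them to "$\Nb$-a.e., for all $\sigma$ simultaneously" requires a quantitative estimate on the probability that the local time modulus of continuity, or the mass of a grafted subtree of diameter $\delta$, deviates from its typical value — strong enough that summing over a net of vertices at scale $\delta$ (of which there are at most $\delta^{-\gamma/(\gamma-1)+o(1)}$, by the dimension of $\Ti$) and over dyadic scales $\delta=2^{-n}$ still yields a convergent series for Borel--Cantelli. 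This is exactly the type of argument used to prove the uniform small-ball estimates \eqref{eq:tree_small_balls1}, and I expect to reuse the same large-deviation inputs for the local time and for subtree masses (which are, conditionally, distributed as masses of independent stable trees via the spinal decomposition). Once these uniform estimates are in place, both parts of the proposition reduce to the elementary manipulations of $\liminf\log(\cdot)/\log r$ sketched above.
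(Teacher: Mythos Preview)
Your sketch for \eqref{eq:ltime_mass_exponents} has the inequality pointing the wrong way. From $\mb(B(\sigma,r))=\int_{a-r}^{a+r}\ell^b(B(\sigma,r))\,db$ you do get the trivial upper bound $\mb(B(\sigma,r))\le 2r\cdot\sup_{|b-a|<r}\ell^b(\cdots)$, but dividing by $\log r<0$ and taking $\liminf$ turns this into $\alpha_\mb\ge 1+\alpha_\ell$, not $\le$. What the implication actually requires is a \emph{lower} bound: if $\ell^a(B(\sigma,r))\ge r^{h+\eps}$ at some scale, you need $\mb(B(\sigma,Cr))\ge r^{h+1+3\eps}$, which amounts to showing that the local time, once large, \emph{stays} large over a level interval of length $\sim r^{1+o(1)}$. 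This persistence is not a modulus-of-continuity statement (càdlàg regularity gives nothing quantitative here) but a one-sided CSBP estimate: the paper uses a hitting-time argument together with an exponential bound on the infimum of a stable CSBP (their Lemmas~\ref{lemma:csbp_inf_lower_tail} and \ref{lemma:local_time_sup_inf_tail}) to show that $\Nb\bigl(\sup_{[\delta,2\delta]}\bk{\ell^u}\ge\delta^{h+\eps}\text{ and }\mb(B(\rho,3\delta))<\delta^{h+1+3\eps}\bigr)$ decays faster than any polynomial, and then runs Borel--Cantelli over dyadic scales and levels $k\delta_n$. Your ``modulus of continuity plus chaining'' plan does not supply this input; you would need to replace it by a CSBP lower-tail estimate of this type.

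For \eqref{eq:branching_mass_exponents} your heuristic $\mb(B(\sigma,r))\asymp n_b(\sigma,r)\,r^{\gamma/(\gamma-1)}$ is in the right spirit, but note that $n_b(\sigma,\delta)$ counts components \emph{outside} $\overline B(\sigma,\delta)$, so the link to $\mb(B(\sigma,r))$ is less direct than ``sum of subtree masses''; one has to go through the local time at the boundary level. The paper takes a somewhat different route: it proves two joint-tail estimates at the root (large $\mb$ but small $n_b$, and vice versa, each with super-polynomial decay; Lemma~\ref{lemma:scaling_exponents1}), then transports them to all vertices by picking $\sim\delta_n^{-\gamma/(\gamma-1)-\eps}$ i.i.d.\ $\mb$-uniform vertices, invoking re-rooting invariance so that each behaves like the root, and using the H\"older regularity of the height process to show this net is $2\delta_n$-dense (Lemma~\ref{lemma:unif_dyadic_vertices}). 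Your plan of uniform mass-versus-diameter bounds on grafted subtrees could in principle be made to work, but you would still need the same kind of super-polynomial joint-tail control to beat the $\delta^{-\gamma/(\gamma-1)}$ entropy of the net; the re-rooting device is what makes this clean in the paper.
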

An interesting consequence of Equation~\eqref{eq:branching_mass_exponents} is to provide a purely geometrical interpretation of the mass measure exponent and motivate furthermore its study as a form of extension of the multiplicity. Given the previous equality, it is natural to wonder if the connection \eqref{eq:ltime_mass_exponents} between mass measure and local time scaling exponents may also be stronger. In fact, as a consequence of the branching property of stable trees (see Section~\ref{sec:notations}), the previous result can not be improved since with positive probability, one may have $\mb(B(\sigma,\delta))\geq \delta^h$ and $\ell^a(B(\sigma,\delta))\asymp \delta^{\tfrac{1}{\gamma-1}}$. Consequently, even though the previous two exponents do not behave completely independently, it remains relevant and interesting to obtain a complete classification of both.

The analysis of the fluctuations of pointwise-like Hölder exponents is usually called \emph{multifractal analysis}. This research topic has attracted attention in probability theory for now several years on many different subjects: (fractional) Lévy processes \cite{Jaffard-1999,Durand.Jaffard-2012,Balanca-2014a,Neuman-2014}, spatial Brownian motion \cite{Dembo.Peres.ea-2000}, Galton--Watson trees \cite{Moerters.Shieh-2002,Moerters.Shieh-2008}, beta-coalescents \cite{Berestycki.Berestycki.ea-2007} and superprocesses \cite{Perkins.Taylor-1998,Mytnik.Wachtel-2015} among them. This formalism happens to be relevant when a scaling exponent tends to fluctuate erratically and one can not determine an almost sure behaviour at every time (or vertex in our case). To characterise nevertheless the variations of such exponents, multifractal analysis is interested in the fractal structure of the iso-Hölder sets. Namely, in the case of stable trees, we define for any index $h\geq 0$:
\begin{align}
  E_\ell(h,\Ti) = \brcb{\sigma\in\Ti : \alpha_\ell(\sigma,\Ti) = h} \quad\text{and}\quad E_\mb(h,\Ti) = \brcb{\sigma\in\Ti : \alpha_\mb(\sigma,\Ti) = h}.
\end{align}
The \emph{multifractal spectrum}, or \emph{spectrum of singularities}, is then commonly defined as the function $h\mapsto \dimH E_\star(h,\Ti)$. In the case of stable trees, it therefore provides an insight of the heterogeneous mass distribution by indicating the relative proportion of vertices with a given scaling exponent.

It is known from simple calculations that the typical mass of a ball $B(\sigma,r)$ is of order $r^{\tfrac{1}{\gamma-1}}$ for the local time and $r^{\tfrac{\gamma}{\gamma-1}}$ for the mass measure. As a consequence, we expect that most vertices, in a fractal dimension sense, have local time and mass measure exponents respectively equal to $\tfrac{1}{\gamma-1}$ and $\tfrac{\gamma}{\gamma-1}$, and that larger masses may only appear at exceptional vertices. In the case of the CRT $\gamma=2$, \citet{Duquesne.Wang-2014} have proved the existence of positive constants $k,K$ such that $\Nb$-a.e.
\begin{align}
  k \leq \liminf_{r\rightarrow 0} \frac{1}{f_2(r)} \sup_{\sigma\in\Ti} \mb\pthb{B(\sigma,r)} \leq \limsup_{r\rightarrow 0} \frac{1}{f_2(r)} \sup_{\sigma\in\Ti} \mb\pthb{B(\sigma,r)} \leq K,
\end{align}
inducing in particular that for every $\Nb$-a.e. $\sigma\in\Ti$, $\alpha_\mb(\sigma,\Ti) = 2$. 

The behaviour of scaling exponents on stable trees with $\gamma\in\ivoo{1,2}$ happens to be more complex and interesting.
A first result on the subject has been obtained by \citet{Berestycki.Berestycki.ea-2007} who determined the spectrum of singularities of the local time of stable trees at fixed level $a$. Namely, for any index $h\in\ivff{\tfrac{1}{\gamma},\tfrac{1}{\gamma-1}}$ and every level $a>0$
\begin{align}  \label{eq:spectrum_fix}
  \dimH \,\pthb{ E_\ell(h,\Ti)\cap \Ti(a) } = \gamma h - 1 \quad\Nb_a\text{-a.e.}
\end{align}
More broadly speaking, this question is also closely related to the literature investigating the multifractal aspect of superprocesses (we refer to the work of \citet{Perkins.Taylor-1998} on super-Brownian motion and the recent paper of \citet{Mytnik.Wachtel-2015} studying the density of stable sBm). Compared to these articles, one important aspect of our work is to present a uniform description of the mass distribution on random stable trees, i.e. simultaneously for all levels and all scaling indices, and therefore describe as well exceptional behaviours appearing at random levels.

Before stating our main results, we introduce a few additional notations: for every nonempty set $F$ in the interval $\ivoo{0,\infty}$, we denote by $\Ti(F)$ the following subset of the tree:
\begin{align*}
  \Ti(F) = \bigcup_{a\in F} \Ti(a).
\end{align*}
We will say that $F$ is \emph{regular} if its Hausdorff and packing dimensions coincide. Moreover, $F$ is said to satisfy a \emph{strong Frostman's lemma} if there exists a probability measure $\mu_F$ on $F$ (i.e. $\supp F\subseteq F$) such that for every $\eps>0$,
\begin{align}  \label{eq:strong_frostman}
  \exists r_0>0,\quad  \forall x\in F,\ \forall r\in\ivoo{r,r_0};\quad \mu_F\pthb{B(x,r)} \leq r^{\dimH F - \eps}.
\end{align}
The previous assumption is stronger than the result of the celebrated Frostman's lemma (see \cite[Prop. 4.11]{Falconer-2003}), and thus not automatically verified by any $F$. A large class of fractal sets nevertheless satisfies condition~\eqref{eq:strong_frostman}, and in particular, it encompasses any Borel set $F$ such that $\Hi^\varphi(F)\in\ivoo{0,\infty}$ for a gauge function $\varphi$ verifying: $\liminf_{r\rightarrow 0} \tfrac{\log \varphi(r)}{\log r} = \dimH F$.
Finally, we also use in the rest of the article the classic convention $\dimH F < 0$ if and only if the set $F$ is empty.\vsp

We begin by presenting the uniform component of the multifractal spectrum of the local time and the mass measure.
\begin{theorem}  \label{th:levy_tree_upper_spectrum1}
  Suppose $\gamma\in\ivoo{1,2}$. $\Nb\pth{ \dt \Ti }$-a.e. for any nonempty regular set $F\subset\ivoo{0,h(\Ti)}$, the spectrum of singularities of the local time on $\Ti(F)$ is equal to
  \begin{align}
    \forall h\in\ivffb{\tfrac{1}{\gamma}, \tfrac{1}{\gamma-1}};\quad \dimH \,\pthb{ E_\ell(h,\Ti)\cap \Ti(F) } = \gamma h - 1 + \dimH F.
  \end{align}
  Moreover, the mass measure multifractal spectrum is equal to
  \begin{align}
    \forall h\in\ivffb{\tfrac{1+\gamma}{\gamma}, \tfrac{\gamma}{\gamma-1}};\quad \dimH \,\pthb{ E_\mb(h,\Ti) \cap \Ti(F) } = \gamma (h-1) - 1 + \dimH F.
  \end{align}
  In particular, $E_\ell(\tfrac{1}{\gamma},\Ti)\cap\Ti(a)$ and $E_\mb(\tfrac{1+\gamma}{\gamma},\Ti)\cap\Ti(a)$ are non-empty for every $a\in\ivoo{0,h(\Ti)}$. Finally, for any $h>\tfrac{\gamma}{\gamma-1}$, $E_\mb(h,\Ti)=\vset$.
\end{theorem}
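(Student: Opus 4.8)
The plan is to establish the theorem in three logically separate parts: (i) the two lower bounds on the Hausdorff dimensions of the iso-Hölder sets, (ii) the matching upper bounds, and (iii) the auxiliary claims about nonemptiness at every level and the vanishing of the spectrum beyond the critical index. I would set up a common framework for (i) and (ii) centred on the local time, then transfer to the mass measure. The starting point is the decomposition $\Ti(F)=\bigcup_{a\in F}\Ti(a)$ together with the branching property of stable trees (Section~\ref{sec:notations}) and the known fixed-level spectrum \eqref{eq:spectrum_fix} of \citet{Berestycki.Berestycki.ea-2007}: for fixed $a$, $\dimH(E_\ell(h,\Ti)\cap\Ti(a))=\gamma h-1$. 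Integrating this over a regular set $F$ should heuristically produce the extra $\dimH F$; making this rigorous uniformly in $h$ and simultaneously for \emph{all} regular $F$ is the crux.

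For the \textbf{lower bound}, I would fix $h\in(\tfrac1\gamma,\tfrac1{\gamma-1})$ and, using the strong Frostman measure $\mu_F$ on $F$ when available (and a limiting/exhaustion argument to pass from strong-Frostman sets to general regular sets), construct a random measure on $E_\ell(h,\Ti)\cap\Ti(F)$ by ``stacking'' the fixed-level exceptional measures: roughly, integrate $d\mu_F(a)$ against the measure witnessing $\gamma h-1$ on $E_\ell(h,\Ti)\cap\Ti(a)$. A mass-distribution-principle (Frostman/energy) estimate then needs the ball mass of this stacked measure to decay like $r^{\gamma h-1+\dimH F}$; the vertical spread contributes $r^{\dimH F}$ via \eqref{eq:strong_frostman} and the horizontal (within-level) spread contributes $r^{\gamma h-1}$. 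The subtle point is uniformity: one wants a single event of full $\Nb$-measure on which the construction works for every $h$ in a dense set and then, by monotonicity/semicontinuity of $h\mapsto\alpha_\ell$, for all $h$; a covering argument over dyadic levels and a union bound handle the ``all regular $F$'' quantifier. The mass measure lower bound then follows by combining with Proposition~\ref{prop:scaling_exponents}: the implication \eqref{eq:ltime_mass_exponents} lets one convert a local-time exponent $h'$ into a mass exponent $h'+1$ on the relevant range, and since $h'=h-1$ runs over $[\tfrac1\gamma,\tfrac1{\gamma-1}]$ when $h$ runs over $[\tfrac{1+\gamma}\gamma,\tfrac\gamma{\gamma-1}]$, the dimension $\gamma h'-1+\dimH F=\gamma(h-1)-1+\dimH F$ appears. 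One must check the inclusion/equality of iso-Hölder sets is tight, using also the geometric identity \eqref{eq:branching_mass_exponents} to control the mass exponent from below when the local-time bound is saturated.

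For the \textbf{upper bound}, I would use a direct covering argument: for $h<\tfrac1{\gamma-1}$, the set $E_\ell(h,\Ti)\cap\Ti(F)$ is covered, for each small $\epsilon$ and each scale $2^{-n}$, by balls $B(\sigma,2^{-n})$ on which $\ell^{a}(B(\sigma,2^{-n}))\ge 2^{-n(h+\epsilon)}$; counting such balls requires a first-moment bound on the number of level-$a$ balls carrying anomalously large local time, which comes from the Ray--Knight description of $a\mapsto\ell^a$ and the $\gamma$-stable CSBP tail estimates — this is where the exponent $\gamma h-1$ originates at a fixed level, and the integration over $F$ adds at most $\overline{\dim}_{\mathrm B}F=\dimH F$ boxes in the vertical direction (here regularity of $F$ is used to equate upper box and Hausdorff dimension). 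The mass-measure upper bound is analogous, now using Duquesne--Wang-type estimates \eqref{eq:tree_small_balls1} and the fragmentation/branching structure to bound the number of vertices whose neighbourhood carries mass $\ge r^{h}$; alternatively one can route it through \eqref{eq:branching_mass_exponents} and bound $n_b(\sigma,r)$ from below on a dimension-small set. I expect the \textbf{main obstacle} to be exactly this uniform (in $h$, in $F$, and across all levels $a$ simultaneously) control of the number of exceptional balls: the fixed-level results are stated $\Nb_a$-a.e. for each $a$ separately, and upgrading to a statement holding $\Nb$-a.e. for \emph{all} regular $F$ at once demands careful second-moment or exponential-moment estimates along dyadic levels plus a Borel--Cantelli argument, together with the subtlety that the local time is only weakly càdlàg in $a$ (its atoms, the infinite-multiplicity vertices, must be excised or handled separately).

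Finally, for the \textbf{last two assertions}: nonemptiness of $E_\ell(\tfrac1\gamma,\Ti)\cap\Ti(a)$ and $E_\mb(\tfrac{1+\gamma}\gamma,\Ti)\cap\Ti(a)$ at \emph{every} level $a\in(0,h(\Ti))$ is the $\dimH=0$ endpoint case, which should follow from the lower-bound construction with $F=\{a\}$ (so $\dimH F=0$) since $\gamma\cdot\tfrac1\gamma-1+0=0\ge 0$ forces the set to be nonempty; one verifies the stacking construction degenerates gracefully to a single level, or directly exhibits a vertex of maximal local-time concentration at level $a$ using the branching property. For the vanishing $E_\mb(h,\Ti)=\vset$ when $h>\tfrac\gamma{\gamma-1}$, this is immediate from \eqref{eq:tree_small_balls1} (the Duquesne--Wang uniform lower bound on small balls of $\mb$): for every $\sigma$, $\mb(B(\sigma,r))\ge k_\gamma f_\gamma(r)$ eventually, so $\log\mb(B(\sigma,r))/\log r\le \tfrac\gamma{\gamma-1}+o(1)$, i.e. $\alpha_\mb(\sigma,\Ti)\le\tfrac\gamma{\gamma-1}$ for all $\sigma$, leaving no vertex with a strictly larger exponent.
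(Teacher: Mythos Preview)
Your overall architecture---upper bound by covering/first-moment counts, lower bound by stacking level-wise Frostman measures against a Frostman measure on $F$, transfer to $\mb$ via Proposition~\ref{prop:scaling_exponents}, and the Duquesne--Wang bound for the vanishing claim---matches the paper's. The upper bound, the transfer, and the two auxiliary claims are essentially as in the paper.

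The genuine gap is in the lower bound. You propose to take the fixed-level measures ``witnessing $\gamma h-1$ on $E_\ell(h,\Ti)\cap\Ti(a)$'' from \cite{Berestycki.Berestycki.ea-2007} and integrate them against $\mu_F(\dt a)$. But those measures exist only $\Nb_a$-a.e.\ for each \emph{fixed} $a$, and your remedy---running the construction at dyadic levels, Borel--Cantelli, then interpolating by semicontinuity of $\alpha_\ell$---does not produce a family $(\mu_{a,h})_{a}$ with \emph{uniform} mass bounds on a single full-measure event; semicontinuity of the exponent says nothing about continuity of the witnessing measures in $a$. Moreover, Theorem~\ref{th:levy_tree_upper_spectrum1} requires the bound simultaneously for \emph{every} regular $F$, so the strong Frostman measure $\mu_F$ cannot be fixed in advance (the strong Frostman hypothesis enters only in Theorem~\ref{th:levy_tree_upper_spectrum2}). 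The paper resolves this not by patching the fixed-level results but by building the level-wise measures from scratch: along a super-exponentially decreasing scale sequence $(\rho_n)$, it constructs nested ``balanced'' subtree collections $\Vbb_n(h_n)$ (indexed by dyadic approximants $h_n\to h$) via CSBP tail estimates and Chernoff bounds, controlling simultaneously the small-ball, large-ball, and branching-count behaviour (Lemmas~\ref{lemma:usp_large_balls1}--\ref{lemma:usp_large_balls_spectrum2}). This yields, on one $\Nb$-full event, sets $G(a,h)\subset F_\ell(h,\Ti)\cap\Ti(a)$ and probability measures $\mu_{a,h}$ with $\mu_{a,h}(B(\sigma,r))\le r^{\gamma h-1-\eps(r)}$ \emph{uniformly for all} $a\in(\epsilon,h(\Ti)-\epsilon)$ and all $h$ (Lemma~\ref{lemma:usp_construction_spectrum4}). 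Only then does the stacking $\mu_{F,h}=\int\nu(\dt a)\,\mu_{a,h}$ work for an \emph{arbitrary} Frostman measure $\nu$ on an \emph{arbitrary} regular $F$, with no strong-Frostman assumption needed. Your outline correctly names this uniformity as the main obstacle but does not supply the mechanism that overcomes it.
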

We may observe that even though the local time and mass measure exponent do not coincide in general, Theorem~\ref{th:levy_tree_upper_spectrum1} nevertheless proves that they share a similar spectrum of singularities.
Concerning the local time itself, Theorem~\ref{th:levy_tree_upper_spectrum1}  clearly extends the spectrum of singularities \eqref{eq:spectrum_fix} presented by \citet{Berestycki.Berestycki.ea-2007}, describing a result which stands for any level $a$ and index $h$. Note that the previous article relies heavily on the work of \citet{Moerters.Shieh-2002} and \citet{Moerters.Shieh-2008} who have investigated the multifractal structure of the branching measure on the boundary of supercritical Galton--Watson trees. Due to the technical requirements of a uniform statement, the proof of Theorem~\ref{th:levy_tree_upper_spectrum1}  makes use of different arguments and techniques.

On the other hand, the spectrum of singularities of $\mb$ completes the study initiated by \citet{Duquesne-2012,Duquesne.Wang-2014} on the asymptotic behaviour of the mass measure by providing a description of the distribution of exceptional large masses.

Note that as a corollary, Theorem~\ref{th:levy_tree_upper_spectrum1} provides a uniform characterisation of the Hausdorff dimension images sets of stable trees, extending the result of \citet[Th. 5.5]{Duquesne.LeGall-2005}.
\begin{corollary}  \label{cor:hdim_level_sets}
  Suppose $\gamma\in\ivoo{1,2}$. $\Nb(\dt\Ti)$-a.e.
  \begin{align}  \label{eq:cor_dimH_images}
    \text{for any Borel set } F\subset\ivoo{0,h(\Ti)};\quad \dimH \Ti(F) = \dimH F + \frac{1}{\gamma-1}.
  \end{align}
\end{corollary}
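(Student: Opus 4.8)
The plan is to prove the two bounds $\dimH\Ti(F)\le\dimH F+\tfrac1{\gamma-1}$ and $\dimH\Ti(F)\ge\dimH F+\tfrac1{\gamma-1}$ separately, both holding $\Nb(\dt\Ti)$-a.e. simultaneously for every Borel set $F\subset\ivoo{0,h(\Ti)}$. The lower bound is immediate when $F$ is regular: applying Theorem~\ref{th:levy_tree_upper_spectrum1} to the local time at the index $h=\tfrac1{\gamma-1}$ gives $\dimH\pthb{E_\ell(\tfrac1{\gamma-1},\Ti)\cap\Ti(F)}=\tfrac{\gamma}{\gamma-1}-1+\dimH F=\tfrac1{\gamma-1}+\dimH F$, and since $E_\ell(\tfrac1{\gamma-1},\Ti)\cap\Ti(F)\subseteq\Ti(F)$ this yields $\dimH\Ti(F)\ge\dimH F+\tfrac1{\gamma-1}$. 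For an arbitrary Borel $F$ and $t<\dimH F$, I would pass to a compact subset: the Hausdorff dimension of an analytic set is attained along its compact subsets, and a compact set of dimension $>t$ contains a compact subset $K$ with $\Hi^t(K)\in\ivoo{0,\infty}$; after a harmless further restriction, the normalised restriction $\mu_K$ of $\Hi^t$ to $K$ satisfies the strong Frostman estimate~\eqref{eq:strong_frostman} with $\dimH K=t$. Since the lower bound in Theorem~\ref{th:levy_tree_upper_spectrum1} rests on this property alone (regularity serving only to match it with the box-type upper bound), one gets $\dimH\Ti(F)\ge\dimH\Ti(K)\ge t+\tfrac1{\gamma-1}$, and letting $t\uparrow\dimH F$ concludes. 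Alternatively, one may bypass the theorem and construct a Frostman measure on $\Ti(F)$ directly, by disintegrating $\mu_K$ along the normalised local times $\ell^a/\ell^a(\Ti(a))$.

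For the upper bound I would reuse the covering estimates underlying the upper bounds of Theorem~\ref{th:levy_tree_upper_spectrum1}. The key uniform ingredient is: $\Nb$-a.e. there is $r_0>0$ such that for every level $a\in\ivoo{0,h(\Ti)}$ and every $r<r_0$, the piece $\Ti(\ivff{a,a+r})$ of the tree can be covered by at most $r^{-\frac1{\gamma-1}-\eps}$ balls of radius $r$; this follows from a uniform-in-$a$ box-counting bound for the level sets $\Ti(a)$ together with the elementary fact that every vertex of $\Ti(\ivff{a,a+r})$ lies within distance $r$ of its ancestor at level $a$. Then, given $s>\dimH F$, I would choose a cover of $F$ by intervals $I_j$ of lengths $\ell_j<r_0$ with $\sum_j\ell_j^{\,s}\le\eps$; since $\Ti(F)\subseteq\bigcup_j\Ti(I_j)$ and each $\Ti(I_j)$ is covered by at most $C\ell_j^{-\frac1{\gamma-1}-\eps}$ balls of radius of order $\ell_j$, one obtains $\Hi^{\,s+\frac1{\gamma-1}+\eps}\pthb{\Ti(F)}\le C\sum_j\ell_j^{\,s}\le C\eps$. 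Letting $\eps\to0$ and then $s\downarrow\dimH F$ gives $\dimH\Ti(F)\le\dimH F+\tfrac1{\gamma-1}$.

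I expect the main obstacle to be the uniformity in the level $a$: the covering bound above — and, in the direct construction of the lower bound, a uniform modulus of continuity for $a\mapsto\ell^a$ together with control of the vertices of exceptionally small local-time exponent — must all hold $\Nb$-a.e. simultaneously for every $a$ and every small scale, which is precisely the strengthening of the fixed-level results such as~\eqref{eq:spectrum_fix} that Theorem~\ref{th:levy_tree_upper_spectrum1} is designed to furnish. A secondary, purely metric point is the extraction from an arbitrary Borel set of a compact subset of almost full Hausdorff dimension carrying a strong Frostman measure.
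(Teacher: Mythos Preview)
Your approach matches the paper's: the upper bound is precisely Lemma~\ref{lemma:usp_global_ub}, and the lower bound is the $h=\tfrac{1}{\gamma-1}$ case of the measure construction behind Theorem~\ref{th:levy_tree_upper_spectrum1}. One simplification: your compact-subset extraction and appeal to the strong Frostman condition are unnecessary, since the lower-bound lemma (Lemma~\ref{lemma:stable_trees_spectrum_lb1}) already covers \emph{every} nonempty $F$ via the ordinary Frostman lemma applied after the measures $\mu_{a,h}$ of Lemma~\ref{lemma:usp_construction_spectrum4} have been built on a single $\Nb$-a.e.\ event independent of $F$ --- regularity in Theorem~\ref{th:levy_tree_upper_spectrum1} is only there to reconcile the $\dimH F$ from that lower bound with the $\dimP F$ appearing in the spectrum upper bound of Lemma~\ref{lemma:usp_spectrum_ub1}, whereas the upper bound relevant here (Lemma~\ref{lemma:usp_global_ub}) already carries $\dimH F$.
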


It seems natural to wonder if the formulas presented Theorem~\ref{th:levy_tree_upper_spectrum1} can be extended to the case $h<\tfrac{1}{\gamma}$ (resp. $h<\tfrac{1+\gamma}{\gamma}$) for the local time (resp. the mass measure). Unfortunately, as a consequence of \cite{Berestycki.Berestycki.ea-2007} and Fubini's theorem, we already know that $\Nb$-a.e.
\begin{align}
  \text{for almost all }a>0,\ \forall h<\tfrac{1}{\gamma};\quad E_\ell(h,\Ti)\cap \Ti(a) = \vset.
\end{align}
Therefore, a uniform statement such as Theorem~\ref{th:levy_tree_upper_spectrum1} can not be obtained once $h<\tfrac{1}{\gamma}$, the same remark existing as well for the mass measure scaling exponent. Nevertheless, a weaker form of result still holds for a fixed fractal set $F$, as presented in the next theorem.
\begin{theorem}  \label{th:levy_tree_upper_spectrum2}
  Suppose $\gamma\in\ivoo{1,2}$ and $F\subset\ivoo{0,\infty}$ is a Borel set such that for every $a>0$, $F\cap\ivoo{0,a}$ is regular and satisfies the strong Frostman's lemma \eqref{eq:strong_frostman}.

  Then, $\Nb\pth{ \dt \Ti }$-a.e., the spectrum of singularities of the local time on $\Ti(F)$ is equal to
  \begin{align}
    \forall h\in\ivofb{\tfrac{1-\dimH F_{|\Ti}}{\gamma}, \tfrac{1}{\gamma-1}};\quad \dimH \,\pthb{ E_\ell(h,\Ti)\cap \Ti(F) } = \gamma h - 1 + \dimH F_{|\Ti}.
  \end{align}
  where $F_{|\Ti}\eqdef F\cap\ivoo{0,h(\Ti)}$.
  In addition, the multifractal spectrum of the mass measure is equal to $\Nb\pth{ \dt \Ti }$-a.e.
  \begin{align}
    \forall h\in\ivofb{\tfrac{\gamma+1-\dimH F_{|\Ti}}{\gamma}, \tfrac{\gamma}{\gamma-1}};\quad \dimH \,\pthb{ E_\mb(h,\Ti) \cap \Ti(F) } = \gamma (h-1) - 1 + \dimH F_{|\Ti}.
  \end{align}
  Finally, $\Nb\pth{ \dt \Ti }$ for every level $a>0$, the sets $E_\ell(h,\Ti)\cap\Ti(a)$ and $E_\mb(h,\Ti)\cap\Ti(a)$ are either empty or have zero Hausdorff dimension.
\end{theorem}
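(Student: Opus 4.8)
The plan is to reduce the statement to Theorem~\ref{th:levy_tree_upper_spectrum1} as much as possible, and then to treat the genuinely new range $h<\tfrac{1}{\gamma}$ (resp. $h<\tfrac{1+\gamma}{\gamma}$) by a separate first-moment/covering argument that exploits the strong Frostman hypothesis~\eqref{eq:strong_frostman} in place of the uniform bound that fails for small $h$. First I would observe that Proposition~\ref{prop:scaling_exponents}, specifically~\eqref{eq:ltime_mass_exponents}, lets me deal only with the local time: once the local-time spectrum on $\Ti(F)$ is known, the mass-measure spectrum follows from the (almost-sure, uniform in $\sigma$) relation $\alpha_\mb = \alpha_\ell + 1$ on the relevant range — matching the index shift $h\mapsto h-1$ and the $\gamma h \mapsto \gamma(h-1)$ in the exponents — with the endpoint $\tfrac{\gamma}{\gamma-1}$ coming from the last clause of Theorem~\ref{th:levy_tree_upper_spectrum1}. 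For $h\in\ivff{\tfrac{1}{\gamma},\tfrac{1}{\gamma-1}}$ the lower and upper bounds on $\dimH(E_\ell(h,\Ti)\cap\Ti(F))$ are already contained in Theorem~\ref{th:levy_tree_upper_spectrum1}, provided I first exhaust $F_{|\Ti}$ by regular subsets and pass to the limit; so the real content is the window $h\in\ivof{\tfrac{1-\dimH F_{|\Ti}}{\gamma},\tfrac{1}{\gamma}}$, where the claimed dimension $\gamma h-1+\dimH F_{|\Ti}$ is strictly positive exactly on this open interval.

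For the \emph{upper bound} on this new window, I would run a direct covering argument at dyadic scales $2^{-n}$. Using the strong Frostman measure $\mu_F$ from~\eqref{eq:strong_frostman} together with a discretisation of the level set at scale $\delta$, the number of balls of radius $2^{-n}$ needed to cover $\Ti(a)$ is controlled (via the Ray--Knight description of $\ell^a$ and the branching property, as in Section~\ref{sec:notations}) by roughly $2^{n/(\gamma-1)}$; integrating over $a\in F$ with the Frostman bound $\mu_F(B(a,2^{-n}))\le 2^{-n(\dimH F-\eps)}$ then produces an efficient covering of $\Ti(F)$. On such a covering, a vertex $\sigma$ with $\alpha_\ell(\sigma,\Ti)\le h$ forces a ball of $\ell^a$-mass at least $2^{-nh(1+\eps)}$ at some dyadic scale; a Markov-type estimate on the exponential moments of $\ell^a(B(\sigma,2^{-n}))$ (which are explicit for stable trees, scaling like those of a CSBP) bounds the expected number of such exceptional balls, and Borel--Cantelli closes the upper bound. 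The \emph{lower bound} is the harder half: I would build a random measure carried by $E_\ell(h,\Ti)\cap\Ti(F)$ by splicing a Frostman-type measure on $F$ with sub-tree constructions that plant, near a density-$\dimH F$ set of levels, a hierarchy of large sub-trees forcing the prescribed small-ball behaviour $\ell^a(B(\sigma,r))\asymp r^{h}$; an energy estimate on this measure gives the dimension lower bound. This is where the strong Frostman's lemma is essential — it is precisely what allows the two independent sources of dimension (the set $F$ of levels and the branching complexity at each level) to be combined without loss.

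The final clause — that at every fixed level $a$, $E_\ell(h,\Ti)\cap\Ti(a)$ and $E_\mb(h,\Ti)\cap\Ti(a)$ are either empty or of zero Hausdorff dimension — I would obtain from the formula just proved by taking $F=\{a\}$, or rather by a limiting/monotonicity argument: $\dimH\{a\}=0$, so the putative spectral value $\gamma h-1+\dimH F_{|\Ti}$ collapses to $\gamma h-1\le 0$ for $h\le\tfrac{1}{\gamma}$, while for $h>\tfrac{1}{\gamma}$ the fixed-level result~\eqref{eq:spectrum_fix} of \citet{Berestycki.Berestycki.ea-2007} is a statement $\Nb_a$-a.e. and hence, by Fubini, holds for almost every $a$ only — so at a genuinely fixed $a$ one cannot have positive dimension beyond a null set of levels. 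Concretely I would phrase it as: for each rational $a$ the conclusion of Theorem~\ref{th:levy_tree_upper_spectrum1} applied to shrinking regular neighbourhoods $F_k\downarrow\{a\}$ gives $\dimH(E_\ell(h,\Ti)\cap\Ti(a))\le\limsup_k(\gamma h-1+\dimH F_k)=\gamma h-1$, which is $\le 0$ on the non-uniform window, and then a density/re-rooting argument extends this from rational to all $a$. The main obstacle I anticipate is the lower-bound measure construction in the new window: keeping the energy integral finite while simultaneously encoding the fractal geometry of $F$ \emph{and} the exceptional branching requires a careful multi-scale percolation-type construction, and it is the step where the strong Frostman hypothesis, rather than mere Frostman, genuinely enters.
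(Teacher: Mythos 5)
Your plan has the right skeleton — transfer $h\ge\tfrac1\gamma$ to Theorem~\ref{th:levy_tree_upper_spectrum1}, treat $h<\tfrac1\gamma$ with a dedicated covering/measure construction, derive the mass-measure statement from the local-time one — but three of your load-bearing steps are either wrong or have an unaddressed gap.

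The most serious problem is the claim that there is an ``almost-sure, uniform in $\sigma$'' \emph{equality} $\alpha_\mb=\alpha_\ell+1$. There is no such equality: Proposition~\ref{prop:scaling_exponents} gives only the one-sided implication $\alpha_\ell(\sigma)\le\tfrac1{\gamma-1}\Rightarrow\alpha_\mb(\sigma)\le\alpha_\ell(\sigma)+1$, and the paper explicitly points out, right after the proposition, that this cannot be strengthened to an equivalence (one can have $\mb(B(\sigma,\delta))\gtrsim\delta^h$ while $\ell^a(B(\sigma,\delta))\asymp\delta^{1/(\gamma-1)}$). What this inclusion $F_\ell(h,\Ti)\subset F_\mb(h+1,\Ti)$ buys you is the \emph{lower} bound on $\dimH\bigl(E_\mb(h,\Ti)\cap\Ti(F)\bigr)$; the \emph{upper} bound must be proved separately for the mass measure, which the paper does (Lemma~\ref{lemma:usp_mspectrum_ub2}) using the right tail of $\mb(B(\rho,\delta))$ from Lemma~\ref{lemma:mmass_tail}. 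Your proposal simply has no upper bound for $E_\mb$.

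On the lower bound, the ``splice a Frostman-type measure on $F$ with a sub-tree hierarchy, then energy estimate'' sketch is headed in the wrong direction. For $h<\tfrac1\gamma$ the iso-H\"older set at any fixed level is (almost surely) finite, so any measure supported on $E_\ell(h,\Ti)\cap\Ti(F)$ is in effect a measure on levels pushed onto isolated tree points. The paper exploits exactly this: it builds a measure $\mu_h$ directly on a compact subset $\Ii(h)$ of the \emph{level set} $\bigl\{a\in F:F_\ell(h,\Ti)\cap\Ti(a)\neq\vset\bigr\}$ satisfying a mass-distribution bound $\mu_h(B(a,r))\le r^{\gamma h-1+s-\eps(r)}g(r)^{-\eta}$, and then passes to $\Ti(F)$ by the fact that the height map is Lipschitz. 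The strong Frostman hypothesis enters precisely in the reweighting step that proves this bound (Lemma~\ref{lemma:stable_usp3_tech0}), together with an additional ``interval non-degeneracy'' extraction (the set $F_\star$ of Lemma~\ref{lemma:stable_usp3_tech2}) that your outline does not account for and without which the induction cannot even be started. An energy integral gives nothing new here because the measure cannot be diffuse transversally.

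Finally, the argument for the last clause is circular and the Fubini remark is about the wrong window. Taking $F_k\downarrow\{a\}$ and applying ``Theorem~\ref{th:levy_tree_upper_spectrum1}'' only covers $h\ge\tfrac1\gamma$, where the dimension at a fixed level is in fact $\gamma h-1\ge 0$ and generically positive, so there is nothing to prove; the content is entirely in $h<\tfrac1\gamma$, and there you would be invoking Theorem~\ref{th:levy_tree_upper_spectrum2} to prove itself. The paper instead reads the final clause off the covering construction of Lemma~\ref{lemma:usp_spectrum_ub2}: with high probability the number of subtrees $Z(k-1,n,h)$ exhibiting the required local-time behaviour at each scale is bounded by a fixed integer $p$, so the iso-H\"older set at a fixed level is covered by at most $p$ balls of radius $4\delta_n$ for every large $n$, i.e.\ it is finite (hence zero-dimensional) whenever it is nonempty. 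A minor additional remark: the strong Frostman measure plays no role in the upper bound — the paper covers $F$ at dyadic scale using its upper box dimension, and the regularity assumption is what turns the resulting packing-dimension bound into a Hausdorff-dimension statement; using $\mu_F$ there is not how the argument runs.
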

We conjecture that Theorem~\ref{th:levy_tree_upper_spectrum2} remains valid without the \emph{strong Frostman's lemma} assumption (and in fact, the previous statement holds without on the collection $\brcb{\sigma\in\Ti(F) : \alpha_\star(\sigma,\Ti) \geq h}$). Unfortunately, we have not been able to drop this technical assumption in our proof of the lower bound. On the other hand, we may observe that extending the multifractal spectrum presented in Theorems~\ref{th:levy_tree_upper_spectrum1} and \ref{th:levy_tree_upper_spectrum2} to the general case where $F$ is not regular is known to be not trivial; the question remaining open on limsup random fractals (we refer to the works of \citet{Khoshnevisan.Peres.ea-2000,Zhang-2013} for partial answers on the subject).\vsp

Even though Theorems~\ref{th:levy_tree_upper_spectrum1} and \ref{th:levy_tree_upper_spectrum2} present a very similar spectrum, we may observe that they emphasize two very different configurations. In the first case, the uniformity of the result proves that large masses of order $h\in\ivffb{\tfrac{1}{\gamma},\tfrac{1}{\gamma-1}}$ (resp. $h\in\ivffb{\tfrac{1+\gamma}{\gamma},\tfrac{\gamma}{\gamma-1}}$) exist at every level with the same Hausdorff dimension, hence allowing to present a strong uniform statement. On the other hand, as a corollary of the proof of Theorem~\ref{th:levy_tree_upper_spectrum2}, we get: $\Nb\pth{ \dt \Ti }$ for any $h\in\ivoob{\tfrac{1-\dimH F_{|\Ti}}{\gamma}, \tfrac{1}{\gamma}}$:
\begin{align}
  \dimH \brcb{a\in F : E_\ell(h,\Ti)\cap\Ti(a)\neq\vset } = \gamma h - 1 + \dimH F_{|\Ti} < 1,
\end{align}
with an analogue statement holding as well on the mass measure.
In other words, large masses of order $h<\tfrac{1}{\gamma}$ (resp. $h<\tfrac{1+\gamma}{\gamma}$) appear only at exceptional random levels, making impossible to expect a uniform description. Even though a strong result of the form of Theorem~\ref{th:levy_tree_upper_spectrum1} can not be obtained, it remains an open question whether a description similar to the work of \citet{Kaufman-1989} on Brownian motion is tractable: for any regular set $F$, the spectrum of singularities holds for almost all $a>0$ on the set $\Ti(F+a)$.
\vsp


Finally, as a last result, we are interested in studying more precisely the smallest scaling exponent appearing on stable trees (or equivalently, the largest mass), improving the description provided in Theorem~\ref{th:levy_tree_upper_spectrum2} and giving a sufficient condition on the set $F$ under which the lower bound is realised.
\begin{theorem}  \label{th:levy_tree_upper_spectrum3}
  Suppose $\gamma\in\ivoo{1,2}$ and $F\subset\ivoo{0,\infty}$ is an analytic set. Then, $\Nb\pth{ \dt \Ti }$-a.e.,
  \begin{align}  \label{eq:stable_usp_hitting0}
    \inf_{\sigma\in \Ti(F)} \alpha_\ell(\sigma,\Ti)  = \frac{1-\dimP\, F_{|\Ti} }{\gamma} \quad\text{and}\quad \inf_{\sigma\in \Ti(F)} \alpha_\mb(\sigma,\Ti) = \frac{\gamma+1-\dimP \,F_{|\Ti} }{\gamma}.
  \end{align}
  where we recall that $F_{|\Ti}\eqdef F\cap\ivoo{0,h(\Ti)}$.

  In addition, if $F$ is an analytic set such that for every $a>0$, $F\cap\ivoo{0,a}$ is empty or has positive and finite packing measure, then $\Nb\pth{ \dt \Ti }$-a.e. the infimum is realized in $\Ti(F)$:
  \begin{align}
    E_\ell\pthB{\tfrac{1-\dimP\, F_{|\Ti} }{\gamma},\Ti}\cap\Ti(F)\neq\vset \quad\text{and}\quad E_\mb\pthB{\tfrac{\gamma+1-\dimP\, F_{|\Ti} }{\gamma},\Ti}\cap\Ti(F)\neq\vset.
  \end{align}
\end{theorem}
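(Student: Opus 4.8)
The plan is to reduce everything to a statement about the subordinator-like structure of the level-set local times and a first/second moment analysis of a tree-indexed percolation. First I would recall from \cite{Duquesne.LeGall-2005} the decomposition of $\Ti$ along a fixed level $a$ and the branching-property description of the local time process $a\mapsto\ell^a$: conditionally on the masses $\ell^a$, the sub-trees hanging at level $a$ are independent stable trees weighted by their mass, and the mass of a small ball $B(\sigma,r)$ around a leaf $\sigma\in\Ti(a)$ behaves, up to constants, like $r^{1/(\gamma-1)}$ for a ``typical'' leaf but can be as large as $r^{1/\gamma}\cdot(\text{mass of the level interval }[a-r,a+r])$ when an atom of the local time appears near $\sigma$. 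Combining this with Proposition~\ref{prop:scaling_exponents} (which converts mass-measure estimates to local-time estimates and vice versa, via $\alpha_\mb=\alpha_\ell+1$ in the relevant range), it suffices to prove the local-time statement; the mass-measure statement then follows by shifting $h\mapsto h+1$ and checking the endpoints.

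For the \emph{lower bound} $\inf_{\sigma\in\Ti(F)}\alpha_\ell(\sigma,\Ti)\ge (1-\dimP F_{|\Ti})/\gamma$, I would argue: if this infimum were strictly smaller, there would exist $h<(1-\dimP F_{|\Ti})/\gamma$, hence $\gamma h-1+\dimP F_{|\Ti}<0$, and a vertex $\sigma\in\Ti(F)$ with $\alpha_\ell(\sigma,\Ti)<h$, i.e.\ $\ell^a(B(\sigma,r_n))\ge r_n^{h}$ along some $r_n\to0$ where $a=d(\rho,\sigma)\in F$. Covering $F_{|\Ti}$ efficiently using its packing dimension, one sets up a dyadic scheme: at scale $2^{-n}$, count the ``bad'' dyadic boxes of level-set local time exceeding $2^{-nh}$. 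A first-moment computation, using the exact scaling of $\Nb$ and the known law of $\ell^a(B(\sigma,r))$ (essentially a one-dimensional stable-type marginal, as in \cite{Duquesne-2012,Duquesne.Wang-2014}), shows the expected number of such bad boxes inside a packing-efficient cover of $F$ is summable when $\gamma h-1+\dimP F_{|\Ti}<0$; Borel--Cantelli then rules these boxes out $\Nb$-a.e. The delicate point is that one must do this uniformly over \emph{all} levels $a\in F_{|\Ti}$ simultaneously, which is exactly where the packing dimension (rather than Hausdorff) of $F$ enters: a packing-type cover is what makes the union bound over levels converge.

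For the \emph{upper bound} and for the second assertion (the infimum is attained when $F$ has positive finite packing measure), I would run a second-moment / limsup-fractal argument in the spirit of \cite{Khoshnevisan.Peres.ea-2000}: construct, for each scale, a random collection of ``good'' vertices $\sigma\in\Ti(a)$, $a\in F$, at which $\ell^a(B(\sigma,2^{-n}))\ge 2^{-nh}$ with $h=(1-\dimP F_{|\Ti})/\gamma$, show the expected number of such vertices along a packing-measure-efficient net of $F$ is bounded below, control the variance using the independence of disjoint sub-trees from the branching property, and apply a Paley--Zygmund / energy argument to produce a nonempty limsup set. Here the hypothesis $0<\Pi^{\dimP F}(F\cap(0,a))<\infty$ for all $a$ is used to obtain a genuine packing measure $\mu_F$ with two-sided ball estimates, which feeds the second-moment bound; this parallels the role of the strong Frostman's lemma in Theorem~\ref{th:levy_tree_upper_spectrum2}, but on the packing side. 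The main obstacle, as in all uniform multifractal statements, is the limsup nature of the event: one cannot simply intersect almost-sure statements over a continuum of levels, so the second-moment lower bound must be organized along a single well-chosen sequence of scales with enough independence, and the packing (rather than Hausdorff) measure of $F$ is essential for the variance estimate to close — getting the constants and the level-aggregation to match the exponent $\gamma h-1+\dimP F_{|\Ti}=0$ exactly is the crux.
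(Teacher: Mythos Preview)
Your lower bound (the inequality $\inf\alpha_\ell\ge(1-\dimP F_{|\Ti})/\gamma$) is correct and matches the paper: a first-moment/Borel--Cantelli count over a packing-efficient cover of $F$ is exactly what Lemma~\ref{lemma:usp_spectrum_ub2} does.

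For the upper bound, however, the paper takes a rather different route from your second-moment/Paley--Zygmund plan. The paper's argument (Lemmas~\ref{lemma:stable_usp2_tech0}--\ref{lemma:stable_usp2_tech1}) runs as follows. First, Joyce--Preiss \cite{Joyce.Preiss-1995} is invoked to extract from the analytic set $F$ a compact $F_\star$ with the \emph{uniform} property $\dimBu(F_\star\cap V)\ge s$ for every open $V$ meeting $F_\star$; this regularisation replaces your ``packing-measure-efficient net'' and is what allows the first part of the theorem to go through with only analyticity (no finite packing measure needed). Second, one defines open sets $\Ti^\star(n)\subset\Ti$ collecting vertices lying in subtrees with large local time at some scale $\le\delta_n$, and shows each $\Ti^\star(n)$ is dense in $\Ti(F_\star)$; Baire's category theorem then gives $\bigcap_n\Ti^\star(n)\neq\vset$, which is exactly the limsup event you were worried about. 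Third --- and this is the key simplification over a second-moment scheme --- density of $\Ti^\star(n)$ is obtained by a pure first-moment computation: given $\Gi_{(k-1)\delta_n}$, the number of ``good'' subtrees at level $k\delta_n$ is Poisson, so the probability that \emph{all} levels in $\Di_n(F_\star)$ fail is bounded by a product $\exp(-c\,\#\Di_n(F_\star)\,\ell_n^{-\gamma})$, which is summable once $\ell_n$ is tuned to the box-counting of $F_\star$. No variance control is needed.

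Your approach is not wrong in principle, but it is heavier, and two points deserve care. First, you treat the bare upper bound and the ``infimum attained'' part with the same machinery, whereas the paper separates them: analyticity alone suffices for $\inf\alpha_\ell\le(1-\dimP F_{|\Ti})/\gamma$ (via Joyce--Preiss at any $s<\dimP F$), while positive finite packing measure is only used to hit $s=\dimP F$ exactly. Second, your reduction ``it suffices to prove the local-time statement; the mass-measure statement then follows by shifting $h\mapsto h+1$'' leans on Proposition~\ref{prop:scaling_exponents}, but that proposition only gives $\alpha_\mb\le\alpha_\ell+1$; the matching lower bound $\inf\alpha_\mb\ge(\gamma+1-\dimP F_{|\Ti})/\gamma$ requires a separate covering argument for $\mb$ (Lemma~\ref{lemma:usp_mspectrum_ub2}), not just the local-time bound.
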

Note that by studying the specific question of the infimum of the scaling exponents on stable trees, we are able to weaken the assumption on the set $F$. In addition, since $\Nb\pth{h(\Ti) > a} > 0$ for any level $a>0$, Theorem~\ref{th:levy_tree_upper_spectrum3} induces estimates on the following hitting probabilities:
\begin{align}  \label{eq:stable_usp_hitting3}
  \forall h\geq 0;\quad \Nb\pthb{ E_\ell(h,\Ti)\cap \Ti(F) \neq \vset }
  \begin{cases}
     > 0 \quad&\text{if }\dimP F > 1-\gamma h, \\
    =0 &\text{if }\dimP F < 1-\gamma h.
  \end{cases}
\end{align}
An analogue statement holds as well on the mass measure.

Equation~\eqref{eq:stable_usp_hitting3} is consistent with the fact that at fixed level $a>0$, there is no vertex with large mass of order $h\in\ivfo{0,\tfrac{1}{\gamma}}$ (resp. $h\in\ivfo{0,\tfrac{1+\gamma}{\gamma}}$). In addition, we observe that the smallest Hölder index appearing on $\Ti(F)$ is properly characterised by the packing dimension of the set $F$. The appearance of the latter quantity could be expected since \citet{Khoshnevisan.Peres.ea-2000} have proved that the packing dimension is the proper notion to characterise hitting probabilities of a large class of limsup sets such as fast points of Gaussian processes. A related property has also been exhibited by \citet{Moerters-2001} on fast points of super-Brownian motion.\vsp

Finally, as a last result, we also present the packing dimension of the iso-Hölder sets.
\begin{proposition}  \label{prop:packing_dim_spectrum}
  Suppose $\gamma\in\ivoo{1,2}$. The packing dimension of iso-Hölder sets satisfies $\Nb\pth{ \dt \Ti }$-a.e. for all levels $a\in\ivoo{0,h(\Ti)}$ and any $h\in\ivffb{\tfrac{1}{\gamma}, \tfrac{1}{\gamma-1}}$,
  \begin{align}
    \dimP \,\pthb{ E_\ell(h,\Ti)\cap \Ti(a) } = \frac{1}{\gamma-1}\quad\text{and}\quad \dimP \,\pthb{ E_\mb(h+1,\Ti)\cap \Ti(a) } = \frac{1}{\gamma-1}.
  \end{align}
\end{proposition}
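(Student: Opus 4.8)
The plan is to combine the Hausdorff-dimension lower bound already available from Theorem~\ref{th:levy_tree_upper_spectrum1} (applied with $F=\{a\}$, or rather a tiny regular neighbourhood, at a fixed level) with a matching packing-dimension upper bound coming from the known packing dimension $\dimP\Ti(a)=\tfrac{1}{\gamma-1}$ of the whole level set. Since $E_\ell(h,\Ti)\cap\Ti(a)\subseteq\Ti(a)$, monotonicity of packing dimension immediately gives $\dimP\bigl(E_\ell(h,\Ti)\cap\Ti(a)\bigr)\le\dimP\Ti(a)=\tfrac{1}{\gamma-1}$, and likewise for the mass-measure set via $\alpha_\mb=\alpha_\ell+1$ on the relevant range (Proposition~\ref{prop:scaling_exponents}, noting $h\le\tfrac{1}{\gamma-1}$ forces the implication to apply). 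So the whole content is the \emph{lower} bound $\dimP\bigl(E_\ell(h,\Ti)\cap\Ti(a)\bigr)\ge\tfrac{1}{\gamma-1}$, uniformly in $a\in(0,h(\Ti))$ and $h\in[\tfrac1\gamma,\tfrac1{\gamma-1}]$.

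For the lower bound I would exploit the fact that packing dimension of a set $E$ is at least $s$ as soon as $E$ cannot be covered by countably many sets of \emph{upper box dimension} $<s$; equivalently, it suffices to show that $E_\ell(h,\Ti)\cap\Ti(a)$ has full packing dimension \emph{inside every ball that meets it}. The natural route: first show, using the uniform spectrum of Theorem~\ref{th:levy_tree_upper_spectrum1} localised to a small regular $F=\{b\}$ with $b$ ranging in a Cantor-like subset of a neighbourhood of $a$, that $E_\ell(h,\Ti)$ intersected with $\Ti$ restricted to that neighbourhood already has Hausdorff dimension $\ge \gamma h-1+1=\gamma h$ inside arbitrarily small balls — wait, that is not quite at a single level. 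Instead I would argue directly at the level $a$: by the self-similar (branching / spinal) decomposition of the stable tree, conditionally on the tree up to level $a$, each subtree hanging at level $a$ is again (a scaled piece of) a stable tree, and within each such subtree, the fixed-level spectrum result \eqref{eq:spectrum_fix} of Berestycki--Berestycki--Limic gives, at every sublevel, a copy of $E_\ell(h,\cdot)\cap\Ti(\text{sublevel})$ of Hausdorff dimension $\gamma h-1$. Summing over a dense (in the tree-metric) family of such subtrees at a geometric sequence of scales, and using that the exponent $\alpha_\ell$ is unchanged by passing to a subtree up to the additive constant coming from the local time of the spine, one produces, in every ball $B(\sigma,r)$ centred at a point of $E_\ell(h,\Ti)\cap\Ti(a)$, a subset of $E_\ell(h,\Ti)\cap\Ti(a)$ of Hausdorff (hence box) dimension arbitrarily close to $\tfrac1{\gamma-1}=\dimH\Ti(a)$. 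The standard criterion $\dimP E=\sup\{s:\overline{\dim}_{\mathrm B}(E\cap U)\ge s\text{ for all open }U\text{ meeting }E\}$ then yields $\dimP\ge\tfrac1{\gamma-1}$.

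The mass-measure statement follows from the local-time one: on $E_\ell(h,\Ti)\cap\Ti(a)$ with $h\le\tfrac1{\gamma-1}$, Proposition~\ref{prop:scaling_exponents} gives $\alpha_\mb\le\alpha_\ell+1=h+1$, and the reverse inequality $\alpha_\mb\ge h+1$ on a full-dimensional subset is obtained from the same branching construction (a ball of radius $r$ around a level-$a$ point sees mass $\ell^a(B)\cdot$(height of subtrees) $\asymp r^{h}\cdot r = r^{h+1}$ as the leading contribution, the subtrees below contributing strictly less for the generic construction points). Hence $E_\mb(h+1,\Ti)\cap\Ti(a)$ contains a subset of $E_\ell(h,\Ti)\cap\Ti(a)$ of the same packing dimension, giving $\dimP\ge\tfrac1{\gamma-1}$, while the upper bound is again $\dimP\Ti(a)$. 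Uniformity in $a$ and $h$ is handled by a density argument over rational $a$ and $h$ together with monotonicity of the relevant sets in $h$ (the sets $\{\alpha_\ell\le h\}$ are increasing in $h$), exactly as in the uniform-statement parts of Theorems~\ref{th:levy_tree_upper_spectrum1}--\ref{th:levy_tree_upper_spectrum2}.

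The main obstacle I anticipate is making the "every ball contains a full-dimensional copy" step genuinely uniform and rigorous: one must control, simultaneously over all small radii $r$ and all centres in the (a priori random, limsup-type) set $E_\ell(h,\Ti)\cap\Ti(a)$, the existence inside $B(\sigma,r)$ of a subtree to which the fixed-level result \eqref{eq:spectrum_fix} applies, with the correct additive shift of the local-time exponent and without losing dimension. This is essentially a second-moment / energy estimate combined with a Borel--Cantelli argument along a geometric sequence of scales, and it is the same kind of delicate bookkeeping that underlies the proof of the uniform lower bounds in Theorem~\ref{th:levy_tree_upper_spectrum1}; I would expect to reuse those estimates more or less verbatim rather than redo them.
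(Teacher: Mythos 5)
Your upper bound is exactly right: $\dimP\bigl(E_\ell(h,\Ti)\cap\Ti(a)\bigr)\le\dimP\Ti(a)\le\tfrac1{\gamma-1}$ by monotonicity, the last inequality being Lemma~\ref{lemma:usp_global_ub}. For the lower bound, however, you are taking a much harder road than the paper and also mis-stating the tool you want to use. The characterisation you write, $\dimP E=\sup\{s:\overline{\dim}_{\mathrm B}(E\cap U)\ge s\text{ for all open }U\text{ meeting }E\}$, is false as stated: take $E=\{0\}\cup[1,2]$, which has $\dimP E=1$ but for which the supremum on the right is $0$. The correct statement (Joyce--Preiss, used by the paper in Lemma~\ref{lemma:stable_usp2_tech1}) requires passing to a suitable \emph{subset} $F\subset E$; constructing such an $F$ inside the random iso-H\"older set $E_\ell(h,\Ti)\cap\Ti(a)$ and then showing it has full box dimension in every ball is precisely the delicate uniform bookkeeping you flag as the obstacle, and it is not how the paper proceeds.

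The paper's proof is essentially one line: the measure $\mu_{a,h}$ already built in Lemma~\ref{lemma:usp_construction_spectrum4} to certify the Hausdorff lower bound was deliberately constructed so that, \emph{in addition} to the Frostman-type bound \eqref{eq:mass_pple_spectrum1}, it satisfies the ``packing-type'' small-ball estimate \eqref{eq:mass_pple_spectrum2}: there is a sequence $r_n\downarrow0$ along which
\begin{equation*}
\mu_{a,h}\pthb{B(\sigma,r_n)}\le r_n^{1/(\gamma-1)}g(r_n)^{-\beta}\quad\text{for all }\sigma\in G(a,h).
\end{equation*}
This is achieved by the ``Cantor with long gaps'' structure of the nested subtrees, in which the scale $\vartheta_n$ sits far below $\rho_{n-1}$ so that the ball $B(\sigma,\vartheta_n)$ catches exactly one subtree. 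The lower packing bound then falls out of the mass distribution principle for packing dimension (Mattila, Th.~6.11): it is enough that $\liminf_{r\to0}\mu_{a,h}(B(\sigma,r))/r^{s}\le C$ for $\mu_{a,h}$-a.e.\ $\sigma$ and all $s<\tfrac1{\gamma-1}$, which \eqref{eq:mass_pple_spectrum2} gives immediately. So no new construction and no Baire-category argument is needed; the whole content of the proposition was pre-paid inside Lemma~\ref{lemma:usp_construction_spectrum4}, and the mass-measure case follows verbatim via Proposition~\ref{prop:scaling_exponents} exactly as you say. The conceptual point you missed is that a single Frostman measure can certify \emph{both} dimension bounds provided its local masses oscillate between the two target exponents along different scale sequences, and the paper's construction engineers precisely this oscillation.
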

Note that the previous result could also be expected as it is a common property of random limsup sets to have full packing dimension. Interestingly, the equality still holds for the limit case $h=\tfrac{1}{\gamma}$ (resp. $h=\tfrac{1+\gamma}{\gamma}$) despite the zero Hausdorff dimension. As a corollary of the previous statement, we extend uniformly the packing dimension of level sets obtained by \citet{Duquesne.LeGall-2005}: $\Nb(\dt\Ti)$-a.e.
\begin{align}  \label{eq:cor_dimP_images}
  \forall a\in\ivoo{0,h(\Ti)};\quad \dimP \Ti(a) = \frac{1}{\gamma-1}.
\end{align}

The rest of the paper is organised as following: we start by recalling important notations and results on stable trees in Section~\ref{sec:notations} and we present several technical lemmas on the tail asymptotic of CSBPs, the local time and the mass measure of stable trees in Section~\ref{sec:preliminaries}. The proof of our three main theorems is then divided into two parts in Section~\ref{sec:stable_trees_spectrum}: the relatively easy upper bound is first presented (Subsection~\ref{ssec:stable_trees_ub}), and then follows in Subsection~\ref{ssec:stable_trees_lb} the more delicate estimate of the lower bound of the multifractal spectrum.


\section{Stable trees: notations and main properties}  \label{sec:notations}

We begin by recalling a few common notations and various results on stable trees. As presented by \citet{Duquesne.LeGall-2002,Duquesne.LeGall-2005}, Lévy trees are encoded by excursions of a continuous non-negative process $(H_t)_{t\geq 0}$ named the \emph{height process}. Note that in the CRT case, $H$ is simply a reflected Brownian motion. We denote by $N(\dt H)$ the \emph{excursion measure}, i.e. meaning that $N$ is a measure on $C(\R_+,\R_+)$ (non-negative continuous functions) such that $H(0)=0$ and $H(t)=0$ for every $t\geq\zeta$ where $\zeta<\infty$ denotes the lifetime of an excursion $\zeta=\inf\brc{t>0 : H(t)=0}$. We refer to \cite{Duquesne.LeGall-2002} for a more complete presentation on the subject.

Under $N(\dt H)$, the excursion $(H_t)_{0\leq t\leq\zeta}$ is the depth-first exploration process of a continuous tree that is defined as a quotient metric space. For that purpose, we introduce the equivalence relation $s\sim_H t$ if and only if $d_H(s,t) = 0$, where $d_h$ denotes the following pseudo-distance:
\begin{align*}
  d_H\pthb{ s,t } = H_s+H_t - 2\min_{s\wedge t\leq u\leq s\vee t} H_u.
\end{align*}
Stable trees are then defined as the quotient metric space:
\begin{align*}
  (\Ti,d) \eqdef \pthb{\ivff{0,\zeta}/\sim_H, d_H}.
\end{align*}
We denote by $p_H:\ivff{0,\zeta}\rightarrow\Ti$ the canonical projection. The latter being continuous, $(\Ti,d)$ is therefore a random connected compact metric space. More precisely, as presented in \cite[Th. 2.1]{Duquesne.LeGall-2005}, $(\Ti,d)$ is an $\R$-tree, i.e. a metric space such that for every $\sigma,\sigma'\in\Ti$
\begin{enumerate}[\it(i)]
  \item There is a unique isometry $f_{\sigma,\sigma'}$ from $\ivffb{0,d(\sigma,\sigma')}$ into $\Ti$ such $f_{\sigma,\sigma'}(0)=\sigma$ and $f_{\sigma,\sigma'}(d(\sigma,\sigma'))=\sigma'$. We set $\iivff{\sigma,\sigma'}=f_{\sigma,\sigma'}\pthb{\ivffb{0,d(\sigma,\sigma')}}$, that is the geodesic joining $\sigma$ to $\sigma'$;\vsp
  \item If $g:\ivff{0,1}\rightarrow\Ti$ is continuous injective, then $g([0,1]) = \iivff{g(0),g(1)}$.
\end{enumerate}
We refer to \cite{Dress.Moulton.ea-1996,Evans.Pitman.ea-2006,Evans-2010} for a more detailed overview on the topic of (random) \R-trees.

As previously outlined in the introduction, from the construction of a local time on the height process is deduced the existence a local time $\ell^a(\dt\sigma)$ on every level set $\Ti(a)$. \citet{Duquesne.LeGall-2006} have determined the joint law of the total mass of local time $\bk{\ell^a}\eqdef\bk{\ell^a,\indi}$ and the mass measure $\mb(B(\rho,a))$ under $\Nb$. Namely, setting for any $a,\lambda,\mu\in\ivfo{0,\infty}$
\begin{align}
  \kappa_a(\lambda,\mu) \eqdef \Nb\pthb{1 - \e^{-\mu\bk{\ell^a} - \lambda\mb(B(\rho,a))}},
\end{align}
the map $a\mapsto\kappa_a(\lambda,\mu)$ is the unique solution of the ordinary differential equation:
\begin{align}  \label{eq:diff_kappa}
  \frac{\partial\kappa_a}{\partial a}(\lambda,\mu) = \lambda - \kappa_a(\lambda,\mu)^\gamma\quad\text{and}\quad \kappa_0(\lambda,\mu) = \mu.
\end{align}
Note that if $\mu=\lambda^{1/\gamma}$, $\kappa_a(\lambda,\mu)=\lambda^{1/\gamma}$. Additionally, if $\mu\neq\lambda^{1/\gamma}$, a change of variable shows that $\kappa_a(\lambda,\mu)$ solves the integral equation
\begin{align}  \label{eq:int_kappa}
  \int_{\mu}^{\kappa_a(\lambda,\mu)} \frac{\dt u}{\lambda - u^\gamma} = a.
\end{align}
We may deduce from Equation~\eqref{eq:int_kappa} a couple of properties on $\kappa$: $\kappa_{a+b}(\lambda,\mu) = \kappa_a(\lambda,\kappa_b(\lambda,\mu))$ and the scaling property $c^{\tfrac{1}{\gamma-1}}\kappa_a\pthb{c^{-\tfrac{\gamma}{\gamma-1}}\lambda,c^{-\tfrac{\gamma}{\gamma-1}}\mu} = \kappa_{a/c}(\lambda,\mu)$. The explicit expression of $\kappa$ seems difficult to obtain in general. Nevertheless, considering the single law of $\bk{\ell^a}$, one may deduce from \eqref{eq:int_kappa} the equality:
\begin{align}
  u_a(\mu) \eqdef \kappa_a(0,\mu)=\Nb\pthb{1 - \e^{-\mu \bk{\ell^a}}} = \pthb{(\gamma-1)a + \mu^{1-\gamma} }^{-\tfrac{1}{\gamma-1}},
\end{align}
This result has been originally described in the foundational work of \citet[Th 1.4.1]{Duquesne.LeGall-2002} as a Ray--Knight theorem, connecting the law of local time of Lévy trees to the Laplace transform of continuous state branching processes (CSBPs). Informally, one may see the local time $\bk{\ell^a}$ of a tree as a CSBP starting from a single individual.

We may also recall that $\Nb(\dt\Ti)$-a.e., the local time $a\mapsto\ell^a$ is càdlàg for the weak topology and $\bk{\ell^a} > 0$ if and only if $h(\Ti)>a$, where the latter denotes the total height of the tree: $h(\Ti) = \sup\brc{d(\rho(\Ti),\sigma) : \sigma\in\Ti}$. As a simple consequence of \eqref{eq:int_kappa}, the measure of the former event is given by
\begin{align}
  \forall a\in\ivoo{0,\infty};\quad \Nb\pth{ \bk{\ell^a} > 0 } = \pthb{(\gamma-1)a}^{-\tfrac{1}{\gamma-1}} \eqdef v(a)\quad\text{where $v(\cdot)$ solves } \int_{v(a)}^\infty \frac{\dt u}{u^\gamma} = a.
\end{align}
For any $a>0$, the conditional probability measure $\Nb\pthc{\,\cdot\,}{\bk{\ell^a} > 0}$ is usually denoted $\Nb_a$. Based on the previous expressions, the law of $\bk{\ell^a}$ is explicit under $\Nb_a$:
\begin{align}  \label{eq:laplace_tr_local_time}
  \forall a,\mu\in\ivoo{0,\infty};\quad \Nb_a\pthb{\e^{-\mu \bk{\ell^a}}} = 1 - \pthbb{ 1 + \frac{1}{(\gamma-1)a \mu^{\gamma-1}} }^{-\tfrac{1}{\gamma-1}}.
\end{align}

Recall that for any $\sigma,\sigma'\in\Ti$, $\iivff{\sigma,\sigma'}$ stands for the unique geodesic between $\sigma$ and $\sigma'$. Then, we may define the subtree $\Ti_\sigma$ stemming from $\sigma\in\Ti$ as following:
\begin{align*}
  \forall \sigma\in\Ti;\quad \Ti_\sigma = \brcb{ \sigma'\in\Ti : \sigma\in\iivff{\rho(\Ti),\sigma'} }.
\end{align*}
For all $a,\delta\in\ivoo{0,\infty}$, we also introduce the subset $\Ti(a,\delta) = \brcb{\sigma\in\Ti(a) : h(\Ti_\sigma) > \delta}\subset\Ti(a)$. Since $\Ti$ is a compact space, $\Ti(a,\delta)$ is a finite subset of $\Ti(a)$. In addition, we denote by $Z(a,\delta) \eqdef \# \Ti(a,\delta)$ its cardinal and  by $\Tbb(a,\delta)$ the collection of subtrees rooted at level $a$ and higher than $\delta$:
\begin{align*}
  \Tbb(a,\delta) = \brc{ \Ti_\sigma : \sigma \in \Ti(a,\delta)}\subset \Tbb
\end{align*}
where $\Tbb$ stands for the set of all equivalence classes of rooted compact
\R-trees (two rooted R-trees are called equivalent if there is a root-preserving isometry mapping the two). Considering the limit $\delta\rightarrow 0$, we also define $\Ti(a,0)$ and $\Tbb(a,0)$ which stand for the collection of all subtrees rooted at level $a$. For any level $a>0$, we also designate by $\tr(a)$ the truncated tree above $a$: $\tr(a) = \brcb{\sigma\in\Ti : d(\rho(\Ti),\sigma) \leq a }$.

Note that even though we usually omit the dependency in the random term $\Ti$ in the previous notations, the latter will be added when not completely obvious, hence writing in this case $\Ti(a,\delta,\Ti)$, $Z(a,\delta,\Ti)$, $\Tbb(a,\delta,\Ti)$, $\dotsc$

\vsp

\noindent\textbf{Branching property.} One important feature of Lévy trees is the branching property presented by \citet{Duquesne.LeGall-2005}. For any $a\in\ivoo{0,\infty}$, define $\Gi_a$ the $\sigma$-field of $\tr(a)$ and $\Ni_a$ the following point measure
\begin{align}
  \Ni_a(\dt\sigma'\dt\Ti') = \sum_{\sigma\in\Ti(a,0)} \delta_{(\sigma,\Ti_\sigma)}.
\end{align}
Then, the branching property states that under $\Nb_a$ and given $\Gi_a$, $\Ni_a$ is a Poisson point process on $\Ti(a)\times\Tbb$ with intensity $\ell^a(\dt\sigma')\Nb(\dt\Ti')$. Note that \citet{Weill-2007} has conversely proved that the branching properly entirely characterised the law of Lévy trees.\vsp

\noindent\textbf{Re-rooting invariance.} Finally, we will also make use in this work of another key property of Lévy trees: the re-rooting invariance. The latter has been first observed on the CRT by \citet{Aldous-1991a}, and then extended to stable (and Lévy) trees in the works of \citet[Th. 11]{Haas.Pitman.ea-2009} and \citet[Prop. 4]{Duquesne.LeGall-2005}. In particular, the former proves that stable trees are the only class of continuous fragmentation trees satisfying this property.

Namely, if $\Ti^{[\sigma]}$ designates the tree re-rooted in the vertex $\sigma$, then the re-rooting invariance of Lévy trees states that
\begin{align}
  \text{the law of }\Ti^{[\sigma]} \text{ under the measure }\tfrac{\mb(\dt\sigma)}{\mb(\Ti)}\Nb(\dt\Ti) \text{ coincides with }\Nb(\dt\Ti).
\end{align}
In other words, the law of Lévy trees is invariant under uniform re-rooting. Note that the previous property is a direct consequence of the spinal decomposition of Lévy trees and that a stronger extension, not required for our application, is also presented in \citet[Th. 2.2]{Duquesne.LeGall-2009}.


\section{Preliminary technical lemmas}  \label{sec:preliminaries}

Before addressing the proof of Theorems~\ref{th:levy_tree_upper_spectrum1}, \ref{th:levy_tree_upper_spectrum2} and \ref{th:levy_tree_upper_spectrum3}, we will recall and extend a few technical results on the left and right tails of the distribution of stable-CSBPs, and the local time and mass measure of stable trees.

Let us begin with the left and right tails of the local time under $\Nb_1$, which are sufficient due to the self-similarity of stable trees.
\begin{lemma}  \label{lemma:local_time_tail}
  Suppose $\gamma\in\ivoo{1,2}$. The tails of local time $\langle\ell^1\rangle$ under $\Nb_1$ satisfy
  \begin{align}
    \Nb_1\pthb{ \langle\ell^1\rangle \leq x } \sim_{0+} \frac{x^{\gamma-1}}{(\gamma-1)^2\Gamma(\gamma)} \quad\text{ and }\quad \Nb_1\pthb{ \langle\ell^1\rangle \geq x } \sim_{+\infty} -\frac{x^{-\gamma}}{v(1)\Gamma(1-\gamma)},
  \end{align}
  observing that $\Gamma(1-\gamma)<0$ and recalling $v(1)=(\gamma-1)^{-\tfrac{1}{\gamma-1}}$.
\end{lemma}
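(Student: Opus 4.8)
The plan is to read both tails off the explicit Laplace transform \eqref{eq:laplace_tr_local_time} by means of Tauberian theorems, using the regime $\mu\to\infty$ for the left tail and $\mu\to0+$ for the right one. Writing $\phi(\mu)\eqdef\Nb_1\pthb{\e^{-\mu\bk{\ell^1}}}$, equation \eqref{eq:laplace_tr_local_time} with $a=1$ reads
\begin{align*}
  1-\phi(\mu) = \pthbb{1+\frac{1}{(\gamma-1)\mu^{\gamma-1}}}^{-\tfrac{1}{\gamma-1}}.
\end{align*}
The first step is to record two elementary binomial expansions. As $\mu\to\infty$ the bracketed term tends to $1$, giving
\begin{align*}
  \phi(\mu) = \frac{1}{(\gamma-1)^2}\,\mu^{-(\gamma-1)} + \bigo\pthb{\mu^{-2(\gamma-1)}},
\end{align*}
hence $\phi(\mu)\sim(\gamma-1)^{-2}\mu^{-(\gamma-1)}$. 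As $\mu\to0+$, factoring out $\mu$ inside the bracket gives
\begin{align*}
  1-\phi(\mu) = \frac{1}{v(1)}\,\mu - \frac{1}{v(1)}\,\mu^{\gamma} + \bigo\pthb{\mu^{2\gamma-1}},
\end{align*}
where $v(1)^{-1}=(\gamma-1)^{1/(\gamma-1)}$; in particular $\Nb_1\pthb{\bk{\ell^1}}=v(1)^{-1}<\infty$ by monotone convergence.

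For the left tail, $\phi$ is the Laplace transform of the law of $\bk{\ell^1}$ under $\Nb_1$, a probability measure with no mass at $0$. Karamata's Tauberian theorem, in the form relating the behaviour of the transform at $+\infty$ to that of the distribution function at $0+$, applied with index $\rho=\gamma-1\in\ivoo{0,1}$ and (constant) slowly varying part $(\gamma-1)^{-2}$, turns $\phi(\mu)\sim(\gamma-1)^{-2}\mu^{-(\gamma-1)}$ into
\begin{align*}
  \Nb_1\pthb{\bk{\ell^1}\leq x} \sim \frac{1}{(\gamma-1)^2\,\Gamma(\gamma)}\,x^{\gamma-1}, \qquad x\to0+,
\end{align*}
since $\Gamma(\rho+1)=\Gamma(\gamma)$; this is the first assertion.

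The right tail is the delicate part: the mean being finite, the transform stays bounded at $0$ and no direct Tauberian argument applies — one has to exploit the second-order term. Set
\begin{align*}
  \Psi(\mu) \eqdef v(1)^{-1}\mu - \pthb{1-\phi(\mu)} = \Nb_1\pthb{\mu\bk{\ell^1} - 1 + \e^{-\mu\bk{\ell^1}}},
\end{align*}
so that $\Psi(\mu)\sim v(1)^{-1}\mu^{\gamma}$. Using $\mu x - 1 + \e^{-\mu x} = \mu\int_0^x(1-\e^{-\mu t})\,\dt t$ and Fubini twice, one obtains $\Psi(\mu)=\mu^2\int_0^\infty\e^{-\mu s}G(s)\,\dt s$ with $G(s)\eqdef\int_s^\infty\Nb_1\pthb{\bk{\ell^1}>t}\,\dt t$, whence $\int_0^\infty\e^{-\mu s}G(s)\,\dt s\sim v(1)^{-1}\mu^{-(2-\gamma)}$ as $\mu\to0+$. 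Since $G$ is continuous, nonincreasing, with $G(0)=v(1)^{-1}<\infty$, and $2-\gamma\in\ivoo{0,1}$, Karamata's theorem gives $\int_0^t G(s)\,\dt s\sim\tfrac{t^{2-\gamma}}{v(1)\Gamma(3-\gamma)}$; the monotone density theorem then gives $G(t)\sim\tfrac{t^{1-\gamma}}{v(1)\Gamma(2-\gamma)}$ (using $\Gamma(3-\gamma)=(2-\gamma)\Gamma(2-\gamma)$); and a last application of the monotone density theorem in tail form — the function $t\mapsto\Nb_1\pthb{\bk{\ell^1}>t}=-G'(t)$ being nonincreasing — yields $\Nb_1\pthb{\bk{\ell^1}>x}\sim\tfrac{(\gamma-1)}{v(1)\Gamma(2-\gamma)}x^{-\gamma}$. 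Finally $\Gamma(2-\gamma)=(1-\gamma)\Gamma(1-\gamma)$ rewrites the constant as $-\bigl(v(1)\Gamma(1-\gamma)\bigr)^{-1}$, which is positive since $\Gamma(1-\gamma)<0$; and passing from $\{>x\}$ to $\{\geq x\}$ does not affect the asymptotics, the tail being regularly varying. This is the second assertion. Equivalently, the whole right-tail step is a single de Haan–type Tauberian statement: if a nonnegative variable has finite mean $m$ and Laplace transform $1-m\mu+c\,\mu^{\gamma}+\littleo(\mu^{\gamma})$ near $0$, with $\gamma\in\ivoo{1,2}$ and $c>0$, then its upper tail is asymptotic to $\tfrac{c}{-\Gamma(1-\gamma)}x^{-\gamma}$.

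The main obstacle is exactly this right-tail computation: one must not corrupt the numerical constant through the successive Karamata and monotone-density passages, and must verify the (easy but required) monotonicity hypotheses at each stage. The two expansions of $\phi$ and the left-tail argument are, by contrast, routine.
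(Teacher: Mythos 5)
Your proposal is correct and recovers both constants. The overall strategy — read both tails off the explicit Laplace transform \eqref{eq:laplace_tr_local_time} via Tauberian theory — is the one the paper uses for the right tail, but you diverge in two ways worth noting. For the left tail, the paper simply cites \citet[Lemma 2.5]{Duquesne.Reichmann.ea-2010}, whereas you re-derive it directly by applying Karamata's theorem to the regime $\mu\to\infty$; this makes the lemma self-contained at no extra cost. For the right tail, the paper's proof compresses the Tauberian step into a one-line appeal to \citet[Th.~1.7.1]{Bingham.Goldie.ea-1987}, which on its face is the plain Karamata theorem and does not directly apply once the mean is finite and the leading term $1-v(1)^{-1}\mu$ has to be stripped away; the relevant statement is rather of the type of \citet[Th.~8.1.6]{Bingham.Goldie.ea-1987} (which the paper does cite, correctly, in the analogous Lemma~\ref{lemma:mmass_tail} for the mass measure). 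Your detour through $\Psi(\mu)=\mu^{2}\int_{0}^{\infty}\e^{-\mu s}G(s)\,\dt s$ with $G(s)=\int_{s}^{\infty}\Nb_1(\bk{\ell^1}>t)\,\dt t$, followed by Karamata and two monotone-density passages, is exactly the mechanism behind that finite-mean Tauberian statement, carried out by hand with the monotonicity checks made explicit; the constant bookkeeping ($\Gamma(3-\gamma)=(2-\gamma)\Gamma(2-\gamma)$ and $\Gamma(2-\gamma)=(1-\gamma)\Gamma(1-\gamma)$) is correct. In short, same approach, but your version fills in the Tauberian details the paper leaves to a citation and gives a cleaner, self-contained treatment of both tails.
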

\begin{proof}
  The first estimate is due to \citet[Lemma 2.5]{Duquesne.Reichmann.ea-2010}.
  To prove the second one, recall the Laplace transform of $\bk{\ell^1}$ is given by Equation~\eqref{eq:laplace_tr_local_time}. In order to apply a Tauberian theorem, one needs to obtain an asymptotic expansion as $\mu$ tends to zero. Namely,
  \begin{align*}
    \Nb_1\pthb{e^{-\mu\langle\ell^1\rangle}}
    &= 1 - (\gamma-1)^{1/(\gamma-1)}\mu \cdot \pthb{ 1 + (\gamma-1)\mu^{\gamma-1} }^{-\tfrac{1}{\gamma-1}} \\
    &= 1 - (\gamma-1)^{1/(\gamma-1)}\mu + (\gamma-1)^{1/(\gamma-1)}\mu^{\gamma} + \littleo(\mu^{\gamma}),
  \end{align*}
  as $\mu\rightarrow 0$. A Tauberian theorem presented in \cite[Th. 1.7.1]{Bingham.Goldie.ea-1987} then entails the desired result, recalling that $v(1) = (\gamma-1)^{-1/(\gamma-1)}$.
\end{proof}
Note that in the quadratic case $\gamma=2$, $\bk{\ell^1}$ has an exponential distribution under $\Nb_1$, inducing that the left tail still holds and the right one is exponential (see \cite{Duhalde-2014}).
Similarly, we also establish a right tail bounds on stable CSBPs in the following two lemmas.
\begin{lemma}  \label{lemma:csbp_lower_tail}
  Suppose $\gamma\in\ivof{1,2}$ and $X$ is a stable CSBP. There exists a positive constant $c_{\gamma,0}$ such that for all positive $x,\rho,\delta$ satisfying $\rho\leq c_{\gamma,0}x$,
  \[
    \prb[_x]{X_\delta \leq \rho } \leq \exp\pthb{ -c_{\gamma,0}\,x\delta^{-1/(\gamma-1)} }.
  \]
\end{lemma}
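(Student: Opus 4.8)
The plan is to control the lower tail of a stable CSBP via its Laplace transform together with an exponential Chebyshev (Markov) argument. Recall that a stable CSBP $X$ with branching mechanism $\psi(\lambda)=c\lambda^\gamma$ (we may take the normalisation matching $\Nb$, the constant $c$ being harmless) has Laplace transform $\espb[_x]{\e^{-\lambda X_\delta}} = \e^{-x\,u_\delta(\lambda)}$, where $t\mapsto u_t(\lambda)$ solves $\partial_t u_t = -\psi(u_t) = -c\,u_t^\gamma$ with $u_0(\lambda)=\lambda$; explicitly $u_\delta(\lambda) = \bigl(\lambda^{1-\gamma} + c(\gamma-1)\delta\bigr)^{-1/(\gamma-1)}$, which is exactly the function $v$-type expression already appearing in the excerpt. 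The key elementary observation is that $u_\delta(\lambda)$ is bounded, uniformly in $\lambda\geq 0$, by $\bigl(c(\gamma-1)\delta\bigr)^{-1/(\gamma-1)} =: C_\gamma\,\delta^{-1/(\gamma-1)}$, and moreover $u_\delta(\lambda)\uparrow C_\gamma\delta^{-1/(\gamma-1)}$ as $\lambda\to\infty$.

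First I would write, for any $\lambda>0$,
\begin{align*}
  \prb[_x]{X_\delta\leq\rho} \;\leq\; \e^{\lambda\rho}\,\espb[_x]{\e^{-\lambda X_\delta}} \;=\; \exp\pthb{\lambda\rho - x\,u_\delta(\lambda)}.
\end{align*}
Now I would let $\lambda\to\infty$ (or simply choose $\lambda$ large enough depending on $\rho,\delta$): since $u_\delta(\lambda)\to C_\gamma\delta^{-1/(\gamma-1)}$, for the bound to be useful we need the linear term $\lambda\rho$ not to overwhelm the gain $x\,u_\delta(\lambda)$. The cleanest route is to fix $\lambda = \lambda_\delta := \tfrac12 C_\gamma\delta^{-1/(\gamma-1)} \cdot (\text{const})$ chosen so that $u_\delta(\lambda_\delta)\geq \tfrac12 C_\gamma\delta^{-1/(\gamma-1)}$ (possible because $u_\delta$ saturates), giving
\begin{align*}
  \prb[_x]{X_\delta\leq\rho} \;\leq\; \exp\pthb{\lambda_\delta\rho - \tfrac12 C_\gamma\,x\,\delta^{-1/(\gamma-1)}}.
\end{align*}
Under the hypothesis $\rho\leq c_{\gamma,0}x$ with $c_{\gamma,0}$ small, the term $\lambda_\delta\rho \leq \lambda_\delta c_{\gamma,0} x$ is at most, say, $\tfrac14 C_\gamma x\delta^{-1/(\gamma-1)}$ provided $c_{\gamma,0}$ is chosen small relative to $\lambda_\delta\delta^{1/(\gamma-1)}$, which is a pure constant. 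This yields $\prb[_x]{X_\delta\leq\rho}\leq \exp\pthb{-\tfrac14 C_\gamma x\delta^{-1/(\gamma-1)}}$, and absorbing constants into a single $c_{\gamma,0}$ (shrinking it once more if needed so that $c_{\gamma,0}\leq \tfrac14 C_\gamma$) gives the statement. One should double-check the bookkeeping of the two competing roles of $c_{\gamma,0}$ — it must simultaneously be small enough to kill the $\lambda_\delta\rho$ term and small enough to serve as the final rate constant — but since both requirements are satisfied by taking the minimum of finitely many $\gamma$-dependent constants, there is no circularity.

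The only mild subtlety, and what I would flag as the main point to get right, is the scaling: the $\delta^{-1/(\gamma-1)}$ power must come out of $u_\delta$ correctly and uniformly, and one must verify that the choice of $\lambda_\delta$ really does make $\lambda_\delta\delta^{1/(\gamma-1)}$ a constant independent of $\delta$ (it does, by the explicit formula for $u_\delta$). Alternatively — and this is perhaps even shorter — one can invoke the self-similarity of the stable CSBP: $X_\delta$ started from $x$ has the same law as $\delta^{1/(\gamma-1)}$ times $\tilde X_1$ started from $x\delta^{-1/(\gamma-1)}$ for an appropriately scaled CSBP $\tilde X$, reducing everything to the case $\delta=1$, where $\prb[_y]{\tilde X_1\leq \epsilon y}\leq \exp(-c\,y)$ for $\epsilon$ small is the single computation $\e^{\lambda\epsilon y - y u_1(\lambda)}$ optimised over $\lambda$. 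Either way the proof is two lines of Laplace-transform estimate plus a careful constant chase; no genuinely hard estimate is involved.
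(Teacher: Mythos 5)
Your proof is correct and is essentially the paper's argument: Laplace transform of the CSBP, exponential Markov on $e^{-\lambda X_\delta}$, and a choice of $\lambda$ proportional to $\delta^{-1/(\gamma-1)}$ so that $u_\delta(\lambda)$ is a fixed $\gamma$-dependent multiple of $\delta^{-1/(\gamma-1)}$, after which the hypothesis $\rho\leq c_{\gamma,0}x$ absorbs the $\lambda\rho$ term. The paper simply makes the cleaner explicit choice $\mu=\delta^{-1/(\gamma-1)}$, which gives $u_\delta(\mu)=(\gamma\delta)^{-1/(\gamma-1)}$ exactly and yields $c_{\gamma,0}=\gamma^{-1/(\gamma-1)}/2$, whereas you keep the constant implicit; the two are the same proof.
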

\begin{proof}
  Recall that the Laplace transform of a CSBP is given by $\esp[_x]{\e^{-\mu X_\delta}} = \exp\pthb{-x\, u_\delta(\mu)}$ where in the stable case, $u_\delta(\mu) = \pthb{(\gamma-1)\delta + \mu^{-(\gamma-1)}}^{-1/(\gamma-1)}$. The Markov inequality on the exponential moment yields
  \begin{align*}
    \prb[_x]{X_\delta \leq \rho }
    =  \prb[_x]{\exp(-\mu X_\delta) \geq \exp(- \mu\rho) }
    \leq \exp\pthb{-x\, u_\delta(\mu) + \mu\rho }.
  \end{align*}
  Setting $\mu = \delta^{-1/(\gamma-1)}$, $u_\delta(\mu) = (\gamma\delta)^{-1/(\gamma-1)}$ and therefore
  \begin{align*}
    \prb[_x]{X_\delta \leq \rho }
    \leq \exp\pthb{-\delta^{-1/(\gamma-1)}\pth{ x\gamma^{-1/(\gamma-1)} - \rho} }.
  \end{align*}
  By choosing $c_{\gamma,0} = \gamma^{-1/(\gamma-1)} / 2$, we obtain the expected inequality.
\end{proof}

Using the previous lemmas, we may present some tail estimates on the supremum and infimum of CSBPs.
\begin{lemma}  \label{lemma:csbp_sup_upper_tail}
  Suppose $\gamma\in\ivoo{1,2}$, $\kappa\geq 1$ and $X$ is a stable CSBP. Then, there exist two positive constants $c_1$ and $c_2$ only depending on $\kappa$ and $\gamma$ such that for all positive $x,\rho,\delta$
  \[
    \prB[_x]{ \sup_{\ivff{0,\kappa\delta}} X_u \geq \rho } \leq c_1 \,\prb[_x]{ X_{\kappa\delta} \geq c_{\gamma,0}\,\rho } \pthB{1 - \exp\pth{-c_2\,\rho\delta^{-1/(\gamma-1)} } }^{-1}.
  \]
  In particular, for all positive $x,\rho,\delta$ satisfying $\delta^{1/(\gamma-1)}\leq\rho$,
  \[
    \prB[_x]{ \sup_{\ivff{0,\kappa\delta}} X_u \geq \rho } \leq c_3 \,\prb[_x]{ X_{\kappa\delta} \geq c_{\gamma,0}\,\rho }.
  \]
  where the constant $c_3$ only depends on $\gamma$ and $\kappa$.
\end{lemma}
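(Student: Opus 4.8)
The plan is to bound the supremum of the CSBP over the interval $\ivff{0,\kappa\delta}$ by reducing to a tail estimate at the single terminal time $\kappa\delta$, using a standard first-passage decomposition together with the Markov property and the branching property of CSBPs. First I would fix a level $\rho>0$ and define the first-passage time $T_\rho \eqdef \inf\brc{u\geq 0 : X_u \geq \rho}$. On the event $\brc{\sup_{\ivff{0,\kappa\delta}} X_u \geq \rho}$ we have $T_\rho \leq \kappa\delta$, and at time $T_\rho$ (using the fact that a CSBP has no negative jumps, hence can be assumed to cross $\rho$ continuously from below, or at worst lands at a value $\geq\rho$ at the jump time) the process is at level $X_{T_\rho}\geq\rho$. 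The strategy is then: starting from a value at least $\rho$ at time $T_\rho\leq\kappa\delta$, the process has at least probability bounded below by $1 - \exp(-c_2\,\rho\,\delta^{-1/(\gamma-1)})$ of still being at level $\geq c_{\gamma,0}\rho$ at the later time $\kappa\delta$; this lower bound is exactly what Lemma~\ref{lemma:csbp_lower_tail} gives (applied to the CSBP restarted from $X_{T_\rho}\geq\rho$ over a time horizon of length at most $\kappa\delta$, with $\rho$ in the role of $x$ and $c_{\gamma,0}\rho$ in the role of the threshold, noting $c_{\gamma,0}\rho \leq \rho$ so the hypothesis $\rho'\leq c_{\gamma,0}x$ of that lemma is met with room to spare on time since the time horizon is $\leq\kappa\delta$ and one rescales constants accordingly).

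Quantitatively, I would write
\begin{align*}
  \prb[_x]{ X_{\kappa\delta} \geq c_{\gamma,0}\rho }
  &\geq \espb[_x]{ \indi_{\brc{T_\rho\leq\kappa\delta}}\, \prcb[_{X_{T_\rho}}]{ X_{\kappa\delta - T_\rho} \geq c_{\gamma,0}\rho }{\Gi_{T_\rho}} } \\
  &\geq \prb[_x]{ T_\rho \leq \kappa\delta } \cdot \inf_{0\leq t\leq\kappa\delta} \inf_{y\geq\rho} \prb[_y]{ X_{t} \geq c_{\gamma,0}\rho },
\end{align*}
using the strong Markov property at $T_\rho$ and monotonicity of the CSBP in its starting value (a branching argument: $X$ started from $y\geq\rho$ stochastically dominates $X$ started from $\rho$). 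The inner infimum is then controlled from below by $1 - \exp(-c_2\,\rho\,\delta^{-1/(\gamma-1)})$ via Lemma~\ref{lemma:csbp_lower_tail}, after absorbing the factor $\kappa$ in the time horizon into the constants $c_2$; here one checks that over a time interval of length $t\leq\kappa\delta$ the bound $\prb[_\rho]{X_t\leq c_{\gamma,0}\rho}\leq\exp(-c_{\gamma,0}\rho\,t^{-1/(\gamma-1)})\leq\exp(-c_{\gamma,0}\kappa^{-1/(\gamma-1)}\rho\,\delta^{-1/(\gamma-1)})$. Rearranging the displayed inequality gives
\[
  \prb[_x]{ T_\rho \leq \kappa\delta } \leq \prb[_x]{ X_{\kappa\delta} \geq c_{\gamma,0}\rho } \pthB{ 1 - \exp\pth{ -c_2\,\rho\,\delta^{-1/(\gamma-1)} } }^{-1},
\]
and since $\brc{\sup_{\ivff{0,\kappa\delta}} X_u \geq \rho} \subseteq \brc{T_\rho\leq\kappa\delta}$ up to a null set, the first claim follows with $c_1=1$ (or any convenient constant).

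For the second claim, when $\delta^{1/(\gamma-1)}\leq\rho$ we have $\rho\,\delta^{-1/(\gamma-1)}\geq 1$, so $1 - \exp(-c_2\,\rho\,\delta^{-1/(\gamma-1)}) \geq 1 - \e^{-c_2} > 0$, a strictly positive constant depending only on $\gamma$ and $\kappa$; dividing by it turns the first inequality into the second with $c_3 = c_1/(1-\e^{-c_2})$. The main obstacle I anticipate is the careful handling of the jump at the first-passage time: a stable CSBP with $\gamma<2$ has positive jumps, so $X_{T_\rho}$ may strictly exceed $\rho$, which is harmless (it only helps, by monotonicity), but one must make sure $T_\rho$ is a genuine stopping time with $X_{T_\rho}\geq\rho$ a.s. on $\brc{T_\rho<\infty}$ — this is standard for càdlàg processes with the right regularity, and the absence of negative jumps guarantees the process cannot ``escape'' above $\rho$ without its running supremum reaching $\rho$. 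A secondary technical point is tracking how the multiplicative constant $\kappa$ in the time horizon feeds into $c_2$ (and thence $c_3$); this is routine bookkeeping using the explicit scaling $u_\delta(\mu)=((\gamma-1)\delta+\mu^{-(\gamma-1)})^{-1/(\gamma-1)}$ already recorded above, so the time-rescaled tail bounds only change the constants, not the form of the estimate.
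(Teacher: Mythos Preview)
Your proposal is correct and follows essentially the same route as the paper: introduce the first-passage time $T_\rho$, apply the strong Markov property at $T_\rho$, and use Lemma~\ref{lemma:csbp_lower_tail} (with $X_{T_\rho}\geq\rho$ and time horizon $\leq\kappa\delta$) to show the process stays above $c_{\gamma,0}\rho$ at time $\kappa\delta$ with probability at least $1-\exp(-c_2\rho\delta^{-1/(\gamma-1)})$, then rearrange. The only cosmetic difference is that the paper writes the decomposition as $\prb[_x]{T\leq\kappa\delta}\leq\prb[_x]{X_{\kappa\delta}\geq c_{\gamma,0}\rho}+\prb[_x]{T\leq\kappa\delta,\,X_{\kappa\delta}<c_{\gamma,0}\rho}$ and bounds the second term, whereas you lower-bound $\prb[_x]{X_{\kappa\delta}\geq c_{\gamma,0}\rho}$ directly; these are algebraically equivalent.
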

\begin{proof}
  The proof is rather classic in the Markov processes literature (see for instance \citet{Duquesne.Labbe-2014} for a similar property). Briefly, let $T$ be the stopping time $T=\inf\brc{u:X_u \geq \rho}$ and $\lambda>0$. Then,
  \begin{align*}
    \prB[_x]{ \sup_{\ivff{0,\kappa\delta}} X_u \geq \rho }
    &= \prb[_x]{ T\leq\kappa\delta }
    \leq \prb[_x]{ X_{\kappa\delta} \geq \lambda\rho } + \prb[_x]{ T\leq\kappa\delta , X_{\kappa\delta} < \lambda\rho }.
  \end{align*}
  Owing to the strong Markov property, the latter term is equal to $\esp[_x]{ \indi_{\brc{T\leq\kappa\delta}} \,p_{\kappa\delta-T}(X_T, \ivfo{0,\lambda\rho}) }$. Then, choosing $\lambda= c_{\gamma,0}^{-1}$, Lemma~\ref{lemma:csbp_lower_tail} entails
  \[
    p_{\kappa\delta-T}(X_T, \ivfo{0,\lambda\rho}) \leq \exp\pthb{ -c_{\gamma,0} X_T \,(\kappa\delta)^{-1/(\gamma-1)} },
  \]
  on the event $\brc{T\leq\kappa\delta}$. Therefore,
  \begin{align*}
    \espb[_x]{ \indi_{\brc{T\leq\kappa\delta}} \,p_{\kappa\delta-T}(X_T, \ivfo{0,\lambda\rho}) }
    \leq \prb[_x]{ T\leq\kappa\delta } \exp\pthb{ - c_{\gamma,0} \rho (\kappa\delta)^{-1/(\gamma-1)} }.
  \end{align*}
  Combining the last inequality and the first bound on $\pr[_x]{ T\leq \kappa\delta }$ yields the first part of the lemma. The second one is straightforward as $\exp\pthb{ - c_{\gamma,0} \rho (\kappa\delta)^{-1/(\gamma-1)} } < 1$ when $\rho \,\delta^{-1/(\gamma-1)}\geq 1$.
\end{proof}

The next lemma extends the bound presented by \citet{Duhalde-2014} on the tail of the infimum of a CSBP.
\begin{lemma}  \label{lemma:csbp_inf_lower_tail}
  Suppose $\gamma\in\ivof{1,2}$ and $X$ is stable CSBP. Then, for all positive $x,y,\delta$ satisfying $y\leq x$,
  \[
    \prB[_x]{ \inf_{u\in\ivff{0,\delta}} X_u \leq y } \leq \exp\brcB{ -v(\delta)\pthb{ x^{1-1/\gamma} + y^{1-1/\gamma} } \pthb{ x^{1-1/\gamma} - y^{1-1/\gamma} }^{1/(\gamma-1)} },
  \]
  where we recall that $v(\delta) = \pthb{ (\gamma-1)\delta }^{-1/(\gamma-1)}$.
\end{lemma}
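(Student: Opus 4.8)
The plan is to use the standard exponential-martingale / Laplace-transform bound for the infimum of a CSBP, optimised over the free Laplace parameter. First I would recall that for a stable CSBP the Laplace transform is $\esp[_x]{\e^{-\mu X_\delta}} = \exp\pthb{-x\,u_\delta(\mu)}$ with $u_\delta(\mu) = \pthb{(\gamma-1)\delta + \mu^{-(\gamma-1)}}^{-1/(\gamma-1)}$, and that $u_t(\mu)$ is exactly the solution of the flow equation $\partial_t u_t(\mu) = -u_t(\mu)^\gamma$, $u_0(\mu)=\mu$. The key structural fact is that $t\mapsto u_{\delta-t}(\mu)$ solves the backward equation making $M_t \eqdef \exp\pthb{-X_t\,u_{\delta-t}(\mu)}$ a martingale on $[0,\delta]$; consequently, applying the optional stopping theorem at $T \eqdef \inf\{u : X_u \le y\}$ (bounded by $\delta$, and using right-continuity of paths so that $X_T \le y$ on $\{T\le\delta\}$) gives
\begin{align*}
  \esp[_x]{M_{T\wedge\delta}} = M_0 = \e^{-x\,u_\delta(\mu)}.
\end{align*}
On $\{T \le \delta\}$ one has $X_T \le y$ and $u_{\delta-T}(\mu) \ge u_\delta(\mu) \ge 0$ (since $u_t(\mu)$ is decreasing in $t$), hence $M_{T\wedge\delta} \ge \indi_{\{T\le\delta\}}\,\e^{-y\,u_\delta(\mu)}$; this yields the clean bound
\begin{align*}
  \prB[_x]{ \inf_{u\in\ivff{0,\delta}} X_u \le y } = \pr[_x]{T\le\delta} \le \exp\brcb{-u_\delta(\mu)\,(x-y)}.
\end{align*}

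The second step is simply to optimise the right-hand side over $\mu>0$, i.e. to choose $\mu$ maximising $u_\delta(\mu) = \pthb{(\gamma-1)\delta + \mu^{-(\gamma-1)}}^{-1/(\gamma-1)}$ against a linear cost. Here, however, the bound as written is monotone increasing in $\mu$, with supremum $\pthb{(\gamma-1)\delta}^{-1/(\gamma-1)} = v(\delta)$ attained as $\mu\to\infty$, which would only give the factor $x-y$ rather than the sharper $\pthb{x^{1-1/\gamma}+y^{1-1/\gamma}}\pthb{x^{1-1/\gamma}-y^{1-1/\gamma}}^{1/(\gamma-1)}$ in the statement. So the real work is to replace the crude step $X_T \le y \Rightarrow M_{T\wedge\delta}\ge \indi\,\e^{-y u_\delta(\mu)}$ by a finer argument: one should instead iterate the strong Markov property, or equivalently use the hitting-time formula, to express $\pr[_x]{T\le\delta}$ via the scale function of the CSBP. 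Following Duhalde~\cite{Duhalde-2014}, I would introduce the function $\mu\mapsto u_\infty(\mu)$ or directly use that $\pr[_x]{\inf X \le y}$ can be read off from the two-parameter exit problem; the cleanest route is to apply optional stopping with a \emph{stationary} (time-homogeneous) harmonic function $W$ for the CSBP killed below $y$, namely $W(z) = 1-\e^{-q(z-y)}$ for suitable $q$, combined with the observation that $\mathbb{E}_x$ of the running infimum relates to $u_\delta$ evaluated at an endpoint depending on both $x$ and $y$.

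Concretely, I expect the sharp constant to come from the following refinement: set $\mu_\star$ so that $u_\delta(\mu_\star)$ balances the two boundary contributions at levels $x$ and $y$ — the quantity $x^{1-1/\gamma}$ is exactly $u_{\text{(something)}}$-type scaling for a CSBP started at $x$, since $z\mapsto z^{1-1/\gamma}$ is (up to constants) the relevant power appearing when one inverts $u_\delta$. Writing $a\eqdef x^{1-1/\gamma}$, $b\eqdef y^{1-1/\gamma}$, the integral equation $\int_{\mu}^{u_\delta(\mu)} \dt u / (-u^\gamma) = \delta$ (the $\lambda=0$ case of~\eqref{eq:int_kappa}) rearranges to $u_\delta(\mu)^{-(\gamma-1)} - \mu^{-(\gamma-1)} = (\gamma-1)\delta$; plugging the value of $\mu$ that makes the bound $(x-y)u_\delta(\mu)$ turn into $v(\delta)(a+b)(a-b)^{1/(\gamma-1)}$ after using $x-y = (a+b)\cdot\frac{a-b}{a^{1/(\gamma-1)}\cdots}$-type factorisations (via $x-y = x-y$ and the identity $x - y = (a^{\gamma/(\gamma-1)} - b^{\gamma/(\gamma-1)})$) should close the computation.

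\textbf{Main obstacle.} The genuinely delicate point is \emph{not} the martingale/optional-stopping mechanics — those are routine once the flow property of $u_t$ is invoked — but rather extracting the precise constant: one must choose the Laplace parameter $\mu$ (equivalently, the harmonic function) so that the crude exponential bound is replaced by the stated product $\pthb{x^{1-1/\gamma}+y^{1-1/\gamma}}\pthb{x^{1-1/\gamma}-y^{1-1/\gamma}}^{1/(\gamma-1)}$. I anticipate this requires either (a) a direct hitting-probability computation for the CSBP exit problem below $y$, reading off the scale function from~\eqref{eq:int_kappa}, and then bounding the finite-time probability by the infinite-time one (which is where the exponent $1/(\gamma-1)$ and the factor $v(\delta)$ enter through the time-discounting), or (b) quoting and adapting the corresponding computation in~\cite{Duhalde-2014} for the CRT case $\gamma=2$ and checking the algebra carries over verbatim with $2$ replaced by $\gamma$. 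I would pursue route (a): write $\pr[_x]{T\le\delta} \le \e^{q\delta}\,\pr[_x]{T<\infty \text{ for the CSBP killed at rate } q}$ is not quite right, so more carefully, use that $\mathbb{E}_x\big[\e^{-q\int_0^T X_s\,\dt s}\indi_{\{T<\infty\}}\big]$ has a closed form and that $T\le\delta$ forces $\int_0^T X_s\,\dt s$ to be controlled; optimising over $q$ then produces exactly the claimed exponent. The bookkeeping of constants there is the crux.
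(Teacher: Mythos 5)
Your opening two steps — the exponential martingale $M_t = \exp\pthb{-X_t\,u_{\delta-t}(\mu)}$ (equivalently the paper's $M_u=\exp\pthb{-X_u(\lambda^{-(\gamma-1)}-(\gamma-1)u)^{-1/(\gamma-1)}}$ with $\lambda=u_\delta(\mu)$) together with optional stopping at $T=\inf\{u:X_u\le y\}$, or equivalently Doob's maximal inequality — is exactly the paper's argument. The gap is in your ``clean bound'' step, and it goes the wrong way. On $\{T\le\delta\}$ you want a \emph{lower} bound on $M_{T\wedge\delta}=\exp\pthb{-X_T\,u_{\delta-T}(\mu)}$, hence an \emph{upper} bound on $X_T\,u_{\delta-T}(\mu)$. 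From $X_T\le y$ and $u_{\delta-T}(\mu)\ge u_\delta(\mu)$ you cannot conclude $X_T\,u_{\delta-T}(\mu)\le y\,u_\delta(\mu)$; the second inequality points the wrong direction. Since $\delta-T\in\ivff{0,\delta}$ and $t\mapsto u_t(\mu)$ decreases, what you actually have is $u_{\delta-T}(\mu)\le u_0(\mu)=\mu$, giving $M_{T\wedge\delta}\ge \indi_{\{T\le\delta\}}\e^{-y\mu}$ and therefore
\begin{align*}
  \prB[_x]{\inf_{\ivff{0,\delta}} X_u\le y}\ \le\ \exp\pthb{y\mu - x\,u_\delta(\mu)},
\end{align*}
which is precisely the paper's bound $\exp\pthb{y(\lambda^{-(\gamma-1)}-(\gamma-1)\delta)^{-1/(\gamma-1)}-x\lambda}$ after the substitution $\lambda=u_\delta(\mu)$.

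Because of this sign slip, you wrongly conclude that the martingale route only yields a bound monotone in $\mu$ degenerating to $\exp\pthb{-v(\delta)(x-y)}$, and then propose substantially heavier machinery (scale functions, hitting--time/exit formulas, killing, time discounting) to recover the stated constant. None of that is needed and none of it appears in the paper. The corrected exponent $\mu\mapsto y\mu - x\,u_\delta(\mu)$ is not monotone: it decreases near $0$ and increases at infinity, so it has a genuine interior minimiser $\mu_0$, and evaluating at $\mu_0$ is exactly the ``elementary computation'' the paper performs (it sets $\mu_{\mathrm{paper}}=\lambda^{-(\gamma-1)}$ and writes $g(\mu_{\mathrm{paper}})=y(\mu_{\mathrm{paper}}-(\gamma-1)\delta)^{-1/(\gamma-1)}-x\mu_{\mathrm{paper}}^{-1/(\gamma-1)}$). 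With $a=x^{1-1/\gamma}$, $b=y^{1-1/\gamma}$, the critical point is $\mu_{\mathrm{paper}}^\star = (\gamma-1)\delta\,a/(a-b)$ and the optimised exponent is $-v(\delta)(a-b)^{\gamma/(\gamma-1)}$, which for $\gamma=2$ reads $-v(\delta)(\sqrt x-\sqrt y)^2$ (the paper's displayed formula $-v(\delta)(a+b)(a-b)^{1/(\gamma-1)}$ appears to contain a small slip, but has the same order of magnitude wherever the lemma is used). So: keep your approach, just bound $u_{\delta-T}(\mu)$ from above by $\mu$ rather than from below by $u_\delta(\mu)$, and the ``routine'' optimisation closes the proof — there is no need for the hitting-time or scale-function detours you anticipated.
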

\begin{proof}
  As pointed out by \citet[Prop. 4.1]{Bingham-1976}, under $\Pr_x$, the process
  \[
    \forall u\in\ivff{0,\delta};\quad M_u = \exp\pthB{- X_u\pthb{ \lambda^{-(\gamma-1)} - (\gamma-1)u }^{-1/(\gamma-1)} },
  \]
  is martingale, under the condition $\lambda\in\ivoo{0,v(\delta)}$. We may then observe that
  \[
    \brcB{ \inf_{u\in\ivff{0,\delta}} X_u \leq y } \subseteq \brcB{ \sup_{u\in\ivff{0,\delta}} M_u \geq \exp\pthB{- y\pthb{ \lambda^{-(\gamma-1)} - (\gamma-1)\delta }^{-1/(\gamma-1)} } }.
  \]
  As $\esp[_x]{M_u} = \exp(-\lambda x)$, the celebrated maximal inequality for submartingales entails
  \begin{align*}
    \prB[_x]{ \inf_{u\in\ivff{0,\delta}} X_u \leq y }
    \leq \exp\pthB{y\pthb{ \lambda^{-(\gamma-1)} - (\gamma-1)\delta }^{-1/(\gamma-1)} - x\lambda}.
  \end{align*}
  To optimize the bound on the variable $\lambda$, we define the function
  \[
    g(\mu) = y\pthb{\mu-(\gamma-1)\delta}^{-1/(\gamma-1)} - x\mu^{-1/(\gamma-1)}.
  \]
  Since $(\gamma-1)g'(\mu) = x\mu^{-\gamma/(\gamma-1)} - y\pth{\mu-(\gamma-1)\delta}^{-\gamma/(\gamma-1)}$, the minimum is attained for $\mu_0 = (\gamma-1)\delta \, y^{-(\gamma-1)/\gamma}\pth{y^{-(\gamma-1)/\gamma } - x^{-(\gamma-1)/\gamma} }^{-1}$. Elementary computations then show that
  \[
    g(\mu_0) = -v(\delta)\pthb{ x^{1-1/\gamma} + y^{1-1/\gamma} } \pthb{ x^{1-1/\gamma} - y^{1-1/\gamma} }^{1/(\gamma-1)},
  \]
  hence proving the lemma.
\end{proof}

Finally, let us describe the tail behaviours of the supremum and infimum of the local time on intervals of the form $\ivff{\delta\kappa,\delta/\kappa}$, for some fixed $\kappa\in\ivoo{0,1}$.
\begin{lemma}  \label{lemma:local_time_sup_tails}
  Suppose $\gamma\in\ivoo{1,2}$ and $\kappa\in\ivoo{0,1}$. There exist two positive constants $c_0$ and $c_1$ such that for any $h\in\ivffb{0,\tfrac{1}{\gamma-1}}$ and all $\delta>0$,
  \begin{align*}
    c_0\,\delta^{\gamma/(\gamma-1)-\gamma h} \Lambda(\delta)^{-\gamma} \leq \Nb_{\delta\kappa}\pthB{ \sup_{\ivff{\delta\kappa,\delta/\kappa}} \bk{\ell^u} \geq \Lambda(\delta)\,\delta^h  } \leq c_1\,\delta^{\gamma/(\gamma-1)-\gamma h}  \Lambda(\delta)^{-\gamma},
  \end{align*}
  where the function $\Lambda(\cdot)$ satisfies $\Lambda(\cdot)\geq 1$.
\end{lemma}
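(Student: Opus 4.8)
The plan is to reduce everything to the corresponding two-sided estimate for the single variable $\bk{\ell^{\delta}}$ (or $\bk{\ell^{\delta\kappa}}$) under $\Nb_{\delta\kappa}$, whose exact Laplace transform and sharp tail asymptotics are already available from Equation~\eqref{eq:laplace_tr_local_time} and Lemma~\ref{lemma:local_time_tail}, and then to control the fluctuations of $u\mapsto\bk{\ell^u}$ over the interval $\ivff{\delta\kappa,\delta/\kappa}$ using the CSBP interpretation of the local time together with Lemmas~\ref{lemma:csbp_sup_upper_tail} and~\ref{lemma:csbp_inf_lower_tail}. Before anything else I would invoke the scaling property of stable trees to reduce to $\delta=1$: writing $c=\delta$ in the scaling relation for $\kappa_a$ and in the Ray--Knight description of $a\mapsto\bk{\ell^a}$, the event $\{\sup_{\ivff{\delta\kappa,\delta/\kappa}}\bk{\ell^u}\geq\Lambda(\delta)\delta^h\}$ under $\Nb_{\delta\kappa}$ has the same probability as $\{\sup_{\ivff{\kappa,1/\kappa}}\bk{\ell^u}\geq\Lambda\delta^{h-1/(\gamma-1)}\}$ under $\Nb_\kappa$; so it suffices to prove $c_0\, t^{-\gamma}\leq\Nb_\kappa(\sup_{\ivff{\kappa,1/\kappa}}\bk{\ell^u}\geq t)\leq c_1\,t^{-\gamma}$ for all $t\geq 1$, where $t=\Lambda\delta^{h-1/(\gamma-1)}$ ranges over $\ivff{1,\infty}$ precisely because $h\le\tfrac1{\gamma-1}$ and $\Lambda\geq 1$ (this is where those hypotheses enter).

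For the \textbf{upper bound} I would split $\Nb_\kappa(\sup_{\ivff{\kappa,1/\kappa}}\bk{\ell^u}\geq t)$ according to the value of $\bk{\ell^\kappa}$ and use the branching property: under $\Nb_\kappa$, conditionally on $\Gi_\kappa$, the total local time at level $\kappa+v$ for $v\geq 0$ evolves as a CSBP started from $\bk{\ell^\kappa}$. On the event $\{\bk{\ell^\kappa}\leq \eps\}$ the right tail of the local time (Lemma~\ref{lemma:local_time_tail}, combined with the Markov/CSBP bound of Lemma~\ref{lemma:csbp_sup_upper_tail} applied with $x=\bk{\ell^\kappa}$ and interval length $\tfrac1\kappa-\kappa$) gives a contribution of order at most $\eps\, t^{-\gamma}$, using $\prb[_x]{\sup X_u\geq t}\leq c\,\prb[_x]{X_{1/\kappa-\kappa}\geq c_{\gamma,0}t}$ for $t\geq 1$ and then $\prb[_x]{X_s\geq c_{\gamma,0}t}\leq x\cdot C t^{-\gamma}$ by integrating the exact CSBP Laplace transform as in the proof of Lemma~\ref{lemma:local_time_tail}; on the complementary event $\{\bk{\ell^\kappa}>\eps\}$, which has $\Nb_\kappa$-measure $\le 1$ (or one can use $\Nb(\bk{\ell^\kappa}>\eps)\leq u_\kappa(\infty)\wedge$ something), the probability is crudely bounded but, taking $\eps$ a small fixed constant, it only costs a constant times $\prb{\bk{\ell^\kappa}>\eps}$ times $\prb[_{>\eps}]{\sup\geq t}$. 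The cleanest route is in fact to run Lemma~\ref{lemma:csbp_sup_upper_tail} directly on the CSBP $(X_v)_{v\in\ivff{0,1/\kappa-\kappa}}$ started from $X_0=\bk{\ell^\kappa}$ under $\Nb_\kappa$, take expectations over $X_0$, and then use $\Nb_\kappa$-integrability of $\bk{\ell^\kappa}$ (finite first moment, again from \eqref{eq:laplace_tr_local_time}) to get $\Nb_\kappa(\sup_{\ivff{0,1/\kappa-\kappa}}X_v\geq c_{\gamma,0}^{-1}t)\leq c_1\Nb_\kappa[X_0]\,t^{-\gamma}$.

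For the \textbf{lower bound} it is enough to note $\Nb_\kappa(\sup_{\ivff{\kappa,1/\kappa}}\bk{\ell^u}\geq t)\geq\Nb_\kappa(\bk{\ell^{a_0}}\geq t)$ for any fixed $a_0\in\ivff{\kappa,1/\kappa}$, say $a_0=1$, and the latter is $\sim c\, t^{-\gamma}$ as $t\to\infty$ by Lemma~\ref{lemma:local_time_tail} after the usual scaling between $\Nb_\kappa$ and $\Nb_1$; for the bounded range $t\in\ivff{1,T_0}$ one gets a lower bound $c_0'$ by continuity/positivity, and absorbing constants yields $\Nb_\kappa(\cdot)\geq c_0 t^{-\gamma}$ uniformly for $t\ge1$. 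Undoing the scaling $t=\Lambda(\delta)\delta^{h-1/(\gamma-1)}$ gives exactly $c_0\,\delta^{\gamma/(\gamma-1)-\gamma h}\Lambda(\delta)^{-\gamma}$ and its upper counterpart. \textbf{The main obstacle} is the upper bound: one must make sure the CSBP maximal inequality (Lemma~\ref{lemma:csbp_sup_upper_tail}) applies with its \emph{second}, clean form, which requires the normalisation $(\text{interval length})^{1/(\gamma-1)}\leq t$ — true here since $t\ge1$ and the interval length $\tfrac1\kappa-\kappa$ is a fixed constant, but one should be slightly careful that the constant $c_3$ then absorbs the $\kappa$-dependence — and that the expectation over the random starting point $\bk{\ell^\kappa}$ under $\Nb_\kappa$ is genuinely finite, which is where the exact formula \eqref{eq:laplace_tr_local_time} is indispensable since under the infinite measure $\Nb$ itself $\bk{\ell^\kappa}$ is not integrable in the same way.
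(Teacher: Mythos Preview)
Your approach is essentially the same as the paper's: scaling reduces to a fixed interval, the lower bound comes from the trivial inclusion $\{\sup\geq t\}\supset\{\bk{\ell^{a_0}}\geq t\}$ and Lemma~\ref{lemma:local_time_tail}, and the upper bound comes from the CSBP maximal inequality (Lemma~\ref{lemma:csbp_sup_upper_tail}) applied conditionally on $\bk{\ell^{\delta\kappa}}$ via the Ray--Knight representation. The one place where the paper is slightly more efficient than your ``cleanest route'' is in closing the upper bound: rather than inserting a pointwise estimate $\prb[_x]{X_s\geq c\,t}\leq C\,x\,t^{-\gamma}$ and then integrating against the law of $\bk{\ell^{\delta\kappa}}$ (which would require you to justify that linear-in-$x$ CSBP tail separately), the paper simply observes that $\Nb_{\delta\kappa}\bigl[\prb[_{\bk{\ell^{\delta\kappa}}}]{X_{\delta(\kappa)}\geq c\,t}\bigr]=\Nb_{\delta\kappa}\bigl(\bk{\ell^{\delta/\kappa}}\geq c\,t\bigr)$ directly by Ray--Knight, converts this to $\Nb_{\delta/\kappa}$ via the ratio $v(\delta/\kappa)/v(\delta\kappa)$, and applies Lemma~\ref{lemma:local_time_tail} with self-similarity. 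Your splitting on the size of $\bk{\ell^\kappa}$ is therefore unnecessary.
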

\begin{proof}
  Owing to the Ray--Knight theorem presented by \citet[Th. 1.4.1]{Duquesne.LeGall-2002} and Lemma~\ref{lemma:csbp_sup_upper_tail}
  \begin{align*}
    \Nb_{\delta\kappa}\pthB{ \sup_{\ivff{\delta\kappa,\delta/\kappa}} \bk{\ell^u} \geq \Lambda(\delta)\,\delta^h }
    &= \Nb_{\delta\kappa}\pthB{ \prB[_{\bk{\ell^{\delta\kappa}}}]{ \sup_{\ivff{0,\delta(\kappa)}} X_u \geq \Lambda(\delta)\,\delta^h } } \\
    &\leq c_1 \, \Nb_{\delta\kappa}\pthB{ \prb[_{\bk{\ell^{\delta\kappa}}}]{ X_{\delta(\kappa)} \geq c_0\,\Lambda(\delta)\,\delta^h } } \\
    &= c_1 \, \Nb_{\delta\kappa}\pthb{ \bk{\ell^{\delta/\kappa}} \geq c_0\,\Lambda(\delta)\,\delta^h }
    = c_2 \, \Nb_{\delta/\kappa}\pthb{ \bk{\ell^{\delta/\kappa}} \geq c_0\,\Lambda(\delta)\,\delta^h },
  \end{align*}
  where $X$ designates a stable CSBP and $\delta(\kappa)=\delta(1/\kappa-\kappa)$. Then, Lemma~\ref{lemma:local_time_tail} and the self-similarity of stable trees entail
  \begin{align*}
    \Nb_{\delta/\kappa}\pthb{ \bk{\ell^{\delta/\kappa}} \geq c_0\,\Lambda(\delta)\,\delta^h }
    &=  \Nb_1 \pthb{ (\delta/\kappa)^{1/(\gamma-1)} \bk{\ell^1} > c_0\,\Lambda(\delta)\,\delta^h } \\
    &\sim_{\delta\rightarrow 0} \, c_2 \, \delta^{\gamma/(\gamma-1)-\gamma h}  \Lambda(\delta)^{-\gamma}.
  \end{align*}
  The lower bound is a consequence of the same Lemma~\ref{lemma:local_time_tail} and the simple observation
  \[
    v(\delta) / v(\delta\kappa) \,\Nb_{\delta}\pthb{ \bk{\ell^{\delta}} \geq \Lambda(\delta)\,\delta^h  }  \leq \Nb_{\delta\kappa}\pthB{ \sup_{\ivff{\delta\kappa,\delta/\kappa}} \bk{\ell^u} \geq \Lambda(\delta)\,\delta^h }.
  \]
\end{proof}

\begin{lemma}  \label{lemma:local_time_inf_tail0}
  Suppose $\gamma\in\ivoo{1,2}$ and $\kappa\in\ivoo{0,1}$. There exist two positive constants $c_0$ and $c_1$ such that for any $h\in\ivffb{0,\tfrac{1}{\gamma-1}}$ and all $\delta>0$,
  \begin{align*}
    c_0\, \delta^{\gamma/(\gamma-1)-\gamma h} \Lambda(\delta)^{-\gamma} \leq \Nb_{\delta\kappa}\pthB{ \inf_{\ivff{\delta\kappa,\delta/\kappa}} \bk{\ell^u} \geq \Lambda(\delta)\,\delta^h } \leq c_1\, \delta^{\gamma/(\gamma-1)-\gamma h} \Lambda(\delta)^{-\gamma},
  \end{align*}
  where the function $\Lambda(\cdot)$ satisfies $\Lambda(\cdot)\geq 1$.
\end{lemma}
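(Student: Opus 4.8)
The argument will parallel the proof of Lemma~\ref{lemma:local_time_sup_tails}, the only genuinely new point being the lower bound. For the upper bound, since $\kappa\in\ivoo{0,1}$ we have $\delta/\kappa\in\ivff{\delta\kappa,\delta/\kappa}$, so
\[
  \brcB{ \inf_{\ivff{\delta\kappa,\delta/\kappa}} \bk{\ell^u} \geq \Lambda(\delta)\,\delta^h } \subseteq \brcb{ \bk{\ell^{\delta/\kappa}} \geq \Lambda(\delta)\,\delta^h },
\]
and the event on the right lies in $\brc{h(\Ti)>\delta/\kappa}$, so $\Nb_{\delta\kappa}$ and $\Nb_{\delta/\kappa}$ coincide on it up to the constant factor $v(\delta/\kappa)/v(\delta\kappa)=\kappa^{2/(\gamma-1)}$, exactly as in Lemma~\ref{lemma:local_time_sup_tails}. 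Self-similarity of stable trees rewrites $\Nb_{\delta/\kappa}\pthb{\bk{\ell^{\delta/\kappa}}\geq\Lambda(\delta)\delta^h}$ as $\Nb_1\pthb{\bk{\ell^1}\geq\kappa^{1/(\gamma-1)}\Lambda(\delta)\delta^{h-1/(\gamma-1)}}$, whose argument is at least $\kappa^{1/(\gamma-1)}$ and, since $h\leq\tfrac1{\gamma-1}$ and $\Lambda\geq 1$, is large for $\delta$ small. The large-value asymptotic of Lemma~\ref{lemma:local_time_tail} (together with a trivial bound on the remaining bounded range) gives $\Nb_1(\bk{\ell^1}\geq x)\asymp x^{-\gamma}$ for $x\geq\kappa^{1/(\gamma-1)}$, and substituting $x=\kappa^{1/(\gamma-1)}\Lambda(\delta)\delta^{h-1/(\gamma-1)}$ yields the announced $c_1\,\delta^{\gamma/(\gamma-1)-\gamma h}\Lambda(\delta)^{-\gamma}$; the same two-sided estimate $\asymp x^{-\gamma}$ will be reused below.

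For the lower bound a single level set no longer suffices and one must keep the whole path of $u\mapsto\bk{\ell^u}$ above $y\eqdef\Lambda(\delta)\delta^h$ on $\ivff{\delta\kappa,\delta/\kappa}$. I would first restrict to $\brcb{\bk{\ell^{\delta\kappa}}\geq 2y}$ — this is natural because the infimum includes the starting value $\bk{\ell^{\delta\kappa}}$, so the event is empty unless $\bk{\ell^{\delta\kappa}}\geq y$, and a strict head start is what makes the subsequent estimate non-degenerate — and then apply the Ray--Knight theorem of \citet[Th. 1.4.1]{Duquesne.LeGall-2002}, which identifies $(\bk{\ell^{\delta\kappa+s}})_{0\leq s\leq\delta(\kappa)}$, conditionally on $\bk{\ell^{\delta\kappa}}$, with a stable CSBP $X$ started from $\bk{\ell^{\delta\kappa}}$, where $\delta(\kappa)=\delta(1/\kappa-\kappa)$. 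This reduces matters to a lower bound on $\prb[_x]{\inf_{\ivff{0,\delta(\kappa)}}X_u\geq y}$ uniform over $x\geq 2y$. Here I would use that the bound of Lemma~\ref{lemma:csbp_inf_lower_tail} on $\prb[_x]{\inf_{\ivff{0,\delta(\kappa)}}X_u\leq y}$ is decreasing in $x$, so it suffices to evaluate it at $x=2y$, where $x^{1-1/\gamma}+y^{1-1/\gamma}$ and $x^{1-1/\gamma}-y^{1-1/\gamma}$ are both positive constant multiples of $y^{1-1/\gamma}$; the algebraic identity $\pthb{1-\tfrac1\gamma}\pthb{1+\tfrac1{\gamma-1}}=1$ then collapses the exponent of that bound to $-c_\gamma\,v(\delta(\kappa))\,y$. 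Since $v(\delta(\kappa))$ is a fixed constant times $\delta^{-1/(\gamma-1)}$ while $y=\Lambda(\delta)\delta^h\geq\delta^h$ with $h\leq\tfrac1{\gamma-1}$, the product $v(\delta(\kappa))\,y$ is bounded below by a positive constant for $\delta\leq 1$, whence $\prb[_x]{\inf_{\ivff{0,\delta(\kappa)}}X_u\geq y}\geq c>0$ uniformly over $x\geq 2y$ (in the small-$\delta$ range, which is the one entering the applications).

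Combining these, $\Nb_{\delta\kappa}\pthB{\inf_{\ivff{\delta\kappa,\delta/\kappa}}\bk{\ell^u}\geq y}\geq c\,\Nb_{\delta\kappa}\pthb{\bk{\ell^{\delta\kappa}}\geq 2y}$, and one further application of self-similarity and the large-value tail of Lemma~\ref{lemma:local_time_tail} turns $\Nb_{\delta\kappa}\pthb{\bk{\ell^{\delta\kappa}}\geq 2y}=\Nb_1\pthb{\bk{\ell^1}\geq 2\kappa^{-1/(\gamma-1)}\Lambda(\delta)\delta^{h-1/(\gamma-1)}}$ into a constant multiple of $\delta^{\gamma/(\gamma-1)-\gamma h}\Lambda(\delta)^{-\gamma}$, as required. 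I expect the lower bound to be the crux: the two delicate points are the choice of the head-start factor (any constant $>1$ works, $2$ being convenient since it makes $x^{1-1/\gamma}-y^{1-1/\gamma}$ comparable to $y^{1-1/\gamma}$) and the verification that the exponent produced by Lemma~\ref{lemma:csbp_inf_lower_tail} scales exactly like $v(\delta(\kappa))\,y$ — this is precisely what keeps the probability of remaining above level $y$ bounded away from $0$, and is the mechanism behind the matching powers of $\delta$ and $\Lambda(\delta)$ on the two sides.
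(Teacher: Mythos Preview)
Your proposal is correct and follows essentially the same route as the paper: the upper bound via inclusion in a single-level event plus self-similarity and Lemma~\ref{lemma:local_time_tail}, and the lower bound by restricting to $\{\bk{\ell^{\delta\kappa}}\geq 2\Lambda(\delta)\delta^h\}$, applying Ray--Knight, and using Lemma~\ref{lemma:csbp_inf_lower_tail} at the starting point $2\Lambda(\delta)\delta^h$ to show the CSBP stays above level $\Lambda(\delta)\delta^h$ with probability bounded away from $0$. The only cosmetic differences are that the paper picks the level $\delta$ rather than $\delta/\kappa$ for the upper bound, and justifies the replacement of the random starting point by $2\Lambda(\delta)\delta^h$ via (implicit) stochastic monotonicity of the CSBP rather than monotonicity of the bound in Lemma~\ref{lemma:csbp_inf_lower_tail}; both variants work equally well.
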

\begin{proof}
  The upper bound is a straightforward consequence of Lemma~\ref{lemma:local_time_tail} and the simple inequality
  \[
    \Nb_{\delta\kappa}\pthB{ \inf_{\ivff{\delta\kappa,\delta/\kappa}} \bk{\ell^u} \geq \Lambda(\delta)\,\delta^h } \leq v(\delta) / v(\delta\kappa) \,\Nb_\delta\pthb{ \bk{\ell^\delta} \geq \Lambda(\delta)\,\delta^h }.
  \]
  Then, to prove the lower bound, let us observe similarly to the proof of Lemma~\ref{lemma:local_time_sup_tails} that
  \begin{align*}
    \Nb_{\delta\kappa}\pthB{ \inf_{\ivff{\delta\kappa,\delta/\kappa}} \bk{\ell^u} \geq \Lambda(\delta)\,\delta^h }
    &= \Nb_{\delta\kappa}\pthB{ \prB[_{\bk{\ell^{\delta\kappa}}}]{ \inf_{\ivff{0,\delta(\kappa)}} X_u \geq \Lambda(\delta)\,\delta^h } } \\
    &\geq \Nb_{\delta\kappa}\pthB{ \prB[_{2\Lambda(\delta)\delta^h}]{ \inf_{\ivff{0,\delta(\kappa)}} X_u \geq \Lambda(\delta)\,\delta^h } \indi_{ \bk{\ell^{\delta\kappa}}\geq 2\Lambda(\delta)\delta^h} } \\
    &= \Nb_{\delta\kappa}\pthb{ \bk{\ell^{\delta\kappa}}\geq 2\Lambda(\delta)\delta^h } \prB[_{2\Lambda(\delta)\delta^h}]{ \inf_{\ivff{0,\delta(\kappa)}} X_u \geq \Lambda(\delta)\delta^h },
  \end{align*}
  Still using Lemma~\ref{lemma:local_time_tail}, we obtain a proper lower bound of the first term.
  Finally, Lemma~\ref{lemma:csbp_inf_lower_tail} leads to an estimate of the second one
  \begin{align*}
    \prB[_{2\Lambda(\delta)\delta^h}]{ \inf_{\ivff{0,\delta(\kappa)}} X_u \leq \Lambda(\delta)\delta^h } \leq \exp\pthb{ -c_2 \Lambda(\delta)\delta^{h-1/(\gamma-1)} }.
  \end{align*}
  for a constant $c_2$ independent of $\delta$ and $h$. The latter term is strictly smaller than $1$ independently of $\delta$, therefore concluding the proof.
\end{proof}

We also present in the next lemma a bound on the left tail of the infimum of the local time.
\begin{lemma}  \label{lemma:local_time_inf_tail1}
  Suppose $\gamma\in\ivof{1,2}$ and $\kappa\in\ivoo{0,1}$. There exist three positive constants $c_0$, $c_1$ and $c_2$ such that for all $\delta,\tau>0$ and any $v\in\ivff{\delta\kappa,\delta/\kappa}$
  \begin{align*}
    c_0\, \Lambda(\delta)^{\gamma-1} \leq \Nb_{v}\pthB{ \inf_{\ivff{v,v+\tau}} \bk{\ell^u} \leq \Lambda(\delta)\,\delta^{\tfrac{1}{\gamma-1}} } \leq c_1\, \Lambda(\delta)^{\gamma-1} + \exp\pthB{-c_2\,v(\tau) \Lambda(\delta)\,\delta^{\tfrac{1}{\gamma-1}} },
  \end{align*}
  where the function $\Lambda(\cdot)$ satisfies $\Lambda(\cdot)\leq 1$.
\end{lemma}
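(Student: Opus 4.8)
The plan is to reduce the statement to tail estimates for a single stable CSBP and then feed in Lemmas~\ref{lemma:local_time_tail} and \ref{lemma:csbp_inf_lower_tail}. Under $\Nb_v$ the total mass $\bk{\ell^v}$ is a.s.\ positive, and by the Ray--Knight theorem \cite[Th.~1.4.1]{Duquesne.LeGall-2002}, conditionally on $\bk{\ell^v}$ the process $s\mapsto\bk{\ell^{v+s}}$ is a stable CSBP started from $\bk{\ell^v}$. Writing $y=\Lambda(\delta)\,\delta^{1/(\gamma-1)}$ and letting $X$ denote such a CSBP, this gives
\begin{align*}
  \Nb_{v}\pthB{ \inf_{\ivff{v,v+\tau}} \bk{\ell^u} \leq y } = \Nb_{v}\pthB{ \prb[_{\bk{\ell^v}}]{ \inf_{\ivff{0,\tau}} X_u \leq y } }.
\end{align*}
Moreover, by the self-similarity of stable trees, $\bk{\ell^v}$ under $\Nb_v$ has the law of $v^{1/(\gamma-1)}\bk{\ell^1}$ under $\Nb_1$, so that $\Nb_v(\bk{\ell^v}\leq y)=\Nb_1\bigl(\bk{\ell^1}\leq\Lambda(\delta)(\delta/v)^{1/(\gamma-1)}\bigr)$; since $v\in\ivff{\delta\kappa,\delta/\kappa}$ and $\Lambda(\delta)\leq1$, the argument here stays in a fixed bounded interval $\ivof{0,M}$ (the same being true with $y$ replaced by $2y$), and this uniformity is what makes all constants independent of $\delta$ and $\tau$.

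For the \textbf{lower bound}, I would simply use that $v\in\ivff{v,v+\tau}$, hence $\inf_{\ivff{v,v+\tau}}\bk{\ell^u}\leq\bk{\ell^v}$, so the probability in question is at least $\Nb_v(\bk{\ell^v}\leq y)=\Nb_1\bigl(\bk{\ell^1}\leq\Lambda(\delta)(\delta/v)^{1/(\gamma-1)}\bigr)$. The left-tail asymptotic of Lemma~\ref{lemma:local_time_tail}, combined with the monotonicity and strict positivity of $z\mapsto\Nb_1(\bk{\ell^1}\leq z)$, yields a two-sided bound $\Nb_1(\bk{\ell^1}\leq z)\asymp z^{\gamma-1}$ for $z\in\ivof{0,M}$ with constants depending only on $\gamma$ and $\kappa$; since $(\delta/v)^{1/(\gamma-1)}\geq\kappa^{1/(\gamma-1)}$, this produces the claimed $c_0\,\Lambda(\delta)^{\gamma-1}$.

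For the \textbf{upper bound}, I would split at level $2y$:
\begin{align*}
  \Nb_{v}\pthB{ \inf_{\ivff{v,v+\tau}} \bk{\ell^u} \leq y } \leq \Nb_v\pthb{ \bk{\ell^v} \leq 2y } + \Nb_v\pthB{ \bk{\ell^v} > 2y,\ \inf_{\ivff{v,v+\tau}} \bk{\ell^u} \leq y }.
\end{align*}
The first term is $\leq c_1\,\Lambda(\delta)^{\gamma-1}$ by the same self-similarity argument and the upper half of the two-sided estimate above. For the second term, Lemma~\ref{lemma:csbp_inf_lower_tail} gives, for every $x\geq2y$,
\begin{align*}
  \prb[_x]{ \inf_{\ivff{0,\tau}} X_u \leq y } \leq \exp\brcB{ -v(\tau)\pthb{ x^{1-1/\gamma} + y^{1-1/\gamma} }\pthb{ x^{1-1/\gamma} - y^{1-1/\gamma} }^{1/(\gamma-1)} },
\end{align*}
whose right-hand side is non-increasing in $x$ (as $x\mapsto x^{1-1/\gamma}$ is increasing); evaluating at $x=2y$ and using $\bigl(1-\tfrac{1}{\gamma}\bigr)\bigl(1+\tfrac{1}{\gamma-1}\bigr)=1$ shows the exponent equals $-c_2\,v(\tau)\,y$ for a constant $c_2=c_2(\gamma)>0$. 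Hence the conditional probability is $\leq\exp\{-c_2\,v(\tau)\,y\}$ uniformly on $\{\bk{\ell^v}>2y\}$, and since $\Nb_v$ is a probability measure the second term is bounded by $\exp\{-c_2\,v(\tau)\,\Lambda(\delta)\,\delta^{1/(\gamma-1)}\}$. Summing the two contributions gives the stated upper bound.

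I do not expect a genuine obstacle here: the two substantive inputs — the left tail of $\bk{\ell^1}$ and the tail of the infimum of a stable CSBP — are already available as Lemmas~\ref{lemma:local_time_tail} and \ref{lemma:csbp_inf_lower_tail}, and everything else is the Ray--Knight identity plus scaling. The only points needing a little care are routine bookkeeping: promoting the purely asymptotic estimate of Lemma~\ref{lemma:local_time_tail} to a two-sided bound $\asymp z^{\gamma-1}$ on the compact interval $\ivof{0,M}$ (immediate from monotonicity and positivity), and checking that the Laplace/martingale bound of Lemma~\ref{lemma:csbp_inf_lower_tail} is monotone in the starting point so that the worst case is $x=2y$.
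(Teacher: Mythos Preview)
Your proof is correct and follows essentially the same route as the paper: the lower bound via the trivial inclusion $\{\bk{\ell^v}\leq y\}\subset\{\inf_{[v,v+\tau]}\bk{\ell^u}\leq y\}$ together with Lemma~\ref{lemma:local_time_tail}, and the upper bound via a split at level $2y$ combined with the Ray--Knight identity and Lemma~\ref{lemma:csbp_inf_lower_tail}. You are simply more explicit than the paper about the self-similarity bookkeeping, the monotonicity in $x$ of the bound from Lemma~\ref{lemma:csbp_inf_lower_tail}, and the exponent computation $(1-\tfrac{1}{\gamma})(1+\tfrac{1}{\gamma-1})=1$ --- all details the paper leaves implicit.
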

\begin{proof}
  The lower bound is a straightforward consequence of the inclusion
  \[
    \brcB{ \bk{\ell^{v}} \leq \Lambda(\delta)\,\delta^{\tfrac{1}{\gamma-1}} } \subset \brcB{ \inf_{\ivff{v,v+\tau}} \bk{\ell^u} \leq \Lambda(\delta)\,\delta^{\tfrac{1}{\gamma-1}} }.
  \]
  We proceed similarly to the proof of Lemma~\ref{lemma:local_time_inf_tail0} to obtain the upper bound.
  \begin{align*}
    \Nb_{v}\pthB{ \inf_{\ivff{v,v+\tau}} \bk{\ell^u} \leq \Lambda(\delta)\,\delta^{\tfrac{1}{\gamma-1}} }
    &= \Nb_{v}\pthB{ \prB[_{\bk{\ell^{v}}}]{ \inf_{\ivff{0,\tau}} X_u \leq \Lambda(\delta)\,\delta^{\tfrac{1}{\gamma-1}} } } \\
    &\leq \Nb_{v}\pthB{ \bk{\ell^{v}} \leq 2\Lambda(\delta)\,\delta^{\tfrac{1}{\gamma-1}} } \\
    &+ \Nb_{v}\pthB{ \prB[_{2\Lambda(\delta)\delta^{1/(\gamma-1)}}]{ \inf_{\ivff{0,\tau}} X_u \leq \Lambda(\delta)\,\delta^{\tfrac{1}{\gamma-1}} } }.
  \end{align*}
  The first term is upper-bounded using Lemma~\ref{lemma:local_time_tail}. For the second one, Lemma~\ref{lemma:csbp_inf_lower_tail} entails
  \begin{align*}
    \prB[_{2\Lambda(\delta)\delta^{1/(\gamma-1)}}]{ \inf_{\ivff{0,\tau}} X_u \leq \Lambda(\delta)\,\delta^{\tfrac{1}{\gamma-1}} }  \leq \exp\pthB{-c_0\,v(\tau) \Lambda(\delta)\,\delta^{\tfrac{1}{\gamma-1}} },
  \end{align*}
  therefore concluding the proof.
\end{proof}

As a last lemma on the tail behaviour of the local time, we present an estimate on the tail of the infimum following an hitting time of the local time.
\begin{lemma}  \label{lemma:local_time_sup_inf_tail}
  Suppose $\gamma\in\ivoo{1,2}$ and $\Lambda(\cdot)$ is a positive function. For any $\delta>0$, we define the stopping time
  \begin{align*}
    \Theta(\delta) \eqdef \inf\brcB{u\geq 0 : \bk{\ell^u} \geq 2\Lambda(\delta) }.
  \end{align*}
  Then, there exist three positive constants $c_0$, $c_1$ and $c_2$ such that for any $\delta,\tau>0$,
  \begin{align*}
    \Nb\pthB{ {\Theta(\delta) \leq \delta} \text{ and } \inf_{\Theta(\delta)+\ivff{0,\tau}} \bk{\ell^u} \leq \Lambda(\delta) }
    \leq c_0\, v(\delta) \pthB{1 - \exp\pthb{-c_1\,\Lambda(\delta) v(\delta) } }^{-1} \exp\pthB{ -c_2\,v(\tau) \Lambda(\delta) }.
  \end{align*}
\end{lemma}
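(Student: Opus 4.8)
The plan is to combine the branching property (to decompose the tree at level $\Theta(\delta)$), the strong Markov property for the local time process, and the CSBP infimum tail bound of Lemma~\ref{lemma:csbp_inf_lower_tail}, exactly in the spirit of the proofs of Lemmas~\ref{lemma:local_time_inf_tail0} and \ref{lemma:local_time_inf_tail1}. First I would observe that by the Ray--Knight theorem of \citet[Th. 1.4.1]{Duquesne.LeGall-2002}, under $\Nb$ conditioned on $\{h(\Ti)>u\}$ the process $u\mapsto\bk{\ell^u}$ has the law of a stable CSBP started (in an excursion sense) from the origin, so that $\Theta(\delta)$ is a genuine stopping time for the filtration $(\Gi_u)$. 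On the event $\{\Theta(\delta)\leq\delta\}$ we have $\bk{\ell^{\Theta(\delta)}}\geq 2\Lambda(\delta)$ by right-continuity (indeed $\bk{\ell^{\Theta(\delta)}}$ is at least $2\Lambda(\delta)$, possibly strictly bigger if $\Theta(\delta)$ is a jump time), and then, applying the strong Markov property at $\Theta(\delta)$,
\begin{align*}
  \Nb\pthB{ \Theta(\delta)\leq\delta \text{ and } \inf_{\Theta(\delta)+\ivff{0,\tau}} \bk{\ell^u} \leq \Lambda(\delta) }
  \leq \Nb\pthb{ \Theta(\delta)\leq\delta } \cdot \sup_{x\geq 2\Lambda(\delta)} \prb[_x]{ \inf_{\ivff{0,\tau}} X_u \leq \Lambda(\delta) },
\end{align*}
where $X$ is a stable CSBP. (The monotonicity in the starting point used to pass to the supremum is standard for CSBPs, or one can simply couple a CSBP from $x\geq 2\Lambda(\delta)$ above one from $2\Lambda(\delta)$.)

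The second factor is bounded directly by Lemma~\ref{lemma:csbp_inf_lower_tail} with $x=2\Lambda(\delta)$ and $y=\Lambda(\delta)$: elementary estimates of $\pthb{x^{1-1/\gamma}+y^{1-1/\gamma}}\pthb{x^{1-1/\gamma}-y^{1-1/\gamma}}^{1/(\gamma-1)}$ give a lower bound of the form $c_2'\,\Lambda(\delta)$ for the exponent coefficient, whence
\begin{align*}
  \sup_{x\geq 2\Lambda(\delta)} \prb[_x]{ \inf_{\ivff{0,\tau}} X_u \leq \Lambda(\delta) } \leq \exp\pthb{ -c_2\,v(\tau)\,\Lambda(\delta) }.
\end{align*}
For the first factor I would bound $\Nb\pthb{\Theta(\delta)\leq\delta}$ by $\Nb\pthb{\sup_{\ivff{0,\delta}}\bk{\ell^u}\geq 2\Lambda(\delta)}$; writing this under $\Nb_\delta$ (so up to the factor $v(\delta)=\Nb(\bk{\ell^\delta}>0)$) and invoking Lemma~\ref{lemma:csbp_sup_upper_tail} for the CSBP supremum on $\ivff{0,\delta}$ together with Lemma~\ref{lemma:local_time_tail}, one gets a bound of the shape $c_0\,v(\delta)\pthB{1-\exp\pthb{-c_1\Lambda(\delta)v(\delta)}}^{-1}$, the denominator being precisely the $\pthB{1-\exp(\cdots)}^{-1}$ correction term appearing in the conclusion of Lemma~\ref{lemma:csbp_sup_upper_tail}. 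Multiplying the two bounds yields the claimed estimate.

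The main obstacle I anticipate is the bookkeeping around $\Theta(\delta)$ when it is a jump time of $a\mapsto\ell^a$: the value $\bk{\ell^{\Theta(\delta)}}$ overshoots $2\Lambda(\delta)$, and one must make sure the strong Markov property is applied with the correct (right-continuous) filtration and that the overshoot only helps (it does, since larger starting mass makes the infimum less likely to be small, which is exactly why the supremum over $x\geq 2\Lambda(\delta)$ step is legitimate). A secondary point is that the CSBP under $\Nb$ is really an excursion-type object rather than an honest Markov process started from a point, so the cleanest route is to condition on $\Gi_{\Theta(\delta)}$, use the branching property to realize the subtree masses above level $\Theta(\delta)$, and only then appeal to the ordinary strong Markov property of a CSBP started from $\bk{\ell^{\Theta(\delta)}}$. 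Once that is set up, everything else is a routine combination of the tail lemmas already established.
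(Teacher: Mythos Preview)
Your strategy coincides with the paper's: factor at the hitting time via strong Markov, control the post-$\Theta(\delta)$ infimum with Lemma~\ref{lemma:csbp_inf_lower_tail}, and control the hitting event with Lemma~\ref{lemma:csbp_sup_upper_tail}. There is however a genuine gap in your handling of the first factor. The step ``writing this under $\Nb_\delta$ (so up to the factor $v(\delta)$)'' is not valid: the event $\brc{\Theta(\delta)\leq\delta}=\brcb{\sup_{u\in[0,\delta]}\bk{\ell^u}\geq 2\Lambda(\delta)}$ is \emph{not} contained in $\brc{h(\Ti)>\delta}$, since the local time may reach $2\Lambda(\delta)$ and then vanish before level $\delta$. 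Hence $\Nb\pthb{\Theta(\delta)\leq\delta}\neq v(\delta)\,\Nb_\delta\pthb{\Theta(\delta)\leq\delta}$ in general, and even under $\Nb_\delta$ the path $(\bk{\ell^u})_{u\in[0,\delta]}$ is not a CSBP started from a deterministic point, so Lemma~\ref{lemma:csbp_sup_upper_tail} cannot be invoked directly there.

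The paper's device is to condition first at a small fixed level $a>0$: on $\brc{\Theta(\delta)>a}$, under $\Nb_a$, Ray--Knight makes $(\bk{\ell^{a+u}})_{u\geq 0}$ a genuine CSBP from the random point $\bk{\ell^a}$. All the strong-Markov and supremum manipulations are then carried out inside $\Pr_{\bk{\ell^a}}$, translated back to $\Nb$, and one lets $a\to 0$ by monotone convergence. The factor $v(\delta)$ emerges at the end from $\Nb\pthb{\bk{\ell^\delta}\geq c\Lambda(\delta)}\leq v(\delta)$, after Lemma~\ref{lemma:csbp_sup_upper_tail} has converted the supremum event into a fixed-time event. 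This $a\to 0$ approximation simultaneously resolves the excursion-measure issue you flag in your final paragraph, since the strong Markov property is applied only for the honest CSBP under $\Pr_x$, never directly under $\Nb$.
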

\begin{proof}
  The proof is clearly inspired by the previous lemmas. To begin with, we remark that using the monotone convergence theorem:
  \begin{align*}
    \Nb\pthB{ \Theta(\delta) \leq \delta \cap \inf_{\Theta(\delta)+\ivff{0,\tau}} \bk{\ell^u} \leq \Lambda(\delta) } = \lim_{a\rightarrow 0} \Nb\pthB{ {\Theta(\delta) \in\ivof{a,\delta}} \cap \inf_{\Theta(\delta)+\ivff{0,\tau}} \bk{\ell^u} \leq \Lambda(\delta) }.
  \end{align*}
  Then, according to the Ray--Knight theorem, the right hand term $J(a)$ is equal to
  \begin{align*}
    J(a)
    &= v(a)\Nb_a\pthB{ {\Theta(\delta) \in\ivof{a,\delta}} \cap \inf_{\Theta(\delta)+\ivff{0,\tau}} \bk{\ell^u} \leq \Lambda(\delta) } \\
    &= v(a)\Nb_a\pthB{ \indi_{\brc{\Theta(\delta)>a}} \prB[_{\bk{\ell^a}}]{ \Theta_X(\delta)\leq \delta-a \cap \inf_{\Theta_X(\delta)+\ivff{0,\tau}} X_u \leq \Lambda(\delta) } },
  \end{align*}
  where under $\Pr_x$, $X$ is stable CSBP starting from $x$ and $\Theta_X(\delta)\eqdef\inf\brcb{u\geq 0 : X_u\geq 2\Lambda(\delta) }$. Applying the strong Markov property, we get for any $x\geq 0$
  \begin{align*}
    \prB[_x]{ \Theta_X(\delta)\leq \delta-a \cap \inf_{\Theta_X(\delta)+\ivff{0,\tau}} X_u \leq \Lambda(\delta) }
    &\leq \prB[_x]{ \Theta_X(\delta)\leq \delta \cap \inf_{\ivff{0,\tau}} X_{\Theta_X(\delta)+u} \leq \Lambda(\delta) } \\
    &= \espB[_x]{ \indi_{\brc{\Theta_X(\delta)\leq \delta}} \prB[_{X_{\Theta_X(\delta)}}]{ \inf_{\ivff{0,\tau}} \widetilde X_u \leq \Lambda(\delta) } }.
  \end{align*}
  Using Lemma~\ref{lemma:csbp_inf_lower_tail} to bound the second term, we get
  \begin{align*}
    \prB[_{X_{\Theta_X(\delta)}}]{ \inf_{\ivff{0,\tau}} \widetilde X_u \leq \Lambda(\delta) } \leq \exp\pthb{ -c_0\,v(\tau) \Lambda(\delta) },
  \end{align*}
  where the constant $c_0$ only depends on $\gamma$. To study $\prb[_x]{ \Theta_X(\delta)\leq \delta }$, we observe that according to Lemma~\ref{lemma:csbp_sup_upper_tail},
  \begin{align*}
    \prb[_x]{ \Theta_X(\delta)\leq \delta } = \prB[_x]{ \sup_{\ivff{0,\delta}} X_u \geq 2\Lambda(\delta) } \leq c_2 \,\prb[_x]{X_\delta \geq c_3\,\Lambda(\delta)} \pthB{1 - \exp\pthb{-c_4\,\Lambda(\delta) v(\delta) } }^{-1}.
  \end{align*}
  Combining the two previous inequalities and the Ray--Knight theorem, we get
  \begin{align*}
    J(a)
    \leq c_5\, \Nb\pthB{ \Theta(\delta)>a \cap \bk{\ell^{\delta+a}} \geq c_3\,\Lambda(\delta) } \pthB{1 - \exp\pthb{-c_4\,\Lambda(\delta) v(\delta) } }^{-1} \exp\pthb{ -c_0\,v(\tau) \Lambda(\delta) }.
  \end{align*}
  Finally, the monotone convergence theorem and the càdlàguity of the local time entail
  \begin{align*}
    \Nb\pthB{ \Theta(\delta) \leq \delta \cap \inf_{\Theta(\delta)+\ivff{0,\tau}} \bk{\ell^u} \leq \Lambda(\delta) }
    &\leq c_6\, v(\delta) \pthB{1 - \exp\pthb{-c_4\,\Lambda(\delta) v(\delta) } }^{-1} \exp\pthb{ -c_0\,v(\tau) \Lambda(\delta) },
  \end{align*}
  where the previous positive constants only depends on $\gamma$.
\end{proof}

Finally, we present as well an estimate on the right tail of the mass measure. Similarly to the local time, the self-similarity ensures that it is sufficient to study $\mb(\rho,1+c)$ under $\Nb_1$, for some fixed constant $c\geq 0$. Recall first that \citet[Lemma 3.1]{Duquesne.Wang-2014} have proved that the left tail has an exponential equivalent. Namely, for any $c\geq 0$ and $\gamma\in\ivof{1,2}$
\begin{align*}
  -\log \Nb_1\pthb{ \mb\pth{B(\rho,1+c)} \leq x } \sim_{x\rightarrow 0+} \pthbb{\frac{\gamma-1}{x}}^{\gamma-1}.
\end{align*}
A less precise bound had also been presented previously in \cite[Lemma 4.1]{Duquesne.LeGall-2006}. In the following lemma, we present the right tail of $m(\rho,1+c)$.
\begin{lemma}  \label{lemma:mmass_tail}
  Suppose $\gamma\in\ivoo{1,2}$ and $c\geq 0$. Then,
  \begin{align*}
    \Nb_1\pthb{ \mb\pth{B(\rho,1+c)} \geq x } \sim_{x\rightarrow\infty} -\frac{(1+c)^{\gamma+1} \, x^{-\gamma}}{(\gamma+1)v(1)\Gamma(1-\gamma)}.
  \end{align*}
\end{lemma}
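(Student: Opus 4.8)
The plan is to reproduce the scheme of Lemma~\ref{lemma:local_time_tail}: I would compute the Laplace transform of $\mb(B(\rho,1+c))$ under $\Nb_1$, expand it to the order $\lambda^\gamma$ as $\lambda\to0$, and then invoke the same Tauberian theorem \cite[Th.~1.7.1]{Bingham.Goldie.ea-1987}. Concretely, the goal is to establish that, as $\lambda\to0$,
\begin{align*}
  \Nb_1\pthb{1-e^{-\lambda\,\mb(B(\rho,1+c))}} = c_\star\,\lambda - \frac{(1+c)^{\gamma+1}}{(\gamma+1)\,v(1)}\,\lambda^\gamma + \littleo(\lambda^\gamma)
\end{align*}
for some finite constant $c_\star=\Nb_1(\mb(B(\rho,1+c)))>0$. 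Only the coefficient of $\lambda^\gamma$ is relevant, and, exactly as in Lemma~\ref{lemma:local_time_tail}, the Tauberian theorem then yields $\Nb_1(\mb(B(\rho,1+c))\geq x)\sim -(1+c)^{\gamma+1}x^{-\gamma}/\pthb{(\gamma+1)v(1)\Gamma(1-\gamma)}$, which is the assertion.

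\textbf{Step 1: a decomposition at level $1$.} Working under $\Nb_1$ and conditioning on $\Gi_1$, the quantities $\mb(B(\rho,1))$ and $\langle\ell^1\rangle$ are measurable, every vertex at distance in $(1,1+c]$ of $\rho$ lies in a unique subtree $\Ti_\sigma$ ($\sigma\in\Ti(1,0)$) at distance $\leq c$ of its root, and by the branching property the marked configuration $\brcb{(\sigma,\Ti_\sigma):\sigma\in\Ti(1,0)}$ is a Poisson point process of intensity $\ell^1(\dt\sigma)\,\Nb(\dt\Ti')$. The exponential formula then gives
\begin{align*}
  \Nb_1\pthb{e^{-\lambda\,\mb(B(\rho,1+c))}} = \Nb_1\pthb{\exp\pthb{-\lambda\,\mb(B(\rho,1))-\kappa_c(\lambda,0)\,\langle\ell^1\rangle}},
\end{align*}
since $\Nb(1-e^{-\lambda\mb(B(\rho,c))})=\kappa_c(\lambda,0)$. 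I therefore need the joint law of $(\mb(B(\rho,1)),\langle\ell^1\rangle)$ under $\Nb_1$. Using $\Nb_1(\,\cdot\,)=v(1)^{-1}\Nb(\,\cdot\,;\langle\ell^1\rangle>0)$ together with the fact that $\mb(B(\rho,1))=\mb(\Ti)$ on $\{\langle\ell^1\rangle=0\}=\{h(\Ti)\leq1\}$, I would split the identity $\Nb(1-e^{-\lambda\mb(B(\rho,1))-\mu\langle\ell^1\rangle})=\kappa_1(\lambda,\mu)$ over $\{\langle\ell^1\rangle>0\}$ and its complement to obtain
\begin{align*}
  \Nb_1\pthb{1-e^{-\lambda\,\mb(B(\rho,1))-\mu\,\langle\ell^1\rangle}} = \frac{\kappa_1(\lambda,\mu)-H(\lambda)}{v(1)},\qquad H(\lambda)\eqdef\Nb\pthb{(1-e^{-\lambda\,\mb(\Ti)})\,\indi_{\{h(\Ti)\leq 1\}}}.
\end{align*}
Specialising $\mu=\kappa_c(\lambda,0)$ and using the semigroup property $\kappa_1(\lambda,\kappa_c(\lambda,0))=\kappa_{1+c}(\lambda,0)$, this collapses to the clean identity $\Nb_1(1-e^{-\lambda\mb(B(\rho,1+c))})=\bigl(\kappa_{1+c}(\lambda,0)-H(\lambda)\bigr)/v(1)$.

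\textbf{Step 2: the two expansions and conclusion.} For $\kappa_{1+c}(\lambda,0)$ I would substitute $u=\lambda^{1/\gamma}s$ in the integral equation \eqref{eq:int_kappa} with $\mu=0$ and invert the Taylor expansion $\int_0^t(1-s^\gamma)^{-1}\dt s = t+t^{\gamma+1}/(\gamma+1)+\cdots$, which yields $\kappa_{1+c}(\lambda,0)=(1+c)\lambda-(1+c)^{\gamma+1}\lambda^\gamma/(\gamma+1)+\littleo(\lambda^\gamma)$ (a computation that can be double-checked against the scaling property of $\kappa$). For $H$, the key point is that $H(\lambda)=\lim_{\mu\to\infty}\kappa_1(\lambda,\mu)-v(1)$ by monotone convergence, the limit being $\Nb(\langle\ell^1\rangle>0)+\Nb\pthb{(1-e^{-\lambda\mb(\Ti)})\indi_{\{h(\Ti)\leq1\}}}$; by \eqref{eq:int_kappa} this limit is the solution $w(\lambda)$ of $\int_{w(\lambda)}^{\infty}(u^\gamma-\lambda)^{-1}\dt u=1$, a relation in which the integrand depends analytically on $\lambda$, uniformly for $u$ near $w(0)=v(1)$, so $w$, hence $H$, is smooth at $0$ with $H(0)=0$. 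Thus $H(\lambda)=H'(0)\lambda+\bigo(\lambda^2)=H'(0)\lambda+\littleo(\lambda^\gamma)$ because $\gamma<2$, so $H$ contributes nothing at order $\lambda^\gamma$. Feeding the two expansions into the identity of Step~1 produces the displayed expansion of $\Nb_1(1-e^{-\lambda\mb(B(\rho,1+c))})$ (the linear coefficient being simply the finite mean $\Nb_1(\mb(B(\rho,1+c)))$, whose value is irrelevant), and the Tauberian argument of Lemma~\ref{lemma:local_time_tail} closes the proof.

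\textbf{Main obstacle.} The one genuinely delicate point is the control of the correction term $H$, equivalently of the portion of $\Nb$ carried by trees of height $\leq1$: one must be certain it does not generate a spurious $\lambda^\gamma$ contribution, which would amount to $\Nb(\mb(\Ti)\geq x;\,h(\Ti)\leq1)$ decaying no faster than $x^{-\gamma}$. Identifying $H$ with $\lim_{\mu\to\infty}\kappa_1(\lambda,\mu)-v(1)$ and reading its smoothness at $0$ off the integral equation \eqref{eq:int_kappa}, as above, is what settles this; the remaining ingredients — the branching decomposition, the scaling and semigroup manipulations of $\kappa$, the reduction of the general-level statement to the level-$1$ case by self-similarity, and the Tauberian inversion — are routine.
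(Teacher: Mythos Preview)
Your proposal is correct and lands on exactly the same computation as the paper. The identity you derive in Step~1, $\Nb_1\bigl(1-e^{-\lambda\mb(B(\rho,1+c))}\bigr)=\bigl(\kappa_{1+c}(\lambda,0)-H(\lambda)\bigr)/v(1)$ with $H(\lambda)=\kappa_1(\lambda,\infty)-v(1)$, is precisely the Duquesne--Wang formula $\Nb_1\bigl(e^{-\lambda\mb(B(\rho,1+c))}\bigr)=v(1)^{-1}\bigl(\kappa_1(\lambda,\infty)-\kappa_{1+c}(\lambda,0)\bigr)$ that the paper simply cites; you rederive it via the branching decomposition rather than quoting it, but from that point on both arguments expand $\kappa_{1+c}(\lambda,0)$ and $\kappa_1(\lambda,\infty)$ through the integral equation \eqref{eq:int_kappa} and invoke a Tauberian theorem. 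The only cosmetic difference is that the paper works out the linear coefficient of $\kappa_1(\lambda,\infty)$ explicitly as $\tfrac{\gamma-1}{2\gamma-1}$, whereas you argue smoothness at $0$ abstractly---either suffices, since all that matters is that $H$ contributes nothing at order $\lambda^\gamma$.
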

\begin{proof}
  The Laplace transform of $m(\rho,1+c)$ has been characterised by \citet[Lemma 2.1]{Duquesne.Wang-2014}:
  \begin{align*}
    \Nb_1\pthb{\e^{-\lambda \mb(\rho,1+c)}} = v(1)^{-1}\pthb{ \kappa_1(\lambda,\infty) - \kappa_{1+c}(\lambda,0) }.
  \end{align*}
  Owing to a Tauberian theorem (see \cite[Th. 8.1.6]{Bingham.Goldie.ea-1987}), it is sufficient to get a precise asymptotic expansion as $\lambda$ goes to zero.

  Let us begin with the first term $\kappa_1(\lambda,\infty)$ which solves, according to \eqref{eq:int_kappa}, $\int_{\kappa_1(\lambda,\infty)}^\infty \frac{\dt u}{u^\gamma-\lambda} = 1$.
  For any $\lambda>0$ sufficiently small, $\kappa_1(\lambda,\infty) \geq v(1) > \lambda^{1/\gamma}$. Hence, we may expand the previous expression into:
  \begin{align*}
    1 = \int_{\kappa_1(\lambda,\infty)}^\infty  u^{-\gamma} \bktbb{ \sum_{k=0}^\infty (\lambda u^{-\gamma})^{k} } \dt u.
  \end{align*}
  Consequently, noting that $\int_{\kappa_1(\lambda,\infty)}^\infty  u^{-\gamma} \dt u = (\gamma-1)^{-1} \kappa_1(\lambda,\infty)^{1-\gamma}$ and $v(1)=(\gamma-1)^{-1/(\gamma-1)}$,
  \begin{align*}
    \kappa_1(\lambda,\infty) = v(1)\brcbb{ 1 - \int_{\kappa_1(\lambda,\infty)}^\infty u^{-\gamma} \bktbb{ \sum_{k=1}^\infty (\lambda u^{-\gamma})^{k} } \dt u }^{-\tfrac{1}{\gamma-1}}.
  \end{align*}
  Observing that $ \int_{\kappa_1(\lambda,\infty)}^\infty u^{-\gamma} \bktB{ \sum_{k=1}^\infty (\lambda u^{-\gamma})^{k} } \dt u = \bigo(\lambda)$, the Taylor expansion of previous term yields
  \begin{align*}
    \kappa_1(\lambda,\infty)
    &= v(1)\brcbb{ 1 + \frac{1}{\gamma-1} \int_{\kappa_1(\lambda,\infty)}^\infty u^{-\gamma} \bktbb{ \sum_{k=1}^\infty (\lambda u^{-\gamma})^{k} } \dt u } + \bigo(\lambda^2) \\
    &= v(1)\brcbb{ 1 + \frac{\lambda}{\gamma-1} \int_{\kappa_1(\lambda,\infty)}^\infty u^{-2\gamma} \dt u } + \bigo(\lambda^2).
  \end{align*}
  Re-injecting the estimate $\kappa_1(\lambda,\infty) =  v(1)\pthb{ 1 + \bigo(\lambda) }$ in the previous expression, we finally obtain:
  \begin{align}  \label{eq:asymp_kappa_infty}
    \kappa_1(\lambda,\infty) = v(1) + \frac{\gamma-1}{2\gamma-1}\lambda + \bigo(\lambda^2).
  \end{align}
  Note that $\tfrac{\gamma-1}{2\gamma-1} < 1$.

  Let us now investigate the component $\kappa_{1+c}(\lambda,0)$. Similarly, it solves $\int_0^{\kappa_1(\lambda,0)} \frac{\dt u}{\lambda-u^\gamma} = 1+c$, and observing that $\lambda^{1/\gamma} > \kappa_1(\lambda,0)$, we get
  \begin{align*}
    1+c = \lambda^{-1} \int_0^{\kappa_1(\lambda,0)} \bktbb{ \sum_{k=0}^\infty (\lambda^{-1} u^{\gamma})^{k} } \dt u.
  \end{align*}
  Consequently,
  \begin{align*}
    \kappa_1(\lambda,0) = (1+c)\lambda - \sum_{k=1}^\infty \frac{1}{\gamma k + 1} \lambda^{-k}\kappa_1(\lambda,0)^{\gamma k + 1} = (1+c)\lambda + \littleo(\lambda),
  \end{align*}
  noting that $\lambda^{-k}\kappa_1(\lambda,0)^{\gamma k} \lessapprox \lambda^{(\gamma-1)k}$
  Then, re-injecting the second estimate into the first equality, we get
  \begin{align}  \label{eq:asymp_kappa_0}
    \kappa_1(\lambda,0) = (1+c)\lambda -  \frac{(1+c)^{\gamma+1}}{\gamma+1} \lambda^\gamma + \littleo(\lambda^\gamma).
  \end{align}
  Combining the asymptotic expansions \eqref{eq:asymp_kappa_infty} and \eqref{eq:asymp_kappa_0}, there exists a constant $C(c,\gamma)>0$ such that
  \begin{align*}
    \Nb_1\pthb{\e^{-\lambda \mb(\rho,1+c)}} = 1 - C(c,\gamma) \lambda + \frac{(1+c)^{\gamma+1}}{(\gamma+1)v(1)} \lambda^\gamma + \littleo(\lambda^\gamma).
  \end{align*}
  A Tauberian theorem presented by \citet[Th. 8.1.6]{Bingham.Goldie.ea-1987} then entails the desired result.
\end{proof}

To end this technical section, let us introduce a few notations that will be extensively used in the rest of the article:
\[
  \forall x\in\ivoo{0,1};\quad g(x) \eqdef \pthb{\log x^{-1} }^{-1} \quad\text{and}\quad h(x) \eqdef \pthb{\log\log x^{-1} }^{-1}.
\]
Note that $h(x) = g(g(x))$.
For any $\delta>0$, we will denote by $\Di(\delta)$ the following collection of subintervals: $\Di(\delta) \eqdef \brcb{ \ivff{k\delta,(k+1)\delta} : k\in\N}$. In addition, we will use the notation $\Di_n$ for dyadic intervals, i.e. $\Di_n\eqdef\Di(2^{-n})$. Finally, for any set $F$, we define $\Di_n(F)$ as $\Di_n(F)\eqdef\brc{ I\in\Di_n : I\cap F\neq \vset }$.


\section{Multifractal spectrum of stable trees}  \label{sec:stable_trees_spectrum}

This section is devoted to the proof of Proposition~\ref{prop:scaling_exponents} and Theorems~\ref{th:levy_tree_upper_spectrum1}, \ref{th:levy_tree_upper_spectrum2} and \ref{th:levy_tree_upper_spectrum3}. As typically in the fractal literature, we divide the proofs of fractal dimensions into two parts corresponding to the upper and lower bounds.

We recall a classic Chernoff bound that will be extensively used in the rest of the article: if X is a Poisson distribution parametrised by $\lambda$:
\begin{align}  \label{eq:chernoff_poisson}
  \pr{ X \leq y } \leq \e^{-\lambda} (\e\lambda)^y y^{-y} \quad\text{and}\quad \pr{ X \geq x } \leq \e^{-\lambda} (\e\lambda)^x x^{-x},
\end{align}
where $y\leq \lambda\leq x$.

\subsection{Mass measure and local time scaling exponents}  \label{ssec:stable_trees_exponents}

In this section, we aim to investigate the connections between the different scaling exponents presented in the introduction. For the sake of readability, we divide the proof of Proposition~\ref{prop:scaling_exponents} into two parts. In order to the relation~\eqref{eq:ltime_mass_exponents}, we study in the following lemma a tail behaviour on the joint law of the supremum of the local time and the mass measure.
\begin{lemma}  \label{lemma:scaling_exponents0}
  Suppose $\gamma\in\ivoo{0,2}$ and $\eps>0$. Then, there exists $\delta_0>0$ such for any $\delta\in\ivoo{0,\delta_0}$ and $h\in\ivffb{0,\tfrac{1}{\gamma-1}}$,
  \begin{align}
    \Nb\pthB{ \sup_{\ivff{\delta,2\delta}} \bk{\ell^u} \geq \delta^{h+\eps} \text{ and } \mb(B(\rho,3\delta)) < \delta^{h+1+3\eps} } \leq c_0\, \delta^{-\tfrac{1}{\gamma-1}-\eps} \exp\pthb{ -c_1\,\delta^{-\eps} },
  \end{align}
  where the constants $c_0$ and $c_1$ are independent of $\delta$ and $h$.
\end{lemma}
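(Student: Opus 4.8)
The plan is to convert the event into the statement that the total local‑time process $u\mapsto\bk{\ell^u}$, having reached height $\delta^{h+\eps}$ somewhere in $\ivff{\delta,2\delta}$, collapses within a \emph{short} time window of length $\sim\delta^{1+2\eps}$, and then to control that collapse by the hitting‑time estimates for stable CSBPs from the previous lemmas. The crucial point will be that the $2\eps$ gap between the local‑time exponent $h+\eps$ and the mass exponent $h+1+3\eps$ has to be spent on shrinking that window rather than on the height; this is exactly what makes the bound uniform over $h\in\ivff{0,\tfrac1{\gamma-1}}$.

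First I would record that $\mb\pth{B(\rho,3\delta)}=\int_0^{3\delta}\bk{\ell^a}\,\dt a$, since $\ell^a$ is carried by $\Ti(a)\subset B(\rho,3\delta)$ whenever $a<3\delta$. Then I set $\tau\eqdef 4\,\delta^{1+2\eps}$, take $\delta_0\in\ivoo{0,1}$ small enough that $\tau\leq\delta$ for $\delta\leq\delta_0$ (e.g.\ $\delta_0=4^{-1/(2\eps)}$), and introduce the stopping time $\Theta\eqdef\inf\brcb{u\geq\delta:\bk{\ell^u}\geq\delta^{h+\eps}}$. On $\brcb{\sup_{\ivff{\delta,2\delta}}\bk{\ell^u}\geq\delta^{h+\eps}}$, since the càdlàg process $u\mapsto\bk{\ell^u}$ has only positive jumps (hence attains its supremum over a compact interval), one has $\Theta\in\ivff{\delta,2\delta}$, $\bk{\ell^\Theta}\geq\delta^{h+\eps}$ by right‑continuity, and $\Theta+\tau\leq3\delta$; if moreover $\mb\pth{B(\rho,3\delta)}<\delta^{h+1+3\eps}$, then
\[
  \delta^{h+1+3\eps}>\int_\Theta^{\Theta+\tau}\bk{\ell^a}\,\dt a\geq\tau\,\inf_{a\in\ivff{\Theta,\Theta+\tau}}\bk{\ell^a},
\]
so $\inf_{\ivff{\Theta,\Theta+\tau}}\bk{\ell^a}<\delta^{h+1+3\eps}/\tau=\tfrac14\delta^{h+\eps}$. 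Hence the event of the lemma is contained in $\brcb{\Theta\in\ivff{\delta,2\delta}}\cap\brcb{\inf_{\ivff{\Theta,\Theta+\tau}}\bk{\ell^u}\leq\tfrac14\delta^{h+\eps}}$.

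Next I would condition at level $\delta$: this event forces $h(\Ti)>\delta$, so $\Nb\pth{\,\cdot\,}=v(\delta)\,\Nb_\delta\pth{\,\cdot\,}$ on it, and under $\Nb_\delta$ the shifted process $Y_s\eqdef\bk{\ell^{\delta+s}}$ is, given $\bk{\ell^\delta}$, a stable CSBP by the Ray--Knight theorem and the branching property, with $\Theta_Y\eqdef\Theta-\delta$ a stopping time and $\brcb{\Theta\in\ivff{\delta,2\delta}}=\brcb{\Theta_Y\leq\delta}$. Applying the strong Markov property at $\Theta_Y$, using $Y_{\Theta_Y}\geq\delta^{h+\eps}$ on $\brcb{\Theta_Y\leq\delta}$, the fact that $x\mapsto\mathbb{P}_x\bigl(\inf_{\ivff{0,\tau}}X_u\leq y\bigr)$ is non‑increasing (additivity of CSBPs in the starting value), and $\Nb_\delta\pth{\Theta_Y\leq\delta}\leq1$, I obtain
\[
  \Nb\bigl(\text{the event}\bigr)\leq v(\delta)\,\mathbb{P}_{\delta^{h+\eps}}\!\Bigl(\inf_{u\in\ivff{0,\tau}}X_u\leq\tfrac14\delta^{h+\eps}\Bigr).
\]
Finally I invoke Lemma~\ref{lemma:csbp_inf_lower_tail} with $x=\delta^{h+\eps}$, $y=\tfrac14\delta^{h+\eps}$ and horizon $\tau$: since $\bigl(1-\tfrac1\gamma\bigr)\bigl(1+\tfrac1{\gamma-1}\bigr)=1$, its right‑hand side is at most $\exp\bigl(-c_\gamma v(\tau)\delta^{h+\eps}\bigr)$, and $v(\tau)=\bigl(4(\gamma-1)\bigr)^{-1/(\gamma-1)}\delta^{-(1+2\eps)/(\gamma-1)}$, so the probability is $\leq\exp\bigl(-c\,\delta^{\,h+\eps-(1+2\eps)/(\gamma-1)}\bigr)$. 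Writing $h+\eps-\tfrac{1+2\eps}{\gamma-1}=\bigl(h-\tfrac1{\gamma-1}\bigr)+\eps\tfrac{\gamma-3}{\gamma-1}$ and using $h\leq\tfrac1{\gamma-1}$ together with $\tfrac{3-\gamma}{\gamma-1}\geq1$ (true because $\gamma\leq2$), this exponent is $\leq-\eps$, hence $\delta^{\,h+\eps-(1+2\eps)/(\gamma-1)}\geq\delta^{-\eps}$ and the probability is $\leq\exp(-c_1\delta^{-\eps})$; combined with $v(\delta)\leq c_0\,\delta^{-1/(\gamma-1)-\eps}$ this gives the assertion with $c_0,c_1$ depending only on $\gamma$.

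The step I expect to be the real obstacle is the reduction in the second paragraph, and specifically the choice of the horizon $\tau$. Keeping the whole slab $\ivff{\delta,2\delta}$ instead would only yield $\Nb(\text{event})\lesssim v(\delta)\exp\bigl(-c\,\delta^{\,h+\eps-1/(\gamma-1)}\bigr)$, which degenerates as $h\uparrow\tfrac1{\gamma-1}$ — precisely the regime where $\bk{\ell^u}$ sits \emph{below} its typical scale $\delta^{1/(\gamma-1)}$ and can vanish on a time scale much shorter than $\delta$. Realising that the spare $2\eps$ in the exponents must be converted into the window length $\tau\sim\delta^{1+2\eps}$, so that $v(\tau)\,\delta^{h+\eps}$ stays $\gtrsim\delta^{-\eps}$ uniformly in $h$, is the one genuinely new idea; the level‑$\delta$ conditioning, the strong Markov reduction, and the CSBP infimum bound are then routine given Lemmas~\ref{lemma:local_time_tail} and~\ref{lemma:csbp_inf_lower_tail}.
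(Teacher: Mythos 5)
Your proof is correct and follows essentially the same strategy as the paper: isolate the stopping time at which the local time first reaches height $\delta^{h+\eps}$, observe that a small ball mass forces the local time to collapse within a short window of length $\tau$, and then control that collapse probability by the CSBP infimum estimate (Lemma~\ref{lemma:csbp_inf_lower_tail}). The core insight you identify — that the spare $2\eps$ must be spent on $\tau$ so that $v(\tau)\,\delta^{h+\eps}\gtrsim\delta^{-\eps}$ uniformly in $h$ — is exactly what the paper exploits.

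Where you diverge is in the packaging and some technical choices. The paper routes through Lemma~\ref{lemma:local_time_sup_inf_tail}, whose stopping time $\Theta(2\delta)=\inf\{u\geq 0:\bk{\ell^u}\geq\delta^{h+\eps}\}$ is allowed to lie anywhere in $\ivff{0,2\delta}$; handling that requires the sup-tail estimate (Lemma~\ref{lemma:csbp_sup_upper_tail}) and a monotone-convergence argument to take $a\downarrow 0$, and it yields the extra prefactor $\bigl(1-\exp(-c_1\Lambda v(\delta))\bigr)^{-1}\asymp\delta^{-\eps}$. Your stopping time is constrained to $\Theta\geq\delta$, so you can condition at level $\delta$ directly, use $\Nb_\delta(\cdot)\leq 1$ trivially, and go straight from strong Markov to Lemma~\ref{lemma:csbp_inf_lower_tail}; this gives a slightly tighter prefactor $v(\delta)=c\,\delta^{-1/(\gamma-1)}$ rather than $\delta^{-1/(\gamma-1)-\eps}$, and avoids Lemma~\ref{lemma:csbp_sup_upper_tail} entirely. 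You also take $\tau=4\delta^{1+2\eps}$ uniformly, while the paper uses the $h$-dependent $\tau=\delta^{(h+2\eps)(\gamma-1)}$; both choices are bounded above by $\delta$ on the range of $h$ in question and deliver the same $\exp(-c\,\delta^{-\eps})$ — yours is slightly cleaner since the $h$-dependence disappears. One small technicality both arguments implicitly rely on: $\Theta\leq 2\delta$ on the event $\{\sup_{\ivff{\delta,2\delta}}\bk{\ell^u}\geq\delta^{h+\eps}\}$ requires the sup to be attained; you justify it via the absence of negative jumps, which is fine (the paper leaves this implicit).
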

\begin{proof}
  The proof mainly relies on the estimate presented in Lemma~\ref{lemma:local_time_sup_inf_tail}. Following the notations introduced in the latter, we set $\Lambda(2\delta) = \delta^{h+\eps} / 2$ and $\tau = \delta^{(h+2\eps)(\gamma-1)} \geq \delta^{1+2\epsilon(\gamma-1)}$. To begin with, we observe that
  \begin{align*}
    &\Nb\pthB{ \sup_{\ivff{\delta,2\delta}} \bk{\ell^u} \geq \delta^{h+\eps} \cap \mb(B(\rho,3\delta)) < \delta^{h+1+3\eps} } \\
    &\leq \Nb\pthB{ {\Theta(2\delta) \leq 2\delta} \cap \inf_{\Theta(2\delta)+\ivff{0,\tau}} \bk{\ell^u} \leq \Lambda(2\delta) } \\
    &+\Nb\pthB{ {\Theta(2\delta) \leq 2\delta} \cap \inf_{\Theta(2\delta)+\ivff{0,\tau}} \bk{\ell^u} > \Lambda(2\delta) \cap \mb(B(\rho,3\delta)) < \delta^{h+1+3\eps} }.
  \end{align*}
  One can easily note that the second term is equal to zero. Indeed, assuming that $\Theta(2\delta) \leq 2\delta$ and $\inf_{\Theta(2\delta)+\ivff{0,\tau}} \bk{\ell^u} > \Lambda(2\delta)$, we get for any $\delta$ sufficiently small
  \begin{align*}
    \mb(B(\rho,3\delta)) = \int_{\ivfo{0,3\delta}} \dt u \bk{\ell^u} \geq \delta^{1+2\epsilon(\gamma-1)} \cdot \delta^{h+\eps} / 2 \geq \delta^{1+h+3\eps}.
  \end{align*}
  On the other hand, we deduce from Lemma~\ref{lemma:local_time_sup_inf_tail} a bound on the first term:
  \begin{align*}
    \Nb\pthB{ {\Theta(2\delta) \leq 2\delta} \cap \inf_{\Theta(2\delta)+\ivff{0,\tau}} \bk{\ell^u} \leq \Lambda(2\delta) }
    &\leq c_0\, v(\delta) \pthB{1 - \exp\pthB{-c_1\,\delta^{h+\eps-\tfrac{1}{\gamma-1}} } }^{-1} \exp\pthb{ -c_2\,\delta^{-\eps} } \\
    &\leq c_3\, \delta^{-\tfrac{1}{\gamma-1}-\eps} \exp\pthb{ -c_2\,\delta^{-\eps} },
  \end{align*}
  where the previous constants are independent of $\delta$ and $h$.
\end{proof}

We may now prove the first part of Proposition~\ref{prop:scaling_exponents}.
\begin{proof}[Proof of Proposition~\ref{prop:scaling_exponents} - Part one]
  Set $h\leq\tfrac{1}{\gamma-1}$, $\eps>0$ and define for any $k,n\in\N$ the r.v.
  \[
    Z(k,n,h)= \#\brcB{ \Ti_\sigma\in\Tbb(k\delta_n,\delta_n) : \sup_{\ivff{\delta_n,2\delta_n}} \bk{\ell^u}(\Ti_\sigma) \geq \delta_n^{h+\eps} \text{ and } \mb(\Ti_\sigma\cap B(\sigma,3\delta_n)) < \delta_n^{h+1+3\eps} }.
  \]
  The branching property of stable trees entails that under $\Nb_{k\delta_n}$ and given $\Gi_{k\delta_n}$, $Z(k,n,h)$ is a Poisson random variable. Moreover, according to Lemma~\ref{lemma:scaling_exponents0}, its parameter $\lambda_{k,n}$ is bounded by:
  \begin{align*}
    \lambda_{k,n} \leq c_0\, \bk{\ell^{k\delta_n}} \delta_n^{-\tfrac{1}{\gamma-1}-\eps} \exp\pthb{ -c_1\,\delta_n^{-\eps} }.
  \end{align*}
  As a consequence,
  \begin{align*}
    \Nb_{k\delta_n}\pthcb{Z(k,n,h)\geq 1}{\Gi_{k\delta_n}} \leq c_0\, \bk{\ell^{k\delta_n}} \delta_n^{-\tfrac{1}{\gamma-1}-\eps} \exp\pthb{ -c_1\,\delta_n^{-\eps} }.
  \end{align*}
  Recalling that $\Nb_{k\delta_n}\pthb{\bk{\ell^{k\delta_n}}} = v(k\delta_n)^{-1}$, we get $\Nb\pthb{Z(k,n,h)\geq 1} \leq c_0\, \delta_n^{-\tfrac{1}{\gamma-1}-\eps} \exp\pthb{ -c_1\,\delta_n^{-\eps} }$.
  Therefore, for any fixed level $b>0$
  \begin{align*}
    \sum_{n\in\N} \sum_{k\delta_n\in\ivoo{0,b}} \Nb_{k\delta_n}\pthb{Z(k,n,h)\geq 1} \leq  c_0\,b\, \sum_{n\in\N} \delta_n^{-\tfrac{1}{\gamma-1}-1-\eps} \exp\pthb{ -c_1\,\delta_n^{-\eps} } < \infty,
  \end{align*}
  and Borel--Cantelli lemma entails: $\Nb$-a.e. for any $n$ sufficiently large,
  \begin{align*}
    \forall k\delta_n\in\ivoo{0,b},\ \forall \Ti_\sigma\in\Tbb(k\delta_n,\delta_n);\quad \sup_{\ivff{\delta_n,2\delta_n}} \bk{\ell^u}(\Ti_\sigma) \geq \delta_n^{h+\eps} \Longrightarrow \mb(\Ti_\sigma\cap B(\sigma,3\delta_n)) \geq \delta_n^{h+1+3\eps}.
  \end{align*}
  We may now prove the first statement in Proposition~\ref{prop:scaling_exponents}. Suppose $\sigma_0\in\Ti$ such that $\alpha_\ell(\sigma_0,\Ti) \leq h$, with $h\leq\tfrac{1}{\gamma-1}$. In addition, we set $a=d(\rho,\sigma_0)$ and suppose that $a\in\ivoo{0,b}$. Then, the definition of the local time scaling exponent induce that for an infinite number of scales $\delta_n$, there exist $k\geq 1$ and $\Ti_\sigma\in\Tbb(k\delta_n,\delta_n)$ such that $a\in\ivff{\delta_n,2\delta_n}$ and
  \begin{align*}
    \sup_{\ivff{\delta_n,2\delta_n}} \bk{\ell^u}(\Ti_\sigma) \geq \ell^a\pthb{B(\sigma_0,\delta_n)} \geq \delta_n^{h+\eps}.
  \end{align*}
  Owing to the previous lemma, we get
  \begin{align*}
    \mb\pthb{B(\sigma_0,6\delta_n)} \geq \mb\pthb{\Ti_\sigma\cap B(\sigma,3\delta_n)} \geq \delta_n^{h+1+3\eps},
  \end{align*}
  proving that $\alpha_\mb(\sigma_0,\Ti) \leq 1+h+3\eps$. Taking the limit $\eps\rightarrow 0$ and a countable and dense collection of $h\in\ivffb{0,\tfrac{1}{\gamma-1}}$, we obtain the desired property.
\end{proof}

The proof of the second part in Proposition~\ref{prop:scaling_exponents} is quite similar, even though slightly more technical. As previously, we start by presenting a lemma investigating the tail behaviour on the join law of the mass measure and the local branching index. For the purpose of the proof, we use a definition of the branching index which generalises the form \eqref{eq:branching_exponent0} presented in the introduction: for any $\sigma\in\Ti$ and $\delta>0$,
\begin{align}  \label{eq:branching_exponent_ext}
  n_{b,\kappa}(\sigma,\delta) \eqdef \#\brcb{ \text{connected components diameter $>\kappa\delta$ in } \Ti\setminus \overline{B}(\sigma,\delta) }.
\end{align}
where $\kappa\geq 1$ is a fixed parameter.
\begin{lemma}  \label{lemma:scaling_exponents1}
  Suppose $\gamma\in\ivoo{0,2}$, $\kappa\geq 1$ and $\eps>0$. Then, for every $h\in \ivoob{-\infty,\tfrac{\gamma}{\gamma-1}-2\eps}$ and all $\delta$ sufficiently small
  \begin{align}  \label{eq:mmass_branching_tail0}
    \Nb\pthB{ \mb(B(\rho,\delta)) \geq \delta^{h+\eps} \text{ and } n_{b,\kappa}(\rho,\delta) \leq \delta^{h+2\eps-\tfrac{\gamma}{\gamma-1}} } \leq c_0\,v(\delta) \exp\pthb{-c_1\,\delta^{-\eps}}
  \end{align}
  and
  \begin{align}  \label{eq:mmass_branching_tail1}
    \Nb\pthB{ n_{b,\kappa}(\rho,\delta) \geq \delta^{h+\eps-\tfrac{\gamma}{\gamma-1}} \text{ and } \mb(B(\rho,2\delta)) \leq \delta^{h+2\eps} } \leq c_0\,v(\delta) \exp\pthb{-c_1\,\delta^{-\eps}}
  \end{align}
  where the constants $c_0$ and $c_1$ are independent of $h$ and $\delta$.
\end{lemma}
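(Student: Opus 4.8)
The plan is to treat the two bounds in parallel, since both compare the mass measure around the root to the number of "macroscopic" subtrees branching off near the root, and both can be reduced to the branching property plus a tail estimate for a single subtree. First I would fix the relevant scales: the natural threshold is $\delta$ itself, and I would look at the subtrees rooted on the segment $\iivff{\rho,\cdot}$ at heights in $\ivff{0,\delta}$ (or $\ivff{0,2\delta}$), of height $>\kappa\delta$, so that each such subtree contributes a connected component of diameter $>\kappa\delta$ in $\Ti\setminus\overline B(\rho,\delta)$ and contributes its mass to $\mb(B(\rho,2\delta))$. By the branching property, conditionally on the spinal local times $a\mapsto\bk{\ell^a}$ along $\iivff{\rho,\cdot}$, the subtrees rooted in $\ivff{0,\delta}$ and higher than $\kappa\delta$ form a Poisson point process; moreover the event $h(\Ti_\sigma)>\kappa\delta$ has $\Nb$-probability $v(\kappa\delta)\asymp\delta^{-1/(\gamma-1)}$, so $n_{b,\kappa}(\rho,\delta)$ is, up to boundary terms I would control separately, a Poisson variable with parameter of order $\bk{\ell^{[0,\delta]}}\cdot\delta^{-1/(\gamma-1)}$, which is typically of order $\delta^{\gamma/(\gamma-1)}\cdot\delta^{-1/(\gamma-1)}=\delta^{-1/(\gamma-1)}\cdot\delta$… wait, the typical spinal local time over a window of length $\delta$ is of order $\delta^{1/(\gamma-1)}$, giving parameter of order $\delta^{1/(\gamma-1)-1/(\gamma-1)}=1$; the exponent $\gamma/(\gamma-1)$ appearing in the statement is the exponent governing the probability that this count is anomalously large, exactly as in Lemma~\ref{lemma:local_time_sup_tails}.

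For \eqref{eq:mmass_branching_tail0} (large mass but few subtrees), I would argue that if $\mb(B(\rho,\delta))\geq\delta^{h+\eps}$ then, discarding the contribution of the truncated spine and of short subtrees (which is $O(\delta\cdot\sup_{\ivff{0,\delta}}\bk{\ell^u})$, hence negligible once $h<\gamma/(\gamma-1)$), a definite fraction of that mass must come from subtrees of height $>\kappa\delta$ rooted near $\rho$; since each such subtree carries mass at most of order $\sup_{\ivff{0,\delta}}\bk{\ell^u}\cdot\delta$ plus its own intrinsic mass $\mb(\Ti_\sigma\cap B(\sigma,\delta))$, having only $\delta^{h+2\eps-\gamma/(\gamma-1)}$ of them forces at least one of them to carry mass $\gtrsim\delta^{\gamma/(\gamma-1)-\eps}$. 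But a subtree of height $>\kappa\delta$ carrying mass $\gtrsim\delta^{\gamma/(\gamma-1)-\eps}$ is an event of superpolynomially small conditional probability: by self-similarity this is $\Nb_{\kappa\delta}(\mb(B(\rho,\kappa\delta))\geq\delta^{\gamma/(\gamma-1)-\eps})$, and rescaling via Lemma~\ref{lemma:mmass_tail} (with $c=0$, $\delta\leftrightarrow\kappa\delta$) shows this is at most of order $(\delta^{\gamma/(\gamma-1)-\eps}/\delta^{\gamma/(\gamma-1)})^{-\gamma}=\delta^{\gamma\eps}$ — only polynomially small, so this crude route is not enough. Instead I would use the \emph{left} tail of $\mb$ exactly as Lemma~\ref{lemma:scaling_exponents0} used Lemma~\ref{lemma:local_time_sup_inf_tail}: write the complementary event as "the spinal local time over $\ivff{\delta,2\delta}$ never reaches height $2\Lambda(\delta)$" on $\ivff{\Theta(\delta),\Theta(\delta)+\tau}$ plus "mass small despite local time staying above $\Lambda(\delta)$ on a window of length $\tau$" — the second piece is empty by the lower bound $\mb(B(\rho,\cdot))\geq\tau\Lambda(\delta)$, and the first is bounded via Lemma~\ref{lemma:local_time_sup_inf_tail} by $c_0 v(\delta)(1-e^{-c_1\Lambda(\delta)v(\delta)})^{-1}e^{-c_2 v(\tau)\Lambda(\delta)}$, which with $\Lambda(\delta)$ of order $\delta^{h+\eps}$ and $\tau$ of order $\delta^{(h+2\eps)(\gamma-1)}$ (so $v(\tau)\Lambda(\delta)\asymp\delta^{-\eps}$) gives the claimed bound $c_0 v(\delta)e^{-c_1\delta^{-\eps}}$.

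For \eqref{eq:mmass_branching_tail1} (many subtrees but small mass), I would run the dual argument: if $n_{b,\kappa}(\rho,\delta)\geq\delta^{h+\eps-\gamma/(\gamma-1)}$ then there are at least that many subtrees of height $>\kappa\delta$ rooted in $\ivff{0,\delta}$; by the branching property, conditionally on the spinal local times, this is a Poisson count, and by the Chernoff bound \eqref{eq:chernoff_poisson} a Poisson variable with parameter $\Lambda$ is unlikely to exceed $x\gg\Lambda$. I would split according to whether the spinal local time $\sup_{\ivff{0,\delta}}\bk{\ell^u}$ is large (say $\geq\delta^{h+2\eps-1}$, using $h<\gamma/(\gamma-1)$ so this exponent is $<1/(\gamma-1)$) — an event controlled by Lemma~\ref{lemma:local_time_sup_tails} — or small, in which case the Poisson parameter governing the subtree count is at most of order $\delta^{h+2\eps-1}\cdot\delta\cdot v(\kappa\delta)\asymp\delta^{h+2\eps-\gamma/(\gamma-1)}$, far below the threshold $\delta^{h+\eps-\gamma/(\gamma-1)}$, so \eqref{eq:chernoff_poisson} kills it; and if the count is genuinely achieved while the spinal local time stays moderate, then at least one of these $\kappa\delta$-high subtrees must carry large mass (mass is bounded below by the count times the typical minimal mass of a $\kappa\delta$-high subtree, which by Lemma~\ref{lemma:mmass_tail}'s left tail is of order $\delta^{\gamma/(\gamma-1)}$ up to a logarithmic-free polynomial factor), contradicting $\mb(B(\rho,2\delta))\leq\delta^{h+2\eps}$ — and the probability of even a single $\kappa\delta$-high subtree being mass-deficient on a positive fraction of them is again estimated through the left-tail exponential of Lemma~\ref{lemma:mmass_tail}. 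After taking expectation over $\Gi_{\delta}$ and using $\Nb(\bk{\ell^\delta}>0)=v(\delta)$, all the pieces combine to the stated $c_0 v(\delta)e^{-c_1\delta^{-\eps}}$.

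The main obstacle, as in Lemma~\ref{lemma:scaling_exponents0}, is organizing the bookkeeping so that the "boundary" contributions — the mass of the truncated spine $\tr(\delta)$, the contribution of short ($\leq\kappa\delta$) subtrees, and the discretization of the segment $\ivff{0,\delta}$ into scales at which the branching property applies — are all dominated by the single-subtree tail estimate, which requires the hypotheses $h<\gamma/(\gamma-1)-2\eps$ (so that exponents like $h+2\eps-1$ stay below the critical $1/(\gamma-1)$) to be used precisely at each juncture; once the right "bad" subtree event is isolated, Lemmas~\ref{lemma:local_time_sup_tails}, \ref{lemma:local_time_sup_inf_tail} and \ref{lemma:mmass_tail} do the quantitative work and the Chernoff bound \eqref{eq:chernoff_poisson} handles the Poisson count.
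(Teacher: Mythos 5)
There are two genuine gaps, both arising from the same confusion: you have transplanted the structure of Lemma~\ref{lemma:scaling_exponents0} onto this lemma without adjusting for the fact that the constraint here is on $n_{b,\kappa}$, not on the mass measure.

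For \eqref{eq:mmass_branching_tail0}, after observing that $\mb(B(\rho,\delta))\geq\delta^{h+\eps}$ forces $\Theta(\delta)\leq\delta$, you split as in Lemma~\ref{lemma:scaling_exponents0} into the cases $\inf_{\Theta+[0,\tau]}\bk{\ell^u}\leq\Lambda(\delta)$ and $\inf_{\Theta+[0,\tau]}\bk{\ell^u}>\Lambda(\delta)$, and then claim the second piece is \emph{empty} because $\mb(B(\rho,\cdot))\geq\tau\Lambda(\delta)$. That emptiness is what makes Lemma~\ref{lemma:scaling_exponents0} work, because there the bad event is $\mb$ \emph{small}; but here the bad event is $\mb$ \emph{large} together with $n_{b,\kappa}$ \emph{small}, so the observation $\mb\geq\tau\Lambda(\delta)$ contradicts nothing. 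Your sketch never actually uses the hypothesis $n_{b,\kappa}(\rho,\delta)\leq\delta^{h+2\eps-\gamma/(\gamma-1)}$. The missing step is the one you flagged abstractly in your closing paragraph but never deployed: on the event that the local time stays above $\Lambda(\delta)\asymp\delta^{h+\eps-1}$ (note the exponent should be $h+\eps-1$, not $h+\eps$ as you wrote — the local time lives one power of $\delta$ below the mass), one has $\bk{\ell^\delta}\gtrsim\delta^{h+\eps-1}$, and the branching property makes $Z(\delta,\kappa\delta)=n_{b,\kappa}(\rho,\delta)$ conditionally Poisson with parameter $\bk{\ell^\delta}v(\kappa\delta)\gtrsim\delta^{h+\eps-\gamma/(\gamma-1)}$, which exceeds the threshold $\delta^{h+2\eps-\gamma/(\gamma-1)}$ by a factor $\delta^{-\eps}$; the lower-tail Chernoff bound \eqref{eq:chernoff_poisson} then yields the $\exp(-c\delta^{-\eps})$ rate.

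For \eqref{eq:mmass_branching_tail1}, your split on whether $\sup_{[0,\delta]}\bk{\ell^u}\gtrless\delta^{h+2\eps-1}$ is the right idea, but the case "sup large" cannot be \emph{controlled} by Lemma~\ref{lemma:local_time_sup_tails} alone: that lemma only gives the polynomial bound $\lesssim\delta^{\gamma/(\gamma-1)-\gamma(h+2\eps-1)}$, which is far from the required $\exp(-c\delta^{-\eps})$. You must combine "local time large" with the second hypothesis "$\mb(B(\rho,2\delta))\leq\delta^{h+2\eps}$" — and this is exactly where the hitting-time decomposition and Lemma~\ref{lemma:local_time_sup_inf_tail} enter: if the local time hits $2\Lambda(\delta)$ and \emph{stays} above $\Lambda(\delta)$ on a window of length $\delta$, the mass is $\geq\delta\Lambda(\delta)$, contradicting "mass small"; and the probability it drops back is superexponentially small by Lemma~\ref{lemma:local_time_sup_inf_tail}. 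Your alternative route (many $\kappa\delta$-high subtrees, each typically of mass $\delta^{\gamma/(\gamma-1)}$, so several must be mass-deficient, use the exponential left tail of $\mb$) is an interesting genuinely different strategy, but as stated it is a one-line heuristic: you would need to quantify "a positive fraction must be mass-deficient" via Markov on the average subtree mass, then control the number of mass-deficient subtrees conditionally given the branching property, which is several non-trivial steps you have not supplied; it would also produce the exponent $\delta^{-\eps(\gamma-1)}$ rather than $\delta^{-\eps}$ unless you tune parameters. The paper's approach via the spinal local time is cleaner for both bounds.
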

\begin{proof}
  Let us set $\eps>0$ and $h\in\ivoob{-\infty,\tfrac{\gamma}{\gamma-1}-2\eps}$, and investigate the first inequality.  As we aim to apply as well Lemma~\ref{lemma:local_time_sup_inf_tail}, we also set, following the notations in the latter, $\Lambda(\delta) \eqdef \delta^{h-1+\eps}/2$ and $\tau \eqdef \delta$. Then, we may simply observe that $\mb(B(\rho,\delta)) \geq \delta^{h+\eps}$ implies that $\Theta(\delta)\eqdef\inf\brc{u:\bk{\ell^u}\geq 2\Lambda(\delta)} \leq \delta$. Therefore, the left hand term in \eqref{eq:mmass_branching_tail0} is upper bounded by
  \begin{align*}
    \Nb\pthB{ \Theta(\delta)\leq\delta \cap n_{b,\kappa}(\rho,\delta) \leq \delta^{h+2\eps-\tfrac{\gamma}{\gamma-1}} }
    &\leq \Nb\pthB{ \Theta(\delta)\leq\delta \cap \inf_{\Theta(\delta)+\ivff{0,\delta}} \bk{\ell^u} \leq \Lambda(\delta) } \\
    &+ \Nb\pthB{ \Theta(\delta)\leq\delta \cap \inf_{\Theta(\delta)+\ivff{0,\delta}} \bk{\ell^u} > \Lambda(\delta) \cap n_{b,\kappa}(\rho,\delta) \leq \delta^{h+2\eps-\tfrac{\gamma}{\gamma-1}} }.
  \end{align*}
  Noting that $\Lambda(\delta)v(\delta) \geq c\delta^{-\eps}$, Lemma~\ref{lemma:local_time_sup_inf_tail} then provides a bound on the first term:
  \begin{align*}
     \Nb\pthB{ \Theta(\delta)\leq\delta \cap \inf_{\Theta(\delta)+\ivff{0,\delta}} \bk{\ell^u} \leq \Lambda(\delta) } \leq c_0\,v(\delta)\exp\pthb{-c_1\,\delta^{-\eps}}.
  \end{align*}
  On the other hand, the second term is itself bounded by $\Nb\pthb{ \bk{\ell^{\delta}} > \Lambda(\delta) \cap Z(\delta,\kappa\delta) \leq \delta^{h+2\eps-\tfrac{\gamma}{\gamma-1}} }$ as we observe that $Z(\delta,\kappa\delta)$ and $n_{b,\kappa}(\rho,\delta)$ coincide. Recall that that given $\Gi_\delta$, $Z(\delta,\kappa\delta)$ is a Poisson random variable parametrised by $\bk{\ell^\delta}v(\kappa\delta)$. Furthermore, conditionally on the event $\bk{\ell^{\delta}} > \Lambda(\delta)$, $\bk{\ell^\delta}v(\kappa\delta) \geq c\,\delta^{-\eps}\cdot\delta^{h+2\eps-\tfrac{\gamma}{\gamma-1}}$.
  As a consequence, Chernoff bound~\eqref{eq:chernoff_poisson} entails
  \begin{align*}
    \Nb\pthB{ \bk{\ell^{\delta}} > \tfrac{1}{2}\delta^{h-1+\eps} \cap Z(\delta,\kappa\delta) \leq \delta^{h+2\eps-\tfrac{\gamma}{\gamma-1}} }
    &= v(\delta) \Nb_\delta\pthB{ \bk{\ell^{\delta}} > \tfrac{1}{2}\delta^{h-1+\eps} \Nb\pthcB{Z(\delta,\kappa\delta) \leq \delta^{h+2\eps-\tfrac{\gamma}{\gamma-1}}}{\Gi_\delta} } \\
    &\leq c_0\,v(\delta) \exp\pthb{-c_1\,\delta^{-\eps}},
  \end{align*}
  therefore concluding the first part of the proof.\vsp

  The proof of the second inequality is analogue to the first one. Begin by observing:
  \begin{align*}
    \Nb\pthB{ n_{b,\kappa}(\rho,\delta) \geq \delta^{h+\eps-\tfrac{\gamma}{\gamma-1}} \cap \mb(B(\rho,2\delta)) \leq \delta^{h+2\eps} }
    &\leq \Nb\pthB{ Z(\delta,\kappa\delta) \geq \delta^{h+\eps-\tfrac{\gamma}{\gamma-1}} \cap \bk{\ell^\delta} \leq \delta^{h+2\eps-1} } \\
    &+ \Nb\pthB{ \bk{\ell^\delta} > \delta^{h+2\eps-1} \cap \mb(B(\rho,2\delta)) \leq \delta^{h+2\eps} }.
  \end{align*}
  Chernoff bound provides similarly a bound on the first term:
  \begin{align*}
    \Nb\pthB{ Z(\delta,\kappa\delta) \geq \delta^{h+\eps-\tfrac{\gamma}{\gamma-1}} \cap \bk{\ell^\delta} \leq \delta^{h+2\eps-1} } \leq v(\delta)\exp\pthb{-c_1\,\delta^{-\eps}}.
  \end{align*}
  Still setting $\Lambda(\delta) \eqdef \delta^{h-1+\eps}/2$ and $\tau \eqdef \delta$, and using the notation of Lemma~\ref{lemma:local_time_sup_inf_tail}, the second part is bounded by
  \begin{align*}
    \Nb\pthb{ \Theta(\delta) \leq \delta \cap \mb(B(\rho,2\delta)) \leq \delta^{h+2\eps} }
    &\leq \Nb\pthB{ \Theta(\delta) \leq \delta \cap \inf_{\Theta(\delta)+\ivff{0,\delta}} \bk{\ell^u} \leq \Lambda(\delta) } \\
    &+ \Nb\pthB{ \Theta(\delta) \leq \delta  \cap \inf_{\Theta(\delta)+\ivff{0,\delta}} \bk{\ell^u} > \Lambda(\delta) \cap \mb(B(\rho,2\delta)) \leq \delta^{h+2\eps} }.
  \end{align*}
  Conditionally on the event $\brc{\Theta(\delta) \leq \delta}$,
  \begin{align*}
    \mb(B(\rho,2\delta)) \geq \int_{\Theta(\delta)+\ivff{0,\delta}} \bk{\ell^u}\dt u \geq \delta \cdot \inf_{\Theta(\delta)+\ivff{0,\delta}} \bk{\ell^u},
  \end{align*}
  therefore proving the second term is null. Finally, the first one is bounded by $c_0\,v(\delta)\exp\pthb{-c_1\,\delta^{-\eps}}$ using Lemma~\ref{lemma:local_time_sup_inf_tail}.
\end{proof}

In order to prove the equivalence between the mass measure and branching exponents, we aim to use of the previous estimates on a finite collection of vertices in the tree that are analogues of dyadic numbers. A construction of such a collection has already been presented by \citet{Duquesne.LeGall-2005}. Nevertheless, in order to make use of the re-rooting invariance principle, one needs to pick uniformly vertices in the tree, making the former construction inappropriate for our purpose. As a consequence, we study in the next lemma a few properties of a collection of vertices uniformly picked on the tree.
\begin{lemma}  \label{lemma:unif_dyadic_vertices}
  Suppose $\gamma\in\ivoo{0,2}$ and $\eps>0$. $\Nb$-a.e., for every $n\in\N$, let $\Sigma_n \eqdef \brcb{\sigma_{1,n}, \sigma_{2,n},\dotsc}\subset\Ti$ be a finite collection of independent random vertices such that for every $n\geq 1$
  \begin{enumerate}[(i)]
    \item $\#{\Sigma_n} = \ceilB{2^{\tfrac{\gamma}{\gamma-1}n+\eps n}}$ and $\Sigma_n\subset\Sigma_{n+1}$;
    \item for every $\sigma_{i,n}\in\Sigma_n$, $\sigma_{i,n}$ follows the uniform distribution $\tfrac{\mb(\dt\sigma)}{\mb(\Ti)}$ on the tree.
  \end{enumerate}
  Then, $\Nb(\dt\Ti)$-a.e., for any $n$ sufficiently large,
  \begin{align*}
    \forall \sigma\in\Ti, \ \exists \sigma_{i,n}\in\Sigma_n;\quad d\pth{\sigma,\sigma_{i,n}} \leq 2^{-n+1}.
  \end{align*}
\end{lemma}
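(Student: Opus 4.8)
The plan is a covering argument combined with a conditional Borel--Cantelli lemma, the point being that the number of balls of radius $2^{-n}$ needed to cover $\Ti$ grows like $2^{\frac{\gamma}{\gamma-1}n}$ up to a polynomial factor, so $\#\Sigma_n = \ceilB{2^{\frac{\gamma}{\gamma-1}n+\eps n}}$ uniform points will eventually hit every such ball. Since $\Nb$ is an infinite measure but $\Nb\pthb{h(\Ti)>\eta}=v(\eta)\in\ivoo{0,\infty}$ for every $\eta>0$, and $\Nb$-a.e. tree satisfies $h(\Ti)>\eta$ for some rational $\eta$, it suffices to argue under the probability measure $\Nb_\eta\pth{\dt\Ti}\eqdef\Nb\pthc{\dt\Ti}{h(\Ti)>\eta}$ for each rational $\eta>0$ and then take a countable union; throughout, I would first condition on $\Ti$ and use that, given $\Ti$, the $\#\Sigma_n$ vertices of $\Sigma_n$ are i.i.d.\ with law $\mb(\dt\sigma)/\mb(\Ti)$. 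Recall also $\mb(\Ti)=\zeta<\infty$ $\Nb$-a.e.

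First I would control the covering numbers at dyadic scales. For each $n$ let $V_n=\brc{v_{1,n},\dots,v_{M_n,n}}$ be a maximal $2^{-n}$-separated subset of $\Ti$ (finite, by compactness); then the balls $B(v_{j,n},2^{-n})$ cover $\Ti$ while the balls $B(v_{j,n},2^{-n-1})$ are pairwise disjoint. By the uniform small-ball lower bound \eqref{eq:tree_small_balls1} of Duquesne--Wang, $\Nb$-a.e.\ there is $n_0(\Ti)$ such that $\mb(B(\sigma,r))\geq\tfrac12 k_\gamma f_\gamma(r)$ for all $\sigma\in\Ti$ and $r\leq 2^{-n_0}$; hence for $n\geq n_0$,
\[
  M_n\cdot\tfrac12 k_\gamma f_\gamma(2^{-n-1})\ \leq\ \sum_{j\leq M_n}\mb\pthb{B(v_{j,n},2^{-n-1})}\ \leq\ \mb(\Ti)=\zeta,
\]
so that $M_n\leq C(\Ti)\,2^{\frac{\gamma}{\gamma-1}n}n^{\frac{\gamma}{\gamma-1}}$ for a finite random constant $C(\Ti)$ depending only on $\zeta,k_\gamma,\gamma$. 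Likewise, for $n\geq n_0$ set $p_n\eqdef\inf_{j\leq M_n}\mb(B(v_{j,n},2^{-n}))/\mb(\Ti)\geq\tfrac{k_\gamma}{2\zeta}f_\gamma(2^{-n})\geq c(\Ti)\,2^{-\frac{\gamma}{\gamma-1}n}n^{-\frac{\gamma}{\gamma-1}}$.

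Next, writing $N_n\eqdef\#\Sigma_n$, for each fixed $j$ the conditional probability that $B(v_{j,n},2^{-n})$ contains no point of $\Sigma_n$ is at most $(1-p_n)^{N_n}\leq\exp(-N_n p_n)$, so a union bound gives
\[
  \Nb_\eta\pthB{\exists\,j\leq M_n:\ B(v_{j,n},2^{-n})\cap\Sigma_n=\vset\ \Big|\ \Ti}\ \leq\ M_n\exp(-N_n p_n).
\]
Since $N_n p_n\geq c(\Ti)\,2^{\eps n}n^{-\frac{2\gamma}{\gamma-1}}$ while $M_n$ grows only polynomially times $2^{\frac{\gamma}{\gamma-1}n}$, the right-hand side is summable in $n$ for $\Nb$-a.e.\ $\Ti$. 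The conditional Borel--Cantelli lemma then yields, $\Nb$-a.e.\ on the enlarged space, an $n_1\geq n_0$ such that for every $n\geq n_1$ each ball $B(v_{j,n},2^{-n})$ meets $\Sigma_n$. Finally, for such $n$ and any $\sigma\in\Ti$, choose $j$ with $\sigma\in B(v_{j,n},2^{-n})$ (the balls cover $\Ti$) and $\sigma_{i,n}\in\Sigma_n\cap B(v_{j,n},2^{-n})$; then $d(\sigma,\sigma_{i,n})\leq d(\sigma,v_{j,n})+d(v_{j,n},\sigma_{i,n})\leq 2^{-n}+2^{-n}=2^{-n+1}$, which is the claim. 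Taking the union over rational $\eta>0$ concludes.

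The argument is essentially routine; the one input doing real work is the uniform small-ball lower bound \eqref{eq:tree_small_balls1}, which is used twice — once to bound the number of balls in the cover and once to bound the probability of a fixed ball being missed — and for which the much weaker, merely $\mb$-a.e.\ statement \eqref{eq:tree_small_balls0} would not suffice. The only other points needing a little care are the reduction from the infinite measure $\Nb$ to the probabilities $\Nb_\eta$ and the order of conditioning (condition on $\Ti$ first, then apply Borel--Cantelli in the auxiliary randomness), both of which are standard.
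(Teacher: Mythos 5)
Your proof is correct, but it takes a genuinely different route from the paper's. The paper works on the excursion interval $\ivff{0,\zeta}$ that codes the tree: it realizes the $\sigma_{i,n}$ as images under $p_H$ of i.i.d.\ Lebesgue-uniform points $X_{i,n}\in\ivff{0,\zeta}$, shows by a union bound over intervals of length $\theta_n=\delta_n^{\gamma/(\gamma-1)+\eps/2}$ that eventually every such interval contains some $X_{i,n}$, and then converts the one-dimensional proximity $\abs{x-X_{i,n}}\leq\theta_n$ into tree proximity via the $\alpha$-H\"older continuity of the height process $H$ (Theorem 1.4.4 of Duquesne--Le Gall) with $\alpha=\pthb{\gamma/(\gamma-1)+\eps/2}^{-1}$. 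You instead stay intrinsic to the metric space $(\Ti,d)$: you build a maximal $2^{-n}$-separated net, control its cardinality $M_n$ and the worst-case hitting probability $p_n$ simultaneously via the uniform small-ball lower bound \eqref{eq:tree_small_balls1} of Duquesne--Wang, and close with a conditional Borel--Cantelli argument. Both approaches land on the same budget $\#\Sigma_n\asymp 2^{(\gamma/(\gamma-1)+\eps)n}$ because both ultimately encode the a.s.\ box dimension $\gamma/(\gamma-1)$ of the tree; the paper obtains it through the modulus of continuity of $H$ (a more elementary input, already used elsewhere in the paper), while you obtain it through the sharper mass-measure estimate \eqref{eq:tree_small_balls1}, which the paper also cites but does not need for this particular lemma. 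One small slip: from $p_n\geq c(\Ti)\,2^{-\gamma n/(\gamma-1)}n^{-\gamma/(\gamma-1)}$ and $N_n\geq 2^{\gamma n/(\gamma-1)+\eps n}$ you actually get $N_n p_n\geq c(\Ti)\,2^{\eps n}n^{-\gamma/(\gamma-1)}$, not $n^{-2\gamma/(\gamma-1)}$; it does not affect summability.
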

\begin{proof}
  If $\zeta$ denotes the lifetime of the excursion $H$ encoding $\Ti$, we know that the mass measure $\mb$ is induced by the Lebesgue measure $\Li$ on $\ivff{0,\zeta}$ (see \cite[Eq. 32]{Duquesne.LeGall-2002}). Namely, for any Borel set of $(\Ti,d)$, $\mb(A) = \Li\pthb{p_H^{-1}(A)}$.

  As a consequence, for every $n\geq 1$, one can construct a proper collection $\Sigma_n$ as follows. Let $\Ei_n=\brcb{X_{1,n},X_{2,n},\dotsc}$ be a finite collection of i.i.d. uniform random variables on $\ivff{0,\zeta}$ such that $\#\Ei_n=\ceilb{2^{\tfrac{\gamma}{\gamma-1}n+\eps n}}$ and $\Ei_n\subset\Ei_{n+1}$. If we define $\Sigma_n = p_H(\Ei_n)$, one clearly observes that $\Sigma_n$ is a collection r.v. on the tree satisfying the previous assumptions.

  Let us set $\theta_n=\delta_n^{\tfrac{\gamma}{\gamma-1}+\eps/2}$ and $J\subset\ivff{0,\zeta}$ be a close interval of size $\theta_{n-1}$. Then,
  \begin{align*}
    \prb{ J\cap\Ei_n =\vset} = \prod_{X_{i,n}\in\Ei_n} \pr{X_{i,n}\notin J} = \pthbb{ 1 - \frac{\theta_{n-1}}{\zeta} }^{\#\Ei_n}.
  \end{align*}
  As a consequence,
  \begin{align*}
    \prb{\exists I \text{ interval of size } \theta_n : I\cap \Ei_n=\vset} \leq \ceilb{ \zeta\theta_n^{-1} }\pthbb{ 1 - \frac{\theta_{n-1}}{\zeta} }^{\#\Ei_n}\leq \exp\pthb{-c_0 \delta_n^{-\eps}},
  \end{align*}
  by standard estimates. Hence, Borel--Cantelli lemma entails that for any $x\in\ivff{0,\zeta}$, for any $n$ sufficiently large, there exists $X_{i,n}\in\Ei_n$ such that $\abs{X_{i,n}-x} \leq \theta_n$. In addition, as proved by \citet[Th. 1.4.4]{Duquesne.LeGall-2002}, the height process is locally Hölder continuous for any exponent $\alpha\in\ivoo{0,\tfrac{\gamma-1}{\gamma}}$. In particular, setting $\alpha=\pthb{\tfrac{\gamma}{\gamma-1}+\tfrac{\eps}{2}}^{-1}$, $x\in\ivff{0,\zeta}$ and $X_{i,n}\in\Ei_n$ such that $\abs{X_{i,n}-x} \leq \theta_n$, we get:
  \begin{align*}
    d_\Ti\pthb{p_H(x),p_H(X_{i,n})} = H(x)+H(X_{i,n}) - 2\inf_{\ivff{x\wedge X_{i,n},x\vee X_{i,n}}} H_u \leq 2\theta_n^\alpha = 2\delta_n,
  \end{align*}
  therefore proving the desired approximating property on the collection $\Sigma_n$.
\end{proof}

We may now present the proof of the second part of Proposition~\ref{prop:scaling_exponents}.
\begin{proof}[Proof of Proposition~\ref{prop:scaling_exponents} - Part two]
  Let us set $\eps>0$, $h\in\ivoob{-\infty,\tfrac{\gamma}{\gamma-1}-2\eps}$ and, for every $n\in\N$, $\Sigma_n \eqdef \brcb{\sigma_{1,n}, \sigma_{2,n},\dotsc}\subset\Ti$ be the random collection of vertices constructed in Lemma~\ref{lemma:unif_dyadic_vertices}. By convention, we also set $\sigma_{0,n}\eqdef\rho$. For any $\sigma_{i,n}\in\Sigma_n$, let $A(\sigma_{i,n})$ be the event:
  \begin{align*}
    A\pthb{\sigma_{i,n}} &= \brcB{ \mb(B(\sigma_{i,n},\delta_n)) \geq \delta_n^{h+\eps} \text{ and } n_{b,\kappa}(\sigma_{i,n},\delta_n) \leq \delta_n^{h+2\eps-\tfrac{\gamma}{\gamma-1}} }.
  \end{align*}
  Using the re-rooting invariance principle and Lemma~\ref{lemma:scaling_exponents1}, we get
  \begin{align*}
    \Nb\pthbb{\bigcup_{\sigma_{i,n}\in\Sigma_n} A\pthb{\sigma_{i,n}} } \leq \sum_{i=1}^{\#\Sigma_n} \Nb\pthb{A\pthb{\sigma_{i,n}} } = \#\Sigma_n \Nb\pthb{A\pthb{\rho} } \leq c_0\,\#\Sigma_n\,v(\delta_n) \exp\pthb{-c_1\delta_n^{-\eps}}.
  \end{align*}
  Borel--Cantelli lemma therefore entails that $\Nb$-a.e. for any $n$ sufficiently large,
  \begin{align*}
    \forall \sigma_{i,n}\in\Sigma_n;\quad  \mb(B(\sigma_{i,n},\delta_n)) \geq \delta_n^{h+\eps} \Longrightarrow n_{b,\kappa}(\sigma_{i,n},\delta_n) > \delta_n^{h+2\eps-\tfrac{\gamma}{\gamma-1}}.
  \end{align*}

  Let us now set $\sigma\in\Ti$ such that $\alpha_\mb(\sigma,\Ti) \leq h$. There exists an infinite subsequence of scales $\delta_n$ such that $\mb(B(\sigma_{i,n},\delta_n)) \geq (3\delta_n)^{h+\eps}$. Then according to Lemma~\ref{lemma:unif_dyadic_vertices}, there exists $\sigma_{i,n}\in\Sigma_n$ such that $d(\sigma,\sigma_{i,n})\leq 2\delta_n$. As a consequence,
  \begin{align*}
    \mb(B(\sigma_{i,n},3\delta_n)) \geq (3\delta_n)^{h+\eps} \text{ and } n_{b,\kappa}(\sigma_{i,n},3\delta_n) > (3\delta_n)^{h+2\eps-\tfrac{\gamma}{\gamma-1}}.
  \end{align*}
  By choosing the parameter $\kappa\geq 2$, the geometry of trees yields $n_b(\sigma,5\delta_n) > (3\delta_n)^{h+2\eps-\tfrac{\gamma}{\gamma-1}}$, proving that $\alpha_b(\sigma,\Ti) \geq \tfrac{\gamma}{\gamma-1}-h-2\eps$.

  Using completely analogue arguments and the second bound presented in Lemma~\ref{lemma:scaling_exponents1}, we eventually get for any $h\in\ivoob{-\infty,\tfrac{\gamma}{\gamma-1}}$
  \begin{align*}
    \forall \sigma\in\Ti;\quad \alpha_\mb(\sigma,\Ti) \leq h \Longleftrightarrow \alpha_b(\sigma,\Ti) \geq \tfrac{\gamma}{\gamma-1}-h,
  \end{align*}
  In order to obtain the complete equality between the two scaling exponents, one needs to treat the specific case $\alpha_\mb(\sigma,\Ti)=\tfrac{\gamma}{\gamma-1}$. For that purpose, let us recall the result obtained by \citet{Duquesne.Wang-2014}:
  \begin{align*}
    k_\gamma \leq \liminf_{\delta\rightarrow 0} \frac{1}{f_\gamma(\delta)} \inf_{\sigma\in\Ti} \mb\pthb{B(\sigma,\delta)}\quad\text{where } f_\gamma(\delta) \eqdef \frac{ \delta^{\tfrac{\gamma}{\gamma-1}} }{ \pth{\log1/\delta}^{\tfrac{\gamma}{\gamma-1}} }.
  \end{align*}
  In particular, one gets from the previous bound that $\Nb$-a.e. for every $\sigma\in\Ti$, $\alpha_\mb(\sigma,\Ti)\leq\tfrac{\gamma}{\gamma-1}$. Furthermore, it is a direct consequence of the definition of the branching exponent that $\alpha_b(\sigma,\Ti)\geq 0$ for every $\sigma\in\Ti$. Therefore, since $h=\tfrac{\gamma}{\gamma-1}$ is the only possible value left and the two exponents coincide on the interval $\ivoob{-\infty,\tfrac{\gamma}{\gamma-1}}$, one must have $\Nb$-a.e.
  \begin{align*}
    \forall \sigma\in\Ti;\quad \alpha_\mb(\sigma,\Ti) = \frac{\gamma}{\gamma-1} - \alpha_b(\sigma,\Ti),
  \end{align*}
  which concludes the second part of the proof of Proposition~\ref{prop:scaling_exponents}.
\end{proof}




\subsection{Upper-bound estimates}  \label{ssec:stable_trees_ub}


This section is devoted to the proof of upper bounds in Theorems~\ref{th:levy_tree_upper_spectrum1}, \ref{th:levy_tree_upper_spectrum2} and \ref{th:levy_tree_upper_spectrum3}. To begin with, we obtain a uniform upper bound on the Hausdorff dimension of the level sets of stable trees.
\begin{lemma}  \label{lemma:usp_global_ub}
  $\Nb(\dt\Ti)$-a.e. for every nonempty Borel set $F$ of $\ivoo{0,h(\Ti)}$,
  \[
    \dimH \,\Ti(F) \leq \frac{1}{\gamma-1} + \dimH F.
  \]
  In addition, $\Nb(\dt\Ti)$-a.e. for every $a>0$, $\dimP \,\Ti(a) \leq \frac{1}{\gamma-1}$.
\end{lemma}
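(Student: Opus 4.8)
The plan is to reduce both assertions to a single uniform estimate on the level sets, obtained from the branching property. Since $\Nb$-a.e. $h(\Ti)>0$ and $\Ti(F)=\bigcup_{b\in\Q,\,b<h(\Ti)}\Ti\pthb{F\cap\ivoo{0,b}}$, it suffices to fix a rational $b>0$ and argue on the finite-measure event $\brc{h(\Ti)>b}$. The key claim, call it $(\star)$, is that $\Nb$-a.e. on $\brc{h(\Ti)>b}$ there exist a finite random constant $C$ and an integer $n_0$ such that $Z(c,2^{-n})\le C\,2^{n/(\gamma-1)}$ for every $n\ge n_0$ and every dyadic level $c=k2^{-n}\in\ivff{0,b}$.

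First I would prove $(\star)$. By the branching property, given $\Gi_c$ the variable $Z(c,2^{-n})$ is Poisson with parameter $\bk{\ell^c}\,v(2^{-n})$, and $v(2^{-n})\asymp 2^{n/(\gamma-1)}$. Via the Ray--Knight theorem $c\mapsto\bk{\ell^c}$ is governed by a stable CSBP, so (e.g. by Lemma~\ref{lemma:csbp_sup_upper_tail} together with Lemma~\ref{lemma:local_time_tail}) $L_b^\star:=\sup_{0\le c\le b}\bk{\ell^c}<\infty$ $\Nb$-a.e. on $\brc{h(\Ti)>b}$. On $\brc{L_b^\star\le L}$ the Poisson parameter of $Z(c,2^{-n})$ is at most $L\,v(2^{-n})$, so the Chernoff bound~\eqref{eq:chernoff_poisson} gives
\[
  \Nb\pthb{ Z(c,2^{-n})>2L\,v(2^{-n}) ,\ \bk{\ell^c}\le L } \ \le\ v(c)\,\e^{-c_0 L\, v(2^{-n})},
\]
using $\Nb(\bk{\ell^c}>0)=v(c)$. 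Summing over the at most $b2^n$ dyadic levels $c<b$ bounds this by $b\,2^n\,v(2^{-n})\,\e^{-c_0 L\, v(2^{-n})}$, which is summable in $n$ because the exponential is doubly exponentially small; Borel--Cantelli then yields $(\star)$ on $\brc{L_b^\star\le L}$, and letting $L\to\infty$ finishes the claim.

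Both conclusions then follow from $(\star)$ by the elementary geometric remark that if $I=\ivff{k2^{-n},(k+1)2^{-n}}$ with $k\ge1$ and $c=(k-1)2^{-n}$, then every $\tau\in\Ti(I)$ lies within $2^{1-n}$ of its ancestor $\sigma$ at level $c$, and $\sigma\in\Ti(c,2^{-n})$; hence $\Ti(I)$ is covered by $Z(c,2^{-n})$ balls of radius $2^{1-n}$. For the Hausdorff bound, fix $\beta>\dimH F$ and cover $F\subset\ivoo{0,b}$ by dyadic intervals $\brc{I_j}$ bounded away from $0$ with $\sum_j\abs{I_j}^\beta<\eta$; at most $\eta 2^{\beta n}$ of them have length $2^{-n}$, so $(\star)$ yields, for $s=\beta+\tfrac1{\gamma-1}+\eps$,
\[
  \Hi^s\pthb{\Ti(F)} \ \le\ \sum_n \pthb{\eta\, 2^{\beta n}}\, C\, 2^{n/(\gamma-1)}\,\pth{2^{2-n}}^{s} \ \le\ C'(b,\Ti)\,\eta,
\]
the finitely many coarse scales $n<n_0$ being absorbed via $Z(c,2^{-n})\le Z(c,2^{-n_0})$. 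Letting $\eta\to0$, then $\eps\to0$, $\beta\to\dimH F$ and $b\uparrow h(\Ti)$ gives $\dimH\Ti(F)\le\dimH F+\tfrac1{\gamma-1}$ simultaneously for all $F$. For the packing statement, the same covering applied to a single level gives $N\pthb{\Ti(a),2^{1-n}}\le C\,2^{n/(\gamma-1)}$ for every $a$ and all large $n$, whence $\dimBu\Ti(a)\le\tfrac1{\gamma-1}$ and a fortiori $\dimP\Ti(a)\le\tfrac1{\gamma-1}$.

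The main obstacle is $(\star)$: a bound on a single $Z(c,2^{-n})$ is immediate (fixed-$F$ statements follow by expectation and Markov), but the total masses $\bk{\ell^c}$ have only the polynomial ($\gamma$-th power) tail of Lemma~\ref{lemma:local_time_tail}, which is not summable against the $\sim2^n$ dyadic levels at scale $n$ and so cannot support the union bound needed for a statement valid for \emph{all} $F$. The fix is to exploit that $c\mapsto\bk{\ell^c}$ is a.s. uniformly bounded on $\ivff{0,b}$ (being a càdlàg CSBP-type process), after which the Poisson concentration comfortably beats the union bound. The remaining steps — the covering of $\Ti(I)$, the reduction to the countable family $b\in\Q$, and $\dimP\le\dimBu$ — are routine.
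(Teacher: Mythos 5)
Your proof is correct and follows essentially the same route as the paper: conditional Poisson concentration for $Z(c,2^{-n})$ given $\Gi_c$, a Borel--Cantelli argument uniform over dyadic levels, then the elementary tree covering that turns a dyadic cover of $F$ into a cover of $\Ti(F)$. The only real difference is how the random Poisson parameter $\bk{\ell^c}v(2^{-n})$ is handled: the paper absorbs it with the self-normalizing threshold $x_{k,n}=(8\bk{\ell^{k\delta_n}}+1)v(\delta_n)$, which makes the Chernoff bound deterministic and lets them sum over all $k\ge1$ using $\tfrac{1}{\gamma-1}>1$, while you truncate on $\{\sup_{c\le b}\bk{\ell^c}\le L\}$ and send $L\to\infty$ — both rest on the a.s.\ boundedness of $c\mapsto\bk{\ell^c}$ and yield the same estimate.
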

\begin{proof}
  For every $k,n\in\N$, let $Z(k,n)\eqdef Z(k\delta_n,\delta_n)$, i.e. the number of subtrees $\Ti_\sigma$ rooted at level $k\delta_n$ and higher than $\delta_n$, where $\delta_n\eqdef 2^{-n}$. Due to the branching property, under $\Nb_{k\delta_n}$ and given $\Gi_{k\delta_n}$, $Z(k,n)$ is a Poisson random variable with parameter $\lambda_{k,n} = \bk{\ell^{k\delta_n}}\Nb\pthb{\bk{\ell^{\delta_n}}>0} = \bk{\ell^{k\delta_n}}v(\delta_n)$. Hence, setting $x_{k,n} = \pthb{8\bk{ \ell^{k\delta_n} } + 1}v(\delta_n)$, Chernoff bound \eqref{eq:chernoff_poisson} entails
  \begin{align*}
    \log\pthb{ \Nb_{k\delta_n}\pthcb{Z(k,n) \geq x_{k,n}}{\Gi_{k\delta_n}} }
    \leq -\lambda_{k,n} - x_{k,n}\pthb{\log\pthb{x_{k,n}\lambda_{k,n}^{-1}}-1}
    \leq -v(\delta_n).
  \end{align*}
  The latter inequality implies
  \begin{align*}
    \Nb\pthb{Z(k,n) \geq x_{k,n} }
    = v\pth{k\delta_n} \,\Nb_{k\delta_n}\pthb{Z(k,n) \geq x_{k,n} }
    \leq v\pth{k\delta_n} \exp\pthb{-v(\delta_n)}.
  \end{align*}
  Then, since $\tfrac{1}{\gamma-1} > 1$,
  \begin{align*}
    \sum_{n\in\N} \ \Nb\pthbb{ \bigcup_{k=1}^{\infty} Z(k,n) \geq x_{k,n} }
    \leq \sum_{n\in\N} \sum_{k=1}^{\infty}  \Nb\pthb{Z(k,n) \geq x_{k,n}  }
    &\leq \sum_{n\in\N} \sum_{k=1}^{\infty} k^{-\tfrac{1}{\gamma-1}}  v\pth{\delta_n} \exp\pthb{-v(\delta_n)} \\
    &\leq c_0 \sum_{n\in\N} v\pth{\delta_n} \exp\pthb{-v(\delta_n)} < \infty.
  \end{align*}
  Hence, owing to Borel--Cantelli lemma, there exists $\Nb(\dt\Ti)$-a.e. $n_0(\Ti)\in\N$ such that for all $n\geq n_0(\Ti)$
  \begin{align}  \label{eq:bound_cover}
    \forall k\in\N\setminus\brc{0};\quad Z(k,n) \leq \pthb{8\bk{ \ell^{k\delta_n} } + 1}v(\delta_n).
  \end{align}

  Relying on the last bound, we may now prove the main statement of the lemma. Let $F$ be a nonempty Borel set of $\ivoo{0,h(\Ti)}$ and $s>\dimH F$. There exists a constant $c(s)>0$ such that for any $\delta>0$, there is a $\delta$-cover $\Oi=(O_i)_{i\in\N}$ of $F$ satisfying $\sum_{i\in\N} \abs{O_i}^s < c(s)$. Note that without any loss of generality, we may restrict ourselves to covers where $O_i = \ivff{k\delta_n,(k+1)\delta_n}\eqdef I_{k,n}$, for some $k,n\in\N$ depending on the index $i$. Then, for every $n\in\N$, if we denote by $\Ji(\Oi,n)$ the collection $\Ji(\Oi,n) = \brc{O\in\Oi : \abs{O}=\delta_n}$, we clearly get $\sum_{n\in\N} \#\Ji(\Oi,n) \,\delta_n^s < c(s)$.

  Let us set $n,k\in\N$ and $a\in\ivfo{(k+1)\delta_n,(k+2)\delta_n}$. A simple geometric argument on tree entails that the number of balls $N(a,n)$ of radius $2\delta_n$ necessary to cover $\Ti(a)$ is smaller than $Z(k,n)$. Hence, using this property, we may deduce from $\Oi$ the construction of a $\delta$-cover $\Vi = (V_i)_{i\in\N}$ of $\Ti(F)$. The latter satisfies, for any $\delta$ sufficiently small and any $\eta>0$
  \begin{align*}
    \sum_{i\in\N} \abs{V_i}^\eta
    &\leq \sum_{n\in\N}  \sum_{I_{k,n}\in\Ji(\Oi,n)} \pthb{8\bk{ \ell^{k\delta_n} } + 1}v(\delta_n) \delta_n^{\eta} \\
    &\leq c_0 \sum_{n\in\N} \#\Ji(\Oi,n) \delta_n^{\eta-1/(\gamma-1)},
  \end{align*}
  owing to Equation~\eqref{eq:bound_cover} and the boundedness of the local time $u\mapsto\bk{\ell^{u}}$. The latter sum is upper bounded by $c(s)$ for any $\eta \geq s + 1/(\gamma-1)$, therefore proving the first inequality.

  The bound on the packing dimension of level sets is a consequence of Equation~\eqref{eq:bound_cover} which entails: $\dimBu \,\Ti(a) \leq \frac{1}{\gamma-1}$.
\end{proof}
Note that Lemma~\ref{lemma:usp_spectrum_ub1} provides the upper bound to Corollary~\ref{cor:hdim_level_sets} and Equation~\eqref{eq:cor_dimP_images} on the uniform Hausdorff and packing dimensions of level sets.

Extending the previous estimates, we may now obtain the upper bound on the multifractal spectrum of the local time. For that purpose, we will study a slightly different class of fractal sets defined by
\begin{align*}
  F_\ell(h,\Ti) = \brcb{\sigma\in\Ti : \alpha_\ell(\sigma,\Ti) \leq h} \quad\text{and}\quad F_\mb(h,\Ti) = \brcb{\sigma\in\Ti : \alpha_\mb(\sigma,\Ti) \leq h}.
\end{align*}
Since $E_\star(h,\Ti)\subset F_\star(h,\Ti)$, any upper bound proved on the latter collection also holds on the former.
\begin{lemma}  \label{lemma:usp_spectrum_ub1}
  $\Nb(\dt\Ti)$-a.e. for every nonempty Borel set $F$ of $\ivoo{0,h(\Ti)}$,
  \begin{align}  \label{eq:usp_spectrum_ub1}
    \forall h\in\ivffb{\tfrac{1}{\gamma},\tfrac{1}{\gamma-1}};\quad \dimH \pthb{ F_\ell(h,\Ti)\cap \Ti(F) } \leq \gamma h - 1 + \dimP F.
  \end{align}
\end{lemma}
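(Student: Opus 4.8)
The set $F_\ell(h,\Ti)$ is of $\limsup$ type, so I would bound its Hausdorff dimension by an economical covering argument that, at each dyadic scale $\delta_n\eqdef 2^{-n}$, keeps only the subtrees carrying an abnormally large local time. The first step is a reduction to a box–counting situation: since $\dimP$ is the $\sigma$-stable modification of the upper box dimension, for any $t>\dimP F$ one may write $F=\bigcup_{i\in\N}F_i$ with $F_i$ closed and $\dimBu F_i<t$; as $\Ti(F)=\bigcup_i\Ti(F_i)$ and $F_\ell(h,\Ti)\cap\Ti(F)=\bigcup_i\bigl(F_\ell(h,\Ti)\cap\Ti(F_i)\bigr)$, it suffices to prove, on one full–measure event \emph{not depending on $F$}, that for every Borel set $G\subset\ivoo{0,h(\Ti)}$ with $\dimBu G<t$, every $h\in\ivff{\tfrac1\gamma,\tfrac1{\gamma-1}}$ and every $\eps>0$, $\dimH\pthb{F_\ell(h,\Ti)\cap\Ti(G)}\leq\gamma(h+\eps)-1+t$. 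Letting $i$ run and then $\eps\downarrow 0$, $t\downarrow\dimP F$ gives the lemma, uniformly in $F$ and (by monotonicity of $h\mapsto F_\ell(h,\Ti)$) in $h$.

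\emph{Counting large–local–time subtrees.} For $k,n\in\N$ let $Z'(k,n)$ be the number of subtrees $\Ti_\sigma\in\Tbb(k\delta_n,3\delta_n)$ with $\sup_{u\in\ivff{2\delta_n,3\delta_n}}\bk{\ell^u}(\Ti_\sigma)\geq\delta_n^{h+\eps}$. Combining Lemma~\ref{lemma:local_time_sup_tails} (together with the Ray--Knight theorem, Lemma~\ref{lemma:csbp_sup_upper_tail}, Lemma~\ref{lemma:local_time_tail} and self-similarity, as in its proof) with $v(2\delta_n)=\Nb\pth{h(\Ti')>2\delta_n}$, one gets $p_n\eqdef\Nb\pthb{h(\Ti')>3\delta_n,\ \sup_{\ivff{2\delta_n,3\delta_n}}\bk{\ell^u}(\Ti')\geq\delta_n^{h+\eps}}\leq c\,\delta_n^{1-\gamma(h+\eps)}$, which tends to $+\infty$ because $h+\eps>\tfrac1\gamma$. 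By the branching property, under $\Nb_{k\delta_n}$ and given $\Gi_{k\delta_n}$, $Z'(k,n)$ is Poisson with parameter $\bk{\ell^{k\delta_n}}p_n$; taking $x_{k,n}=8\bk{\ell^{k\delta_n}}p_n+p_n$ the Chernoff bound~\eqref{eq:chernoff_poisson} yields $\Nb_{k\delta_n}\pthcb{Z'(k,n)\geq x_{k,n}}{\Gi_{k\delta_n}}\leq e^{-c_1 p_n}$, hence $\Nb\pth{Z'(k,n)\geq x_{k,n}}\leq v(k\delta_n)e^{-c_1 p_n}$. Since $\sum_{k\geq1}v(k\delta_n)\leq c\,\delta_n^{-1/(\gamma-1)}$ (because $\tfrac1{\gamma-1}>1$) and $p_n\to\infty$ geometrically, $\sum_{n,k}\Nb\pth{Z'(k,n)\geq x_{k,n}}<\infty$, so Borel--Cantelli provides $\Nb$-a.e. an $n_0(\Ti)$, \emph{independent of $G$}, with $Z'(k,n)\leq(8M+1)p_n$ for all $n\geq n_0$ and all $k$, where $M\eqdef\sup_a\bk{\ell^a}$ is a.e. finite as $a\mapsto\bk{\ell^a}$ is càdlàg on the compact $\ivff{0,h(\Ti)}$.

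\emph{Covering.} If $\sigma\in F_\ell(h,\Ti)\cap\Ti(G)$ and $a=d(\rho(\Ti),\sigma)\in G$, then for infinitely many $n$ one has $\ell^a(B(\sigma,2\delta_n))\geq\delta_n^{h+\eps}$; writing $\sigma'$ for the ancestor of $\sigma$ on level $k\delta_n$ with $k$ maximal subject to $k\delta_n\leq a-2\delta_n$, the branching property forces $\Ti_{\sigma'}\in\Tbb(k\delta_n,3\delta_n)$ and $\sup_{\ivff{2\delta_n,3\delta_n}}\bk{\ell^u}(\Ti_{\sigma'})\geq\delta_n^{h+\eps}$, while $\sigma$ lies in $\tr(3\delta_n,\Ti_{\sigma'})$, a set of diameter $\leq6\delta_n$, and $k\delta_n$ is within $3\delta_n$ of $G$, so $k$ is one of at most $c\,\#\Di_n(G)$ indices. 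Hence $F_\ell(h,\Ti)\cap\Ti(G)\subseteq\limsup_n A_n$, where for $n$ large $A_n$ is covered by at most $c\,\#\Di_n(G)\,(8M+1)p_n\leq c\,M\,\delta_n^{1-\gamma(h+\eps)-t}$ balls of radius $6\delta_n$ (using $\dimBu G<t$). For any $s>\gamma(h+\eps)-1+t$ the $s$-Hausdorff sums of these covers satisfy $\sum_{n\geq N}\Hi^s_{6\delta_N}(A_n)\leq c\,M\sum_{n\geq N}\delta_n^{\,s-t+1-\gamma(h+\eps)}\to0$ as $N\to\infty$, whence $\Hi^s\pthb{F_\ell(h,\Ti)\cap\Ti(G)}=0$ and $\dimH\pthb{F_\ell(h,\Ti)\cap\Ti(G)}\leq\gamma(h+\eps)-1+t$. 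Reassembling over $i$, $\eps$ and $t$ proves the stated bound; the endpoint $h=\tfrac1{\gamma-1}$ (where $h+\eps$ exceeds $\tfrac1{\gamma-1}$ and Lemma~\ref{lemma:local_time_sup_tails} no longer applies with that exponent) is obtained by applying the bound at $h'\uparrow\tfrac1{\gamma-1}$.

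\emph{Main obstacle.} The delicate point is to recover $\dimP F$ rather than the possibly strictly larger $\dimBu F$ while keeping the almost-sure statement valid \emph{simultaneously for all Borel sets $F$}: this is exactly why the packing/modified–box–dimension decomposition is performed first and why the uniform-in-$k$ control of $Z'(k,n)$ must be established once and for all with no reference to $F$. A more routine nuisance is the bookkeeping that attaches to a vertex $\sigma$ of local–time exponent $\leq h$ a dyadic ancestor $\sigma'$ whose subtree witnesses the large local time and whose level lies in a controlled neighbourhood of $F$.
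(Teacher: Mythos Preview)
Your approach matches the paper's almost exactly: reduce $\dimP F$ to $\dimBu F$ via the $\sigma$-stable decomposition of the packing dimension, count ``large'' subtrees at level $k\delta_n$ using the branching property and Lemma~\ref{lemma:local_time_sup_tails}, control them uniformly in $k$ by Chernoff and Borel--Cantelli (on a full-measure event independent of $F$), then cover $F_\ell(h,\Ti)\cap\Ti(G)$ and sum. The paper uses $\Tbb(k\delta_n,\delta_n)$ with $\sup_{\ivff{\delta_n,2\delta_n}}$ where you use $\Tbb(k\delta_n,3\delta_n)$ with $\sup_{\ivff{2\delta_n,3\delta_n}}$, and it writes the Chernoff threshold as $(\bk{\ell^{k\delta_n}}+1)\delta_n^{1-\gamma h-2\eps}$ rather than $(8M+1)p_n$; these are cosmetic differences.

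There is, however, one genuine gap: your treatment of the endpoint $h=\tfrac{1}{\gamma-1}$ does not work as stated. The sets $F_\ell(h',\Ti)$ are \emph{increasing} in $h'$, so proving $\dimH\pthb{F_\ell(h',\Ti)\cap\Ti(F)}\leq\gamma h'-1+\dimP F$ for all $h'<\tfrac{1}{\gamma-1}$ bounds only the strict union $\bigcup_{h'<1/(\gamma-1)}F_\ell(h',\Ti)=\brcb{\sigma:\alpha_\ell(\sigma,\Ti)<\tfrac{1}{\gamma-1}}$, not $F_\ell\pthb{\tfrac{1}{\gamma-1},\Ti}$ itself, which may also contain vertices with $\alpha_\ell(\sigma,\Ti)=\tfrac{1}{\gamma-1}$. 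The paper handles this endpoint separately by invoking Lemma~\ref{lemma:usp_global_ub}, which already gives $\dimH\Ti(F)\leq\tfrac{1}{\gamma-1}+\dimH F\leq\tfrac{1}{\gamma-1}+\dimP F$; since $F_\ell\pthb{\tfrac{1}{\gamma-1},\Ti}\cap\Ti(F)\subset\Ti(F)$ and $\gamma\cdot\tfrac{1}{\gamma-1}-1=\tfrac{1}{\gamma-1}$, this is exactly the required bound at the endpoint.
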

\begin{proof}
  Let us first briefly recall a regularisation argument implying that it is sufficient to prove the bound \eqref{eq:usp_spectrum_ub1} with the upper box dimension of $F$. It is known (see e.g. \citet[Th. 5.1.1]{Mattila-1995}) that the packing dimension of $F$ can be characterised as following:
  \begin{align*}
    \dimP F = \inf\brcbb{ \sup_i \dimBu F_i : F=\bigcup_{i\in\N} F_i, F_i\text{ is bounded} }.
  \end{align*}
  As a consequence, there exists an increasing sequence of sets $(F_i)_{i\in\N}$ such that $F=\cup_{i\in\N} F_i$ and $\dimBu F_i\rightarrow_\infty \dimP F$. To obtain Inequality~\eqref{eq:usp_spectrum_ub1}, it is therefore sufficient to verify the latter on every $F_i$ for the upper box dimension, i.e. proving $\dimH \pth{ F_\ell(h,\Ti)\cap \Ti(F) } \leq \gamma h - 1 + \dimBu F$ for any set $F$.

  In the particular case $h=\tfrac{1}{\gamma-1}$, the upper bound is a consequence of Lemma~\ref{lemma:usp_global_ub}. Hence, from now on, let us set $h\in\ivfob{\tfrac{1}{\gamma},\tfrac{1}{\gamma-1}}$ and $\eps>0$.
  For every $k,n\in\N$, we define the random variable
  \[
    Z(k,n,h) = \#\brcB{ \Ti_\sigma\in\Tbb(k\delta_n,\delta_n) : \sup_{\ivff{\delta_n,2\delta_n}} \bk{\ell^u}(\Ti_\sigma) \geq \delta_n^{h+\eps} },
  \]
  where $\delta_n\eqdef 2^{-n}$. Under $\Nb_{k\delta_n}$ and given $\Gi_{k\delta_n}$, $Z(k,n,h)$ is a Poisson random variable parametrised by $p_n\bk{ \ell^{k\delta_n} }$, where
  \begin{align*}
    p_n \eqdef v(\delta_n)\Nb_{\delta_n}\pthB{ \sup_{\ivff{\delta_n,2\delta_n}} \bk{\ell^u} \geq \delta_n^{h+\eps} } \asymp \delta_n^{1-\gamma (h+\eps)}.
  \end{align*}
  according to Lemma~\ref{lemma:local_time_sup_tails}.
  Setting $x_{k,n}\eqdef (\bk{ \ell^{k\delta_n} }+1) \delta_n^{1-\gamma h-2\eps}$ and noting that $1-\gamma h\leq 0$, Chernoff bound \eqref{eq:chernoff_poisson} entails $\Nb_{k\delta_n}\pthc{Z(k,n,h) \geq x_{k,n}}{\Gi_{k\delta_n}}\leq \exp\pth{-\delta_n^{-\eps}}$. The latter inequality implies
  \begin{align*}
    \Nb\pthb{Z(k,n,h) \geq x_{k,n} }
    = v\pthb{k\delta_n} \Nb_{k\delta_n}\pthb{Z(k,n) \geq x_{k,n} }
    \leq v(\delta_n)k^{-1/(\gamma-1)} \exp\pth{-\delta_n^{-\eps}}.
  \end{align*}
  Hence, recalling that $\tfrac{1}{\gamma-1} > 1$,
  \begin{align*}
    \sum_{n\in\N} \ \Nb\pthbb{ \bigcup_{k=1}^{\infty} Z(k,n,h) \geq x_{k,n} }
    \leq \sum_{n\in\N} \sum_{k=1}^{\infty} \Nb\pthb{Z(k,n,h) \geq x_{k,n} }
    \leq c_0 \sum_{n\in\N} v(\delta_n) \exp\pth{-\delta_n^{-\eps}} < \infty.
  \end{align*}
  Owing to Borel--Cantelli lemma, $\Nb(\dt\Ti)$-a.e. there exists $n_0(\Ti)\in\N$ such that for all $n\geq n_0(\Ti)$
  \begin{align}  \label{eq:bound_cover2}
    \forall k\in\N\setminus\brc{0};\quad Z(k,n,h) \leq (\bk{ \ell^{k\delta_n} }+1) \delta_n^{1-\gamma h-2\eps}.
  \end{align}

  We may now prove the main statement of the lemma. Similarly to the proof of Lemma~\ref{lemma:usp_global_ub}, F is a nonempty Borel set of $\ivoo{0,h(\Ti)}$ and for every $n\in\N$, we denote by $\Ji(n)$ the collection of intervals of type $I_{k,n}\eqdef\ivfo{k\delta_n,(k+1)\delta_n}$ necessary to cover the former.
  For any $a\in\ivoob{0,h(\Ti)}$, observe that
  \begin{align*}
    F_\ell(h,\Ti)\cap\Ti(F)
    &\subset \brcB{ \sigma\in\Ti(F) : \limsup_{r\rightarrow 0} r^{-(h+\eps/2)}\,\ell^{a}(B(\sigma,2r)) = \infty}.
  \end{align*}
  Suppose $\sigma\in\Ti(F)$ is such that $\ell^{a}(B(\sigma,2r)) \geq r^{h+\eps/2}$. There exist $k,n\in\N$ such that $2r\in\ivfo{\delta_{n+1},\delta_n}$ and $a\in\ivfo{(k+1)\delta_n,(k+2)\delta_n}$. In addition, there is a unique subtree $\Ti_{\sigma'}\in\Tbb(k\delta_n,\delta_n)$ embedded in $\Ti$ which satisfies $B(\sigma,2r)\subset \Ti_{\sigma'}$ and
  \begin{align*}
    \sup_{\ivff{\delta_n,2\delta_n}} \bk{\ell^u}(\Ti_{\sigma'}) \geq \ell^a(B(\sigma,2r)) \geq r^{h+\eps/2} \geq \delta_n^{h+\eps}.
  \end{align*}
  The last property proves that for any $N\in\N$, we may cover the set $F_\ell(h,\Ti)\cap\Ti(F)$ with balls of radius $4\delta_n$, $n\geq N$, centered at levels $k\delta_n$. Therefore, if we denote by $\Vi=(V_i)_{i\in\N}$ the resulting $\delta$-cover of $F_\ell(h,\Ti)\cap\Ti(F)$, Equation~\eqref{eq:bound_cover2} entails for any $\eta>0$
  \begin{align*}
    \sum_{i\in\N} \abs{V_i}^\eta
    \leq \sum_{n\geq N} \sum_{I(k,n)\in\Ji(n)} Z(k-1,n,h) \,\delta_n^\eta
    \leq c_0\sum_{n\geq N} \#\Ji(n)\, \delta_n^{\eta + 1-\gamma h-2\eps}.
  \end{align*}
  The latter sums converges for any $\eta > \gamma h-1+2\eps+\dimBu F$, therefore proving that $\Nb$-a.e. $\dimH \pthb{ F_\ell(h,\Ti)\cap \Ti(F) } \leq \gamma h - 1 + \dimBu F$. The regularisation argument previously described leads to the desired result with the packing dimension $\dimP F$ and, finally, we note that the uniformity of the result in the variable $h$ is a straightforward consequence of the monotonicity of the collection $h\mapsto F_\ell(h,\Ti)\cap\Ti(F)$ for the inclusion.
\end{proof}

To conclude the first part of this section on the local time, we present an upper bound to Theorem~\ref{th:levy_tree_upper_spectrum2} (and as a corollary, the lower bound in Theorem~\ref{th:levy_tree_upper_spectrum3}).
\begin{lemma}  \label{lemma:usp_spectrum_ub2}
  Suppose $F$ is a nonempty Borel set. Then, $\Nb\pth{ \dt \Ti }$-a.e.,
  \begin{align}  \label{eq:usp_spectrum_ub2}
    \forall h\in\ivfob{0, \tfrac{1}{\gamma}};\quad \dimH \,\pthb{ F_\ell(h,\Ti)\cap \Ti(F) } \leq \gamma h - 1 + \dimP F_{|\Ti}.
  \end{align}
  where $F_{|\Ti}\eqdef F\cap\ivoo{0,h(\Ti)}$. In particular, $F_\ell(h,\Ti)\cap \Ti(F)$ is empty when $h < \tfrac{1-\dimH F_{|\Ti}}{\gamma}$. Moreover, $\Nb\pth{ \dt \Ti }$-a.e. for every level $a>0$, the set $F_\ell(h,\Ti)\cap \Ti(a)$ is either empty or has zero Hausdorff dimension.
\end{lemma}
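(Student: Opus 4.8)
The plan is to follow the covering scheme of the proof of Lemma~\ref{lemma:usp_spectrum_ub1}, the crucial modification being that for $h<\tfrac{1}{\gamma}$ the exponent $1-\gamma h$ is positive, so the pointwise bound $Z(k,n,h)\leq\delta_n^{1-\gamma h-2\eps}$ used there is vacuous; instead I shall bound the \emph{total} number of heavy subtrees over an entire box-dimension cover of $F$ by a first-moment argument. Two reductions come first. Since $\Ti(F)=\bigcup_{q\in\Q,\,q<h(\Ti)}\Ti\pthb{F\cap\ivof{0,q}}$ and $\dimH$ is countably stable, it suffices to bound $\dimH\pthb{F_\ell(h,\Ti)\cap\Ti(G)}$ for each fixed rational $q$ with $G\eqdef F\cap\ivof{0,q}$; on $\brc{h(\Ti)>q}$ one has $G\subseteq F_{|\Ti}$, hence $\dimP G\leq\dimP F_{|\Ti}$, and $\sup_{q<h(\Ti)}\dimP\pthb{F\cap\ivof{0,q}}=\dimP F_{|\Ti}$ by $\sigma$-stability of packing dimension. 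Then, exactly as in Lemma~\ref{lemma:usp_spectrum_ub1}, the decomposition $G=\bigcup_i G_i$ with $\sup_i\dimBu G_i\to\dimP G$ reduces everything to showing, for a fixed bounded set $G$, that $\dimH\pthb{F_\ell(h,\Ti)\cap\Ti(G)}\leq\gamma h-1+\dimBu G$.

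Fix then $h\in\ivfob{0,\tfrac{1}{\gamma}}$, $\eps>0$ and $s>\dimBu G$, set $\delta_n=2^{-n}$, and let $\Ji(n)$ be a cover of $G$ by at most $\delta_n^{-s}$ intervals $I_{k,n}=\ivfo{k\delta_n,(k+1)\delta_n}$. With the random variables $Z(k,n,h)=\#\brcb{\Ti_\sigma\in\Tbb(k\delta_n,\delta_n):\sup_{\ivff{\delta_n,2\delta_n}}\bk{\ell^u}(\Ti_\sigma)\geq\delta_n^{h+\eps}}$ and the quantity $p_n=v(\delta_n)\Nb_{\delta_n}\pthb{\sup_{\ivff{\delta_n,2\delta_n}}\bk{\ell^u}\geq\delta_n^{h+\eps}}\asymp\delta_n^{1-\gamma(h+\eps)}$ from the proof of Lemma~\ref{lemma:usp_spectrum_ub1} (the estimate on $p_n$ coming from Lemma~\ref{lemma:local_time_sup_tails}), put $Z_{\mathrm{tot}}(n)=\sum_{I_{k,n}\in\Ji(n)}Z(k-1,n,h)$. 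The branching property gives $\Nb\pthb{Z(k,n,h)}=p_n$ for every $k$, hence $\Nb\pthb{Z_{\mathrm{tot}}(n)}=\#\Ji(n)\,p_n\leq c_0\,\delta_n^{1-s-\gamma h-\gamma\eps}$. Markov's inequality then yields $\Nb\pthb{Z_{\mathrm{tot}}(n)\geq\delta_n^{-\eps}\Nb\pthb{Z_{\mathrm{tot}}(n)}}\leq\delta_n^{\eps}$, and since $\sum_n\delta_n^{\eps}<\infty$, Borel--Cantelli produces $\Nb$-a.e. an $n_0(\Ti)$ with $Z_{\mathrm{tot}}(n)\leq c_0\,\delta_n^{1-s-\gamma h-2\eps}$ for all $n\geq n_0(\Ti)$.

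To conclude, recall from the proof of Lemma~\ref{lemma:usp_spectrum_ub1} that any $\sigma\in F_\ell(h,\Ti)\cap\Ti(G)$ with $\ell^{a}(B(\sigma,2r))\geq r^{h+\eps}$ and $2r\in\ivfo{\delta_{n+1},\delta_n}$, $a\in I_{k,n}$, lies in one of the subtrees of $\Tbb((k-1)\delta_n,\delta_n)$ counted by $Z(k-1,n,h)$, hence inside a ball of radius $4\delta_n$ about level $(k-1)\delta_n$; so for every $N$ the set $F_\ell(h,\Ti)\cap\Ti(G)$ is covered by $\sum_{n\geq N}Z_{\mathrm{tot}}(n)$ such balls, and $\sum_{n\geq N}Z_{\mathrm{tot}}(n)(4\delta_n)^{\eta}\leq c_1\sum_{n\geq N}\delta_n^{\eta+1-s-\gamma h-2\eps}\to 0$ whenever $\eta>\gamma h-1+s+2\eps$. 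Thus $\dimH\pthb{F_\ell(h,\Ti)\cap\Ti(G)}\leq\gamma h-1+\dimBu G$ after letting $s\downarrow\dimBu G$ and $\eps\downarrow 0$ along countable sequences, and reversing the two reductions (using monotonicity of $h\mapsto F_\ell(h,\Ti)\cap\Ti(F)$ to restrict to a countable set of exponents) gives \eqref{eq:usp_spectrum_ub2}. The emptiness of $F_\ell(h,\Ti)\cap\Ti(F)$ is then forced by \eqref{eq:usp_spectrum_ub2} as soon as $\gamma h-1+\dimP F_{|\Ti}<0$, a set of negative Hausdorff dimension being empty by convention — equivalently $h<\tfrac{1-\dimH F_{|\Ti}}{\gamma}$ when $F$ is regular, which is the case in every application of the lemma. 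For the last assertion, Lemma~\ref{lemma:usp_spectrum_ub1} holds on a single $\Nb$-a.e. event valid for every Borel set, in particular for $F=\brc{a}$; taking $h=\tfrac{1}{\gamma}$ there gives $\dimH\pthb{F_\ell(\tfrac{1}{\gamma},\Ti)\cap\Ti(a)}\leq 0$ for every level $a$, and $F_\ell(h,\Ti)\subseteq F_\ell(\tfrac{1}{\gamma},\Ti)$ for $h<\tfrac{1}{\gamma}$ then shows $F_\ell(h,\Ti)\cap\Ti(a)$ is empty or of zero Hausdorff dimension.

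The main difficulty I anticipate is precisely the passage from the individual, small-parameter Poisson counts $Z(k,n,h)$ — which are genuinely sparse but not identically zero, so no deterministic pointwise bound is available — to a working bound on the aggregate $Z_{\mathrm{tot}}(n)$: its first moment $\#\Ji(n)\,p_n$ decays like a positive power of $\delta_n$ exactly in the regime $s+\gamma h<1$ in which the argument operates, and the sub-polynomial slack produced by Markov together with Borel--Cantelli is just enough to run the cover estimate. The only secondary nuisance, the replacement of $\dimP F$ by $\dimP F_{|\Ti}$, is dispatched by the rational-cutoff decomposition above; the regularisation step reducing $\dimP$ to upper box dimension is identical to the one in Lemma~\ref{lemma:usp_spectrum_ub1}.
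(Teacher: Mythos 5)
Your proposal is correct, and it takes a genuinely different (and in my view cleaner) route than the paper at two points. First, where the paper runs a two-step argument --- a first-moment bound giving $N(n,h)\leq\delta_n^{1-\gamma h-s-3\eps}$ for the number of dyadic intervals containing at least one heavy subtree, \emph{plus} a separate $p$-th--moment Borel--Cantelli argument showing the pointwise bound $Z(k,n,h)<p$ uniformly in $k$ --- you fold both into a single first-moment estimate on the aggregate $Z_{\mathrm{tot}}(n)=\sum_{I_{k,n}\in\Ji(n)}Z(k-1,n,h)$, using $\Nb\bktb{Z(k,n,h)}=p_n$ (which does hold since $\Nb\bktb{\bk{\ell^{a}}}=1$) and Markov under the sigma-finite measure $\Nb$. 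The resulting bound $Z_{\mathrm{tot}}(n)\lesssim\delta_n^{1-s-\gamma h-c\eps}$ is exactly what the cover needs, so the paper's pointwise bound $Z(k,n,h)<p$ is not required for the dimension estimate. Second, for the final assertion that $F_\ell(h,\Ti)\cap\Ti(a)$ has zero Hausdorff dimension, the paper reuses the pointwise bound $Z(k-1,n,h)<p$, whereas you shortcut this by applying Lemma~\ref{lemma:usp_spectrum_ub1} to the singleton $F=\brc{a}$ with $h=\tfrac{1}{\gamma}$ together with the monotonicity $F_\ell(h,\Ti)\subseteq F_\ell(\tfrac{1}{\gamma},\Ti)$; this is legitimate because Lemma~\ref{lemma:usp_spectrum_ub1} holds on a single $\Nb$-a.e. event uniformly over Borel sets (its proof hinges on \eqref{eq:bound_cover2}, which is quantified over all $k$). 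The $\eps$-bookkeeping in your Markov/Borel--Cantelli step is slightly off (the exponent loss is $(1+\gamma)\eps$, not $2\eps$), but this vanishes in the limit $\eps\downarrow 0$ and does not affect the conclusion. Finally, your remark on the emptiness condition is well taken: the covering argument naturally yields emptiness when $\gamma h-1+\dimP F_{|\Ti}<0$, and equating this with the stated condition $h<\tfrac{1-\dimH F_{|\Ti}}{\gamma}$ tacitly uses regularity of $F$, which is indeed assumed in every application (Theorem~\ref{th:levy_tree_upper_spectrum2}).
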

\begin{proof}
  Let us begin by observing that it is sufficient to prove that $\dimH \,\pthb{ F_\ell(h,\Ti)\cap \Ti(F) } \leq \gamma h - 1 + \dimP F$ to obtain the optimal bound \eqref{eq:usp_spectrum_ub2}. Indeed, by applying the former inequality to any set of the form $F\cap\ivoo{0,a}$, $a\in\Q_+$, common properties on the Hausdorff and packing dimensions induce the precise estimate \eqref{eq:usp_spectrum_ub2}.  Moreover, similarly to Lemma~\ref{lemma:usp_spectrum_ub1}, with the help of a regularisation argument it is sufficient to prove the previous bound with the upper box dimension $\dimBu F$.

  Then, let us set $h\in\ivfo{0,\tfrac{1}{\gamma}}$ and $\eps>0$. For any $n,k\in\N$, we still designate by $Z(k,n,h)$ the r.v.
  \[
    Z(k,n,h)= \#\brcB{ \Ti_\sigma\in\Tbb(k\delta_n,\delta_n) :  \sup_{\ivff{\delta_n,2\delta_n}} \bk{\ell^u}(\Ti_\sigma) \geq \delta_n^{h+\eps} }.
  \]
  Under $\Nb_{k\delta_n}$ and given $\Gi_{k\delta_n}$, $Z(k,n,h)$ is a Poisson random variable of parametrised by $\lambda_{k,n} = \bk{ \ell^{k\delta_n} } v(\delta_n)\Nb_\delta\pthb{ \sup_{\ivff{\delta_n,2\delta_n}} \bk{\ell^u} \geq \delta_n^h } \asymp \bk{ \ell^{k\delta_n} }\delta_n^{1-\gamma (h+\eps)}$.

  We may first prove $Z(k,n,h)$ is finite for any $n$ sufficiently large. For every $p\in\N$,
  \begin{align*}
    \Nb_{k\delta_n}\pthcb{ Z(k,n,h)\geq p }{ \Gi_{k\delta_n} }
    \leq c(p)\bk{ \ell^{k\delta_n} }^p\delta_n^{p-p\gamma (h+\eps)}.
  \end{align*}
  $\eps$ can be supposed small enough to satisfy $1-\gamma (h+3\eps) > 0$ and there exists $p\in\N$ sufficiently large such that $p(1-\gamma(h+3\eps)) > \tfrac{1}{\gamma-1}>$. As a consequence,
  \begin{align*}
    \sum_{n\in\N} \sum_{k\geq 1} \Nb\pthB{ Z(k,n,h)\geq p \cap \sup_{u>0} \bk{\ell^u} \leq \delta_n^{-\gamma\eps} } \leq c(p) \sum_{n\in\N} \sum_{k\geq 1} v(k\delta_n) \delta_n^{p-p\gamma (h+3\eps)} < \infty.
  \end{align*}
  Borel--Cantelli lemma then entails that $\Nb$-a.e. for any $n$ sufficiently large,
  \begin{align*}
    \forall k\in\N\setminus\brc{0};\quad Z(k,n,h)< p \quad\text{or}\quad \sup_{u>0} \bk{\ell^u} > \delta_n^{-\gamma\eps}.
  \end{align*}
  Since the process $a\mapsto\bk{\ell^a}$ is almost surely càdlàg with bounded support, $\sup_{u>0} \bk{\ell^u} \leq \delta_n^{-\gamma\eps}$ for any $n$ sufficiently large, therefore proving that $Z(k,n,h) < p$ for all $k\geq 1$.

  We may now construct an optimal cover of $F_\ell(h,\Ti)\cap \Ti(F)$. We still denote by $\Ji(n)$ the collection of intervals of type $I_{k,n}\eqdef\ivfo{k\delta_n,(k+1)\delta_n}$ necessary to cover $F$. Then, let
  \begin{align*}
    N(n,h) \eqdef \#\brcb{k\in\N : I_{k,n}\in\Ji(n) \text{ and } Z(k-1,n,h)\geq 1}
  \end{align*}
  and observe that $\Nb\pth{ N(n,h) } = \sum_{I_{k,n}\in\Ji(n)} \Nb\pthb{ Z(k-1,n,h)\geq 1 }$. The branching property entails $\Nb_{k\delta_n}\pthcb{ Z(k,n,h)\geq 1 }{ \Gi_{k\delta_n} } \leq c_0\bk{ \ell^{k\delta_n} }\delta_n^{1-\gamma (h+\eps)}$, and thus, $\Nb\pthb{ Z(k,n,h)\geq 1 } \leq c_0\,\delta_n^{1-\gamma (h+\eps)}$ where the constant $c_0$ is independent of $k,n\in\N$ and $h$. Setting $s>\dimBu F$, we get
  \begin{align*}
    \Nb\pthb{ N(n,h) \geq \delta_n^{1-\gamma h-s-3\eps} }
    \leq \delta_n^{\gamma h-1+s+3\eps} \Nb\pthb{ N(n,h) }
    \leq c_0\,\#\Ji(n) \,\delta_n^{\eps+s}
    \leq c_1\,\delta_n^{\eps}.
  \end{align*}
  Borel--Cantelli lemma then yields: $\Nb$-a.e. for every $n$ sufficiently large, $N(n,h) \leq \delta_n^{1-\gamma h-s-3\eps}$.\vsp

  For any $a\in\ivoob{0,h(\Ti)}$ and $r>0$, set $n,k\in\N$ such that $2r\in\ivfo{\delta_{n+1},\delta_n}$ and $a\in\ivfo{(j+1)\delta_n,(j+2)\delta_n}$. Then, if $\ell^a\pthb{ B(\sigma,2r) } \geq r^{h+\eps/2}$, there exists a subtree $\Ti_{\sigma'}$ rooted at level $k\delta_n$ such that $\sup_{\ivff{\delta_n,2\delta_n}} \bk{\ell^u}(\Ti_{\sigma'}) \geq \delta_n^{h+\eps}$.

  Hence, for any $N\in\N$, we may cover the set $F_\ell(h,\Ti)\cap \Ti(F)$ with balls of radius $4\delta_n$, $n\geq N$, centered at levels $k\delta_n$ where $k$ is such that $I_{k,n}\in \Ji(n)$ and $Z(k-1,n,h)\geq 1$. In addition, since $Z(k-1,n,h) < p$, only $p$ balls are required for the cover at every level in the tree.

  If $\gamma h - 1 + \dimBu F < 0$, the bound $N(n,h) \leq \delta_n^{1-\gamma h-s-3\eps}$ clearly implies that $N(n,h) = 0$ for any $n\in\N$ sufficiently large (and $\eps$ small enough), hence proving the emptiness of the set $F_\ell(h,\Ti)\cap \Ti(F)$. In the second case, if we denote by $\Vi=(V_j)_{j\in\N}$ the cover obtained using the aforementioned construction, we get
  \begin{align*}
    \sum_{j\in\N} \abs{V_j}^\eta
    \leq \sum_{n\geq n_0} N(n,h) \,\delta_n^\eta
    \leq c_0\sum_{n\geq n_0} \delta_n^{\eta+1-\gamma h-s-3\eps}.
  \end{align*}
  The latter sums converges for any $\eta > \gamma h-1+s+3\eps$, therefore proving that $\Nb$-a.e. $\dimH \,\pthb{ F_\ell(h,\Ti)\cap \Ti(F) } \leq \gamma h - 1 + \dimBu F$. The two arguments previously outlined then ensure the optimal inequality~\eqref{eq:usp_spectrum_ub2}.

  Finally, due to the bound $Z(k-1,n,h) < p$, the set $F_\ell(h,\Ti)\cap \Ti(a)$ has zero Hausdorff dimension if it is non-empty.
\end{proof}

In the second part of this section, we investigate the upper bound on the multifractal spectrum of the mass measure. Since the structure of the proofs is very similar to the case of the local time, we only present the main steps and refer to the previous lemmas for the technical details.
\begin{lemma}  \label{lemma:usp_mspectrum_ub1}
  $\Nb(\dt\Ti)$-a.e. for every nonempty Borel set $F$ of $\ivoo{0,h(\Ti)}$,
  \begin{align}
    \forall h\in\ivffb{\tfrac{1+\gamma}{\gamma},\tfrac{\gamma}{\gamma-1}};\quad \dimH \pthb{ F_\mb(h,\Ti)\cap \Ti(F) } \leq \gamma (h-1) - 1 + \dimP F.
  \end{align}
\end{lemma}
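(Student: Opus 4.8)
The plan is to transcribe the proof of Lemma~\ref{lemma:usp_spectrum_ub1} almost verbatim, replacing the supremum of the local time of a subtree by the mass of a small ball about the root of that subtree, and using the tail of the mass measure (Lemma~\ref{lemma:mmass_tail}) in place of Lemma~\ref{lemma:local_time_sup_tails}. By the regularisation argument recalled at the start of the proof of Lemma~\ref{lemma:usp_spectrum_ub1} (decompose $F$ into an increasing union of bounded sets whose upper box dimensions increase to $\dimP F$) and the fact that $h\mapsto F_\mb(h,\Ti)\cap\Ti(F)$ is nondecreasing for inclusion, it suffices to show, for a fixed bounded Borel set $F\subset\ivoo{0,h(\Ti)}$ and a fixed $h$, that $\Nb$-a.e.\ $\dimH\pthb{F_\mb(h,\Ti)\cap\Ti(F)}\leq\gamma(h-1)-1+\dimBu F$. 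The endpoint $h=\tfrac{\gamma}{\gamma-1}$ is immediate from $F_\mb(h,\Ti)\cap\Ti(F)\subseteq\Ti(F)$ and Lemma~\ref{lemma:usp_global_ub}, since then $\gamma(h-1)-1=\tfrac{1}{\gamma-1}$; so fix $h\in\ivfob{\tfrac{1+\gamma}{\gamma},\tfrac{\gamma}{\gamma-1}}$ and $\eps>0$.

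For $k,n\in\N$ set $\delta_n\eqdef 2^{-n}$ and
\[
  Z(k,n,h)\eqdef\#\brcB{\Ti_\sigma\in\Tbb(k\delta_n,\delta_n):\mb\pthb{\Ti_\sigma\cap B(\sigma,3\delta_n)}\geq\delta_n^{h+\eps}},
\]
noting that, $\sigma$ being the root of $\Ti_\sigma$, this is just the $\mb$-mass of the ball of radius $3\delta_n$ about the root of the subtree. By the branching property, under $\Nb_{k\delta_n}$ and given $\Gi_{k\delta_n}$ the variable $Z(k,n,h)$ is Poisson with parameter $\bk{\ell^{k\delta_n}}\,q_n$, where $q_n\eqdef\Nb\pthb{\mb(B(\rho,3\delta_n))\geq\delta_n^{h+\eps}}$. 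Combining the self-similarity of stable trees (under $d\mapsto c\,d$ one has $\mb\mapsto c^{\gamma/(\gamma-1)}\mb$) with Lemma~\ref{lemma:mmass_tail} — treating separately the event $h(\Ti)\leq 3\delta_n$, which contributes the same order because $\mb(B(\rho,3\delta_n))\leq 3\delta_n\sup_{\ivff{0,3\delta_n}}\bk{\ell^u}$ and the supremum of the local time has a comparable tail — one gets $q_n\asymp\delta_n^{\gamma+1-\gamma(h+\eps)}$. Since $h\geq\tfrac{1+\gamma}{\gamma}$ gives $\gamma+1-\gamma h\leq 0$, one may set $x_{k,n}\eqdef\pthb{\bk{\ell^{k\delta_n}}+1}\delta_n^{\gamma+1-\gamma h-2\eps}$ and, exactly as in Lemma~\ref{lemma:usp_spectrum_ub1}, apply the Chernoff bound~\eqref{eq:chernoff_poisson} to obtain $\Nb_{k\delta_n}\pthcb{Z(k,n,h)\geq x_{k,n}}{\Gi_{k\delta_n}}\leq\exp\pth{-\delta_n^{-\eps}}$, hence $\Nb\pthb{Z(k,n,h)\geq x_{k,n}}\leq v(\delta_n)\,k^{-1/(\gamma-1)}\exp\pth{-\delta_n^{-\eps}}$. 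As $\tfrac{1}{\gamma-1}>1$ this is summable in $k\geq1$ and $n\in\N$, so Borel--Cantelli provides $\Nb$-a.e.\ an $n_0(\Ti)$ with
\[
  \forall n\geq n_0(\Ti),\ \forall k\geq 1;\quad Z(k,n,h)\leq\pthb{\bk{\ell^{k\delta_n}}+1}\delta_n^{\gamma+1-\gamma h-2\eps}.
\]

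Finally, the covering step is identical in spirit to Lemma~\ref{lemma:usp_spectrum_ub1}. If $\sigma\in F_\mb(h,\Ti)\cap\Ti(F)$ and $a\eqdef d(\rho,\sigma)\in F$, there are infinitely many radii $r$ with $\mb(B(\sigma,2r))\geq r^{h+\eps/2}$; picking $n,k$ with $2r\in\ivfo{\delta_{n+1},\delta_n}$ and $a\in\ivfo{(k+1)\delta_n,(k+2)\delta_n}$, the inequality $a-2r>k\delta_n$ and a standard $\R$-tree argument (the branch point of any $\tau\in B(\sigma,2r)$ with $\sigma$ lies at level $\geq a-2r$) show that $B(\sigma,2r)$ lies in the unique subtree $\Ti_{\sigma'}\in\Tbb(k\delta_n,\delta_n)$ rooted at level $k\delta_n$, with $d(\sigma',\sigma)<2\delta_n$, so $B(\sigma,2r)\subset\Ti_{\sigma'}\cap B(\sigma',3\delta_n)$ and $\mb\pthb{\Ti_{\sigma'}\cap B(\sigma',3\delta_n)}\geq r^{h+\eps/2}\geq\delta_n^{h+\eps}$ for $n$ large; thus $\Ti_{\sigma'}$ is counted by $Z(k,n,h)$. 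Letting $\Ji(n)$ be a cover of $F$ by intervals $I_{k,n}\eqdef\ivfo{k\delta_n,(k+1)\delta_n}$, the set $F_\mb(h,\Ti)\cap\Ti(F)$ is then covered, for every $N$, by the balls of radius $4\delta_n$ ($n\geq N$) centred at the (at most $Z(k-1,n,h)$) subtree roots at level $(k-1)\delta_n$ over the intervals $I_{k,n}\in\Ji(n)$, and the bound above together with the boundedness of $u\mapsto\bk{\ell^u}$ yields, for $\eta>0$,
\[
  \sum_i\abs{V_i}^\eta\leq c_0\sum_{n\geq N}\sum_{I_{k,n}\in\Ji(n)}Z(k-1,n,h)\,\delta_n^\eta\leq c_1\sum_{n\geq N}\#\Ji(n)\,\delta_n^{\eta+\gamma+1-\gamma h-2\eps},
\]
which converges whenever $\eta>\gamma(h-1)-1+2\eps+\dimBu F$. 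Letting $\eps\downarrow0$ along a countable sequence and invoking the regularisation and monotonicity remarks finishes the argument. The only point requiring genuine work beyond Lemma~\ref{lemma:usp_spectrum_ub1} is the estimate $q_n\asymp\delta_n^{\gamma+1-\gamma(h+\eps)}$, where the scaling of the mass measure and Lemma~\ref{lemma:mmass_tail} enter, together with the elementary $\R$-tree fact that an arbitrary ball in $\Ti(F)$ is swallowed by a single dyadic subtree whose near-root mass dominates it; I expect this scaling bookkeeping to be the only delicate part.
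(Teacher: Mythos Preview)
Your proof is correct and follows essentially the same route as the paper's own proof: the same regularisation to $\dimBu F$, the same Poisson count $Z(k,n,h)$ of subtrees with large near-root mass, the same Chernoff/Borel--Cantelli bound, and the same dyadic covering argument. Your version is in fact slightly more careful in two places --- using radius $3\delta_n$ rather than $\delta_n$ so that the inclusion $B(\sigma,2r)\subset\Ti_{\sigma'}\cap B(\sigma',3\delta_n)$ is clean, and flagging the short-tree contribution to $q_n$ --- but these are cosmetic; the paper simply writes $p_n=v(\delta_n)\Nb_{\delta_n}\pthb{\mb(B(\sigma,\delta_n))\geq\delta_n^{h+\eps}}\asymp\delta_n^{\gamma+1-\gamma(h+\eps)}$ directly from self-similarity and Lemma~\ref{lemma:mmass_tail}.
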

\begin{proof}
  Similarly to Lemma~\ref{lemma:usp_spectrum_ub1}, we only need to prove the inequality for the upper box dimension, and we may assume $h<\tfrac{\gamma}{\gamma-1}$. Set $\eps>0$ and for every $k,n\in\N$, define the random variable
  \[
    Z(k,n,h) = \#\brcb{ \Ti_\sigma\in\Tbb(k\delta_n,\delta_n) : \mb(B(\sigma,\delta_n)) \geq \delta_n^{h+\eps} },
  \]
  Under $\Nb_{k\delta_n}$ and given $\Gi_{k\delta_n}$, $Z(k,n,h)$ is a Poisson random variable parametrised by $p_n\bk{ \ell^{k\delta_n} }$, where
  \begin{align*}
    p_n \eqdef v(\delta_n)\Nb_{\delta_n}\pthb{ \mb(B(\sigma,\delta_n)) \geq \delta_n^{h+\eps} } \sim_{n\rightarrow\infty} \delta_n^{\tfrac{\gamma^2-1}{\gamma-1}-\gamma (h+\eps)} = \delta_n^{\gamma+1-\gamma (h+\eps)}.
  \end{align*}
  according to Lemma~\ref{lemma:mmass_tail} and the self-similarity of stable trees. Chernoff bound \eqref{eq:chernoff_poisson} and Borel--Cantelli lemma then entail that $\Nb(\dt\Ti)$-a.e. for every $n$ sufficiently large
  \begin{align}  \label{eq:bound_cover_m2}
    \forall k\in\N\setminus\brc{0};\quad Z(k,n,h) \leq (\bk{ \ell^{k\delta_n} }+1) \delta_n^{\gamma+1-\gamma h-3\eps}.
  \end{align}

  Similarly to Lemma~\ref{lemma:usp_spectrum_ub1}, we observe that if $\sigma\in\Ti$ is such that $\mb(B(\sigma,2r)) \geq r^{h+\eps}$, for some $r>0$, then there exist $k,n\in\N$ and $\Ti_{\sigma'}\in\Tbb(k\delta_n,\delta_n)$ such that $2r\in\ivfo{\delta_{n+1},\delta_n}$ and $\mb(\sigma',\delta_n) \geq c\,\delta_n^{h+\eps}$, where the constant $c$ is independent of $k$ and $n$. Consequently, we may deduce from this property a cover of the set $F_\mb(h,\Ti)\cap\Ti(F)$ with balls of radius $\delta_n$, $n\geq N$, centered at levels $k\delta_n$. The bound \eqref{eq:bound_cover_m2} then provides an upper bound on the number of balls required for such a cover, entailing that $\Nb$-a.e. for any set $F\subset\ivoo{0,h(\Ti)}$,
  \begin{align*}
    \dimH \pthb{ F_\mb(h,\Ti)\cap \Ti(F) } \leq \gamma (h-1) - 1 + \dimBu F.
  \end{align*}
\end{proof}

\begin{lemma}  \label{lemma:usp_mspectrum_ub2}
  Suppose $F$ is a nonempty Borel set. Then, $\Nb\pth{ \dt \Ti }$-a.e.,
  \begin{align}  \label{eq:usp_mspectrum_ub2}
    \forall h\in\ivfob{0, \tfrac{1+\gamma}{\gamma}};\quad \dimH \,\pthb{ F_\mb(h,\Ti)\cap \Ti(F) } \leq \gamma (h-1) - 1 + \dimP F_{|\Ti}.
  \end{align}
  where $F_{|\Ti}\eqdef F\cap\ivoo{0,h(\Ti)}$. In particular, $F_\mb(h,\Ti)\cap \Ti(F)$ is empty when $h < \tfrac{\gamma+1-\dimH F_{|\Ti}}{\gamma}$. Moreover, $\Nb\pth{ \dt \Ti }$-a.e. for every level $a>0$, the set $F_\mb(h,\Ti)\cap \Ti(a)$ is either empty or has zero Hausdorff dimension.
\end{lemma}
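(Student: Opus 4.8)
The plan is to mirror, essentially line for line, the argument of Lemma~\ref{lemma:usp_spectrum_ub2}, replacing the local-time supremum tail (Lemma~\ref{lemma:local_time_sup_tails}) by the mass-measure right tail of Lemma~\ref{lemma:mmass_tail}. First I would carry out the two reductions already used there. Applying the target inequality to each $F\cap\ivoo{0,a}$ with $a\in\Q$, $a>0$, and using that $\dimH$ is stable under countable unions together with $\dimP(F\cap\ivoo{0,a})\downarrow\dimP F_{|\Ti}$ as $a\downarrow h(\Ti)$, it suffices to prove $\dimH\pthb{F_\mb(h,\Ti)\cap\Ti(F)}\leq\gamma(h-1)-1+\dimP F$; and by the characterisation $\dimP F=\inf\{\sup_i\dimBu F_i:F=\bigcup_i F_i\}$ invoked in Lemma~\ref{lemma:usp_spectrum_ub1}, it suffices to prove this bound with $\dimBu F$ in place of $\dimP F$ for an arbitrary bounded $F$.

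Next, fix $h\in\ivfob{0,\tfrac{1+\gamma}{\gamma}}$ and $\eps>0$, set $\delta_n\eqdef 2^{-n}$, and consider
\[
  Z(k,n,h)=\#\brcb{\Ti_\sigma\in\Tbb(k\delta_n,\delta_n):\mb(B(\sigma,\delta_n))\geq\delta_n^{h+\eps}}.
\]
By the branching property, under $\Nb_{k\delta_n}$ and given $\Gi_{k\delta_n}$, $Z(k,n,h)$ is a Poisson variable whose parameter is $\bk{\ell^{k\delta_n}}\,v(\delta_n)\,\Nb_{\delta_n}(\mb(B(\rho,\delta_n))\geq\delta_n^{h+\eps})$, and Lemma~\ref{lemma:mmass_tail} with the self-similarity of stable trees identifies this parameter as $\asymp\bk{\ell^{k\delta_n}}\,\delta_n^{\gamma+1-\gamma(h+\eps)}$; crucially $\gamma+1-\gamma h>0$ since $h<\tfrac{1+\gamma}{\gamma}$. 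Choosing $\eps$ small enough that $\gamma+1-\gamma(h+3\eps)>0$, then $p\in\N$ with $p(\gamma+1-\gamma(h+3\eps))>\tfrac{1}{\gamma-1}$, the Poisson moment bound $\Nb_{k\delta_n}\pthcb{Z(k,n,h)\geq p}{\Gi_{k\delta_n}}\leq c(p)\bk{\ell^{k\delta_n}}^{p}\delta_n^{p(\gamma+1-\gamma(h+\eps))}$, restricted to the event $\{\sup_u\bk{\ell^u}\leq\delta_n^{-\gamma\eps}\}$, is summable over $k\geq1$ and $n$, so Borel--Cantelli plus the a.s.\ boundedness of $u\mapsto\bk{\ell^u}$ give, $\Nb$-a.e., $Z(k,n,h)<p$ for all $k$ once $n$ is large. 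In parallel, letting $\Ji(n)$ be the dyadic intervals $\ivfo{k\delta_n,(k+1)\delta_n}$ meeting $F$ and $N(n,h)=\#\{k:I_{k,n}\in\Ji(n),\,Z(k-1,n,h)\geq1\}$, the uniform estimate $\Nb(Z(k,n,h)\geq1)\leq c_0\,\delta_n^{\gamma+1-\gamma(h+\eps)}$ with $\#\Ji(n)\leq c\,\delta_n^{-s}$ ($s>\dimBu F$, $n$ large) yields by Markov and Borel--Cantelli that, $\Nb$-a.e., $N(n,h)\leq\delta_n^{\gamma+1-\gamma h-s-3\eps}$ for $n$ large.

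Finally I would reuse the covering geometry of Lemmas~\ref{lemma:usp_spectrum_ub1} and \ref{lemma:usp_mspectrum_ub1}: if $\sigma\in\Ti(F)$ satisfies $\mb(B(\sigma,2r))\geq r^{h+\eps/2}$ with $2r\in\ivfo{\delta_{n+1},\delta_n}$, the ancestor of $\sigma$ at the appropriate level $k\delta_n$ roots a subtree in $\Tbb(k\delta_n,\delta_n)$ carrying mass $\geq c\,\delta_n^{h+\eps}$ in a fixed multiple of $\delta_n$ about its root, hence counted by $Z(k-1,n,h)$; so for every $N$ the set $F_\mb(h,\Ti)\cap\Ti(F)$ is covered by at most $p\,N(n,h)$ balls of radius $4\delta_n$, $n\geq N$. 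If $\gamma(h-1)-1+\dimBu F<0$ the bound on $N(n,h)$ forces it to vanish eventually, whence emptiness; otherwise $\sum_n N(n,h)\delta_n^{\eta}<\infty$ for $\eta>\gamma h-\gamma-1+s+3\eps$, so $\dimH\pthb{F_\mb(h,\Ti)\cap\Ti(F)}\leq\gamma(h-1)-1+\dimBu F$, and letting $s\downarrow\dimBu F$ and undoing the two reductions gives \eqref{eq:usp_mspectrum_ub2}. The ``in particular'' clauses then follow exactly as in Lemma~\ref{lemma:usp_spectrum_ub2}: the emptiness assertion from the right-hand side of the bound being negative in that range of $h$, and the zero-dimension-at-a-fixed-level assertion from the uniform bound $Z(k-1,n,h)<p$, which makes only boundedly many balls per scale suffice to cover $F_\mb(h,\Ti)\cap\Ti(a)$. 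The only genuinely new ingredient over the local-time case is Lemma~\ref{lemma:mmass_tail}, which shifts the governing exponent from $1-\gamma h$ to $\gamma+1-\gamma h$; I expect the one delicate point to be the bookkeeping in the covering step, namely feeding Lemma~\ref{lemma:mmass_tail} the mass of a ball of radius $\delta_n$ about a vertex at height of order $\delta_n$, i.e.\ $\mb(B(\rho,(1+c)\delta_n))\eqd\delta_n^{\gamma/(\gamma-1)}\mb(B(\rho,1+c))$ with $c$ of order one — routine, but it is where the constants and the conditioning must be handled carefully.
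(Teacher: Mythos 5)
Your proposal reproduces the paper's proof essentially line for line: the same two reductions (countable exhaustion in $a$ and the $\dimP$-via-$\dimBu$ regularisation), the same Poisson variable $Z(k,n,h)$ governed via the branching property and Lemma~\ref{lemma:mmass_tail} by the exponent $\gamma+1-\gamma h$, the same Borel--Cantelli bound $Z(k,n,h)<p$ on the event $\sup_u\bk{\ell^u}\leq\delta_n^{-\gamma\eps}$, the same Markov/Borel--Cantelli control of $N(n,h)$, and the same covering of $F_\mb(h,\Ti)\cap\Ti(F)$ by at most $p\,N(n,h)$ balls of radius $4\delta_n$. The paper itself phrases this lemma as a near-verbatim transplant of Lemma~\ref{lemma:usp_spectrum_ub2} with the local-time tail swapped for Lemma~\ref{lemma:mmass_tail}, which is exactly what you do; your argument is correct and matches the paper's.
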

\begin{proof}
  As previously, it sufficient to prove \eqref{eq:usp_mspectrum_ub2} with upper box dimension of $F$. We first bound uniformly the r.v. $Z(k,n,h) \eqdef \#\brcb{ \Ti_\sigma\in\Tbb(k\delta_n,\delta_n) : \mb(B(\sigma,\delta_n)) \geq \delta_n^{h+\eps} }$. Since the former is a Poisson random variable, for any $p\geq 1$
  \begin{align*}
    \Nb_{k\delta_n}\pthcb{ Z(k,n,h)\geq p }{ \Gi_{k\delta_n} }
    \leq c(p)\bk{ \ell^{k\delta_n} }^p\delta_n^{p(\gamma+1)-p\gamma (h+\eps)}.
  \end{align*}
  Assuming $\eps$ is sufficiently small and $p$ large enough to have $p(\gamma+1-\gamma(h+3\eps)) > \tfrac{1}{\gamma-1}>$, Borel--Cantelli lemma entails that $\Nb$-a.e. for any $n$ sufficiently large and every $k\in\N$, $Z(k,n,h) < p$.

  To construct the optimal cover of $F_\mb(h,\Ti)\cap \Ti(F)$, we introduce as well $N(n,h) \eqdef \#\brcb{k\in\N : I_{k,n}\in\Ji(n) \text{ and } Z(k-1,n,h)\geq 1}$, where $\Ji(n)$ denotes the collection of intervals of type $I_{k,n}\eqdef\ivfo{k\delta_n,(k+1)\delta_n}$ necessary to cover $F$. Still according to the law of $Z(k,n,h)$,  $\Nb\pthb{ Z(k,n,h)\geq 1 } \leq c_0\,\delta_n^{1+\gamma-\gamma (h+\eps)}$. Hence, setting $s>\dimBu F$, we get
  \begin{align*}
    \Nb\pthb{ N(n,h) \geq \delta_n^{1+\gamma-\gamma h-s-3\eps} }
    \leq \delta_n^{\gamma h-\gamma-1+s+3\eps} \Nb\pthb{ N(n,h) }
    \leq c_0\,\#\Ji(n) \,\delta_n^{\eps+s}
    \leq c_1\,\delta_n^{\eps},
  \end{align*}
  and therefore, $\Nb$-a.e. for every $n$ sufficiently large, $N(n,h) \leq \delta_n^{1+\gamma-\gamma h-s-3\eps}$.

  Using the same arguments presented in the proof of Lemma~\ref{lemma:usp_spectrum_ub2}, we deduce that $F_\mb(h,\Ti)\cap \Ti(F)$ is empty when $\gamma h - 1-\gamma + \dimBu F < 0$. In addition, using the previous bound and a similar construction of an optimal cover, we get $\dimH \,\pthb{ F_\mb(h,\Ti)\cap \Ti(F) } \leq \gamma (h-1) - 1 + \dimBu F$.
\end{proof}


\subsection{Lower bound estimates}  \label{ssec:stable_trees_lb}

In this section, we are interested in proving the lower bound of the uniform multifractal spectrum described in Theorem~\ref{th:levy_tree_upper_spectrum1}. We may begin by observing that owing to the connection between the local time and the mass measure exponents presented in Proposition~\ref{prop:scaling_exponents}, the key element is to investigate the behaviour of large masses of the local time.

As we might expect, the lower bound of the Hausdorff dimension is more technical to obtain. In addition to the classic difficulties related to the proof of a dimension's lower bound, we also must take into account the uniformity of the latter, i.e. for all $h\in\ivffb{\tfrac{1}{\gamma},\tfrac{1}{\gamma-1}}$ and any regular set $F$. In the multifractal literature, ubiquity theorems are often used to ensure the uniformity on the parameter $h$ (see for instance the seminal work of \citet{Jaffard-1999} on Lévy processes). In this section, we adopt an alternative approach. Instead of looking for a representation necessary to use these ubiquity theorems, we directly construct a proper family of Hausdorff measures using the distribution properties of stable trees. Note that our construction share common ideas with the work of \citet{Dembo.Peres.ea-2000} on the Hausdorff dimension of limsup random fractals.

\subsubsection{Proof of Theorem~\ref{th:levy_tree_upper_spectrum1} (lower bound)}

To start with, we aim to obtain uniform bounds on the size of the well-behaving parts of stable tree, i.e. the subset of $\Ti(u)$ which does not contain any exceptional small or large mass. For that purpose, we define for any $\delta,\rho\in\ivoo{0,\infty}$
\begin{align}  \label{eq:balance_set_Lambda}
  \Lambda(u,\rho,\delta) = \bigcup_{r\in\ivoo{\delta,\kappa\rho}}  \brcb{\sigma\in \Ti(u) : \ell^{u}(B(\sigma,2r)) \notin \ivffb{\underline{r}(r),\overline{r}(r)} },
\end{align}
where $\underline{r}(r) \eqdef \pthb{r \,g(r)^{1+\epsilon}}^{1/(\gamma-1)}$ and $\overline{r}(r) \eqdef \pthb{r / g(r)^{1+4\epsilon}}^{1/(\gamma-1)}$ for some $\epsilon>0$ fixed. The subset $\Lambda(u,\rho,\delta)$ thus gathers nodes in $\Ti(u)$ where the mass of the local time is locally exceptionally small or large at some scale $r\in\ivoo{\delta,\kappa\rho}$. In order to construct a ``well-behaving'' subtree, we aim to bound the contribution of the set $\Lambda(u,\rho,\delta)$ to the local time at level $u$.

Consequently, we begin by presenting a few technical lemmas on the small and large balls of the local time. In the rest of section, we fix the parameters $\kappa\in\ivoo{0,1}$ and $\epsilon>0$ whose values will be properly set later. In addition, we define the following event for any $v,w\geq 0$ and $\rho>0$
\begin{align*}
  \Ai(v,w,\rho) = \brcB{\Ti : \forall u\in\ivff{v,w};\ \bk{\ell^u}\in\ivffB{\rho^{\tfrac{1}{\gamma-1}}g(\rho)^{-\epsilon} , \rho^{\tfrac{1}{\gamma-1}} g(\rho)^{-2\epsilon} } }.
\end{align*}
For the sake of readability, we will also use simpler notations when possible: $\Ai(v,\rho)\eqdef \Ai(v,v,\rho)$ and $\Ai(\rho)\eqdef\Ai(\kappa\rho,\rho/\kappa,\rho)$.

To begin with, we present a bound on the tail behaviour of large masses of the local time.
\begin{lemma}  \label{lemma:usp_large_balls1}
  Suppose $\rho>0$. Then, for any $v\in\ivff{2\kappa\rho,\rho/\kappa}$ and all $\delta\in\ivoo{0,\kappa\rho}$
  \begin{align*}
    \Nb_{\kappa\rho}\bktbb{ \sup_{u\in\ivff{0,\delta}} \int \ell^{v+u}(\dt \sigma)\indi_{\brc{\ell^{v+u}(B(\sigma,2\delta)) > \overline{r}(\delta)}} \geq \ell(\rho,\delta) \cap \Ai(\rho) }
    \leq c_0\,\delta\rho^{-1} g(\delta)^{-\gamma(2+5\epsilon)},
  \end{align*}
  where $\ell(\rho,\delta) \eqdef g(\delta)^{1+\epsilon} \rho^{1/(\gamma-1)}$, $\overline{r}(\delta) \eqdef \pthb{\delta / g(\delta)^{1+4\epsilon}}^{1/(\gamma-1)}$ and the constant $c_0$ is independent of $\rho$, $\delta$ and $v$.
\end{lemma}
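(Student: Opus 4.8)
The plan is to localise the bad contribution geometrically, apply the branching property at a well chosen level, and then control the resulting Poisson sum by a \emph{single big jump} argument. For the geometric step, fix $u\in\ivff{0,\delta}$ and $\sigma\in\Ti(v+u)$: every vertex of $\Ti(v+u)$ lying in $B(\sigma,2\delta)$ shares with $\sigma$ an ancestor at level at least $v+u-\delta\ge v-\delta$, hence belongs to the subtree $\Ti_{\sigma^*}$ stemming from the ancestor $\sigma^*$ of $\sigma$ at level $v-\delta$, and moreover $v-\delta\in\ivff{\kappa\rho,\rho/\kappa}$ since $\delta<\kappa\rho$ and $v\in\ivff{2\kappa\rho,\rho/\kappa}$. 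Thus $\ell^{v+u}(B(\sigma,2\delta))\le\bk{\ell^{\delta+u}}(\Ti_{\sigma^*})\le M(\sigma^*)$, where $M(\sigma^*)\eqdef\sup_{a\in\ivff{\delta,2\delta}}\bk{\ell^a}(\Ti_{\sigma^*})$; summing over $\sigma^*\in\Ti(v-\delta,0)$ and moving the supremum in $u$ inside the sum,
\begin{align*}
  \sup_{u\in\ivff{0,\delta}}\int\ell^{v+u}(\dt\sigma)\,\indi_{\brc{\ell^{v+u}(B(\sigma,2\delta))>\overline{r}(\delta)}}\ \leq\ \sum_{\sigma^*\in\Ti(v-\delta,0)}M(\sigma^*)\,\indi_{\brc{M(\sigma^*)>\overline{r}(\delta)}}\ \eqdef\ S.
\end{align*}

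By the branching property at level $v-\delta$, conditionally on $\Gi_{v-\delta}$ the sum $S$ is a Poisson integral of $\psi(\Ti')\eqdef M(\Ti')\indi_{\brc{M(\Ti')>\overline{r}(\delta)}}$ against a Poisson measure on $\Tbb$ with intensity $L\,\Nb(\dt\Ti')$, where $L\eqdef\bk{\ell^{v-\delta}}$ and $M(\Ti')\eqdef\sup_{a\in\ivff{\delta,2\delta}}\bk{\ell^a}(\Ti')$. On $\Ai(\rho)$ one has the $\Gi_{v-\delta}$-measurable bound $L\le\rho^{1/(\gamma-1)}g(\rho)^{-2\epsilon}\eqdef R$, and this is the only information we shall use from $\Ai(\rho)$, so it suffices to bound $\Nb_{\kappa\rho}\pthcb{S\ge\ell(\rho,\delta)}{\Gi_{v-\delta}}$ on $\brc{L\le R}$. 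As for the law of $M(\Ti')$ under $\Nb$, arguing exactly as in Lemma~\ref{lemma:local_time_sup_tails} (through Lemma~\ref{lemma:csbp_sup_upper_tail} and the self-similarity of stable trees) gives $\Nb\pthb{M\ge t}\le c\,\delta\,t^{-\gamma}$ for every $t\ge\delta^{1/(\gamma-1)}$; since $\overline{r}(\delta)\ge\delta^{1/(\gamma-1)}$, this yields $\Nb\pthb{\psi}\le c\,\delta\,\overline{r}(\delta)^{1-\gamma}=c\,g(\delta)^{1+4\epsilon}$, $\Nb\pthb{M^2\indi_{\brc{\overline{r}(\delta)<M\le\ell(\rho,\delta)}}}\le c_\gamma\,\delta\,\ell(\rho,\delta)^{2-\gamma}$, and $\Nb\pthb{M>\ell(\rho,\delta)}\le c\,\delta\,\ell(\rho,\delta)^{-\gamma}$.

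Now split $S=S_1+S_2$ with $S_2\eqdef\sum_{\sigma^*}M(\sigma^*)\indi_{\brc{M(\sigma^*)>\ell(\rho,\delta)}}$, so that $\brc{S\ge\ell(\rho,\delta)}\subseteq\brc{S_2>0}\cup\brc{S_1\ge\ell(\rho,\delta)}$. Conditionally on $\Gi_{v-\delta}$, the number of $\sigma^*$ with $M(\sigma^*)>\ell(\rho,\delta)$ is a Poisson variable of parameter $L\,\Nb\pthb{M>\ell(\rho,\delta)}$, hence $\Nb_{\kappa\rho}\pthcb{S_2>0}{\Gi_{v-\delta}}\le L\,\Nb\pthb{M>\ell(\rho,\delta)}\le c\,R\,\delta\,\ell(\rho,\delta)^{-\gamma}$ on $\brc{L\le R}$. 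The truncated part $S_1$ has conditional mean $L\,\Nb\pthb{M\indi_{\brc{\overline{r}(\delta)<M\le\ell(\rho,\delta)}}}\le c\,R\,g(\delta)^{1+4\epsilon}$, which for $\rho$ small (depending on $\gamma,\epsilon,\kappa$) is $\le\tfrac12\ell(\rho,\delta)$, the ratio being $O\pthb{g(\rho)^\epsilon}$ thanks to $g(\delta)\le g(\rho)$; and its conditional variance is $L\,\Nb\pthb{M^2\indi_{\brc{\overline{r}(\delta)<M\le\ell(\rho,\delta)}}}\le c_\gamma\,R\,\delta\,\ell(\rho,\delta)^{2-\gamma}$, so Chebyshev's inequality gives $\Nb_{\kappa\rho}\pthcb{S_1\ge\ell(\rho,\delta)}{\Gi_{v-\delta}}\le 4c_\gamma\,R\,\delta\,\ell(\rho,\delta)^{-\gamma}$ on $\brc{L\le R}$. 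Adding the two bounds, taking expectations over the $\Gi_{v-\delta}$-measurable event $\brc{L\le R}\supseteq\Ai(\rho)$, and then substituting $R=\rho^{1/(\gamma-1)}g(\rho)^{-2\epsilon}$ and $\ell(\rho,\delta)=g(\delta)^{1+\epsilon}\rho^{1/(\gamma-1)}$ while using $\tfrac1{\gamma-1}-\tfrac{\gamma}{\gamma-1}=-1$, $g(\rho)\ge g(\delta)$ and $\gamma(1+\epsilon)+2\epsilon\le\gamma(2+5\epsilon)$, one obtains the announced bound $c_0\,\delta\,\rho^{-1}\,g(\delta)^{-\gamma(2+5\epsilon)}$.

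The crux is the last step: the summands $M(\sigma^*)$ inherit a heavy ($\gamma$-stable, $\gamma<2$) right tail, so a direct exponential Chernoff estimate on $S$ cannot succeed; one must instead isolate the contribution of a single exceptionally large subtree by a first moment bound and control the remaining bulk, truncated at the target level $\ell(\rho,\delta)$, by a second moment. A minor but essential technical point is that $\Ai(\rho)$ may only be invoked through the $\Gi_{v-\delta}$-measurable inequality $\bk{\ell^{v-\delta}}\le R$, since $\Ai(\rho)$ itself is not measurable with respect to the tree truncated at level $v-\delta$.
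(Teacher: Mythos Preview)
Your proof is correct and takes a genuinely more elementary route than the paper's. The geometric localisation at level $v-\delta$, the reduction of $\Ai(\rho)$ to the $\Gi_{v-\delta}$-measurable constraint $\bk{\ell^{v-\delta}}\le R$, and the single-big-jump mechanism are identical to the paper's argument. The difference lies in the treatment of the \emph{bulk} term $S_1$. The paper dyadically layers the range $(\overline{r}(\delta),R(\delta)]$ with $R(\delta)=g(\delta)^{2+2\epsilon}\rho^{1/(\gamma-1)}$, computes the Laplace transform of the resulting Poisson sum, and applies an exponential Markov inequality with $\mu=R(\delta)^{-1}$; this yields the stronger bound $\exp\pthb{-c\,g(\delta)^{-1-\epsilon}}$ for the bulk, which is then seen to be dominated by the polynomial tail bound. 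You instead truncate at the target level $\ell(\rho,\delta)$ and use only first and second moments: the mean is small relative to $\ell(\rho,\delta)$ (via $g(\delta)\le g(\rho)$), and Chebyshev gives directly a bound of the same order $R\,\delta\,\ell(\rho,\delta)^{-\gamma}$ as the tail term. Since the tail dominates in either argument, your approach loses nothing and avoids both the dyadic decomposition and the exponential-moment computation; in fact your final exponent $\gamma(1+\epsilon)+2\epsilon$ is smaller than the paper's $\gamma(2+5\epsilon)$.

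Two minor points. First, the tail estimate $\Nb(M\ge t)\le c\,\delta\,t^{-\gamma}$ that you invoke for $t\ge\delta^{1/(\gamma-1)}$ in fact holds for all $t>0$, since for $t\le c'\delta^{1/(\gamma-1)}$ one trivially has $\Nb(M\ge t)\le v(\delta)\le c\,\delta\,t^{-\gamma}$; this covers the regime $\delta$ close to $\kappa\rho$ where $\ell(\rho,\delta)$ may drop below $\overline{r}(\delta)$ (and then $S_1\le0$, so only the $S_2$ bound is needed). Second, your requirement ``$\rho$ small'' for the mean comparison is exactly the same caveat that appears in the paper's proof (``$c_0>1/2$ for any $\rho>0$ sufficiently small''); for bounded $\rho$ the claim is absorbed into the constant $c_0$.
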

\begin{proof}
  Let us start with a simple observation. Since $v-\delta\geq \kappa\rho$, $\Ai(\rho) \subset \Ai(v-\delta,\rho)$ and it is thus sufficient to obtain an upper bound of the following expression
  \[
    \Nb_{v-\delta}\bktbb{ \sup_{u\in\ivff{0,\delta}} \int \ell^{v+u}(\dt \sigma)\indi_{\brc{\ell^{v+u}(B(\sigma,2\delta)) > \overline{r}(\delta)}} \geq \ell(\rho,\delta) \cap \Ai(v-\delta,\rho) }.
  \]
  We recall that $\Tbb(v-\delta,\delta)$ denotes the collection of subtrees rooted at level $v-\delta$ and higher than $\delta$. Then, for any $u\in\ivff{0,\delta}$ and $\sigma\in\Ti(v+u)$, there exists a unique $\Ti_{\sigma'}\in \Tbb(v-\delta,\delta)$ such that $B(\sigma,2\delta)\subset \Ti_{\sigma'}$. Hence, $\ell^{v+u}(B(\sigma,2\delta)) \leq \bk{\ell^{\delta+u}}(\Ti_{\sigma'})$ and
  \begin{align*}
    \sup_{u\in\ivff{0,\delta}} \int \ell^{v+u}(\dt \sigma)\indi_{\brc{\ell^{v+u}(B(\sigma,2\delta)) > \overline{r}(\delta)}}
    &\leq \sup_{u\in\ivff{0,\delta}} \sum_{\Ti_\sigma\in \Tbb(v-\delta,\delta)} \bk{\ell^{\delta+u}}(\Ti_\sigma) \indi_{\brc{\bk{\ell^{\delta+u}}(\Ti_\sigma) > \overline{r}(\delta)}} \\
    &\leq \sum_{\Ti_\sigma\in \Tbb(v-\delta,\delta)}  \sup_{u\in\ivff{0,\delta}} \bk{\ell^{\delta+u}}(\Ti_\sigma) \indi_{\brc{ \sup_{u\in\ivff{0,\delta}}\bk{\ell^{\delta+u}}(\Ti_\sigma) > \overline{r}(\delta)}}.
  \end{align*}
  Set $R(\delta) > \overline{r}(\delta)$, where the precise value will be set latter. For any $k\in\N$, define $r_k\eqdef 2^k \overline{r}(\delta)$. There exists $K\in\N$ such that $r_{K}\leq R(\delta) < r_{K+1}$. Then, let $N(k,\delta)$ denotes the number of subtrees in $\Tbb(v-\delta,\delta)$ such that $\sup_{u\in\ivff{0,\delta}}\bk{\ell^{\delta+u}}(\Ti_\sigma)\in\ivfo{r_k,r_{k+1}}$. Owing to the branching property of Lévy trees, given $\Gi_{v-\delta}$, $N(k,\delta)$ are independent Poisson random variables respectively parametrised by
  \begin{align*}
    \lambda_k
    = \bk{\ell^{v-\delta}} \,v(\delta) \Nb_{\delta}\pthB{ \sup_{u\in\ivff{0,\delta}}\bk{\ell^{\delta+u}} \in \ivfo{r_k,r_{k+1}} }
    \asymp \bk{\ell^{v-\delta}} \,\delta r_k^{-\gamma},
  \end{align*}
  according to Lemma~\ref{lemma:local_time_sup_tails}. Then, observing that
  \[
    \sum_{\Ti_\sigma\in \Tbb(v-\delta,\delta)}  \sup_{u\in\ivff{0,\delta}}\bk{\ell^{\delta+u}}(\Ti_\sigma) \indi_{\brc{ \sup_{u\in\ivff{0,\delta}}\bk{\ell^{\delta+u}} \in \ivfo{\overline{r}(\delta),R(\delta)} }}
    \leq 2 \sum_{k\leq K} r_k N(k,\delta),
  \]
  the classic Markov inequality on exponential moments entails
  \begin{align*}
    &\Nb_{v-\delta}\pthcbb{ \sum_{\Ti_\sigma\in \Tbb(v-\delta,\delta)}  \sup_{u\in\ivff{0,\delta}}\bk{\ell^{\delta+u}}(\Ti_\sigma) \indi_{\brc{ \sup_{u\in\ivff{0,\delta}}\bk{\ell^{\delta+u}} \in \ivfo{\overline{r}(\delta),R(\delta)} }} \geq \ell(\rho,\delta) }{ \Gi_{v-\delta}} \\
    &\leq \Nb_{v-\delta}\pthcbb{ 2 \sum_{k\leq K} r_k N(k,\delta) \geq \ell(\rho,\delta) }{ \Gi_{v-\delta}} \\
    &\leq \exp\pthb{-\mu \ell(\rho,\delta) } \Nb_{v-\delta}\pthcbb{ \exp\pthbb{ 2\mu\sum_{k\leq K} r_k N(k,\delta) }}{ \Gi_{v-\delta} },
  \end{align*}
  for some $\mu>0$.
  Using the independence on r.v. $N(k,\delta)$, the latter term is equal to
  \begin{align*}
    \Nb_{v-\delta}\pthcbb{ \exp\pthbb{ 2\mu\sum_{k\leq K} r_k N(k,\delta) }}{ \Gi_{v-\delta} }
    &= \Nb_{v-\delta}\pthcbb{ \exp\pthbb{ \sum_{k\leq K} \lambda_k\pthb{ \e^{2\mu r_k}-1 } } }{ \Gi_{v-\delta} }.
  \end{align*}
  Since $\lambda_k\asymp \bk{\ell^{v-\delta}}(\Ti) \,\delta r_k^{-\gamma}$ and $\gamma>0$, by choosing $\mu=R(\delta)^{-1}$, we obtain
  \begin{align*}
    \Nb_{v-\delta}\pthcbb{ \exp\pthbb{ \sum_{k\leq K} \lambda_k\pthb{ \e^{2\mu r_k}-1 } } }{ \Gi_{v-\delta} }
    &\leq \Nb_{v-\delta}\pthcbb{ \exp\pthbb{ \sum_{k\leq K} 8\mu \lambda_k r_k } }{ \Gi_{v-\delta} } \\
    &\leq \exp\pthB{ c_0 \mu \delta \bk{\ell^{v-\delta}} \sum_{k\leq K} r_k^{1-\gamma} } \\
    &\leq \exp\pthb{ c_1 \mu \delta \bk{\ell^{v-\delta}} \overline{r}(\delta)^{1-\gamma} } .
  \end{align*}
  Recalling $\overline{r}(\delta) = \pthb{\delta / g(\delta)^{1+4\epsilon}}^{1/(\gamma-1)}$,
  \begin{align*}
    \Nb_{v-\delta}\pthcbb{  2 \sum_{k\leq K} r_k N(k,\delta) \geq \ell(\rho,\delta) }{ \Gi_{v-\delta} }
    \leq  \exp\pthB{ -\mu\brcb{ \ell(\rho,\delta) - c_1 g(\delta)^{1+4\epsilon} \bk{\ell^{v-\delta}} } }.
  \end{align*}
  Finally, since $\ell(\rho,\delta) = g(\delta)^{1+\epsilon}\rho^{1/(\gamma-1)}$ and $\bk{\ell^{v-\delta}}\leq\rho^{1/(\gamma-1)}g(\rho)^{-2\epsilon}$ on the event $\Ai(v-\delta,\rho)$, we obtain by setting $\mu^{-1} = R(\delta) \eqdef g(\delta)^{2+2\epsilon} \rho^{1/(\gamma-1)}$
  \begin{align*}
    \Nb_{v-\delta}\pthbb{ 2 \sum_{k\leq K} r_k N(k,\delta) \geq \ell(\rho,\delta) \cap  \Ai(v-\delta,\rho) }
    &\leq \exp\pthb{ -c_0 \mu \rho^{1/(\gamma-1)} g(\delta)^{1+\epsilon} } \\
    &= \exp\pthb{ -c_0 g(\delta)^{-1-\eps} },
  \end{align*}
  where $c_0 > 1/2$ for any $\rho>0$ sufficiently small.

  Let us now consider the subtrees such that $\sup_{u\in\ivff{0,\delta}}\bk{\ell^{\delta+u}}(\Ti_\sigma)\geq R(\delta)$. Similarly, the number of such subtrees is Poisson random variable $N(R,\delta)$ parametrised by $\lambda_R \asymp \bk{\ell^{v-\delta}}\,\delta R(\delta)^{-\gamma}$.
  The probability of obtaining one such subtree is then,
  \begin{align*}
    \Nb_{v-\delta}\pthcb{ N(R,\delta) \geq 1 \cap \Ai(v-\delta,\rho) }{ \Gi_{v-\delta} }
    &\leq 1 - \exp\pthb{-c_1\, g(\rho)^{-2\epsilon} \rho^{1/(\gamma-1)} \,\delta R(\delta)^{-\gamma}} \\
    &\leq c_1\, \delta\rho^{-1} g(\rho)^{-2\epsilon} g(\delta)^{-\gamma(2+3\epsilon)} \\
    &\leq c_1\, \delta\rho^{-1} g(\delta)^{-\gamma(2+5\epsilon)}.
  \end{align*}
  To conclude the proof, we simply observe that $\delta\rho^{-1} g(\delta)^{-\gamma(2+5\epsilon)} \geq \exp\pthb{ -c_0 g(\delta)^{-1-\epsilon} },$ for any $\rho$ sufficiently small.
\end{proof}

We slightly extend the previous lemma to a more general form.
\begin{lemma}  \label{lemma:usp_large_balls2}
  Suppose $\rho>0$. Then, for any $v\in\ivff{2\kappa\rho,\rho/\kappa}$, all $\delta\in\ivoo{0,\kappa\rho}$ and any $\tau\in\ivff{\delta,\rho}$,
  \begin{align*}
    \Nb_{\kappa\rho}\bktbb{ \sup_{u\in\ivff{0,\tau}} \int \ell^{v+u}(\dt \sigma)\indi_{\brc{\ell^{v+u}(B(\sigma,2\delta)) > \overline{r}(\delta)}} \geq \ell(\rho,\delta) \cap \Ai(\rho) }
    \leq c_0\,\tau\rho^{-1} g(\delta)^{-\gamma(2+5\epsilon)},
  \end{align*}
  where the constant $c_0$ is independent of $\rho$, $\delta$, $\tau$ and $v$.
\end{lemma}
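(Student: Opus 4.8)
The plan is to reduce Lemma~\ref{lemma:usp_large_balls2} to the already-established Lemma~\ref{lemma:usp_large_balls1} by covering the time interval $\ivff{0,\tau}$ with overlapping blocks of length $\delta$. Write $\ivff{0,\tau} \subseteq \bigcup_{j=0}^{J} \ivff{j\delta, (j+2)\delta}$ where $J \eqdef \ceil{\tau/\delta}$, so that $J+1 \leq c\,\tau/\delta$. For each $j$, the supremum over $u\in\ivff{j\delta,(j+2)\delta}$ of the mass of the local time carried by exceptionally large balls is controlled by the event appearing in Lemma~\ref{lemma:usp_large_balls1}, applied at the shifted base level $v+j\delta$ instead of $v$ (noting $v+j\delta$ still lies in a range comparable to $\rho/\kappa$ up to adjusting the constant $\kappa$, since $j\delta \leq \tau \leq \rho$). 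Therefore a union bound over $j=0,\dots,J$ gives
\begin{align*}
  \Nb_{\kappa\rho}\bktbb{ \sup_{u\in\ivff{0,\tau}} \int \ell^{v+u}(\dt \sigma)\indi_{\brc{\ell^{v+u}(B(\sigma,2\delta)) > \overline{r}(\delta)}} \geq \ell(\rho,\delta) \cap \Ai(\rho) }
  \leq (J+1)\, c_0\,\delta\rho^{-1} g(\delta)^{-\gamma(2+5\epsilon)} \leq c_1\,\tau\rho^{-1} g(\delta)^{-\gamma(2+5\epsilon)},
\end{align*}
which is exactly the claimed bound.

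The only slightly delicate point is handling the base-level shift. Lemma~\ref{lemma:usp_large_balls1} is stated for $v\in\ivff{2\kappa\rho,\rho/\kappa}$, whereas the shifted levels $v+j\delta$ may exceed $\rho/\kappa$. I would address this either by (i) enlarging $\rho$ slightly, i.e. applying Lemma~\ref{lemma:usp_large_balls1} with $\rho$ replaced by $\rho' = \rho/\kappa^2$ or a similar constant multiple (and checking that $\ell(\rho',\delta)$, $\overline r(\delta)$ and the event $\Ai(\rho')$ differ from their $\rho$-counterparts only by multiplicative constants that can be absorbed, since $g(\rho')\asymp g(\rho)$ and $\rho'^{1/(\gamma-1)}\asymp\rho^{1/(\gamma-1)}$), or (ii) more simply, covering $\ivff{0,\tau}$ by blocks $\ivff{j\delta,(j+2)\delta}$ only for $j\delta \leq \rho - 2\delta$, which already suffices since $\tau\leq\rho$, so that all shifted levels $v+j\delta$ stay within $\ivff{2\kappa\rho,\rho/\kappa}$ after a harmless adjustment of the constant $\kappa$ in the statement. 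Either way the self-similarity and scaling properties of $\kappa$, $v$, $g$ recorded in Section~\ref{sec:notations} ensure the relevant quantities match up to constants absorbed in $c_0$.

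I expect the main (minor) obstacle to be precisely this bookkeeping of how $\Ai(\rho)$ relates to the events $\Ai(v+j\delta-\delta,\rho)$ appearing in the proof of Lemma~\ref{lemma:usp_large_balls1}: one needs $\Ai(\rho) \subseteq \Ai(v+j\delta-\delta,\rho)$ for every $j$, which holds as long as the shifted windows $v+j\delta-\delta$ remain inside $\ivff{\kappa\rho,\rho/\kappa}$ — again guaranteed by $\tau\leq\rho$ together with $v\geq 2\kappa\rho$ and a slight shrinking of $\kappa$. Once this containment is in place, the union bound is immediate and the factor $J+1 = \bigo(\tau/\delta)$ exactly converts the $\delta\rho^{-1}$ of Lemma~\ref{lemma:usp_large_balls1} into the $\tau\rho^{-1}$ of Lemma~\ref{lemma:usp_large_balls2}, with no further estimate needed.
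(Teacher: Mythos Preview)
Your proposal is correct and follows essentially the same route as the paper: partition $\ivff{0,\tau}$ into $\bigo(\tau/\delta)$ sub-blocks of length $\delta$, apply Lemma~\ref{lemma:usp_large_balls1} at each shifted level $v_k=v+k\delta$, and take a union bound to convert the factor $\delta\rho^{-1}$ into $\tau\rho^{-1}$. You are in fact more careful than the paper about the bookkeeping of the range of $v_k$ and the containment $\Ai(\rho)\subset\Ai(v_k-\delta,\rho)$; the paper glosses over this, tacitly absorbing it into the choice of $\kappa$.
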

\begin{proof}
  Let us divide the interval $\ivff{0,\tau}$ into successive and disjoint subintervals of size $\delta$. Owing the previous Lemma~\ref{lemma:usp_large_balls1}, for every $k\in\brc{0,\dotsc,\ceil{\tau/\delta}}$,
  \begin{align*}
    \Nb_{\kappa\rho}\bktbb{ \sup_{u\in\ivff{0,\delta}} \int \ell^{v_k+u}(\dt \sigma)\indi_{\brc{\ell^{v_k+u}(B(\sigma,2\delta)) > r(\delta)}} \geq \ell(\rho,\delta) \cap \Ai(\rho) }
    \leq c_0\,\delta\rho^{-1} g(\delta)^{-\gamma(2+5\epsilon)},
  \end{align*}
  where $v_k\eqdef v+k\delta$. Therefore, since
  \begin{align*}
    &\brcbb{ \sup_{u\in\ivff{0,\tau}} \int \ell^{v+u}(\dt \sigma)\indi_{\brc{\ell^{v+u}(B(\sigma,2\delta)) > \overline{r}(\delta)}} \geq \ell(\rho,\delta) } \\
    &\subset \bigcup_{k\leq \ceil{\tau/\delta}} \brcbb{ \sup_{u\in\ivff{0,\delta}} \int \ell^{v_k+u}(\dt \sigma)\indi_{\brc{\ell^{v_k+u}(B(\sigma,2\delta)) > \overline{r}(\delta)}} \geq \ell(\rho,\delta) },
  \end{align*}
  the sum over $k\in\brc{0,\dotsc,\ceil{\tau/\delta}}$ entails the result.
\end{proof}

Let us now present similar estimates on the small masses of the local time.
\begin{lemma}  \label{lemma:usp_small_balls1}
  Suppose $\rho>0$. Then, for any $v\in\ivff{2\kappa\rho,\rho/\kappa}$, all $\delta\in\ivoo{0,2^{-1/\rho}}$ and $\tau\in\ivoo{0,\delta\, g(\delta)^{1+2\epsilon}}$
  \begin{align*}
    \Nb_{\kappa\rho}\bktbb{ \sup_{u\in\ivff{0,\tau}} \int \ell^{v+u}(\dt \sigma)\indi_{\brc{\ell^{v+u}(B(\sigma,2\delta)) < \underline{r}(\delta)}} \geq \ell(\rho,\delta) \cap \Ai(\rho) }
    \leq c_0\exp\pthB{-(\rho\delta^{-1})^{\tfrac{1}{\gamma-1}} g(\delta)^{-\epsilon} },
  \end{align*}
  where $\ell(\rho,\delta) \eqdef g(\delta)^{1+\epsilon} \rho^{1/(\gamma-1)}$, $\underline{r}(\delta) \eqdef \pthb{\delta \, g(\delta)^{1+\epsilon}}^{1/(\gamma-1)}$ and the constant $c_0$ is independent of $\rho$, $\delta$ and $v$.
\end{lemma}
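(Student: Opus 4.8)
Here is the plan. The scheme parallels the proof of Lemma~\ref{lemma:usp_large_balls1}, but the treatment of the supremum over $u$ must be different: the constraint $\ell^{v+u}(B(\sigma,2\delta))<\underline{r}(\delta)$, unlike ``$>\overline{r}(\delta)$'', is destroyed when one replaces a ball by a subtree and takes a supremum over levels. The starting point is a deterministic identity. For $u\in\ivff{0,\tau}$ and $\sigma\in\Ti(v+u)$, let $\hat\sigma$ be the ancestor of $\sigma$ at level $v+u-\delta$; then $B(\sigma,2\delta)\cap\Ti(v+u)$ is exactly the set of descendants of $\hat\sigma$ at level $v+u$, so $\ell^{v+u}(B(\sigma,2\delta))=\bk{\ell^{\delta}}(\Ti_{\hat\sigma})$ depends on $\sigma$ only through $\hat\sigma$, and
\[
  \int \ell^{v+u}(\dt\sigma)\,\indi_{\brc{\ell^{v+u}(B(\sigma,2\delta))<\underline{r}(\delta)}}
  = \sum_{\hat\sigma\in\Ti(v+u-\delta,0)} \bk{\ell^{\delta}}(\Ti_{\hat\sigma})\,\indi_{\brc{0<\bk{\ell^{\delta}}(\Ti_{\hat\sigma})<\underline{r}(\delta)}} =: S_u.
\]
By the branching property, conditionally on $\Gi_{v+u-\delta}$ the subtrees $\brc{\Ti_{\hat\sigma}}$ form a Poisson process of intensity $\ell^{v+u-\delta}(\dt\hat\sigma)\Nb(\dt\Ti')$, so $S_u$ is a Poisson integral of the functional $\phi(\Ti')=\bk{\ell^{\delta}}(\Ti')\indi_{\brc{0<\bk{\ell^{\delta}}(\Ti')<\underline{r}(\delta)}}$.

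For a fixed $u$ one then argues as in Lemma~\ref{lemma:usp_large_balls1}. Self-similarity turns $\bk{\ell^{\delta}}$ into $\delta^{1/(\gamma-1)}\bk{\ell^{1}}$ after rescaling, so the left-tail asymptotics of Lemma~\ref{lemma:local_time_tail} give $\Nb\brc{0<\bk{\ell^{\delta}}<\underline{r}(\delta)}\asymp g(\delta)^{1+\epsilon}\delta^{-1/(\gamma-1)}$ and, integrating the tail, $\Nb\bkt{\phi(\Ti')}\asymp g(\delta)^{\gamma(1+\epsilon)/(\gamma-1)}$. On $\Ai(\rho)$ one has $\bk{\ell^{v+u-\delta}}\leq\rho^{1/(\gamma-1)}g(\rho)^{-2\epsilon}$ (note $v+u-\delta\in\ivff{\kappa\rho,\rho/\kappa}$ once $\delta$ is small, since $\tau<\delta$), whence the conditional mean satisfies $\Nb_{\kappa\rho}\bkt{S_u\mid\Gi_{v+u-\delta}}\leq\rho^{1/(\gamma-1)}g(\rho)^{-2\epsilon}\Nb\bkt{\phi(\Ti')}$, which — using $\delta<2^{-1/\rho}$, i.e.\ $g(\delta)<\rho$ — is $\leq\tfrac12\ell(\rho,\delta)$ for $\rho$ small. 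An exponential Markov inequality (in the spirit of Lemma~\ref{lemma:usp_large_balls1} and of \eqref{eq:chernoff_poisson}) with parameter $\mu=1/\underline{r}(\delta)$, so that $\e^{\mu t}-1\leq\e\mu t$ on $\ivoo{0,\underline{r}(\delta)}$, then gives, on the $\Gi_{v+u-\delta}$-measurable part of $\Ai(\rho)$,
\[
  \Nb_{\kappa\rho}\bktb{S_u\geq\ell(\rho,\delta)\mid\Gi_{v+u-\delta}}
  \leq \e^{-\mu\ell(\rho,\delta)}\exp\pthb{\bk{\ell^{v+u-\delta}}\Nb\bkt{\e^{\mu\phi}-1}}
  \leq \exp\pthb{-\tfrac12\mu\ell(\rho,\delta)},
\]
and $\mu\ell(\rho,\delta)=(\rho\delta^{-1})^{1/(\gamma-1)}g(\delta)^{-(1+\epsilon)(\tfrac{1}{\gamma-1}-1)}$ dominates the target exponent $(\rho\delta^{-1})^{1/(\gamma-1)}g(\delta)^{-\epsilon}$ once $\epsilon$ is small enough in terms of $\gamma$, which is consistent with the parameters being fixed later.

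It remains to remove the supremum over $u$, which is the delicate point. I would discretise $\ivff{0,\tau}$ into $N$ equidistant points $u_{j}$ with $N$ polynomial in $1/\delta$, so that $\log N=\bigo(\log 1/\delta)$ gets absorbed by the polynomial factor $(\rho\delta^{-1})^{1/(\gamma-1)}$ in the exponent above. For $u\in\ivff{u_{j},u_{j+1}}$, regrouping $S_u$ by the ancestors at level $v+u_{j}-\delta$ bounds it by $\sum\sup_{s\leq\tau/N}\bk{\ell^{\delta+s}}(\Ti_{\hat\sigma^{(j)}})\indi_{\brc{\inf_{s\leq\tau/N}\bk{\ell^{\delta+s}}(\Ti_{\hat\sigma^{(j)}})<\underline{r}(\delta)}}$ over the subtrees rooted at level $v+u_{j}-\delta$ that do not branch over the tiny window $\ivff{0,\tau/N}$, plus the mass carried by genuinely branching subtrees. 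Since $\tau/N\ll\tau\leq\delta g(\delta)^{1+2\epsilon}\ll\underline{r}(\delta)^{\gamma-1}$, the branching subtrees are few (a Poisson count of negligible mean, controlled via Lemma~\ref{lemma:local_time_sup_tails}), and the relative fluctuation of $\bk{\ell^{\delta+\cdot}}$ over a window of length $\tau/N$ exceeds $2$ only on an event whose probability is estimated by Lemmas~\ref{lemma:csbp_sup_upper_tail} and~\ref{lemma:local_time_inf_tail1}; outside those exceptional events each non-branching subtree contributes at most $2\underline{r}(\delta)$, so one is back to a Poisson integral of a functional bounded by $2\underline{r}(\delta)$, treated exactly as in the single-level estimate. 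Summing the per-$u_{j}$ bounds together with the small contributions of the branching and strongly fluctuating subtrees yields the claimed estimate. The main obstacle is precisely this last step: the CSBP $\bk{\ell^{\delta+\cdot}}$ has only power-law tails, so a naive exponential-moment bound on $\sup_{u}S_u$ diverges, and one is forced to peel off the rare bushy or strongly fluctuating subtrees — on which the small-mass truncation is lost — from the bulk, where that truncation enforces a deterministic per-subtree bound $2\underline{r}(\delta)$; the hypothesis $\tau<\delta g(\delta)^{1+2\epsilon}$ is exactly what keeps those exceptional events negligible against the target decay $\exp\pthb{-(\rho\delta^{-1})^{1/(\gamma-1)}g(\delta)^{-\epsilon}}$.
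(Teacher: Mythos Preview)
Your fixed-$u$ estimate is fine, and your identification of the difficulty with the supremum is correct, but you are working much harder than necessary; the paper's proof is a two-line reduction that avoids the entire discretisation and peeling machinery.

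The key point you miss is that the indicator $\indi_{\brc{\cdot<\underline{r}(\delta)}}$ already gives a \emph{deterministic} per-summand bound: any term in the sum is at most $\underline{r}(\delta)$. So if one can arrange for the family of subtrees to be \emph{independent of $u$}, the supremum over $u$ disappears immediately. The paper does exactly this: instead of rooting at the moving level $v+u-\delta$, it sets $\vartheta\eqdef\delta-\tau$ and roots once and for all at the fixed level $v-\vartheta$. For every $u\in\ivff{0,\tau}$ one has $\vartheta+u\leq\delta$, so each subtree $\Ti_{\sigma'}\in\Tbb(v-\vartheta,\vartheta)$ satisfies $\Ti_{\sigma'}\cap\Ti(v+u)\subset B(\sigma,2\delta)$ for any $\sigma$ in it, whence $\ell^{v+u}(B(\sigma,2\delta))\geq\bk{\ell^{\vartheta+u}}(\Ti_{\sigma'})$ and
\[
  \sup_{u\in\ivff{0,\tau}}\int\ell^{v+u}(\dt\sigma)\indi_{\brc{\ell^{v+u}(B(\sigma,2\delta))<\underline{r}(\delta)}}
  \ \leq\ \underline{r}(\delta)\,\cdot\,\#\brcb{\Ti_{\sigma'}\in\Tbb(v-\vartheta,\vartheta):\inf_{u\in\ivff{\vartheta,\delta}}\bk{\ell^{u}}(\Ti_{\sigma'})<\underline{r}(\delta)}.
\]
The right-hand side is $\underline{r}(\delta)$ times a single Poisson random variable $N(\delta)$ (conditionally on $\Gi_{v-\vartheta}$), whose parameter is estimated by Lemma~\ref{lemma:local_time_inf_tail1} --- the hypothesis $\tau<\delta g(\delta)^{1+2\epsilon}$ is precisely what makes the second term in that lemma negligible, since $v(\tau)\underline{r}(\delta)\geq g(\delta)^{-\epsilon/(\gamma-1)}$. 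A single Chernoff bound on $N(\delta)$ then finishes.

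Your discretisation scheme does eventually rediscover the $\leq 2\underline{r}(\delta)$ bound on non-branching subtrees, but only after introducing and then removing two layers of exceptional events (branching and fluctuating subtrees) that the direct argument never sees. That detour is not wrong in spirit, but the control of the exceptional contributions is only sketched, and each of those estimates would itself need a Poisson-tail argument of comparable strength to the one you are trying to prove. The fixed-rooting trick replaces all of it with a one-line inequality.
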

\begin{proof}
  As previously, it is enough to bound the following quantity
  \[
    \Nb_{v-\tau}\bktbb{ \sup_{u\in\ivff{0,\tau}} \int \ell^{v+u}(\dt \sigma)\indi_{\brc{\ell^{v+u}(B(\sigma,2\delta)) < \underline{r}(\delta)}} \geq \ell(\rho,\delta) \cap \Ai(v-\vartheta,\rho) }.
  \]
  where $\vartheta=\delta-\tau$.
  Then, we similarly observe that
  \begin{align*}
    \sup_{u\in\ivff{0,\tau}} \int \ell^{v+u}(\dt \sigma)\indi_{\brc{\ell^{v+u}(B(\sigma,2\delta)) < \underline{r}(\delta)}}
    &\leq \sup_{u\in\ivff{0,\tau}} \sum_{\Ti_\sigma\in \Tbb(v-\vartheta,\vartheta)} \bk{\ell^{\vartheta+u}}(\Ti_\sigma) \indi_{\brc{\bk{\ell^{\vartheta+u}}(\Ti_\sigma) < \underline{r}(\delta)}} \\
    &\leq \underline{r}(\delta) \sum_{\Ti_\sigma\in \Tbb(v-\vartheta,\vartheta)} \indi_{\brc{ \inf_{u\in\ivff{0,\tau}}\bk{\ell^{\vartheta+u}}(\Ti_\sigma) < \underline{r}(\delta)}}.
  \end{align*}
  Let us denote by $N(\delta)$ the latest sum. Owing to the branching property of Lévy trees, given $\Gi_{v-\delta}$, $N(\delta)$ is a Poisson random variable parametrised by
  \begin{align*}
    \lambda(\delta)
    = \bk{\ell^{v-\vartheta}} \,v(\vartheta) \Nb_{\vartheta}\pthB{ \inf_{u\in\ivff{\vartheta,\delta}}\bk{\ell^{u}}(\Ti) <  \underline{r}(\delta) }.
  \end{align*}
  Lemma~\ref{lemma:local_time_inf_tail1} then entails
  \begin{align*}
    c_0 \, g(\delta)^{1+\epsilon} \leq \Nb_{\vartheta}\pthB{ \inf_{u\in\ivff{\vartheta,\delta}}\bk{\ell^{u}}(\Ti) <  \underline{r}(\delta) } \leq c_1\,g(\delta)^{1+\epsilon}  + \exp\pthb{-v(\tau) \, \underline{r}(\delta) } \leq c_2\,g(\delta)^{1+\epsilon},
  \end{align*}
  since $v(\tau) \, \underline{r}(\delta)\geq g(\delta)^{-\epsilon/(\gamma-1)}$. We also may note that
  \begin{align*}
    \brcbb{ \sup_{u\in\ivff{0,\tau}} \int \ell^{v+u}(\dt \sigma)\indi_{\brc{\ell^{v+u}(B(\sigma,2\delta)) < \underline{r}(\delta)}} \geq \ell(\rho,\delta) }
    &\subseteq \brcb{ N(\delta) \geq \ell(\rho,\delta) \,\underline{r}(\delta)^{-1} } \\
    &\subseteq \brcB{ N(\delta) \geq (\rho\delta^{-1})^{\tfrac{1}{\gamma-1}} g(\delta)^{-2\epsilon} },
  \end{align*}
  Chernoff bound then yields
  \begin{align*}
    \indi_{\Ai(v-\vartheta,\rho)} \Nb_{v-\vartheta}\pthcB{ N(\delta) \geq (\rho\delta^{-1})^{\tfrac{1}{\gamma-1}} g(\delta)^{-2\epsilon} }{ \Gi_{v-\vartheta} }
    &\leq\exp\pthB{-c_3\, (\rho\delta^{-1})^{\tfrac{1}{\gamma-1}} g(\delta)^{-2\epsilon} } \\
    &\leq c_4\exp\pthB{-(\rho\delta^{-1})^{\tfrac{1}{\gamma-1}} g(\delta)^{-\epsilon} }.
  \end{align*}
  This last equality concludes the proof of the lemma.
\end{proof}

We also extend the previous bound to a slightly more general form.
\begin{lemma}  \label{lemma:usp_small_balls2}
  Suppose $\rho>0$. Then, for any $v\in\ivff{2\kappa\rho,\rho/\kappa}$, all $\delta\in\ivoo{0,\kappa\rho}$ and any $\tau\in\ivff{\delta,\rho}$,
  \begin{align*}
    \Nb_{\kappa\rho}\bktbb{ \sup_{u\in\ivff{0,\tau}} \int \ell^{v+u}(\dt \sigma)\indi_{\brc{\ell^{v+u}(B(\sigma,2\delta)) < \underline{r}(\delta)}} \geq \ell(\rho,\delta) \cap \Ai(\rho) }
    \leq c_0\,\tau \rho^{-1} g(\rho)^{-1-2\epsilon},
  \end{align*}
  where the constant $c_0$ is independent of $\rho$, $\delta$, $\tau$ and $v$.
\end{lemma}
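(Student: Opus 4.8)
The plan is to imitate the proof of Lemma~\ref{lemma:usp_large_balls2}: split the window $\ivff{0,\tau}$ into $\ceil{\tau/\eta}$ successive disjoint subintervals of a fixed length $\eta$, control on each of them the event
\[
  \brcbb{ \sup_{u\in\ivff{0,\eta}} \int \ell^{v_k+u}(\dt\sigma)\,\indi_{\brc{\ell^{v_k+u}(B(\sigma,2\delta)) < \underline{r}(\delta)}} \geq \ell(\rho,\delta) },\qquad v_k\eqdef v+k\eta ,
\]
whose union over $k$ contains the event of the lemma, and add the resulting bounds. Since the base estimate Lemma~\ref{lemma:usp_small_balls1} only covers windows shorter than $\delta\,g(\delta)^{1+2\epsilon}$ — much shorter than $\delta$, unlike in the large-ball case — the natural choice is $\eta\eqdef\tfrac{1}{2}\delta\,g(\delta)^{1+2\epsilon}$, so that $\ceil{\tau/\eta}\leq 5\,\tau\,\delta^{-1}g(\delta)^{-(1+2\epsilon)}$. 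On a single subinterval I would invoke Lemma~\ref{lemma:usp_small_balls1} directly whenever $\delta<2^{-1/\rho}$; in the remaining range $\delta\in\ivff{2^{-1/\rho},\kappa\rho}$, where that lemma is not available, I would simply re-run its argument.

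For that range: descend to level $v_k-\vartheta$ with $\vartheta\eqdef\delta-\eta$ (legitimate since then $v_k-\vartheta\geq\kappa\rho$ and $\vartheta\in\ivff{\delta\kappa,\delta/\kappa}$ once $\delta$ is small), so that by the branching property, given $\Gi_{v_k-\vartheta}$, the number of subtrees of $\Tbb(v_k-\vartheta,\vartheta)$ whose local time dips below $\underline{r}(\delta)$ somewhere on the window is Poisson with parameter $\lambda=\bk{\ell^{v_k-\vartheta}}\,v(\vartheta)\,\Nb_{\vartheta}\pthb{\inf_{\ivff{\vartheta,\delta}}\bk{\ell^u}<\underline{r}(\delta)}$. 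Lemma~\ref{lemma:local_time_inf_tail1}, with $\Lambda(\delta)=g(\delta)^{(1+\epsilon)/(\gamma-1)}\leq1$, bounds the last factor by $c\,g(\delta)^{1+\epsilon}$ (the exponential term being negligible, since $v(\eta)\,\underline{r}(\delta)\asymp g(\delta)^{-\epsilon/(\gamma-1)}\to\infty$), whence $\lambda\leq c\,(\rho/\delta)^{1/(\gamma-1)}g(\rho)^{-2\epsilon}g(\delta)^{1+\epsilon}$ on $\Ai(\rho)$. On the other hand the total light mass cannot reach $\ell(\rho,\delta)$ unless this Poisson variable attains $\ell(\rho,\delta)\,\underline{r}(\delta)^{-1}=(\rho/\delta)^{1/(\gamma-1)}g(\delta)^{-(1+\epsilon)(1/(\gamma-1)-1)}$, which exceeds $\lambda$ by a diverging factor; hence the Chernoff bound~\eqref{eq:chernoff_poisson} gives a single-subinterval estimate $\leq c_0\exp\pthb{-(\rho\delta^{-1})^{1/(\gamma-1)}}$, of the same kind as the one produced by Lemma~\ref{lemma:usp_small_balls1}.

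Summing over $k$ then gives
\[
  \Nb_{\kappa\rho}\bktbb{ \sup_{u\in\ivff{0,\tau}} \int \ell^{v+u}(\dt\sigma)\,\indi_{\brc{\ell^{v+u}(B(\sigma,2\delta)) < \underline{r}(\delta)}} \geq \ell(\rho,\delta) \cap \Ai(\rho) }
  \leq c_0\,\frac{\tau}{\delta\,g(\delta)^{1+2\epsilon}}\exp\pthb{-(\rho\delta^{-1})^{1/(\gamma-1)}},
\]
and it remains to check that the right-hand side is at most $c_0'\,\tau\rho^{-1}g(\rho)^{-1-2\epsilon}$. Writing $t\eqdef\rho/\delta>\kappa^{-1}$ and using $g(\rho)/g(\delta)=\log(1/\delta)/\log(1/\rho)\leq 1+\log t$ together with $t^{1/(\gamma-1)}\geq t$ (valid since $\gamma\in\ivoo{1,2}$), the ratio of the two sides is controlled by $t\,(1+\log t)^{1+2\epsilon}\,\e^{-t}$, a bounded function of $t$ on $\ivfo{\kappa^{-1},\infty}$; this yields the lemma with a constant $c_0$ depending only on $\gamma$, $\kappa$ and $\epsilon$.

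The main obstacle is the middle step: recovering the single-subinterval estimate of Lemma~\ref{lemma:usp_small_balls1} on the full range $\delta\in\ivoo{0,\kappa\rho}$, and then making sure the sum stays of the announced order despite its possibly $\asymp\rho/\delta$ terms — which works precisely because $1/(\gamma-1)>1$, so the single-term decay in $(\rho/\delta)^{1/(\gamma-1)}$ outpaces the number of terms. Everything else merely repeats computations already carried out in Lemmas~\ref{lemma:usp_small_balls1} and~\ref{lemma:usp_large_balls2}.
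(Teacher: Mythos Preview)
Your proposal is correct and follows essentially the same route as the paper: split $\ivff{0,\tau}$ into subintervals of length $\asymp\delta\,g(\delta)^{1+2\epsilon}$, apply the single-window bound of Lemma~\ref{lemma:usp_small_balls1} on each, and sum. The paper arrives at the same intermediate expression $c_1\,\tau\,\delta^{-1}g(\delta)^{-1-2\epsilon}\exp\pthb{-(\rho\delta^{-1})^{1/(\gamma-1)}g(\delta)^{-\epsilon}}$ and then asserts without further comment that it is $\leq c_2\,\tau\rho^{-1}g(\rho)^{-1-2\epsilon}$; your substitution $t=\rho/\delta$ and the observation that $t^{1/(\gamma-1)}\geq t$ make this step explicit.

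The only difference is that you noticed the hypothesis $\delta<2^{-1/\rho}$ in Lemma~\ref{lemma:usp_small_balls1} and re-ran its argument on the complementary range $\delta\in\ivfo{2^{-1/\rho},\kappa\rho}$, whereas the paper simply invokes Lemma~\ref{lemma:usp_small_balls1} across the full range without addressing this. Your extra care is justified and the re-run is straightforward (the constraint $\delta<2^{-1/\rho}$ is not actually used in the body of that proof), so this is a harmless patch rather than a genuinely different argument.
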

\begin{proof}
  We divide the interval $\ivff{0,\tau}$ into successive and disjoint subintervals of size $\delta g(\delta)^{1+2\epsilon}$. Owing the previous Lemma~\ref{lemma:usp_small_balls1}, for every $k\in\brc{0,\dotsc,\ceil{\tau/\delta}}$,
  \begin{align*}
    \Nb_{\kappa\rho}\bktbb{ \sup_{u\in\ivff{0,\delta g(\delta)^{1+2\epsilon}}} \int \ell^{v_k+u}(\dt \sigma)\indi_{\brc{\ell^{v_k+u}(B(\sigma,2\delta)) < \underline{r}(\delta)}} \geq \ell(\rho,\delta) \cap \Ai(\rho) }
    \leq c_0\exp\pthB{-(\rho\delta^{-1})^{\tfrac{1}{\gamma-1}} g(\delta)^{-\epsilon} },
  \end{align*}
  where $v_k\eqdef v+k\delta$. Therefore, using the same argument as in proof of Lemma~\ref{lemma:usp_large_balls2},
  \begin{align*}
    &\Nb_{\kappa\rho}\bktbb{ \sup_{u\in\ivff{0,\tau}} \int \ell^{v+u}(\dt \sigma)\indi_{\brc{\ell^{v+u}(B(\sigma,2\delta)) < \underline{r}(\delta)}} \geq \ell(\rho,\delta) \cap \Ai(\rho) } \\
    &\leq c_1 \tau \delta^{-1}g(\delta)^{-1-2\epsilon}\exp\pthB{-(\rho\delta^{-1})^{\tfrac{1}{\gamma-1}} g(\delta)^{-\epsilon} }
    \leq c_2 \tau \rho^{-1} g(\rho)^{-1-2\epsilon},
  \end{align*}
  which entails the result.
\end{proof}

The combination of Lemmas~\ref{lemma:usp_large_balls2} and \ref{lemma:usp_small_balls2} provides a uniform estimate on the size of exceptional behaviours of the local time at given scale, i.e. for any $v\in\ivff{2\kappa\rho,\rho/\kappa}$, all $\delta\in\ivoo{0,\kappa\rho}$ and any $\tau\in\ivff{\delta,\rho}$,
\begin{align*}
  \Nb_{\kappa\rho}\bktbb{ \sup_{u\in\ivff{0,\tau}} \int \ell^{v+u}(\dt \sigma)\indi_{\brc{\ell^{v+u}(B(\sigma,2\delta)) \notin \ivff{\underline{r}(\delta),\overline{r}(\delta)}}} \geq 2\ell(\rho,\delta) \cap \Ai(\rho) }
  \leq c_0\,\tau\rho^{-1} g(\delta)^{-\gamma(2+5\epsilon)}.
\end{align*}
In the following lemma, we may now present a uniform bound of these exceptional masses of the local time, i.e. estimate the size of $\Lambda(u,\rho,\delta)\subset\Ti(u)$, recalling that
\begin{align*}
  \Lambda(u,\rho,\delta) = \bigcup_{r\in\ivoo{\delta,\kappa\rho}}  \brcb{\sigma\in \Ti(u) : \ell^{u}(B(\sigma,2r)) \notin \ivffb{\underline{r}(r),\overline{r}(r)} },
\end{align*}
where $\underline{r}(r) \eqdef \pthb{r \, g(r)^{1+\epsilon}}^{1/(\gamma-1)}$ and $\overline{r}(r) \eqdef \pthb{r / g(r)^{1+4\epsilon}}^{1/(\gamma-1)}$.
\begin{lemma}  \label{lemma:usp_large_small_balls1}
  Suppose $\rho>0$ and $\delta\in\ivoo{0,\kappa\rho}$.
  Then, for all $v\in\ivff{2\kappa\rho,\rho/\kappa}$,
  \begin{align*}
    \Nb_{\kappa\rho}\bktB{ \sup_{u\in\ivff{0,\tau}} \ell^{v+u}(\Lambda(u+v,\rho,\delta)) \geq \ell(\rho) \cap \Ai(\rho) }
    \leq c_0\,\tau\rho^{-1} g(\delta)^{-1-\gamma(2+5\epsilon)},
  \end{align*}
  where $\ell(\rho) = g(\rho)^{\epsilon} \rho^{1/(\gamma-1)}$.
\end{lemma}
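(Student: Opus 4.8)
The plan is to sum the previously obtained single-scale estimates over a geometric sequence of scales $r$ between $\delta$ and $\kappa\rho$. First I would fix a dyadic-type grid of scales $\delta_j \eqdef 2^j\delta$ for $j = 0, 1, \dots, J$, where $J$ is the largest integer with $\delta_J \leq \kappa\rho$; there are $O(\log(\rho/\delta)) = O(g(\delta)^{-1})$ such scales. The key geometric observation is the obvious monotonicity/inclusion $\Lambda(u,\rho,\delta) \subseteq \bigcup_{j=0}^{J} \brcb{\sigma\in\Ti(u) : \ell^u(B(\sigma, 2\delta_{j+1})) \notin \ivff{\underline r(\delta_j), \overline r(\delta_j)}}$, which holds because any $r \in \ivoo{\delta_j, \delta_{j+1}}$ satisfies $B(\sigma,2r)\subseteq B(\sigma,2\delta_{j+1})$ and $\underline r, \overline r$ are (up to the slowly varying factors $g(r)^{1\pm\epsilon}$) increasing in $r$, so that $\ell^u(B(\sigma,2r))$ lying outside $\ivff{\underline r(r),\overline r(r)}$ forces $\ell^u(B(\sigma,2\delta_{j+1}))$ outside the slightly enlarged band $\ivff{\underline r(\delta_j),\overline r(\delta_{j+1})}$; one absorbs the constant factors into the $\epsilon$'s exactly as was done in defining $\underline r$ and $\overline r$. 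Consequently $\ell^{v+u}(\Lambda(u+v,\rho,\delta))$ is bounded by the sum over $j$ of the contributions controlled scale-by-scale.

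The second step is to apply the combined single-scale estimate displayed right before the lemma (the consequence of Lemmas~\ref{lemma:usp_large_balls2} and~\ref{lemma:usp_small_balls2}) with $\delta$ replaced by $\delta_j$ and $\tau$ kept fixed: for each $j$,
\[
  \Nb_{\kappa\rho}\bktbb{ \sup_{u\in\ivff{0,\tau}} \int \ell^{v+u}(\dt\sigma)\,\indi_{\brc{\ell^{v+u}(B(\sigma,2\delta_j)) \notin \ivff{\underline r(\delta_j),\overline r(\delta_j)}}} \geq 2\ell(\rho,\delta_j) \cap \Ai(\rho)}
  \leq c_0\,\tau\rho^{-1} g(\delta_j)^{-\gamma(2+5\epsilon)}.
\]
On the complement of all these $J+1$ events, $\ell^{v+u}(\Lambda(u+v,\rho,\delta)) \leq \sum_{j=0}^{J} 2\ell(\rho,\delta_j) = \sum_{j=0}^{J} 2 g(\delta_j)^{1+\epsilon}\rho^{1/(\gamma-1)}$. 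Since $g(\delta_j) \leq g(\delta)$ for all $j$ and there are $O(g(\delta)^{-1})$ terms, this sum is at most $c\,g(\delta)^{1+\epsilon}\cdot g(\delta)^{-1}\cdot\rho^{1/(\gamma-1)} = c\,g(\delta)^{\epsilon}\rho^{1/(\gamma-1)} \leq g(\rho)^{\epsilon}\rho^{1/(\gamma-1)} = \ell(\rho)$ once $\rho$ is small (here one uses $g(\delta)\leq g(\rho)$ since $\delta<\rho$, and can slightly shrink the exponent $\epsilon$ if a constant must be absorbed; alternatively one replaces $g(\delta)^{1+\epsilon}$ by $g(\delta)^{1+\epsilon/2}$ at the cost of adjusting the $\epsilon$'s upstream, which is harmless as all the input lemmas hold for every $\epsilon>0$). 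Thus the bad event in the lemma is contained in the union of the $J+1$ single-scale bad events.

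The final step is the union bound: $\Nb_{\kappa\rho}\bktb{\,\cdot\,} \leq \sum_{j=0}^{J} c_0\,\tau\rho^{-1} g(\delta_j)^{-\gamma(2+5\epsilon)} \leq (J+1)\, c_0\,\tau\rho^{-1} g(\delta)^{-\gamma(2+5\epsilon)} \leq c_0'\,\tau\rho^{-1}\, g(\delta)^{-1}\, g(\delta)^{-\gamma(2+5\epsilon)} = c_0'\,\tau\rho^{-1} g(\delta)^{-1-\gamma(2+5\epsilon)}$, using $J+1 = O(\log(\kappa\rho/\delta)) = O(g(\delta)^{-1})$ and $g(\delta_j)\geq$ some constant multiple of $g(\delta)$ — more simply, $g(\delta_j)^{-\gamma(2+5\epsilon)} \leq g(\delta)^{-\gamma(2+5\epsilon)}$ directly since $\delta_j\geq\delta$ implies $g(\delta_j)\leq g(\delta)$ hence $g(\delta_j)^{-1}\geq g(\delta)^{-1}$, so one should instead note $\sum_j g(\delta_j)^{-\gamma(2+5\epsilon)}$ is dominated by its largest term $g(\delta_J)^{-\gamma(2+5\epsilon)}$ times $(J+1)$, and $g(\delta_J)\geq g(\kappa\rho)\geq$ const; keeping the crude bound $g(\delta_j)^{-\gamma(2+5\epsilon)}\leq g(\delta)^{-\gamma(2+5\epsilon)}$ for every $j$ and multiplying by $(J+1)\leq c g(\delta)^{-1}$ gives the claim immediately. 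I expect the only real subtlety to be the bookkeeping of the slowly varying factors in Step~1 — verifying that passing from radius $2r$ to the grid radius $2\delta_{j+1}$ and replacing $\underline r(r),\overline r(r)$ by $\underline r(\delta_j),\overline r(\delta_j)$ only costs powers of $g$ that can be re-absorbed into the fixed $\epsilon$ — since everything else is a routine geometric-sum union bound. This is exactly the kind of "$\epsilon$-absorption into slowly varying corrections" that the paper has set up its notation to handle, so no genuinely new idea is needed.
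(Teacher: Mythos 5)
Your overall strategy matches the paper's: decompose $\Lambda(u,\rho,\delta)$ over a dyadic grid of scales in $(\delta,\kappa\rho)$, apply the single-scale estimate of Lemmas~\ref{lemma:usp_large_balls2} and~\ref{lemma:usp_small_balls2} at each scale, and finish with a union bound. The structure is right, but there is a genuine error in Step~3, the verification that the sum of the single-scale thresholds $\sum_j \ell(\rho,\delta_j) = \sum_j g(\delta_j)^{1+\epsilon}\rho^{1/(\gamma-1)}$ stays below the target $\ell(\rho)=g(\rho)^\epsilon\rho^{1/(\gamma-1)}$. You assert $g(\delta_j)\leq g(\delta)$ for $\delta_j = 2^j\delta \geq \delta$, but $g(x)=(\log x^{-1})^{-1}$ is \emph{increasing} on $(0,1)$, so in fact $g(\delta_j)\geq g(\delta)$. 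With the corrected direction, the crude ``number of terms times largest term'' bound becomes $\sum_j g(\delta_j)^{1+\epsilon} \lesssim g(\delta)^{-1}\,g(\kappa\rho)^{1+\epsilon}$, and this does \emph{not} yield $g(\rho)^\epsilon$: take $\delta = 2^{-1/\rho}$, so $g(\delta)\asymp\rho$ and the bound is of order $\rho^{-1}(\log 1/\rho)^{-(1+\epsilon)}$, which blows up as $\rho\to 0$, while $g(\rho)^\epsilon\to 0$. Shrinking the upstream $\epsilon$ does not help because the offending factor $g(\delta)^{-1}$ is not a power of a slowly varying quantity in $\rho$.

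The ingredient you are missing is the \emph{summability} of the threshold series. Since $g(2^{-k}) = (k\log 2)^{-1}$, the sum over dyadic scales is a tail of $\sum_k k^{-(1+\epsilon)}$, which converges; the tail starting at $k_0\asymp g(\kappa\rho)^{-1}$ is of order $k_0^{-\epsilon}\asymp g(\kappa\rho)^\epsilon\leq g(\rho)^\epsilon$, and crucially this bound is independent of $\delta$. This is exactly why the paper built the exponent $1+\epsilon$ into $\underline r$, $\overline r$, and $\ell(\rho,\delta)$: it makes the sum over geometric scales converge rather than accumulate a logarithmic factor. Your final union-bound step contains the same sign error, but there the inequality $g(\delta_j)^{-\gamma(2+5\epsilon)}\leq g(\delta)^{-\gamma(2+5\epsilon)}$ you want is actually correct (since $g(\delta_j)\geq g(\delta)$ and the exponent is negative), so the worst term really is at $j=0$ and that part of the argument survives. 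Only Step~3 needs repair, and it cannot be repaired by the crude term-count estimate — the convergence observation is essential.
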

\begin{proof}
  Let us first observe that
  \begin{align*}
    \Lambda(u,\rho,\delta) \subset \bigcup_{2^{-k}\in\ivoo{\delta,\kappa\rho}}  \brcb{\sigma\in \Ti(u) : c_0 \,\ell^{u}(B(\sigma,2^{-k+1})) \notin \ivffb{\underline{r}(2^{-k}),\overline{r}(2^{-k})} },
  \end{align*}
  for some constant $c_0>0$. Then,
  \begin{align*}
    \sup_{u\in\ivff{0,\tau}} \ell^{v+u}(\Lambda(u+v,\rho,\delta))
    &\leq \sup_{u\in\ivff{0,\tau}} \sum_{2^{-k}\in\ivoo{\delta,\kappa\rho}}  \int \ell^{v+u}(\dt \sigma) \indi_{\brc{c_0\ell^{v+u}(B(\sigma,2^{-k+1})) \notin \ivff{\underline{r}(2^{-k}),\overline{r}(2^{-k})} }} \\
    &\leq \sum_{2^{-k}\in\ivoo{\delta,\kappa\rho}} \sup_{u\in\ivff{0,\tau}} \int \ell^{v+u}(\dt \sigma) \indi_{\brc{c_0\ell^{v+u}(B(\sigma,2^{-k+1})) \notin \ivff{\underline{r}(2^{-k}),\overline{r}(2^{-k})} }}.
  \end{align*}
  In addition,
  \begin{align*}
    \sum_{2^{-k}\in\ivoo{\delta,\kappa\rho}} \ell(\rho,2^{-k})
    = \sum_{2^{-k}\in\ivoo{\delta,\kappa\rho}} g(2^{-k})^{1+\epsilon} \rho^{1/(\gamma-1)}
    &\leq c_{1} \, g(\kappa\rho)^{\epsilon} \rho^{1/(\gamma-1)} \\
    &\leq c_{1} \, g(\rho)^{\epsilon} \rho^{1/(\gamma-1)},
  \end{align*}
  observing that $g(2^{-k}) = k^{-1} (\log 2)^{-1}$.
  Therefore, based Lemmas~\ref{lemma:usp_large_balls2} and \ref{lemma:usp_small_balls2}
  \begin{align*}
    &\Nb_{\kappa\rho}\bktB{ \sup_{u\in\ivff{0,\tau}} \ell^{v+u}(\Lambda(u+v,\rho,\delta)) \geq \ell(\rho) \cap \Ai(\rho) } \\
    &\leq c_{2} \sum_{2^{-k}\in\ivoo{\delta,\kappa\rho}} \Nb_{\kappa\rho}\bktbb{ \sup_{u\in\ivff{0,\tau}} \int \ell^{v+u}(\dt \sigma)\indi_{\brc{c_0\ell^{v+u}(B(\sigma,2^{-k+1})) \notin \ivff{\underline{r}(2^{-k}),\overline{r}(2^{-k})} }} \geq \ell(\rho,2^{-k}) \cap \Ai(\rho) } \\
    &\leq c_3\, \sum_{2^{-k}\in\ivoo{\delta,\kappa\rho}} \tau\rho^{-1}  g(2^{-k})^{-\gamma(2+5\epsilon)}
    \leq c_3\, \tau\rho^{-1} g(\delta)^{-1-\gamma(2+5\epsilon)}.
  \end{align*}
\end{proof}

Lemma~\ref{lemma:usp_large_small_balls1} provides a tight estimate of the total mass of exceptional small and large balls of the local time, ensuring that the set $\Ti(u)\setminus\Lambda(u,\rho,\delta)$ is sufficiently large with high probability.
Hence, we may now start presenting the construction of the collection of Hausdorff measures needed to prove the lower bound of the multifractal spectrum. To begin with, let us evaluate the probability of appearance of well-behaving collection of subtrees.

\begin{lemma}  \label{lemma:usp_small_balls3}
  Suppose $\rho>0$, $\tau\in\ivoo{0,\kappa\rho}$ and $\delta\in\ivoo{0,2^{-1/\rho}}$. For any $u\geq 0$, let us define the following collection of subtrees
  \begin{align*}
    \Ubb(u,\rho,\delta) = \brcB{\Ti_\sigma\in\Tbb(u-\delta,\delta) :\,
    & \bk{\ell^\delta}(\Ti_\sigma)\geq \delta^{\tfrac{1}{\gamma-1}} g(\rho)^{\kappa\epsilon} }.
  \end{align*}
  Then, for all $v\in\ivff{2\kappa\rho,\rho/\kappa}$,
  \begin{align*}
    \Nb_{\kappa\rho}\bktB{ \inf_{u\in\ivff{0,\tau}} \#\Ubb(u+v,\rho,\delta) \leq (\rho\delta^{-1})^{\tfrac{1}{\gamma-1}} g(\rho)^{-\kappa\epsilon} \cap \Ai(\rho) }
    \leq c_0\,\tau \exp\pthb{ -\delta^{-\eta} },
  \end{align*}
  where $\eta\in\ivoo{0,1/(\gamma-1)}$ and $c_0$ is independent of $\rho$, $\tau$ and $\delta$.
\end{lemma}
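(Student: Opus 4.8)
The proof combines the branching property, the tail estimates of Lemmas~\ref{lemma:local_time_tail}, \ref{lemma:csbp_inf_lower_tail} and \ref{lemma:local_time_inf_tail0}, the Chernoff bound \eqref{eq:chernoff_poisson}, and a tree–geometric device that reduces $\inf_{u\in\ivff{0,\tau}}\#\Ubb(u+v,\rho,\delta)$ to a \emph{single} Poisson variable on each of finitely many short sub-intervals. I would first dispose of a fixed $u\in\ivff{0,\tau}$. By the branching property of stable trees, conditionally on $\Gi_{u+v-\delta}$ the quantity $\#\Ubb(u+v,\rho,\delta)$ is a Poisson random variable whose parameter, by self-similarity, equals $\bk{\ell^{u+v-\delta}}\,\Nb\pthb{\bk{\ell^\delta}\geq\delta^{1/(\gamma-1)}g(\rho)^{\kappa\epsilon}}=\bk{\ell^{u+v-\delta}}\,v(\delta)\,\Nb_1\pthb{\bk{\ell^1}\geq g(\rho)^{\kappa\epsilon}}$. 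Since $g(\rho)^{\kappa\epsilon}\to0$, Lemma~\ref{lemma:local_time_tail} gives $\Nb_1\pthb{\bk{\ell^1}\geq g(\rho)^{\kappa\epsilon}}\geq\tfrac12$ for $\rho$ small, so on $\Ai(\rho)$ this parameter is at least $c\,(\rho/\delta)^{1/(\gamma-1)}g(\rho)^{-\epsilon}$, which, as $\kappa<1$ and $g(\rho)<1$, eventually exceeds twice the target level $(\rho\delta^{-1})^{1/(\gamma-1)}g(\rho)^{-\kappa\epsilon}$. The Chernoff bound \eqref{eq:chernoff_poisson} then gives, for this single $u$, a bound of order $\exp\pthb{-c\,(\rho/\delta)^{1/(\gamma-1)}}$, and because $\delta<2^{-1/\rho}$ this is $\leq\exp(-\delta^{-\eta_1})$ for any $\eta_1\in\ivoo{\eta,1/(\gamma-1)}$ and $\rho$ small.

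The substantive step is to make this uniform in $u$. I would split $\ivff{0,\tau}$ into $\ceil{\tau/\delta'}$ successive intervals $I_j$ of length $\delta'\eqdef\delta\,g(\delta)^{1+2\epsilon}$, much smaller than $\delta$, and set $w_j\eqdef v+j\delta'-\delta$ (the smallest value of $u+v-\delta$ on $I_j$), so that on $\Ai(\rho)\subseteq\Ai(\kappa\rho,w_j,\rho)$ the local time at the reference levels $w_j$ is controlled. A short geometric argument then shows that for \emph{every} $u\in I_j$,
\begin{align*}
  \#\Ubb(u+v,\rho,\delta)\ \geq\ \#\brcbb{\,\Ti_\sigma\in\Tbb(w_j,\delta)\ :\ \inf_{s\in\ivff{\delta,\delta+\delta'}}\bk{\ell^s}(\Ti_\sigma)\geq\delta^{1/(\gamma-1)}g(\rho)^{\kappa\epsilon}\ \text{ and }\ \sup_{s\in\ivff{0,\delta'}}Z(s,\delta,\Ti_\sigma)\leq1\,}.
\end{align*}
Indeed, for such a subtree $\Ti_\sigma$ — which by the last condition admits, at each internal level $s\in\ivff{0,\delta'}$, a \emph{unique} descendant subtree of height $>\delta$ — the global level $u+v-\delta$ sits at internal level $s_u\eqdef u-j\delta'\in\ivff{0,\delta'}$, where this unique descendant, $\Ti_{\sigma_0}$, is rooted at global level $u+v-\delta$, has height $>\delta$, and carries \emph{all} the local time of $\Ti_\sigma$ at global level $u+v$ (internal level $s_u+\delta\in\ivff{\delta,\delta+\delta'}$), so that $\ell^{u+v}(\Ti_{\sigma_0})\geq\delta^{1/(\gamma-1)}g(\rho)^{\kappa\epsilon}$; moreover distinct $\Ti_\sigma$ produce disjoint $\Ti_{\sigma_0}$. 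Crucially the right-hand side above does not depend on $u$. (Non-strict inequalities are handled by harmless adjustments, e.g. replacing $\delta$ by $\delta/2$ in the $Z$-condition and $\Tbb(w_j,\delta)$ by $\Tbb(w_j,\delta+2\delta')$.)

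Conditionally on $\Gi_{w_j}$, the count on the right-hand side is Poisson with parameter $\bk{\ell^{w_j}}\cdot q$, where $q=\Nb\pthb{\inf_{\ivff{\delta,\delta+\delta'}}\bk{\ell^s}\geq T,\ \sup_{\ivff{0,\delta'}}Z(s,\delta)\leq1}$ and $T\eqdef\delta^{1/(\gamma-1)}g(\rho)^{\kappa\epsilon}$. Using the Markov property at level $\delta$ together with Lemma~\ref{lemma:csbp_inf_lower_tail} — a local time of size $\geq 2T\asymp\delta^{1/(\gamma-1)}$ cannot drop below $T$ over the very short band $\ivff{\delta,\delta+\delta'}$, with conditional probability $1-\littleo(1)$ — and Lemma~\ref{lemma:local_time_tail} for the value at $\delta$, one gets $\Nb\pthb{\inf_{\ivff{\delta,\delta+\delta'}}\bk{\ell^s}\geq T}\geq\tfrac12 v(\delta)$ for $\rho$ small; and since branching within the short interval $\ivff{0,\delta'}$ is rare — one inclusion reduces $\brc{\sup_{\ivff{0,\delta'}}Z(s,\delta)\geq2}$ to $\brc{Z(\delta',\delta-\delta')\geq2}$, whose Poisson probability is $\bigo\pthb{(\delta'/\delta)\,v(\delta)}$ by a truncated second-moment estimate of $\bk{\ell^{\delta'}}$ — one obtains $q\geq\tfrac14 v(\delta)$. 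Hence on $\Ai(\rho)$ the parameter of this count is at least $c\,(\rho/\delta)^{1/(\gamma-1)}g(\rho)^{-\epsilon}$, again more than twice the level $(\rho\delta^{-1})^{1/(\gamma-1)}g(\rho)^{-\kappa\epsilon}$, so the Chernoff bound \eqref{eq:chernoff_poisson} (with the $\Gi_{w_j}$-measurable conditioning on $\Ai(\kappa\rho,w_j,\rho)$ handled exactly as in Lemma~\ref{lemma:usp_global_ub}) gives a per-interval probability $\leq\exp(-\delta^{-\eta_1})$. Summing over the $\ceil{\tau/\delta'}\leq c\,\tau\,\delta^{-1}g(\delta)^{-1-2\epsilon}$ intervals, the polynomial and logarithmic prefactors in $1/\delta$ being swallowed by $\exp(-\delta^{-\eta_1})$ because $\eta_1>\eta$, yields the announced bound $c_0\,\tau\exp(-\delta^{-\eta})$.

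The genuine obstacle is the reduction of the penultimate paragraph: $\inf_{u\in\ivff{0,\tau}}\#\Ubb(u+v,\rho,\delta)$ is a functional of a Poisson point process that is neither monotone nor continuous in $u$, and everything hinges on dominating it from below by one $u$-independent random variable. This works precisely because $\delta'$ can be chosen $\ll\delta$ — so that over each $I_j$ the relevant local time of a contributing subtree is essentially frozen and the additional branching is negligible — while staying large enough that the number $\ceil{\tau/\delta'}$ of intervals is only polynomial in $1/\delta$ and is therefore absorbed by the main stretched-exponential factor.
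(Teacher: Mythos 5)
Your proposal is correct and follows the same skeleton as the paper's own proof: discretize $\ivff{0,\tau}$ into sub-intervals of length a small polylogarithmic multiple of $\delta$, bound $\#\Ubb(u+v,\rho,\delta)$ from below on each sub-interval by a single $u$-independent count that is Poisson given $\Gi_{w_j}$ on the $\Gi_{w_j}$-measurable event $\Ai(\kappa\rho,w_j,\rho)\supset\Ai(\rho)$, apply the Chernoff bound, and sum. There are two real points of divergence. First, you impose the explicit ``no extra branching in the thin band'' condition $\sup_{s\in\ivff{0,\delta'}}Z(s,\delta,\Ti_\sigma)\leq1$ so that the map $\Ti_\sigma\mapsto\Ti_{\sigma_0}$ is well defined and injective, and you control the cost of this condition by the second-moment/tail estimate you sketch (which, together with $\inf_{\ivff{\delta,\delta+\delta'}}\bk{\ell^s}\geq T>0$ forcing $Z(s,\delta,\Ti_\sigma)\geq1$, makes the geometric reduction watertight). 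The paper proceeds in the reverse direction, mapping each $\Ti_\sigma\in\Tbb(v_m-\delta,\delta)$ to an enclosing $\Ti_u\in\Ubb(u+v,\rho,\delta)$, and does not visibly address the injectivity of that map; its statement of the reduction is moreover obscured by what look like typos in the band $\ivff{\delta/2,\delta/2+\nu}$ and in the inclusion $\Ti_\sigma\subset\Ti_u$. Your added condition is thus a clean and arguably more defensible implementation of the same idea. Second, for the sustained-local-time probability the paper uses the ready-made Lemma~\ref{lemma:local_time_inf_tail1} after choosing the sub-interval width $\nu=\delta\,g(\rho)^{\kappa\epsilon(\gamma-1)}$ precisely so that the exponential term in that lemma becomes a constant, whereas you re-derive the estimate directly from Lemmas~\ref{lemma:local_time_tail} and~\ref{lemma:csbp_inf_lower_tail} with the slightly different width $\delta'=\delta\,g(\delta)^{1+2\epsilon}$; both choices are $\ll\delta$ but only polynomially so in $1/\log(1/\delta)$, so both let the polynomial-in-$1/\delta$ number of sub-intervals be absorbed by $\exp(-\delta^{-\eta_1})$ with $\eta_1\in\ivoo{\eta,1/(\gamma-1)}$. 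In short: same approach, a different (and more careful) handling of the per-interval geometric domination.
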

\begin{proof}
  Let us set $v\in\ivff{2\kappa\rho,\rho/\kappa}$, $\nu\in\ivoo{0,\tau}$ and denote by $X(m)$, $m\in\N$ the following random variable
  \[
    X(m) = \#\brcB{\Ti_\sigma\in\Tbb(v_m-\delta,\delta)  :\,\inf_{w\in\ivff{\delta/2,\delta/2+\nu}}\bk{\ell^w}(\Ti_\sigma)\geq \delta^{\tfrac{1}{\gamma-1}} g(\rho)^{\kappa\epsilon} }.
  \]
  where $v_m \eqdef v + m\nu$. For any $u\in\ivff{0,\tau}$, there exists $m\in\N$ such that $u-m\nu\in\ivff{\delta/2,\delta/2+\nu}$. We then observe that $\bk{\ell^{u-v_m}}(\Ti_\sigma)\geq \delta^{1/(\gamma-1)} g(\rho)^{\kappa\epsilon}$ induces that the subtree $\Ti_u$ rooted at level $u-\delta$ such that $\Ti_\sigma\subset \Ti_u$ satisfies $\bk{\ell^{\delta}}(\Ti_u)\geq\bk{\ell^{u-v_m}}(\Ti_\sigma)\geq \delta^{1/(\gamma-1)} g(\rho)^{\kappa\epsilon}$. Therefore, $\#\Ubb(u+v,\rho,\delta) \geq X(m)$ and
  \[
    \brcB{ \inf_{u\in\ivff{0,\tau}} \#\Ubb(u+v,\rho,\delta) \leq (\rho\delta^{-1})^{\tfrac{1}{\gamma-1}} g(\rho)^{-\kappa\epsilon} } \subset \bigcup_{m\leq2\ceil{\tau/\nu}} \brcb{ X(m) \leq (\rho\delta^{-1})^{\tfrac{1}{\gamma-1}} g(\rho)^{-\kappa\epsilon} }.
  \]
  Hence, it remains to obtain an upper bound of the measure of the latter events. For a given $m\leq2\ceil{\tau/\nu}$, we know that
  \begin{align*}
    \Nb_{\kappa\rho}\bktB{ X(m) \leq (\rho\delta^{-1})^{\tfrac{1}{\gamma-1}} g(\rho)^{-\kappa\epsilon} \cap \Ai(\rho) }
    \leq c_0\,\Nb_{v_m-\delta}\bktB{ X(m) \leq (\rho\delta^{-1})^{\tfrac{1}{\gamma-1}} g(\rho)^{-\kappa\epsilon} \cap \Ai(\rho) },
  \end{align*}
  where the constant $c_0$ is independent of $m$.Given $\Gi_{v_m-\delta}$, $X(m)$ is Poisson random variable parametrized by
  \[
    \lambda_m = \bk{\ell^{v_m-\delta}} v(\delta/2) \Nb_{\delta/2}\pthB{ \inf_{\ivff{\delta/2,\delta/2+\tau}} \bk{\ell^w} \geq \delta^{\tfrac{1}{\gamma-1}} g(\rho)^{\kappa\epsilon} }.
  \]
  Lemma~\ref{lemma:local_time_inf_tail1} entails
  \begin{align*}
    \Nb_{\delta/2}\pthB{ \inf_{\ivff{\delta/2,\delta/2+\tau}} \bk{\ell^w} \leq \delta^{\tfrac{1}{\gamma-1}} g(\rho)^{\kappa\epsilon} } \leq c_0 \,g(\rho)^{\kappa\epsilon(\gamma-1)} + c_0\, \exp\pthb{ -c_1 \,v(\nu) g(\rho)^{\kappa\epsilon} \delta^{1/(\gamma-1)} }.
  \end{align*}
  Choosing $\nu=\delta\,g(\rho)^{\kappa\epsilon(\gamma-1)}$, we therefore know there exists $c_2>0$ independent of $m$, $\rho$ and $\delta$ such that $\lambda_m \geq c_2\, \bk{\ell^{v_m-\delta}} v(\delta)$. Classic Chernoff bound \eqref{eq:chernoff_poisson} then entails
  \begin{align*}
    \Nb_{v_m-\delta}\pthcb{ X(m) \leq c_2 \bk{\ell^{v_m-\delta}} v(\delta) / 2 }{\Gi_{v_m-\delta}} \leq \exp\pthb{-c_2\bk{\ell^{v_m-\delta}} v(\delta) / 8}.
  \end{align*}
  On the event $\Ai(\rho)$, $\bk{\ell^{v_m-\delta}} \geq \rho^{1/(\gamma-1)}g(\rho)^{-\epsilon}$. Therefore,
  \begin{align*}
    \Nb_{v_m-\delta}\pthB{ X(m) \leq (\rho\delta^{-1})^{\tfrac{1}{\gamma-1}} g(\rho)^{-\kappa\epsilon} \cap \Ai(\rho) } \leq \exp\pthB{- (\rho\delta^{-1})^{\tfrac{1}{\gamma-1}} g(\rho)^{-\kappa\epsilon} }.
  \end{align*}
  Finally,
  \begin{align*}
    \Nb_{\kappa\rho}\pthbb{ \bigcup_{m\leq2\ceil{\tau/\nu}} \brcB{ X(m) \leq (\rho\delta^{-1})^{\tfrac{1}{\gamma-1}} g(\rho)^{-\kappa\epsilon} } }
    &\leq c_3\,\tau \delta^{-1}\,g(\rho)^{-\kappa\epsilon(\gamma-1)} \exp\pthB{-(\rho\delta^{-1})^{\tfrac{1}{\gamma-1}} g(\rho)^{-\kappa\epsilon} } \\
    &\leq c_3\,\tau \exp\pthb{ -\delta^{-\eta} },
  \end{align*}
  for some $\eta < 1/(\gamma-1)$, as we recall that $\delta\leq 2^{-1/\rho}$.
\end{proof}

Combining Lemmas~\ref{lemma:usp_large_small_balls1} and \ref{lemma:usp_small_balls3}, we may define a collection of subtrees $\Vbb(u,\rho,\delta)$ designed to be properly ``balanced'':
\begin{align*}
  \Vbb(u,\rho,\delta) = \brcb{\Ti_\sigma\in\Ubb(u,\rho,\delta) : \Ti_\sigma(\delta) \cap \Lambda(u,\rho,\delta) = \vset }.
\end{align*}
Note that in the previous expression, owing to the definition of the set $\Lambda(u,\rho,\delta)$, either $\Ti_\sigma(\delta) \cap \Lambda(u,\rho,\delta) = \vset$ or $\Ti_\sigma(\delta) \subset \Lambda(u,\rho,\delta)$. Then, let us  introduce the event
\begin{align*}
  \Li(\rho,\delta) = \brcB{\Ti : \inf_{u\in\ivff{3\kappa\rho,\rho/\kappa}} \Vbb(u,\rho,\delta) \geq (\rho\delta^{-1})^{\tfrac{1}{\gamma-1}} }.
\end{align*}
The estimates presented in Lemmas~\ref{lemma:usp_large_small_balls1} and \ref{lemma:usp_small_balls3} allow us in the following lemma to obtain a tight bound on the measure of the latter.
\begin{lemma}  \label{lemma:usp_balanced_balls}
  Suppose $\rho>0$ and $\delta\in\ivoo{0,2^{-1/\rho}}$.
  Then,
  \begin{align*}
    \Nb_{\kappa\rho}\pthcb{ \Li(\rho,\delta)^c }{ \Gi_{\kappa\rho} }
    \leq c_1\exp\pthB{ -c_0\,\bk{\ell^{\kappa\rho}} \rho^{-\tfrac{1}{\gamma-1}} g(\rho)^{\epsilon\gamma} + 8 h(\delta)^{-1} },
  \end{align*}
  where the constants $c_0$ and $c_1$ are independent of $\rho$ and $\delta$.
\end{lemma}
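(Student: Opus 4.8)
The plan is to argue conditionally on $\Gi_{\kappa\rho}$, writing $x=\bk{\ell^{\kappa\rho}}$, and to combine the deterministic structure of $\Vbb(u,\rho,\delta)$ with the preliminary estimates of Section~\ref{sec:preliminaries}. First I would dispose of a trivial regime: if $c_0\,x\,\rho^{-1/(\gamma-1)}g(\rho)^{\epsilon\gamma}\leq 8h(\delta)^{-1}$ then the asserted right-hand side is at least $1$ (recall $h(\delta)^{-1}=\log\log\delta^{-1}\geq\log\log\rho^{-1}$ since $\delta<2^{-1/\rho}$), so there is nothing to prove. Hence one may assume $x\gg\rho^{1/(\gamma-1)}$; this is precisely the regime where the fixed corridor $\Ai(\rho)$ is of little use (it is typically already violated) and an $x$-dependent barrier must be substituted for it.

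The deterministic step is a pigeonhole. Every $\Ti_\sigma\in\Ubb(u,\rho,\delta)$ carries local-time mass $\bk{\ell^\delta}(\Ti_\sigma)\geq\delta^{1/(\gamma-1)}g(\rho)^{\kappa\epsilon}$ at level $u$, and, by the dichotomy recorded right after the definition of $\Vbb(u,\rho,\delta)$, a subtree of $\Ubb(u,\rho,\delta)\setminus\Vbb(u,\rho,\delta)$ has $\Ti_\sigma(\delta)\subseteq\Lambda(u,\rho,\delta)$, so its whole mass is contained in $\ell^u(\Lambda(u,\rho,\delta))$; consequently
\begin{align*}
  \#\Vbb(u,\rho,\delta)\;\geq\;\#\Ubb(u,\rho,\delta)-\delta^{-1/(\gamma-1)}g(\rho)^{-\kappa\epsilon}\,\ell^u\bigl(\Lambda(u,\rho,\delta)\bigr).
\end{align*}
Fixing a small constant $c>0$, this shows that for $\rho$ small $\Li(\rho,\delta)^c$ is contained in the union of $E_0=\brcb{\inf_{\ivff{\kappa\rho,\rho/\kappa}}\bk{\ell^u}\leq x/2}$, $E_1=\brcb{\exists\,u\in\ivff{3\kappa\rho,\rho/\kappa}:\ \#\Ubb(u,\rho,\delta)<c\,\bk{\ell^u}\,v(\delta)}$ and $E_2=\brcb{\exists\,u\in\ivff{3\kappa\rho,\rho/\kappa}:\ \ell^u(\Lambda(u,\rho,\delta))>g(\rho)^{2\epsilon}\,\bk{\ell^u}}$: indeed off $E_1\cup E_2$ one has $\delta^{-1/(\gamma-1)}g(\rho)^{-\kappa\epsilon}\ell^u(\Lambda(u,\rho,\delta))\leq g(\rho)^{\epsilon(2-\kappa)}\delta^{-1/(\gamma-1)}\bk{\ell^u}\leq\tfrac12\#\Ubb(u,\rho,\delta)$ (using $g(\rho)\to0$ and $v(\delta)\asymp\delta^{-1/(\gamma-1)}$), so that off $E_0$ one gets $\#\Vbb(u,\rho,\delta)\geq\tfrac12 c\,\bk{\ell^u}v(\delta)\geq(\rho\delta^{-1})^{1/(\gamma-1)}$ since $\bk{\ell^u}\geq x/2\gg\rho^{1/(\gamma-1)}$.

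It then remains to bound $E_0,E_1,E_2$ conditionally on $\Gi_{\kappa\rho}$. Conditionally on $\Gi_{\kappa\rho}$, the branching property and the Ray--Knight theorem identify $(\bk{\ell^{\kappa\rho+s}})_{s\geq0}$ with a stable CSBP started from $x$; Lemma~\ref{lemma:csbp_inf_lower_tail} with barrier $y=x/2$ over the time-lag $\rho/\kappa-\kappa\rho$, together with the identity $(1-1/\gamma)\tfrac{\gamma}{\gamma-1}=1$ and $v(\rho/\kappa-\kappa\rho)\asymp\rho^{-1/(\gamma-1)}$, gives $\Nb_{\kappa\rho}\pthcb{E_0}{\Gi_{\kappa\rho}}\leq\exp(-c\,x\,\rho^{-1/(\gamma-1)})$. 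For $E_1$ and $E_2$ one reruns the arguments of Lemmas~\ref{lemma:usp_large_small_balls1} and \ref{lemma:usp_small_balls3} with the fixed corridor $\Ai(\rho)$ replaced by the lower barrier $\brcb{\bk{\ell^u}>x/2}$: on that barrier the relevant Poisson counts --- of $\Ubb$-subtrees, resp.\ of subtrees carrying an exceptionally small or large mass at some scale $r\in\ivoo{\delta,\kappa\rho}$ --- have parameters $\gtrsim x\,\delta^{-1/(\gamma-1)}$, resp.\ are controlled by scale-invariant (size-biased) multiples of $\bk{\ell^u}$, so the Chernoff bound \eqref{eq:chernoff_poisson} and the self-similar tail estimates of Lemmas~\ref{lemma:local_time_tail}, \ref{lemma:local_time_sup_tails} and \ref{lemma:local_time_inf_tail1}, after a union over the $O(\rho/\delta)$ subintervals sweeping $\ivff{3\kappa\rho,\rho/\kappa}$ (handled as in the proof of Lemma~\ref{lemma:usp_small_balls3}), yield
\begin{align*}
  \Nb_{\kappa\rho}\pthcb{E_1\cap E_0^c}{\Gi_{\kappa\rho}}+\Nb_{\kappa\rho}\pthcb{E_2\cap E_0^c}{\Gi_{\kappa\rho}}\;\leq\;(\rho/\delta)\,g(\delta)^{-1-\gamma(2+5\epsilon)}\exp\bigl(-c\,x\,\delta^{-1/(\gamma-1)}\bigr).
\end{align*}
Since $\delta\ll\rho$ forces $\delta^{-1/(\gamma-1)}\gg\rho^{-1/(\gamma-1)}$, the prefactor is absorbed, and writing $g(\delta)^{-1}=\exp(h(\delta)^{-1})$ and choosing $\epsilon$ so small that $1+\gamma(2+5\epsilon)\leq 8$ (possible as $\gamma<2$) this is $\leq\exp\bigl(-c_0\,x\,\rho^{-1/(\gamma-1)}g(\rho)^{\epsilon\gamma}+8h(\delta)^{-1}\bigr)$. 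Summing the contributions of $E_0$, $E_1\cap E_0^c$ and $E_2\cap E_0^c$ and absorbing constants into $c_1$ gives the claim.

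I expect the main obstacle to be the $E_2$ estimate in the regime $x\gg\rho^{1/(\gamma-1)}$: because $\Ai(\rho)$ is then useless, the proofs of Lemmas~\ref{lemma:usp_large_balls1}--\ref{lemma:usp_small_balls2} must be reworked with only a lower barrier on $\bk{\ell^u}$, and the delicate point is to verify that the \emph{proportion} of the local time at each level carried by $\Lambda(u,\rho,\delta)$ stays a small power of $g(\rho)$, uniformly in the total mass --- which should hold because that proportion is governed by scale-invariant tail probabilities for the local time of individual subtrees (Lemmas~\ref{lemma:local_time_tail}, \ref{lemma:local_time_sup_tails}, \ref{lemma:local_time_inf_tail1}), rather than by an absolute bound $\ell^u(\Lambda(u,\rho,\delta))\leq\ell(\rho)$ of the kind used in Lemma~\ref{lemma:usp_large_small_balls1}. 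Extracting the clean exponent $x\rho^{-1/(\gamma-1)}$ from Lemma~\ref{lemma:csbp_inf_lower_tail}, the pigeonhole bookkeeping, and the rewriting of the surviving $g(\delta)^{-O(1)}$ factor as $\exp(8h(\delta)^{-1})$ are routine.
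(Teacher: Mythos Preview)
Your decomposition $\Li(\rho,\delta)^c\subset E_0\cup E_1\cup E_2$ and the pigeonhole step are fine, but the bound you claim for $E_2$ has a real gap --- the one you yourself flag. Lemmas~\ref{lemma:usp_large_balls1}--\ref{lemma:usp_large_small_balls1} are stated and proved \emph{on} the corridor $\Ai(\rho)$, and their proofs genuinely use its upper bound $\bk{\ell^{v-\delta}}\leq\rho^{1/(\gamma-1)}g(\rho)^{-2\epsilon}$: in Lemma~\ref{lemma:usp_large_balls1} it is needed both to compare the Chernoff threshold $\ell(\rho,\delta)$ with the relevant Poisson mean, and --- more seriously --- to control the polynomial-tail contribution of the ``very large'' subtrees (the event $N(R,\delta)\geq 1$), which yields only a bound $c\,\delta\rho^{-1}g(\delta)^{-O(1)}$ per level, \emph{not} an exponential one. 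Replacing the absolute threshold $\ell(\rho,\delta)$ by your proportional one $g(\rho)^{2\epsilon}\bk{\ell^u}$ does not by itself cure this: the heavy-tailed large-mass part still gives only polynomial control, and a union over $O(\rho/\delta)$ levels cannot produce the claimed $\exp(-c\,x\,\delta^{-1/(\gamma-1)})$. So the displayed inequality for $\Nb_{\kappa\rho}\pthcb{E_2\cap E_0^c}{\Gi_{\kappa\rho}}$ is unproved as written.

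The paper avoids this entirely by a different device: it uses the branching property \emph{at level $\kappa\rho$} to split $\Ti$ into independent subtrees $\Ti_\rho\in\Tbb(\kappa\rho,0)$, and observes that $\Li(\rho,\delta)$ holds as soon as, for every short window $[v_m,v_m+\tau]$, \emph{at least one} such subtree is ``good'' --- meaning it lies in $\Ai(\rho)$, has $\sup_u\ell^{v_m+u}(\Lambda)\leq\ell(\rho)$ in the sense of Lemma~\ref{lemma:usp_large_small_balls1}, and has $\inf_u\#\Ubb\geq(\rho/\delta)^{1/(\gamma-1)}g(\rho)^{-\kappa\epsilon}$ in the sense of Lemma~\ref{lemma:usp_small_balls3}. (Balls of radius $<\kappa\rho$ centred at levels $\geq 3\kappa\rho$ stay inside a single $\Ti_\rho$, so a good subtree already supplies $(\rho/\delta)^{1/(\gamma-1)}$ elements of $\Vbb(u,\rho,\delta,\Ti)$.) Given $\Gi_{\kappa\rho}$, the number $N(m)$ of good subtrees is Poisson with parameter $\lambda_m\geq\bk{\ell^{\kappa\rho}}v(\kappa\rho)\bigl(\Nb_{\kappa\rho}(\Ai(\rho))-c\tau\rho^{-1}g(\delta)^{-1-\gamma(2+5\epsilon)}-c\tau\exp(-\delta^{-\eta})\bigr)$; choosing $\tau=\rho\,g(\delta)^{1+\gamma(2+7\epsilon)}$ makes the corrections negligible next to $\Nb_{\kappa\rho}(\Ai(\rho))\asymp g(\rho)^{\epsilon\gamma}$, which is precisely the origin of the factor $g(\rho)^{\epsilon\gamma}$ in the exponent. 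The union over the $\rho/\tau=g(\delta)^{-O(1)}$ windows then contributes the additive $8h(\delta)^{-1}$. This route reuses Lemmas~\ref{lemma:usp_large_small_balls1} and~\ref{lemma:usp_small_balls3} verbatim, with no rewriting of the $\Ai(\rho)$-conditioned estimates.
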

\begin{proof}
  We aim to combine the results of Lemmas~\ref{lemma:usp_large_small_balls1} and \ref{lemma:usp_small_balls3} to obtain our result. Nevertheless, one may note that we can directly not apply the former with $\tau\asymp\rho$, as the bound previously presented do not converge in this case. Therefore, we need to apply a slightly more complex strategy to find the proper estimate.

  Let us divide the interval $\ivff{3\kappa\rho,\rho/\kappa}$ into successive and disjoint subintervals of size $\tau>0$, where the value of the latter will be specified at the end of the proof. For every $m\in\N$, let us define $v_m = 2\kappa\rho + m\tau$ and the random variable $N(m)$:
  \begin{align*}
    N(m) = \#\brcb{\Ti_\rho\in\Tbb(\kappa\rho,0) \text{ s.t. } &\sup_{u\in\ivff{0,\tau}} \ell^{v_m+u}\pthb{ \Lambda(u+v_m,\rho,\delta) }(\Ti_\rho) \leq \ell(\rho) \\
    \text{ and } &\inf_{u\in\ivff{0,\tau}} \#\Ubb(u+v_m,\rho,\delta,\Ti_\rho) \geq (\rho\delta^{-1})^{\tfrac{1}{\gamma-1}} g(\rho)^{-\kappa\epsilon} \\
    \text{ and } &\Ti_\rho\in \Ai(\rho) },
  \end{align*}
  using notations respectively introduced in Lemmas~\ref{lemma:usp_large_small_balls1} and \ref{lemma:usp_small_balls3}. If $N(m)\geq 1$, owing to the definition of the latter, the contribution to the local time at level $u+v$ of subtrees in $\Ubb(u+v,\rho,\delta)$ is large than $\rho^{1/(\gamma-1)}$. Furthermore and as previously outlined, the tree geometry induces that for any $\Ti_\sigma\in\Ubb(u+v,\rho,\delta)$, either $\Ti_\sigma(\delta) \subset \Lambda(u+v,\rho,\delta)$ or $\Ti_\sigma(\delta) \cap \Lambda(u+v,\rho,\delta)=\vset$. Therefore, when $N(m)\geq 1$, we must have $\Vbb(u+v,\rho,\delta) \geq (\rho\delta^{-1})^{\tfrac{1}{\gamma-1} }$, otherwise the contribution of $\Ubb(u+v,\rho,\delta)\setminus\Vbb(u+v,\rho,\delta)$ to the local time contradicts the assumption $\ell^{v_m+u}\pthb{ \Lambda(u+v_m,\rho,\delta) }(\Ti_\rho) \leq g(\rho)^\epsilon \rho^{1/(\gamma-1)}$. Hence,
  \begin{align*}
    \Li(\rho,\delta)^c \subset \bigcup_{m\leq\ceil{\rho/\kappa\tau}} \brcb{ N(m)  = 0 }.
  \end{align*}
  We need to obtain an upper bound of $\Nb_{\kappa\rho}\pthb{N(m) = 0 \cap \bk{\ell^{\kappa\rho}} \geq \rho^{1/(\gamma-1)} g(\rho)^{-\alpha_0} }$. As previously, we may observe that given $\Gi_{\kappa\rho}$, $N(m)$ is a Poisson random variable parametrised by
  \begin{align*}
    \lambda_m = \bk{\ell^{\kappa\rho}}(\Ti) \,v(\kappa\rho) \,\Nb_{\kappa\rho}\bktB{ &\sup_{u\in\ivff{0,\tau}} \ell^{v_m+u} \pthb{ \Lambda(u+v_m,\rho,\delta) } \leq \ell(\rho) \cap \\
    &\inf_{u\in\ivff{0,\tau}}  \#\Ubb(u+v,\rho,\delta) \geq (\rho\delta^{-1})^{\tfrac{1}{\gamma-1}} g(\rho)^{-\kappa\epsilon} \cap \Ai(\rho) }.
  \end{align*}
  Then, Lemmas~\ref{lemma:usp_large_small_balls1} and \ref{lemma:usp_small_balls3} entail
  \begin{align*}
    \lambda_m
    &\geq \bk{\ell^{\kappa\rho}}(\Ti) \,v(\kappa\rho) \pthb{ \Nb_{\kappa\rho}\pthb{ \Ai(\rho) }  - c_1\,\tau\rho^{-1} g(\delta)^{-1-\gamma(2+5\epsilon)}  -  c_1\,\tau \exp\pth{ -\delta^{-\eta} } }.
  \end{align*}
  In addition, owing to Lemma~\ref{lemma:local_time_sup_tails}, $\Nb_{\kappa\rho}\pthb{ \Ai(\rho) } \asymp g(\rho)^{\epsilon\gamma}$.
  Hence, by choosing $\tau = \rho g(\delta)^{1+\gamma(2+7\epsilon)}$, the last two terms are negligible in front of $\Nb_{\kappa\rho}\pthb{ \Ai(\rho) }$, as we recall that $\delta\leq 2^{-1/\rho}$, and there exists a positive constant $c_2$ independent of $m$, $\rho$ and $\delta$ such that
  \begin{align*}
    \Nb_{\kappa\rho}\pthcb{N(m) = 0 }{\Gi_{\kappa\rho}} \leq \exp\pthb{ -c_2 \bk{\ell^{\kappa\rho}} \rho^{-1/(\gamma-1)} g(\rho)^{\epsilon\gamma} },
  \end{align*}
  Finally, this last inequality entails
  \begin{align*}
    \Nb_{\kappa\rho}\pthcb{ \Li(\rho,\delta)^c }{\Gi_{\kappa\rho}}
    &\leq c_3\exp\pthb{ -c_2 \bk{\ell^{\kappa\rho}} \rho^{-1/(\gamma-1)} g(\rho)^{\epsilon\gamma} - \log(\tau) } \\
    &\leq c_3\exp\pthb{ -c_2 \bk{\ell^{\kappa\rho}} \rho^{-1/(\gamma-1)} g(\rho)^{\epsilon\gamma} + 8 h(\delta)^{-1} },
  \end{align*}
  recalling that $\tau = \rho g(\delta)^{1+\gamma(2+7\epsilon)}$ and $\delta\in\ivoo{0,2^{-1/\rho}}$.
\end{proof}

Lemma~\ref{lemma:usp_balanced_balls} presents how a properly balanced collection of subtrees $\Vbb(u,\rho,\delta)$ can uniformly be constructed with high probability. The previous bound would be enough to construct by induction a proper collection of Hausdorff measure on every level $u$ to prove the Hausdorff~\eqref{eq:cor_dimH_images} and packing~\eqref{eq:cor_dimP_images} dimensions of level sets $\Ti(u)$.

Nevertheless, as we aim to obtain in addition the full multifractal spectrum of the local time in Theorem~\ref{th:levy_tree_upper_spectrum1}, we rely on Lemma~\ref{lemma:usp_balanced_balls} and the collections $\Vbb(u,\rho,\delta)$ to construct ``well-behaving'' configurations with large balls of the local time of order $\delta^h$.

For that purpose, we set in the rest of this section a real number $\alpha\in\R$, the latter being more specifically defined later.

\begin{lemma}  \label{lemma:usp_large_balls_spectrum1}
  Suppose $\rho>0$, $\delta\in\ivoo{0,2^{-1/\rho}}$ and $h\in\ivffb{\tfrac{1}{\gamma},\tfrac{1}{\gamma-1}}$. For any $v\in\ivff{\kappa\rho,\rho/\kappa}$, let us denote by $\Vbb(v,\rho,\delta,h)\subset\Tbb(v-\delta,\delta)$ the collection of embedded subtrees satisfying the following properties: for every $\Ti_\sigma\in\Vbb(v,\rho,\delta,h)$,
  \begin{enumerate}[(i)]
    \item $\Ti_\sigma\in\Tbb(v-\delta,\delta)$ and
    \[
      \inf_{u\in\ivff{\kappa\delta,\delta/\kappa}}\bk{\ell^u}(\Ti_\sigma) \geq \delta^h g(\delta)^{-\alpha};
    \]
    \item for every $\sigma'\in \Ti_\sigma(\delta)$ and any $r\in\ivff{\delta,\kappa\rho}$
    \[
      \#\brcb{ \Ti'\in\Vbb(v,\rho,\delta,h) : \Ti' \cap B(\sigma',2r)\neq\vset } \leq 1 + \pthb{r\vartheta^{-1}}^{\tfrac{1}{\gamma-1}} g(r)^{-\beta},
    \]
    where $\beta = (1+5\epsilon)/(\gamma-1)$ and using the notation $\vartheta \eqdef \delta^{(\gamma-1)(\gamma h-1)}g(\delta)^{-\alpha\gamma(\gamma-1)}$.
  \end{enumerate}
  Then, for any $v\in\ivff{\rho,2\rho}$
  \begin{align*}
    \Nb_{\kappa\rho}\bktB{ \#\Vbb(v,\rho,\delta,h) \leq \pthb{\rho\vartheta^{-1}}^{\tfrac{1}{\gamma-1}} g(\rho)^{2\epsilon} \cap \Li(\rho,\vartheta) }
    \leq c_0\exp\brcB{- \pth{\rho\vartheta^{-1}}^{1/(\gamma-1)} g(\rho)^{2\epsilon} },
  \end{align*}
  where the constant $c_0>0$ is independent of $v$, $\rho$, $\delta$ and $h\\in\Hi$.
\end{lemma}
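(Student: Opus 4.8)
The plan is to bound $\#\Vbb(v,\rho,\delta,h)$ from below by the cardinality of a Poisson random variable counting subtrees that are simultaneously well-placed (many descendants of the ``balanced'' type on the event $\Li(\rho,\vartheta)$) and have a suitably large infimum of local time over $\ivff{\kappa\delta,\delta/\kappa}$. The key observation is that the scale $\vartheta = \delta^{(\gamma-1)(\gamma h-1)}g(\delta)^{-\alpha\gamma(\gamma-1)}$ is chosen precisely so that a subtree rooted at level $v-\delta$ carrying a large mass $\asymp \delta^h g(\delta)^{-\alpha}$ at the sublevels $\ivff{\kappa\delta,\delta/\kappa}$ corresponds to $\asymp (\delta/\vartheta)^{1/(\gamma-1)}$ many balanced subtrees at scale $\vartheta$ hanging below it; thus property (ii) — the ``not too crowded'' condition on the subtrees — will follow from the counting bounds already baked into $\Li(\rho,\vartheta)$, while property (i) must be produced by hand via the Ray--Knight theorem and Lemma~\ref{lemma:local_time_inf_tail0}.

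First I would reduce, as in the earlier lemmas, from $\Nb_{\kappa\rho}$ to $\Nb_{v-\delta}$ (losing only a constant factor $v(\kappa\rho)/v(v-\delta)$, bounded for $v\in\ivff{\rho,2\rho}$), and condition on $\Gi_{v-\delta}$. Then I would define a random variable $Y$ counting those subtrees $\Ti_\sigma\in\Tbb(v-\delta,\delta)$ such that $\inf_{\ivff{\kappa\delta,\delta/\kappa}}\bk{\ell^u}(\Ti_\sigma)\geq \delta^h g(\delta)^{-\alpha}$ \emph{and}, on the event $\Li(\rho,\vartheta)$, each such subtree automatically satisfies the crowding bound (ii) because $\Li(\rho,\vartheta)$ forces $\Vbb(u,\rho,\vartheta)\geq(\rho\vartheta^{-1})^{1/(\gamma-1)}$ uniformly over $u\in\ivff{3\kappa\rho,\rho/\kappa}$, and the balls $B(\sigma',2r)$ can only meet a controlled number of these. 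By the branching property, given $\Gi_{v-\delta}$, $Y$ is Poisson with parameter
\[
  \lambda = \bk{\ell^{v-\delta}}\,v(\delta)\,\Nb_{\kappa\delta}\pthB{\inf_{\ivff{\kappa\delta,\delta/\kappa}}\bk{\ell^u}\geq \delta^h g(\delta)^{-\alpha}},
\]
and Lemma~\ref{lemma:local_time_inf_tail0} (with $\Lambda(\delta)=g(\delta)^{-\alpha}$) gives $\Nb_{\kappa\delta}(\cdots)\asymp \delta^{\gamma/(\gamma-1)-\gamma h}g(\delta)^{\alpha\gamma}$, hence $\lambda\asymp \bk{\ell^{v-\delta}}\,\delta^{1-\gamma h}g(\delta)^{\alpha\gamma}=\bk{\ell^{v-\delta}}\,(\delta/\vartheta)^{1/(\gamma-1)}$. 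On the event $\Li(\rho,\vartheta)$ — which is $\Gi_{\kappa\rho}$-built but whose intersection with a lower bound $\bk{\ell^{v-\delta}}\gtrsim \rho^{1/(\gamma-1)}g(\rho)^{-\epsilon}$ is the relevant scenario (and that lower bound is itself essentially forced on $\Li$ via the $\Ubb$-counting or can be added as in Lemma~\ref{lemma:usp_balanced_balls}) — we get $\lambda\gtrsim (\rho/\vartheta)^{1/(\gamma-1)}g(\rho)^{-\epsilon}$, so the threshold $(\rho\vartheta^{-1})^{1/(\gamma-1)}g(\rho)^{2\epsilon}$ sits well below $\lambda/2$. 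The classic Chernoff bound \eqref{eq:chernoff_poisson} for the lower tail of a Poisson variable then yields $\Nb_{v-\delta}(Y\leq (\rho\vartheta^{-1})^{1/(\gamma-1)}g(\rho)^{2\epsilon}\mid\Gi_{v-\delta})\leq \exp(-c\,(\rho\vartheta^{-1})^{1/(\gamma-1)}g(\rho)^{2\epsilon})$ on $\Li(\rho,\vartheta)$, and integrating over $\Gi_{v-\delta}$ finishes the estimate.

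The main obstacle is verifying property (ii) rigorously: one must show that the crowding condition for the candidate subtrees is genuinely \emph{implied} by membership in $\Li(\rho,\vartheta)$ rather than being an extra independent event, so that it does not cost an additional probabilistic factor. The point is geometric: a ball $B(\sigma',2r)$ with $\sigma'\in\Ti_\sigma(\delta)$ and $r\in\ivff{\delta,\kappa\rho}$ is contained in (at most one, plus boundary effects) subtree rooted at level $\approx v-r$, and inside that subtree $\Li(\rho,\vartheta)$ bounds the number of $\vartheta$-scale balanced subtrees, hence the number of our candidates that can branch off within distance $2r$; the ``$1+$'' in (ii) absorbs the subtree containing $\sigma'$ itself and the endpoint ambiguity, and the exponent $\beta=(1+5\epsilon)/(\gamma-1)$ comes from combining the $\overline{r}$-type upper bound on local-time masses (built into $\Lambda$, hence excluded on $\Vbb$) with the counting identity $\#\{\text{candidates near }B(\sigma',2r)\}\lesssim \ell^{v-r}(B(\sigma',2r))/(\vartheta^{1/(\gamma-1)}g(\vartheta)^{\cdots})$. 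Once this deterministic inclusion is established, the probabilistic content reduces entirely to the Poisson lower-tail computation above.
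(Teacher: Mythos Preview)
There is a genuine gap. Your variable $Y$ counts \emph{all} subtrees $\Ti_\sigma\in\Tbb(v-\delta,\delta)$ satisfying~(i), but this collection need not satisfy~(ii), and $\Li(\rho,\vartheta)$ does not force it to. For $r\in\ivff{\delta,\vartheta}$ the term $(r\vartheta^{-1})^{1/(\gamma-1)}g(r)^{-\beta}$ is $o(1)$, so~(ii) is a \emph{separation} condition: at most one element of $\Vbb(v,\rho,\delta,h)$ may meet any ball of radius $\leq 2\vartheta$. Nothing in a raw Poisson count prevents several qualifying $\delta$-subtrees from sharing a common $\vartheta$-scale ancestor; conditionally on that ancestor having even moderate local time, the number of qualifying descendants is again Poisson, not bounded by~$1$. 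Your closing argument invokes $\Li(\rho,\vartheta)$ as an upper bound on $\vartheta$-subtrees in a ball, but $\Li$ is only a \emph{lower} bound on $\#\Vbb(u,\rho,\vartheta)$. The inequality $\#\{\text{candidates in }B(\sigma',2r)\}\lesssim \ell^{w}(B(\cdot,2r))/\vartheta^{1/(\gamma-1)}$ requires two things you do not have: that each candidate contributes at least $\vartheta^{1/(\gamma-1)}$ to the local time at the \emph{ancestor} level (your candidates carry mass $\delta^h g(\delta)^{-\alpha}$ at a different level), and that $\ell^{w}(B(\sigma_0,2r))\leq\overline r(r)$, which holds only at vertices outside $\Lambda(w,\rho,\vartheta)$ --- a restriction your candidates need not respect.

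The paper's construction is precisely the two-scale selection that manufactures this separation. One passes to the intermediate level $w=v-\vartheta$, takes the balanced collection $\Vbb(w,\rho,\vartheta)$ (of cardinality $\geq(\rho\vartheta^{-1})^{1/(\gamma-1)}$ on $\Li(\rho,\vartheta)$, and $\Gi_w$-measurable), and for each $\widehat\Ti\in\Vbb(w,\rho,\vartheta)$ tests whether it contains \emph{some} descendant $\delta$-subtree with property~(i); if so, selects exactly one. The count is thus a Binomial, not a raw Poisson, with success probability $\gtrsim g(\rho)^\epsilon$ (this is where the definition of $\vartheta$ is calibrated: $\bk{\ell^\vartheta}(\widehat\Ti)\cdot\Nb\bigl(\inf_{\ivff{\kappa\delta,\delta/\kappa}}\bk{\ell^u}\geq\delta^h g(\delta)^{-\alpha}\bigr)\gtrsim g(\rho)^\epsilon$ by Lemma~\ref{lemma:local_time_inf_tail0}). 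Chernoff then gives the stated tail. Property~(ii) for $r\in\ivff{\delta,\vartheta}$ is now automatic --- one selection per $\vartheta$-ancestor --- and for $r\in\ivff{\vartheta,\kappa\rho}$ it follows because each selected subtree sits inside some $\widehat\Ti\in\Vbb(w,\rho,\vartheta)$, whose vertices at level $w$ lie outside $\Lambda(w,\rho,\vartheta)$ by definition, so $\ell^{w}(B(\sigma_0,2r))\leq\overline r(r)$; dividing by the minimum contribution $\vartheta^{1/(\gamma-1)}g(\rho)^{\kappa\epsilon}$ of each $\widehat\Ti$ yields the bound $(r\vartheta^{-1})^{1/(\gamma-1)}g(r)^{-\beta}$.
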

\begin{proof}
  Let us begin by describing more precisely our construction of the collection $\Vbb(v,\rho,\delta,h)$. Note that $\vartheta = \delta^{(\gamma-1)(\gamma h-1)}g(\delta)^{-\alpha\gamma(\gamma-1)} > \delta$ and consider the level $w=v-\vartheta$. To define $\Vbb(v,\rho,\delta,h)$, we are interested in a specific configuration of subtrees rooted at level $w$ and belonging to:
  \[
    \Bi(\delta,h) = \brcB{\Ti : \exists\Ti_\sigma\in\Tbb(\vartheta-\delta,\delta,\Ti) : \,\inf_{u\in\ivff{\kappa\delta,\delta/\kappa}}\bk{\ell^u}(\Ti_\sigma) \geq \delta^h g(\delta)^{-\alpha} }.
  \]
  Using the branching property of Lévy trees, we know that given $\Gi_{\vartheta-\delta}$, the number of subtrees $\Ti_\sigma$ with the previous properties is a Poisson random variable parametrized by $\bk{\ell^{\vartheta-\delta}}\Nb\pthb{\inf_{u\in\ivff{\kappa\delta,\delta/\kappa}}\bk{\ell^u} \geq \delta^h g(\delta)^{-\alpha} }$. Hence,
  \begin{align*}
    \Nb_{\vartheta-\delta}\pthb{ \Bi(\delta,h) } = \Nb_{\vartheta-\delta}\pthB{ 1-\exp\pthB{-\bk{\ell^{\vartheta-\delta}}\Nb\bktB{ \inf_{u\in\ivff{\kappa\delta,\delta/\kappa}}\bk{\ell^u}\geq \delta^h g(\delta)^{-\alpha} }} }.
  \end{align*}
  Recall that owing to Lemma~\ref{lemma:local_time_inf_tail0}, $\Nb\pthb{ \inf_{u\in\ivff{\kappa\delta,\delta/\kappa}}\bk{\ell^u} \geq \delta^h g(\delta)^{-\alpha} } \asymp_{\delta\rightarrow 0} \delta^{1-\gamma h} g(\delta)^{\alpha\gamma}$ and observe in addition that $\vartheta-\delta\asymp\vartheta$.
  The Laplace transform of the local time then entails
  \begin{align*}
    \Nb_{\vartheta-\delta}\pthb{ \Bi(\delta,h) }
    &\geq \Nb_{\vartheta-\delta}\pthb{ 1-\exp\pthb{-c_0 \,\delta^{1-\gamma h}g(\delta)^{\alpha\gamma} \bk{\ell^{\vartheta-\delta}}} } \\
    &\geq \pthb{ 1 + c_1\vartheta^{-1}\delta^{(\gamma-1)(\gamma h -1)}g(\delta)^{-\alpha\gamma(\gamma-1)} }^{-1/(\gamma-1)} \\
    &\geq c_2,
  \end{align*}
  recalling that $\vartheta = \delta^{(\gamma-1)(\gamma h-1)}g(\delta)^{-\alpha\gamma(\gamma-1)}$. Therefore, there exists $c_3>0$ such that $\Nb\pthb{ \Bi(\delta,h) } \geq c_3\, \vartheta^{-1/(\gamma-1)}$, where the constant $c_3>0$ (as well as $c_2$) is independent of $\delta$ and $h$.

  We now aim to count the number of these configurations rooted at level $w=v-\vartheta$. More precisely, we are interested in the elements in the set $\Vbb(w,\rho,\vartheta)$ who give birth to such a configuration. We denote by $N(v,\rho,\delta)$ the number of such elements. Given $\Gi_w$, $N(v,\rho,\delta)$ is the sum of $\#\Vbb(w,\rho,\vartheta)$ independent Bernoulli random variable whose parameter depends on local time $\bk{\ell^\vartheta}(\Ti_\vartheta)$, $\Ti_\vartheta\in\Vbb(w,\rho,\vartheta)$. Nevertheless, owing to definition of $\Vbb(w,\rho,\vartheta)$, it is clearly lower bounded by a Binomial distribution parametrised by $\#\Vbb(w,\rho,\vartheta)$ and $\lambda(\rho,\delta) \geq \vartheta^{\tfrac{1}{\gamma-1}} g(\rho)^\epsilon \,\Nb\pthb{ \Bi(\delta,h) } \geq c_3\, g(\rho)^\epsilon$.
  Therefore, Chernoff bound entails
  \begin{align*}
    \Nb_{w}\pthcb{ N(v) \leq \lambda(\rho,\delta)\# \Vbb(v,\rho,\vartheta) / 2 }{\Gi_w} \leq \exp\pthb{-\lambda(\rho,\delta)\# \Vbb(v,\rho,\vartheta) / 8}.
  \end{align*}
  On the event $\Li(\rho,\vartheta)$, $\#\Vbb(w,\rho,\vartheta) \geq \pth{\rho\vartheta^{-1}}^{1/(\gamma-1)}$. Hence,
  \begin{align*}
    \Nb_{w}\bktB{ N(v) \leq \pthb{\rho\vartheta^{-1}}^{\tfrac{1}{\gamma-1}} g(\rho)^{2\epsilon} \cap \Li(\rho,\vartheta) }
    &\leq \exp\pthb{-\pth{\rho\vartheta^{-1}}^{1/(\gamma-1)} g(\rho)^{2\epsilon} },
  \end{align*}
  for any $\rho$ sufficiently small.\vsp

  As we have obtained the expected bound on the size of $\Vbb(v,\rho,\delta,h)$, it remains to prove that the collection of subtrees $\Vbb(v,\rho,\delta,h)$ constructed in this way satisfies the conditions of the lemma. The first one is easily verified. Concerning the second one, let us set $r\in\ivff{\delta,\kappa\rho}$ and $\sigma'\in\Ti_\sigma(\delta)$ for a fixed $\Ti_\sigma\in\Vbb(v,\rho,\delta,h)$.
  Then, let us distinguish two cases. If $r\in\ivfo{\delta,\vartheta}$, then our construction of $\Vbb(v,\rho,\delta,h)$ ensures that $\#\brcb{ \Ti'\in\Vbb(v,\rho,\delta,h) : \Ti' \cap B(\sigma',2r)\neq\vset } = 1$.

  Hence, we may suppose that $r\in\ivfo{\vartheta,\kappa\rho}$. Let $\sigma_0$ designates the ancestor of $\Ti_\sigma$ at level $w$. Clearly, $\sigma_0\in\Ti(w)$ and owing to the definition of $\Vbb(w,\rho,\vartheta)$, we know that $\ell^{w}(B(\sigma_0,2r)) \leq \overline{r}(r)$, where $\overline{r}(r) = \pthb{r / g(r)^{1+4\epsilon}}^{1/(\gamma-1)}$. Then, the construction of $\Vbb(v,\rho,\delta,h)$ and the tree geometry induce
  \[
    \#\brcb{ \Ti'\in\Vbb(v,\rho,\delta,h) : \Ti'\subset B(\sigma',2r) } \leq \#\brcb{ \widehat\Ti \in\Vbb(w,\rho,\vartheta) : \widehat\Ti \subset B(\sigma_0,2r) }.
  \]
  Moreover, since every subtree $\widehat\Ti\in\Vbb(w,\rho,\vartheta)$ satisfies $\bk{\ell^\vartheta}(\widehat\Ti) \geq \vartheta^{1/(\gamma-1)} g(\rho)^\epsilon$, it follows that
  \begin{align*}
    \#\brcb{ \widehat\Ti \in\Vbb(w,\rho,\vartheta) : \widehat\Ti \subset B(\sigma_0,2r) }\, \vartheta^{1/(\gamma-1)} g(\rho)^\epsilon
    &\leq \sum_{\widehat\Ti \subset B(\sigma_0,2r)} \bk{\ell^\vartheta}(\widehat\Ti) \\
    &\leq \ell^{w}(B(\sigma_0,2r))
    \leq \pthb{r / g(r)^{1+4\epsilon}}^{1/(\gamma-1)}.
  \end{align*}
  The last inequality clearly entails
  \begin{align*}
    \#\brcb{ \Ti'\in\Vbb(v,\rho,\delta,h) : \Ti'\subset B(\sigma',2r) } \leq
    \pthb{r\vartheta^{-1}}^{\tfrac{1}{\gamma-1}} g(r)^{-\beta},
  \end{align*}
  where $\beta \eqdef (1+5\epsilon)/(\gamma-1)$.
\end{proof}

Let us now slightly extend the previous lemma to a collection of levels inside the interval $\ivfo{\rho,2\rho}$. For that purpose, we define the event
\begin{align}  \label{eq:event_ppties_collections}
  \Bi( \rho,\delta,h ) =  \Li(\rho,\vartheta) \cap \bigcap_{ k\in\N:k\delta\in\ivfo{\rho,2\rho} } \brcB{ \Ti : \#\Vbb(k\delta,\rho,\delta,h) \geq \pthb{\rho\vartheta^{-1}}^{\tfrac{1}{\gamma-1}} g(\rho)^{2\epsilon} }.
\end{align}
where we remind that $\vartheta \eqdef \delta^{(\gamma-1)(\gamma h-1)}g(\delta)^{-\alpha\gamma(\gamma-1)}$ (for the sake of readability, we omit to recall the dependency in $h$). Note that we will always assume that $\alpha\in\R$ is such that $\delta < \vartheta < \rho$, which in particular implies $\alpha>0$ if $h=\tfrac{1}{\gamma-1}$ and  $\alpha<0$ if $h=\tfrac{1}{\gamma}$.

We present in the following lemma a bound on the measure of the event $\Bi( \rho,\delta,h )$.
\begin{lemma}  \label{lemma:usp_large_balls_spectrum2}
  Suppose $\rho>0$, $\delta\in\ivoo{0,2^{-1/\rho}}$ and $h\in\ivffb{\tfrac{1}{\gamma},\tfrac{1}{\gamma-1}}$. Then,
  \begin{align*}
    \Nb_{\kappa\rho}\pthb{ \Bi( \rho,\delta,h )^c \cap \bk{\ell^{\kappa\rho}} \geq \rho^{1/(\gamma-1)} g(\rho)^{-\alpha} }
    &\leq c_1\exp\pthb{ -c_0 g(\rho)^{-\alpha+\gamma\epsilon} + 8 h(\delta)^{-1} } \\
    &+ c_1\exp\pthb{- \delta^{1-\gamma h}g(\delta)^{\alpha\gamma+1+\epsilon} + g(\delta)^{-1} }.
  \end{align*}
  where $c_0$ and $c_1$ are independent of $\rho$ and $\delta$.
\end{lemma}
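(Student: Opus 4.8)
The plan is to decompose the complement $\Bi(\rho,\delta,h)^c$ into the two natural bad events appearing in its definition~\eqref{eq:event_ppties_collections} and bound each separately on the event $\{\bk{\ell^{\kappa\rho}}\geq\rho^{1/(\gamma-1)}g(\rho)^{-\alpha}\}$. First I would write
\[
  \Bi(\rho,\delta,h)^c \subset \Li(\rho,\vartheta)^c \cup \bigcup_{k\delta\in\ivfo{\rho,2\rho}} \brcB{ \#\Vbb(k\delta,\rho,\delta,h) < \pthb{\rho\vartheta^{-1}}^{\tfrac{1}{\gamma-1}} g(\rho)^{2\epsilon} },
\]
so that a union bound reduces the task to (a) estimating $\Nb_{\kappa\rho}\pthcb{\Li(\rho,\vartheta)^c}{\Gi_{\kappa\rho}}$ and (b) estimating, for each level $v=k\delta\in\ivfo{\rho,2\rho}$, the measure of $\brc{\#\Vbb(v,\rho,\delta,h)<\pthb{\rho\vartheta^{-1}}^{1/(\gamma-1)} g(\rho)^{2\epsilon}}\cap\Li(\rho,\vartheta)$. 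Since $\Vbb(v,\rho,\delta,h)$ is defined on the event $\Li(\rho,\vartheta)$, intersecting with $\Li(\rho,\vartheta)$ is exactly what permits the application of Lemma~\ref{lemma:usp_large_balls_spectrum1}.

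For term (a), I would invoke Lemma~\ref{lemma:usp_balanced_balls} with the scale parameter $\vartheta$ in place of $\delta$, noting that $\vartheta\in\ivoo{0,2^{-1/\rho}}$ since we always assume $\delta<\vartheta<\rho$ and $\delta<2^{-1/\rho}$; this gives on the event $\{\bk{\ell^{\kappa\rho}}\geq\rho^{1/(\gamma-1)}g(\rho)^{-\alpha}\}$ a bound of the form $c_1\exp\pthb{-c_0 g(\rho)^{-\alpha+\epsilon\gamma} + 8h(\vartheta)^{-1}}$, and since $\vartheta>\delta$ gives $h(\vartheta)^{-1}\leq h(\delta)^{-1}$ (as $h(\cdot)=(\log\log(\cdot)^{-1})^{-1}$ is increasing near $0$), this is controlled by $c_1\exp\pthb{-c_0 g(\rho)^{-\alpha+\gamma\epsilon}+8h(\delta)^{-1}}$ — the first term of the claimed bound. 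For term (b), I would apply Lemma~\ref{lemma:usp_large_balls_spectrum1} at each level $v=k\delta\in\ivfo{\rho,2\rho}$, obtaining for each such $v$ a bound $c_0\exp\brcB{-\pth{\rho\vartheta^{-1}}^{1/(\gamma-1)} g(\rho)^{2\epsilon}}$, then sum over the at most $\ceil{\rho/\delta}$ relevant values of $k$. This contributes $c_0\,\rho\delta^{-1}\exp\brcB{-\pth{\rho\vartheta^{-1}}^{1/(\gamma-1)} g(\rho)^{2\epsilon}}$, and using $\rho\delta^{-1}\leq\exp\pth{g(\delta)^{-1}}\leq \exp\pth{h(\delta)^{-1}}$ together with the substitution $\vartheta=\delta^{(\gamma-1)(\gamma h-1)}g(\delta)^{-\alpha\gamma(\gamma-1)}$ — so that $\pth{\rho\vartheta^{-1}}^{1/(\gamma-1)}g(\rho)^{2\epsilon}\geq \delta^{1-\gamma h}g(\delta)^{\alpha\gamma}\cdot(\text{lower-order in }\rho)$, whence $\pth{\rho\vartheta^{-1}}^{1/(\gamma-1)}g(\rho)^{2\epsilon}\gtrsim \delta^{1-\gamma h}g(\delta)^{\alpha\gamma+1+\epsilon}$ for $\rho$ small — yields the second term $c_1\exp\pthb{-\delta^{1-\gamma h}g(\delta)^{\alpha\gamma+1+\epsilon}+g(\delta)^{-1}}$.

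The main obstacle I anticipate is the careful bookkeeping of the exponents to make the summation over $k$ in step (b) collapse into the stated clean form: one must verify that $\pth{\rho\vartheta^{-1}}^{1/(\gamma-1)}g(\rho)^{2\epsilon}$, after the $\vartheta$-substitution, genuinely dominates $\delta^{1-\gamma h}g(\delta)^{\alpha\gamma+1+\epsilon}$ uniformly in $h\in\ivffb{\tfrac{1}{\gamma},\tfrac{1}{\gamma-1}}$ and in the admissible range of $\alpha$, absorbing the polynomial prefactor $\rho\delta^{-1}$ and the logarithmic discrepancies between $g$ at scales $\rho$, $\delta$, $\vartheta$ into the $+g(\delta)^{-1}$ slack. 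Because $\delta\leq 2^{-1/\rho}$, the ratio $\log(1/\delta)/\log(1/\rho)$ is at least $1/\rho\to\infty$, so quantities like $g(\rho)/g(\delta)$ are large, which is what allows the $\epsilon$-room in the exponents of $g(\delta)$ to swallow all lower-order terms; making this quantitative for all $h$ simultaneously (exactly as in Lemma~\ref{lemma:usp_large_balls_spectrum1}, whose constants are uniform in $h$) is the delicate point. Everything else is a direct combination of Lemmas~\ref{lemma:usp_balanced_balls} and~\ref{lemma:usp_large_balls_spectrum1} via Borel-type union bounds.
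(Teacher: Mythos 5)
Your proposal is correct and essentially identical to the paper's proof: the paper introduces the auxiliary event $\Bi_\star(\rho,\delta,h)=\bigcap_{k\delta\in\ivff{\rho,2\rho}}\brc{\#\Vbb(k\delta,\rho,\delta,h)\geq(\rho\vartheta^{-1})^{1/(\gamma-1)}g(\rho)^{2\epsilon}}$ and performs the same two-term decomposition $\Bi^c\cap\brc{\bk{\ell^{\kappa\rho}}\geq\rho^{1/(\gamma-1)}g(\rho)^{-\alpha}}\subset\bigl(\Bi_\star^c\cap\Li(\rho,\vartheta)\bigr)\cup\bigl(\Li(\rho,\vartheta)^c\cap\brc{\bk{\ell^{\kappa\rho}}\geq\ldots}\bigr)$, applying Lemma~\ref{lemma:usp_balanced_balls} with $\vartheta\geq\delta$ to the second term and Lemma~\ref{lemma:usp_large_balls_spectrum1} with a union bound over $k$ to the first. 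The exponent bookkeeping you flag as the delicate point (absorbing $\rho\delta^{-1}$ into $\exp(g(\delta)^{-1})$ and verifying $\rho^{1/(\gamma-1)}g(\rho)^{2\epsilon}\gtrsim g(\delta)^{1+\epsilon}$ after substituting $\vartheta^{-1/(\gamma-1)}=\delta^{1-\gamma h}g(\delta)^{\alpha\gamma}$) is exactly the computation the paper carries out in its final two displayed inequalities.
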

\begin{proof}
  Defining
  \begin{align*}
    \Bi_\star( \rho,\delta,h ) =  \bigcap_{ k\in\N:k\delta\in\ivff{\rho,2\rho} } \brcB{ \Ti : \#\Vbb(k\delta,\rho,\delta,h) \geq \pthb{\rho\vartheta^{-1}}^{\tfrac{1}{\gamma-1}} g(\rho)^{2\epsilon} },
  \end{align*}
  we then simply first observe that
  \begin{align*}
    \Bi( \rho,\delta,h )^c \cap \brcb{ \bk{\ell^{\kappa\rho}} \geq \rho^{1/(\gamma-1)} g(\rho)^{-\alpha} }
    \subset &\,\Bi_\star( \rho,\delta,h )^c \cap \Li(\rho,\vartheta) \\
    \cup & \,\Li(\rho,\vartheta)^c \cap \brcb{ \bk{\ell^{\kappa\rho}} \geq \rho^{1/(\gamma-1)} g(\rho)^{-\alpha} }.
  \end{align*}
  Lemma~\ref{lemma:usp_balanced_balls} provides a bound on the measure of the second term:
  \begin{align*}
    \Nb_{\kappa\rho}\pthb{ \Li(\rho,\vartheta)^c \cap  \bk{\ell^{\kappa\rho}} \geq \rho^{1/(\gamma-1)} g(\rho)^{-\alpha} }
    \leq c_1\exp\pthb{ -c_0 g(\rho)^{-\alpha+\gamma\epsilon} + 8 h(\delta)^{-1} },
  \end{align*}
  as $\vartheta \geq \delta$.
  Using Lemma~\ref{lemma:usp_large_balls_spectrum1}, the first one is upper bounded by
  \begin{align*}
    \Nb_{\kappa\rho}\pthb{ \Bi_\star( \rho,\delta,h )^c \cap \Li(\rho,\vartheta) }
    &\leq \sum_{ k\in\N:k\delta\in\ivff{\rho,2\rho} } \Nb_{\kappa\rho}\bktB{ \#\Vbb(k\delta,\rho,\delta,h) \leq  \pthb{\rho\vartheta^{-1}}^{\tfrac{1}{\gamma-1}} g(\rho)^{2\epsilon} \cap \Li(\rho,\vartheta)} \\
    &\leq c_2\,\rho\delta^{-1} \exp\pthb{- \delta^{1-\gamma h}g(\delta)^{\alpha\gamma}\rho^{1/(\gamma-1)} g(\rho)^{2\epsilon} } \\
    &\leq \exp\pthb{- \delta^{1-\gamma h}g(\delta)^{\alpha\gamma+1+\epsilon} + g(\delta)^{-1} }.
  \end{align*}
  The two previous bounds conclude the proof of the lemma.
\end{proof}

Based on the estimate obtained in the Lemma~\ref{lemma:usp_large_balls_spectrum2}, we are now able to describe and study more precisely the construction of Hausdorff measures on the sets $E_\ell(h,\Ti)\cap \Ti(a)$. We first focus on the case $h\in\ivofb{\tfrac{1}{\gamma}, \tfrac{1}{\gamma-1}}$ and set $\alpha>1+\gamma\epsilon$.
From now on, we will consider a sequence $(\rho_n)_{n\in\N}$ such that
\[
  \rho_n = 2^{-\rho_{n-1}^{-1}}\quad\text{and}\quad \rho_0=1.
\]
The latter clearly converges exponentially fast to zero and is such that $\rho_{n-1} = \pthb{\log_2 1/\rho_n}^{-1}$.

As we wish to obtain a uniform result on the lower bound of the multifractal spectrum, we need to construct simultaneously a collection of Hausdorff measures indexed by $h\in\ivofb{\tfrac{1}{\gamma}, \tfrac{1}{\gamma-1}}$. For that purpose, we will consider a sub-interval $\Hi\eqdef\ivff{h_1,h_2} \subset \ivofb{\tfrac{1}{\gamma}, \tfrac{1}{\gamma-1}}$. Then, similarly to the dyadic decomposition of real numbers, the interval $\Hi$ can be represented by a binary tree $T_\Hi$ whose rays (i.e. infinite branches) correspond to real numbers $h\in\Hi$. As a consequence, for any $h\in \Hi$, we may write
\[
  h = \eps_0\eps_1\cdots \eps_n\cdots\quad \text{where }\eps_k\in\brc{0,1},
\]
which is equivalent to the representation of nodes in a tree using the classic lexicographical order. For any $n\in\N$, $\Hi_n$ will denote the set of dyadics of order $n$, i.e.
\[
  \Hi_n= \brcb{ \eps_0\eps_1\cdots \eps_n : \eps_k\in\brc{0,1}}.
\]
Furthermore, $p_n:\Hi\mapsto \Hi_n$ designates the classic projection defined by $p_n(h) = \eps_0\eps_1\cdots \eps_n$.

Let us begin with the main lemma necessary to our construction by induction.
\begin{lemma}  \label{lemma:usp_construction_spectrum1}
  Suppose $b>0$, $h\in\Hi$ and $n\in\N$. We denote by $\Ni(n,h)$ the following random variable
  \begin{align*}
    \Ni(n,h) = \#\brcB{\Ti_\sigma\in\Tbb(j\rho_n,0) \text{ where } j\rho_{n}\in\ivoo{0,b} \text{ and } &\Ti_\sigma\in \Bi( \rho_n,\rho_{n+1},h )^c \\
    \text{and } &\bk{\ell^{\kappa\rho_n}}(\Ti_\sigma) \geq \rho_n^{1/(\gamma-1)} g(\rho_n)^{-\alpha} }.
  \end{align*}
  Then,
  \begin{align*}
    \Nb\pthb{ \Ni(n,h) \geq 1 } \leq b\,c_0 \exp\pthb{ -c_1 g(\rho_n)^{-\alpha+\epsilon} },
  \end{align*}
  where the constants $c_0$ and $c_1$ are independent of $n$, $h$ and $b$.
\end{lemma}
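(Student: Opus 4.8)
The plan is to bound $\Nb(\Ni(n,h)\geq 1)$ by a union bound over the levels $j\rho_n\in(0,b)$, to reduce each term to the single-tree estimate of Lemma~\ref{lemma:usp_large_balls_spectrum2} via the branching property, and then to carry out the bookkeeping forced by the iterated-logarithmic scales $(\rho_n)$.

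\textbf{Reduction via the branching property.} Write $D_n\eqdef \Bi(\rho_n,\rho_{n+1},h)^c\cap\{\bk{\ell^{\kappa\rho_n}}\geq\rho_n^{1/(\gamma-1)}g(\rho_n)^{-\alpha}\}$, an $\Nb$-measurable event on trees, so that $\{\Ni(n,h)\geq 1\}=\bigcup_{j:\,j\rho_n\in(0,b)}\{\exists\,\Ti_\sigma\in\Tbb(j\rho_n,0):\Ti_\sigma\in D_n\}$. For a fixed level $a=j\rho_n$, the branching property says that under $\Nb_a$ and given $\Gi_a$ the point measure $\Ni_a$ is Poisson with intensity $\ell^a(\dt\sigma')\Nb(\dt\Ti')$; hence the number of $\Ti_\sigma$ with $\Ti_\sigma\in D_n$ is Poisson with parameter $\bk{\ell^a}\Nb(D_n)$, and $\Nb_a(\exists\,\Ti_\sigma\in\Tbb(a,0):\Ti_\sigma\in D_n\mid\Gi_a)\leq\bk{\ell^a}\Nb(D_n)$. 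Taking $\Nb_a$-expectations with $\Nb_a(\bk{\ell^a})=v(a)^{-1}$ and multiplying back by $v(a)$ gives $\Nb(\exists\,\Ti_\sigma\in\Tbb(a,0):\Ti_\sigma\in D_n)\leq\Nb(D_n)$. Since $D_n\subset\{\bk{\ell^{\kappa\rho_n}}>0\}$ we have $\Nb(D_n)=v(\kappa\rho_n)\Nb_{\kappa\rho_n}(D_n)$, and summing over the at most $b/\rho_n$ admissible levels (the bound being trivial if $\rho_n\geq b$) yields
\[
  \Nb(\Ni(n,h)\geq 1)\;\leq\;\frac{b}{\rho_n}\,v(\kappa\rho_n)\,\Nb_{\kappa\rho_n}(D_n)\;\leq\;c\,b\,\rho_n^{-\gamma/(\gamma-1)}\,\Nb_{\kappa\rho_n}(D_n),
\]
using $v(\kappa\rho_n)\asymp\rho_n^{-1/(\gamma-1)}$.

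\textbf{Scale bookkeeping.} It then remains to insert Lemma~\ref{lemma:usp_large_balls_spectrum2} with $\rho=\rho_n$, $\delta=\rho_{n+1}$ and to check that its two exponential terms, after multiplication by $\rho_n^{-\gamma/(\gamma-1)}$, still give the claimed bound. The recursion $\rho_{n+1}=2^{-1/\rho_n}$, $\rho_{n-1}=(\log_2 1/\rho_n)^{-1}$ yields $g(\rho_n)\asymp\rho_{n-1}$, $g(\rho_{n+1})\asymp\rho_n$, $h(\rho_{n+1})=g(g(\rho_{n+1}))\asymp g(\rho_n)$ and, crucially, $\rho_n^{-\gamma/(\gamma-1)}=\exp\!\big(\tfrac{\gamma}{\gamma-1}g(\rho_n)^{-1}\big)$, which is already \emph{exponentially} large in $g(\rho_n)^{-1}$. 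For the first term of Lemma~\ref{lemma:usp_large_balls_spectrum2} the resulting exponent is $\tfrac{\gamma}{\gamma-1}g(\rho_n)^{-1}+8h(\rho_{n+1})^{-1}-c_0 g(\rho_n)^{-\alpha+\gamma\epsilon}+O(1)$; since the standing assumption $\alpha>1+\gamma\epsilon$ gives $-\alpha+\gamma\epsilon<-1$, the negative term dominates all of the positive ones as $g(\rho_n)\to 0$, so for $n$ large this term is $\leq\exp(-c g(\rho_n)^{-\alpha+\gamma\epsilon})$, and a routine comparison of the iterated-log scales then produces the stated bound $\exp(-c_1 g(\rho_n)^{-\alpha+\epsilon})$. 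For the second term, $\rho_{n+1}^{1-\gamma h}g(\rho_{n+1})^{\alpha\gamma+1+\epsilon}\geq\rho_{n+1}^{-1/(\gamma-1)}(\log 2)^{\alpha\gamma+1+\epsilon}\rho_n^{\alpha\gamma+1+\epsilon}$ grows doubly exponentially in $g(\rho_n)^{-1}$, overwhelming both $g(\rho_{n+1})^{-1}\asymp\rho_n^{-1}$ and the prefactor, so its contribution is negligible. The finitely many small values of $n$ are absorbed into $c_0$, since there $\Nb_{\kappa\rho_n}(D_n)\leq 1$ while $\rho_n$ and $g(\rho_n)^{-\alpha+\epsilon}$ stay bounded away from $0$ and $\infty$.

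\textbf{Main obstacle.} The delicate part is precisely this last bookkeeping: the geometric factor $b\,\rho_n^{-\gamma/(\gamma-1)}$ — coming from $v(\kappa\rho_n)$ and from the number of admissible levels — is itself exponentially large in $g(\rho_n)^{-1}$, so one genuinely needs the hypothesis $\alpha>1+\gamma\epsilon$ to guarantee that the stretched-exponential decay $\exp(-c_0 g(\rho_n)^{-\alpha+\gamma\epsilon})$ supplied by Lemma~\ref{lemma:usp_large_balls_spectrum2} beats it; the doubly-exponential second term is comparatively harmless. Everything else is a routine use of the branching property together with the Poisson tail estimates already set up in this section.
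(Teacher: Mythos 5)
Your approach is the paper's approach: union bound over the levels $j\rho_n\in(0,b)$, reduction via the branching property to $\Nb(\Ni(j,n,h)\geq 1)\leq\Nb(D_n)=v(\kappa\rho_n)\Nb_{\kappa\rho_n}(D_n)$, insertion of Lemma~\ref{lemma:usp_large_balls_spectrum2}, and absorption of the prefactor $b\rho_n^{-\gamma/(\gamma-1)}$ (together with the $h(\rho_{n+1})^{-1}$ term) into the dominant stretched-exponential decay using $\alpha>1+\gamma\epsilon$. You also correctly identify the iterated-log bookkeeping as the delicate point.

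One slip worth flagging: in dismissing the second exponential term of Lemma~\ref{lemma:usp_large_balls_spectrum2} you write $\rho_{n+1}^{1-\gamma h}\geq\rho_{n+1}^{-1/(\gamma-1)}$, but the inequality runs the other way, since $1-\gamma h\geq -1/(\gamma-1)$ and $\rho_{n+1}<1$. The correct lower bound uses $h\geq h_1>1/\gamma$, which holds because $\Hi=[h_1,h_2]$ is a closed subinterval of $\ivofb{1/\gamma,1/(\gamma-1)}$: this gives $\rho_{n+1}^{1-\gamma h}\geq\rho_{n+1}^{-(\gamma h_1-1)}=2^{(\gamma h_1-1)/\rho_n}$, which is still doubly exponential in $\rho_n^{-1}$ and therefore still overwhelms the $g(\rho_{n+1})^{-1}\asymp\rho_n^{-1}$ term and the prefactor. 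So your conclusion is right, but the written step fails at the endpoint $h=1/\gamma$ and needs the hypothesis $h_1>1/\gamma$ to be invoked explicitly; the resulting constant then depends (harmlessly) on $\Hi$, consistently with the lemma's claim that $c_0,c_1$ are independent only of $n$, $h$ and $b$.
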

\begin{proof}
  The random variable $\Ni(n,h)$ can be rewritten as a sum
  $\Ni(n,h) = \sum_{j\in\N:0<j\rho_{n}<b}\Ni(j,n,h)$, where $\Ni(j,n,h)$ designates the number of such configurations rooted at level $j\rho_{n}$. Then, we easily observe that
  \[
    \brcb{ \Ni(n,h) \geq 1 } \subset \bigcup_{j\in\N:0<j\rho_{n}<b} \brcb{ \Ni(j,n,h) \geq 1 }.
  \]
  Let us fixed $j\in\N$ such that $0<j\rho_{n}<b$. We know that $\Nb\pthb{ \Ni(j,n,h) \geq 1 } = v(j\rho_{n}) \Nb_{j\rho_{n}}\pthb{ \Ni(j,n,h) \geq 1 }$. Furthermore, given the $\sigma$-field $\Gi_{j\rho_{n}}$, $\Ni(j,n,h)$ is Poisson random variable parametrised by $\lambda_{j,n} = \bk{\ell^{j\rho_{n}}} \Nb\pthb{ \Bi( \rho_n,\rho_{n+1},h )^c \cap \bk{\ell^{\kappa\rho_n}} \geq \rho_n^{1/(\gamma-1)} g(\rho_n)^{-\alpha} }$. Hence, Lemma~\ref{lemma:usp_large_balls_spectrum2} entails
  \begin{align*}
    \Nb_{j\rho_{n}}\pthcb{ \Ni(j,n,h) \geq 1 }{\Gi_{j\rho_{n}}}
    &\leq \bk{\ell^{j\rho_{n}}} \Nb\pthb{ \Bi( \rho_n,\rho_{n+1},h )^c \cap \bk{\ell^{\kappa\rho_n}} \geq \rho_n^{1/(\gamma-1)} g(\rho_n)^{-\alpha} } \\
    &\leq c_1\, \bk{\ell^{j\rho_{n}}} v\pthb{\rho_n }\exp\pthb{ -c_0 g(\rho_n)^{-\alpha+\epsilon} + 8 h(\rho_{n+1})^{-1} }.
  \end{align*}
  Note that $h(\rho_{n+1})^{-1} \leq c\,g(\rho_n)^{-1}$, $v(\rho_n)=v(1)\exp\pthb{+g(\rho_n)^{-1}/(\gamma-1)}$ and $\Nb_{j\rho_{n}}\pthb{ \bk{\ell^{j\rho_{n}}} } = v(j\rho_{n})^{-1}$. Therefore, as $\alpha > 1+\epsilon$,
  \begin{align*}
    \Nb\pthb{ \Ni(j,n,h) \geq 1 } \leq c_2\,\exp\pthb{ -c_3 g(\rho_n)^{-\alpha+\epsilon} },
  \end{align*}
  where the constants $c_2$ and $c_3$ are independent of $j$, $n$ and $h$. Summing over $j\in\N$, we obtain
  \begin{align*}
    \Nb\pthb{ \Ni(n,h) \geq 1 } \leq 2c_2b \,\rho_{n}^{-1} \exp\pthb{ -c_3 g(\rho_n)^{-\alpha+\epsilon} },
  \end{align*}
  which concludes the proof of the lemma.
\end{proof}

We may now present our key lemma for the existence of the Hausdorff measures.
\begin{lemma}  \label{lemma:usp_construction_spectrum2}
  Suppose $b>0$. $\Nb(\dt \Ti)$-a.e. there exists $n_0(\Ti)$ such that for all $n\geq n_0(\Ti)$ and for any subtree $\Ti_\sigma$ rooted at level $j\rho_{n}\in\ivoo{0,b}$ satisfying $\bk{\ell^{\kappa\rho_n}}(\Ti_\sigma) \geq \rho_n^{1/(\gamma-1)} g(\rho_n)^{-\alpha}$, we have
  \begin{align*}
    \forall h_{n+1}\in \Hi_{n+1};\quad \Ti_\sigma\in \Bi( \rho_n,\rho_{n+1},h_{n+1} ).
  \end{align*}
\end{lemma}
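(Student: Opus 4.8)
The plan is to obtain the statement from Lemma~\ref{lemma:usp_construction_spectrum1} by a single Borel--Cantelli argument, made uniform over the dyadic indices $h_{n+1}\in\Hi_{n+1}$. Fix $b>0$ as in Lemma~\ref{lemma:usp_construction_spectrum1}. For every $n\in\N$ I would introduce the ``bad'' event
\[
  A_n \eqdef \bigcup_{h_{n+1}\in\Hi_{n+1}} \brcb{ \Ni(n,h_{n+1})\geq 1 },
\]
each $h_{n+1}\in\Hi_{n+1}$ being identified with the corresponding real number of $\Hi$, so that Lemma~\ref{lemma:usp_construction_spectrum1} applies with $h=h_{n+1}$ and constants $c_0,c_1$ that do not depend on the chosen $h_{n+1}$. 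A crude union bound then yields
\[
  \Nb\pthb{ A_n } \leq \sum_{h_{n+1}\in\Hi_{n+1}} \Nb\pthb{ \Ni(n,h_{n+1})\geq 1 } \leq \#\Hi_{n+1}\cdot b\,c_0\exp\pthb{ -c_1\,g(\rho_n)^{-\alpha+\epsilon} }.
\]

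Next I would check that $\sum_{n}\Nb(A_n)<\infty$. Since $\Hi\subset\ivofb{\tfrac1\gamma,\tfrac1{\gamma-1}}$ is an interval of length smaller than $1$, one has $\#\Hi_{n+1}\leq 2^{n+2}$. On the other hand the defining recursion $\rho_n=2^{-1/\rho_{n-1}}$ gives $g(\rho_n)=(\log 1/\rho_n)^{-1}=(\log 2)\,\rho_{n-1}$, together with $\rho_{n-1}^{-1}\geq 2^{\,n-1}$ for all $n\geq 1$ (an immediate induction, the growth being in fact tower-type for large $n$). Because $\alpha>1+\gamma\epsilon$ and $\gamma>1$ force $\alpha-\epsilon>1$, the quantity $g(\rho_n)^{-\alpha+\epsilon}$ is comparable to $\rho_{n-1}^{-(\alpha-\epsilon)}\geq 2^{(n-1)(\alpha-\epsilon)}$, which grows far faster than the prefactor $2^{n+2}$; hence the series converges. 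The Borel--Cantelli lemma, valid here since every $\Nb(A_n)$ is finite and the series is summable, then gives $\Nb(\limsup_n A_n)=0$, i.e.\ $\Nb(\dt\Ti)$-a.e.\ there exists $n_0(\Ti)$ with $\Ti\notin A_n$ for all $n\geq n_0(\Ti)$.

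To conclude, for such $n$ we have $\Ni(n,h_{n+1})=0$ for every $h_{n+1}\in\Hi_{n+1}$. Unwinding the definition of $\Ni(n,\cdot)$ from Lemma~\ref{lemma:usp_construction_spectrum1}, $\Ni(n,h_{n+1})=0$ means precisely that no subtree $\Ti_\sigma$ rooted at a level $j\rho_n\in\ivoo{0,b}$ with $\bk{\ell^{\kappa\rho_n}}(\Ti_\sigma)\geq\rho_n^{1/(\gamma-1)}g(\rho_n)^{-\alpha}$ belongs to $\Bi(\rho_n,\rho_{n+1},h_{n+1})^c$; equivalently, every such $\Ti_\sigma$ lies in $\Bi(\rho_n,\rho_{n+1},h_{n+1})$. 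As $h_{n+1}$ ranges over all of $\Hi_{n+1}$ this is exactly the assertion.

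I do not anticipate a genuine obstacle: the hard analytic content is already packaged in Lemma~\ref{lemma:usp_construction_spectrum1}. The one delicate point is that, after summing the tail estimate over the $\sim 2^{n}$ dyadic indices of $\Hi_{n+1}$, one still needs convergence of the resulting series — and this is exactly where the super-exponential (tower) decay of the sequence $(\rho_n)$, which is why it was defined by $\rho_n=2^{-1/\rho_{n-1}}$ rather than dyadically, is indispensable.
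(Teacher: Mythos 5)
Your argument is correct and is essentially the same as the paper's: a union bound over the $\sim 2^{n}$ dyadic indices in $\Hi_{n+1}$, the tail estimate of Lemma~\ref{lemma:usp_construction_spectrum1}, summability enforced by the super-exponential decay of $(\rho_n)$, and Borel--Cantelli. The only difference is that you spell out explicitly why the series converges (noting $g(\rho_n)\asymp\rho_{n-1}$ and the tower growth of $\rho_n^{-1}$), whereas the paper states the summability without elaboration.
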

\begin{proof}
  Based on the estimate obtained in Lemma~\ref{lemma:usp_construction_spectrum1},
  \begin{align*}
    \Nb\pthbb{ \bigcup_{h_{n+1}\in \Hi_{n+1}} \brcb{ \Ni(n,h_{n+1}) \geq 1 } } \leq b\,c_0 2^{n+1}\exp\pthb{ -c_1 g(\rho_n)^{-\alpha+\epsilon} }.
  \end{align*}
  Hence,
  \begin{align*}
    \sum_{n\in\N} \,\Nb\pthbb{ \bigcup_{h_{n+1}\in \Hi_{n+1}} \brcb{ \Ni(n,h_{n+1}) \geq 1 } } < \infty,
  \end{align*}
  and Borel--Cantelli lemma entails the result.
\end{proof}
Finally, let us also prove that we are able to initialise properly our construction by induction.
\begin{lemma}  \label{lemma:usp_construction_spectrum3}
  Suppose $b>0$. Then, $\Nb(\dt \Ti)$-a.e. there exists $n_0(\Ti)$ such that for all $n\geq n_0(\Ti)$ and for every $j\in\N$ such that $j\rho_{n}\in\ivoo{0,b}$, we have
  \begin{align*}
    \bk{\ell^{j\rho_{n}}} \leq \rho_n^\epsilon\quad\text{or}\quad \exists \Ti_\sigma\in\Tbb(j\rho_n,0) \text{ s.t. }\bk{\ell^{\kappa\rho_n}}(\Ti_\sigma) \geq \rho_n^{1/(\gamma-1)} g(\rho_n)^{-\alpha},
  \end{align*}
  under the assumption $\tfrac{1}{\gamma-1} > 1+\epsilon$ on $\epsilon$.
\end{lemma}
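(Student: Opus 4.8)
The plan is to fix a level $j\rho_n\in\ivoo{0,b}$ with $\bk{\ell^{j\rho_n}}>\rho_n^\epsilon$ and estimate the probability that \emph{none} of the subtrees rooted at $j\rho_n$ carries local time at level $\kappa\rho_n$ above $\rho_n^{1/(\gamma-1)}g(\rho_n)^{-\alpha}$; then sum over $j$ and over $n$ and invoke Borel--Cantelli. First I would work under $\Nb_{j\rho_n}$ and condition on $\Gi_{j\rho_n}$. By the branching property, the point measure $\Ni_{j\rho_n}$ of subtrees rooted at level $j\rho_n$ is, given $\Gi_{j\rho_n}$, a Poisson point process on $\Ti(j\rho_n)\times\Tbb$ with intensity $\ell^{j\rho_n}(\dt\sigma')\,\Nb(\dt\Ti')$. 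Hence the number $Y$ of these subtrees $\Ti_\sigma$ with $\bk{\ell^{\kappa\rho_n}}(\Ti_\sigma)\geq \rho_n^{1/(\gamma-1)}g(\rho_n)^{-\alpha}$ is, given $\Gi_{j\rho_n}$, a Poisson random variable with parameter
\[
  \lambda_{j,n} = \bk{\ell^{j\rho_n}}\,\Nb\pthb{\bk{\ell^{\kappa\rho_n}}\geq \rho_n^{1/(\gamma-1)}g(\rho_n)^{-\alpha}}.
\]

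The second step is to lower-bound $\lambda_{j,n}$. Using $\Nb(\bk{\ell^{\kappa\rho_n}}>0)=v(\kappa\rho_n)$ together with the explicit conditional Laplace transform \eqref{eq:laplace_tr_local_time} and the self-similarity of stable trees (Lemma~\ref{lemma:local_time_tail} gives the relevant right-tail equivalent $\Nb_1(\bk{\ell^1}\geq x)\sim -x^{-\gamma}/(v(1)\Gamma(1-\gamma))$), one gets
\[
  \Nb\pthb{\bk{\ell^{\kappa\rho_n}}\geq \rho_n^{1/(\gamma-1)}g(\rho_n)^{-\alpha}} = v(\kappa\rho_n)\,\Nb_{\kappa\rho_n}\pthb{\bk{\ell^{\kappa\rho_n}}\geq \rho_n^{1/(\gamma-1)}g(\rho_n)^{-\alpha}} \asymp \rho_n^{-1/(\gamma-1)}\,g(\rho_n)^{\alpha\gamma},
\]
where the self-similarity rescales $\bk{\ell^{\kappa\rho_n}}$ under $\Nb_{\kappa\rho_n}$ to $(\kappa\rho_n)^{1/(\gamma-1)}\bk{\ell^1}$ under $\Nb_1$, and the threshold $\rho_n^{1/(\gamma-1)}g(\rho_n)^{-\alpha}$ is of the same polynomial order in $\rho_n$ up to the slowly varying factor $g(\rho_n)^{-\alpha}$. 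Therefore, on the event $\{\bk{\ell^{j\rho_n}}>\rho_n^\epsilon\}$ one has $\lambda_{j,n}\geq c\,\rho_n^{\epsilon-1/(\gamma-1)}g(\rho_n)^{\alpha\gamma}$, which tends to infinity as $n\to\infty$. A Poisson tail bound (the left inequality in \eqref{eq:chernoff_poisson} with $y=0$, i.e. $\pr{Y=0}=\e^{-\lambda_{j,n}}$) then gives
\[
  \indi_{\{\bk{\ell^{j\rho_n}}>\rho_n^\epsilon\}}\,\Nb_{j\rho_n}\pthcb{Y=0}{\Gi_{j\rho_n}} \leq \exp\pthb{-c\,\rho_n^{\epsilon-1/(\gamma-1)}g(\rho_n)^{\alpha\gamma}}.
\]

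The third step is the summation. Multiplying by $v(j\rho_n)$ to pass from $\Nb_{j\rho_n}$ back to $\Nb$, using $v(j\rho_n)=(\gamma-1)^{-1/(\gamma-1)}(j\rho_n)^{-1/(\gamma-1)}$ and $\tfrac{1}{\gamma-1}>1$ so that $\sum_{j\geq1} j^{-1/(\gamma-1)}$ diverges only logarithmically slower — here one argues more carefully: the bad event at level $j\rho_n$ requires $\bk{\ell^{j\rho_n}}>\rho_n^\epsilon>0$, hence $h(\Ti)>j\rho_n$, so in fact one only sums over $j\rho_n\in\ivoo{0,b}$, giving at most $b\rho_n^{-1}$ terms each of measure $\leq v(\rho_n)\exp(-c\,\rho_n^{\epsilon-1/(\gamma-1)}g(\rho_n)^{\alpha\gamma})$; since $v(\rho_n)=v(1)\exp(g(\rho_n)^{-1}/(\gamma-1))$ grows only stretched-exponentially in $g(\rho_n)^{-1}$ while the main term decays super-polynomially in $\rho_n^{-1}$ (as $\rho_n^{\epsilon-1/(\gamma-1)}$ blows up far faster than any power of $g(\rho_n)$ since $\tfrac{1}{\gamma-1}-\epsilon>1$ by hypothesis), the resulting series $\sum_n b\rho_n^{-1-1/(\gamma-1)}\exp(-c\rho_n^{\epsilon-1/(\gamma-1)}g(\rho_n)^{\alpha\gamma})$ converges. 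Borel--Cantelli then yields $\Nb$-a.e.\ an $n_0(\Ti)$ such that for $n\geq n_0(\Ti)$ and every $j$ with $j\rho_n\in\ivoo{0,b}$, either $\bk{\ell^{j\rho_n}}\leq\rho_n^\epsilon$ or some subtree rooted at $j\rho_n$ has $\bk{\ell^{\kappa\rho_n}}\geq\rho_n^{1/(\gamma-1)}g(\rho_n)^{-\alpha}$, which is the claim.

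\textbf{Main obstacle.} The delicate point is not the Poisson or Chernoff step but making the book-keeping of slowly varying factors rigorous: one must check that the quantity $\rho_n^{\epsilon-1/(\gamma-1)}g(\rho_n)^{\alpha\gamma}$ in the exponent genuinely dominates both the prefactor $v(\rho_n)\asymp\exp(c\,g(\rho_n)^{-1})$ and the $b\rho_n^{-1-1/(\gamma-1)}$ counting factor, uniformly in $j$ and with the hypothesis $\tfrac{1}{\gamma-1}>1+\epsilon$ doing exactly the work of guaranteeing $\epsilon-\tfrac{1}{\gamma-1}<-1$. The only other subtlety is the self-similarity scaling in the asymptotic $\Nb(\bk{\ell^{\kappa\rho_n}}\geq \rho_n^{1/(\gamma-1)}g(\rho_n)^{-\alpha})\asymp\rho_n^{-1/(\gamma-1)}g(\rho_n)^{\alpha\gamma}$: one needs the right tail of $\bk{\ell^1}$ from Lemma~\ref{lemma:local_time_tail} to apply at the (growing, since $\alpha>0$ when $h=\tfrac1{\gamma-1}$) argument $\kappa^{-1/(\gamma-1)}g(\rho_n)^{-\alpha}$, which is fine because that argument tends to $+\infty$.
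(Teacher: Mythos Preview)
Your proposal is correct and follows essentially the same approach as the paper: the branching property gives a Poisson count with parameter $\lambda_{j,n}=\bk{\ell^{j\rho_n}}\,\Nb(\bk{\ell^{\kappa\rho_n}}\geq\rho_n^{1/(\gamma-1)}g(\rho_n)^{-\alpha})$, Lemma~\ref{lemma:local_time_tail} yields $\Nb(\bk{\ell^{\kappa\rho_n}}\geq\rho_n^{1/(\gamma-1)}g(\rho_n)^{-\alpha})\asymp\rho_n^{-1/(\gamma-1)}g(\rho_n)^{\alpha\gamma}$, and then one sums and applies Borel--Cantelli. One small slip: since $\tfrac{1}{\gamma-1}>1$, the series $\sum_{j\geq1} j^{-1/(\gamma-1)}$ actually \emph{converges}, not diverges; the paper exploits this to bound $\sum_{0<j\rho_n<b} v(j\rho_n)\leq c_1\rho_n^{-1/(\gamma-1)}$ directly, which is slightly sharper than your crude count $b\rho_n^{-1}\cdot v(\rho_n)$, but either bound is amply dominated by the exponential and yields convergence.
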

\begin{proof}
  Let us denote by $N(j,n)$
  \[
    N(j,n) = \#\brcb{ \Ti_\sigma\in\Tbb(j\rho_n,0)  : \bk{\ell^{\kappa\rho_n}}(\Ti_\sigma) \geq \rho_n^{1/(\gamma-1)} g(\rho_n)^{-\alpha} }.
  \]
  Given $\Gi_{j\rho_{n}}$, the former is a Poisson random variable parametrised by $\lambda_{j,n} = \bk{\ell^{j\rho_{n}}} \Nb\pthb{ \bk{\ell^{\kappa\rho_n}}(\Ti_\sigma) \geq \rho_n^{1/(\gamma-1)} g(\rho_n)^{-\alpha} }$. Therefore, Lemma~\ref{lemma:local_time_tail} entails
  \begin{align*}
    \Nb\pthb{ N(j,n) = 0 \cap \bk{\ell^{j\rho_{n}}} \geq \rho_n^\epsilon }
    &\leq v(j\rho_n) \exp\pthb{-c_0\rho_n^{-1/(\gamma-1)+\epsilon} g(\rho_n)^{\alpha\gamma} },
  \end{align*}
  inducing that
  \begin{align*}
    \Nb\pthbb{ \bigcup_{j:0<j\rho_n<b} \brcb{ N(j,n) = 0 } \cap \brcb{ \bk{\ell^{j\delta_{n}}} \geq \rho_n^\epsilon } }
    &\leq \sum_{0<j\rho_n<b} v(j\rho_n) \exp\pthb{-c_0\rho_n^{-1/(\gamma-1)+\epsilon} g(\rho_n)^{\alpha\gamma} } \\
    &\leq c_1\,\rho_n^{-1/(\gamma-1)}\exp\pthb{-c_0\rho_n^{-1/(\gamma-1)+\epsilon} g(\rho_n)^{\alpha\gamma} } \\
    &<\infty,
  \end{align*}
  Borel--Cantelli lemma then concludes the proof.
\end{proof}

Provided the previous two lemmas, we may now define by induction a family of collections of nested subtrees $(\Vbb_n(h))_{n\geq n_0, h\in\Hi_n}$. Indeed, up to a modification of $n_0(\Ti)$, the combination of Lemmas~\ref{lemma:usp_construction_spectrum2} and \ref{lemma:usp_construction_spectrum3} ensure the existence of nonempty collections $\Vbb_{n_0}(j,h)\subset\Tbb((j-1)\rho_{n_0},\rho_{n_0})$ where $j\rho_{n_0}\in\ivoo{\epsilon,h(\Ti)-\epsilon}$, $h\in\Hi_{n_0}$ and
\begin{align*}
  \forall \Ti_\sigma\in\Vbb_{n_0}(j,h);\quad \inf_{u\in\ivff{\kappa\rho_{n_0},\rho_{n_0}/\kappa}}\bk{\ell^u}(\Ti_\sigma) \geq \rho_{n_0}^h \,g(\rho_{n_0})^{-\alpha} \geq \rho_{n_0}^{1/(\gamma-1)} g(\rho_{n_0})^{-\alpha}.
\end{align*}
$\Vbb_{n_0}(h)$ is then simply defined as $\Vbb_{n_0}(h) = \cup_{j\rho_{n_0}\in\ivoo{\epsilon,h(\Ti)-\epsilon}} \Vbb_{n_0}(j,h)$.

Lemma~\ref{lemma:usp_construction_spectrum2} then allows to proceed by induction for $n>n_0$. Suppose $\Vbb_{n-1}(j,h')$, $h'\in\Hi_{n-1}$ has been properly constructed. For every $\Ti_\sigma\in\Vbb_{n-1}(j,h')$, based on the notation introduced in Lemma~\ref{lemma:usp_construction_spectrum1}, let us define $\Vbb_{n}(k,h,\Ti_\sigma)$ as
\begin{align}
  \Vbb_{n}(k,h,\Ti_\sigma) \eqdef \Vbb(k\rho_{n},\rho_{n-1},\rho_{n},h,\Ti_\sigma) \subset\Tbb((k-1)\rho_{n},\rho_{n})
\end{align}
where $h=h'\eps$, $\eps\in\brc{0,1}$ and $k\rho_n\in\ivfo{j\rho_{n-1},(j+1)\rho_{n-1}}$. The previous definition is entirely licit as, according to Lemma, $\Ti_\sigma\in \Bi( \rho_{n-1},\rho_{n},h )$. Then, we may define in addition
\begin{align}
  \Vbb_{n}(k,h) = \bigcup_{\Ti_\sigma\in\Vbb_{n-1}(j,h')}  \Vbb_{n}(k,h,\Ti_\sigma) \quad\text{and}\quad\Vbb_{n}(h)=\bigcup_{k\rho_n\in\ivoo{\epsilon,h(\Ti)-\epsilon}} \Vbb_{n}(k,h).
\end{align}
Note that the previous lemmas ensure us that
\begin{align}
  \forall \Ti_\sigma\in\Vbb_{n}(h);\quad \inf_{u\in\ivff{\kappa\rho_{n},\rho_{n}/\kappa}}\bk{\ell^u}(\Ti_\sigma) \geq \rho_{n}^h \,g(\rho_{n})^{-\alpha} \geq \rho_{n}^{1/(\gamma-1)} g(\rho_{n})^{-\alpha},
\end{align}
hence proving the correctness of our construction by induction. The collections $(\Vbb_n(h))_{n\geq n_0, h\in\Hi_n}$ are the cornerstone to prove the lower bound of the multifractal spectrum.\vsp

Using the collections $(\Vbb_n(h))_{n\geq n_0, h\in\Hi_n}$, we may for now construct a collection of Hausdorff measures sufficient to complete the proof of Theorem~\ref{th:levy_tree_upper_spectrum1}.
\begin{lemma}  \label{lemma:usp_construction_spectrum4}
  Suppose $\Hi\subset\ivofb{\tfrac{1}{\gamma},\tfrac{1}{\gamma-1}}$. Then, $\Nb(\dt \Ti)$-a.e. for every $a\in\ivoo{\epsilon,h(\Ti)-\epsilon}$ and for any $h\in\Hi$, there exists $G(a,h)\subset \Ti(a)$ such that
  \[
    \forall \sigma\in G(a,h);\quad \liminf_{r\rightarrow 0} \frac{ \log \ell^a\pthb{ B(\sigma,r) } }{ \log r } \leq h.
  \]
  Furthermore, there exist a probability measure $\mu_{a,h}$ supported by $G(a,h)$ such that
  \begin{align}  \label{eq:mass_pple_spectrum1}
    \forall \sigma\in G(a,h),\ \forall r>0;\quad \mu_{a,h}\pthb{ B(\sigma,r) } \leq r^{\gamma h-1-\eps(r)} g(r)^{-\beta}.
  \end{align}
  where $\beta>0$ is independent of $a$ and $h$, and $\eps(\cdot)$ is a positive non-decreasing function satisfying $\lim_{\eps\rightarrow 0} \eps(r) = 0$. Finally, there also exists a decreasing sequence $r_n\rightarrow 0$ such that
  \begin{align}  \label{eq:mass_pple_spectrum2}
    \forall \sigma\in G(a,h),\ \forall n\in\N;\quad \mu_{a,h}\pthb{ B(\sigma,r_n) } \leq r_n^{1/(\gamma-1)} g(r_n)^{-\beta}.
  \end{align}
\end{lemma}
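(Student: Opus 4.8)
\emph{Proof strategy.} The plan is to realise $\mu_{a,h}$ as a Frostman-type measure on the Cantor set obtained by intersecting, over $n\geq n_0(\Ti)$, the unions of the level-$a$ slices of the members of $\Vbb_n(p_n(h))$ that are \emph{good for $a$}, i.e. whose good level range $\ivff{\kappa\rho_n,\rho_n/\kappa}$ (relative to the root) contains $a$. Since $\Vbb_{n_0}(h)\neq\vset$ and the cardinality bounds packaged in the events $\Bi(\rho_{n-1},\rho_n,h)$ — in force for $n>n_0$ by Lemma~\ref{lemma:usp_construction_spectrum2} — guarantee that every subtree good for $a$ at stage $n-1$ has good-for-$a$ children at stage $n$, this nested family of nonempty compact sets has nonempty intersection $G(a,h)$, and distributing mass uniformly among the good-for-$a$ children at each stage yields a consistent family of pre-measures that extends (Carathéodory) to a Borel probability $\mu_{a,h}$ carried by $G(a,h)$. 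A telescoping computation, using $\rho_{n-1}\asymp g(\rho_n)$ to see that the logarithmic prefactors accumulate to a fixed power of $g(\rho_n)$ and the tower-like growth of $\log(1/\rho_n)$ to absorb the $C^n$-type losses (there are only $\sim n$ stages above scale $\rho_n$), gives for every good-for-$a$ subtree $\Ti_\sigma^{(n)}$ at stage $n$
\[
  \mu_{a,h}\pthb{\Ti_\sigma^{(n)}}\leq \rho_n^{\gamma h-1}\, g(\rho_n)^{-C},
\]
with $C=C(\gamma,\alpha,\epsilon)$ independent of $a$ and $h$; this already yields most of \eqref{eq:mass_pple_spectrum1}.

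The first claim of the lemma is then immediate: if $\sigma\in G(a,h)$ lies in $\Ti_\sigma^{(n)}\in\Vbb_n(p_n(h))$ good for $a$, then $\Ti_\sigma^{(n)}\cap\Ti(a)\subset B(\sigma,2\rho_n/\kappa)$, so $\ell^a(B(\sigma,2\rho_n/\kappa))\geq \inf_{u\in\ivff{\kappa\rho_n,\rho_n/\kappa}}\bk{\ell^u}(\Ti_\sigma^{(n)})\geq \rho_n^{p_n(h)}g(\rho_n)^{-\alpha}\geq \rho_n^{h+2^{-n}(h_2-h_1)}g(\rho_n)^{-\alpha}$; letting $n\to\infty$ along the scales $2\rho_n/\kappa\to 0$ forces $\liminf_{r\to 0}\log\ell^a(B(\sigma,r))/\log r\leq h$.

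For the mass bounds, fix $\sigma\in G(a,h)$ and $r>0$, and take $n$ with $\rho_{n+1}\leq r<\rho_n$ (finitely many scales $r\geq\rho_{n_0}$ are trivial). Writing $\vartheta\eqdef\vartheta_{n+1}=\rho_{n+1}^{(\gamma-1)(\gamma h-1)}g(\rho_{n+1})^{-\alpha\gamma(\gamma-1)}$, the key observation is geometric: the good-for-$a$ stage-$(n+1)$ subtrees descending from $\Ti_\sigma^{(n)}$ sit atop pairwise distinct ``balanced'' $\Vbb(\,\cdot\,,\rho_n,\vartheta)$-subtrees of height $\vartheta$ rooted essentially at level $a-\vartheta$, hence at level $a$ they are pairwise $\gtrsim\vartheta$-separated, and together with the non-clustering bound~(ii) of Lemma~\ref{lemma:usp_large_balls_spectrum1} this bounds by $1+(r\vartheta^{-1})^{1/(\gamma-1)}g(r)^{-\beta}$ the number of them met by $B(\sigma,r)$. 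If $r\in\ivfo{\vartheta,\rho_n}$, absorbing $\vartheta$ against the cardinality bound $\#\{\text{good-for-}a\text{ children of }\Ti_\sigma^{(n)}\}\gtrsim(\rho_n\vartheta^{-1})^{1/(\gamma-1)}g(\rho_n)^{2\epsilon}$ leaves
\[
  \mu_{a,h}\pthb{B(\sigma,r)}\lesssim (r/\rho_n)^{1/(\gamma-1)}g(\cdots)^{-O(1)}\,\mu_{a,h}\pthb{\Ti_\sigma^{(n)}}\lesssim r^{1/(\gamma-1)}\rho_n^{\gamma h-1-1/(\gamma-1)}g(\cdots)^{-O(1)}\leq r^{\gamma h-1}g(\cdots)^{-O(1)},
\]
using $\gamma h-1-\tfrac1{\gamma-1}\leq 0$ and $g(\rho_n)>g(r)$; if $r\in\ivfo{\rho_{n+1},\vartheta}$, the $\gtrsim\vartheta$-separation forces $B(\sigma,r)$ to meet only the single subtree $\Ti_\sigma^{(n+1)}$ containing $\sigma$, so $\mu_{a,h}(B(\sigma,r))\leq\mu_{a,h}(\Ti_\sigma^{(n+1)})\lesssim\rho_{n+1}^{\gamma h-1}g(\rho_{n+1})^{-O(1)}\lesssim r^{\gamma h-1}g(r)^{-O(1)}$, where $g(r)\asymp g(\rho_{n+1})$ on this interval because $\log(1/\vartheta)\asymp\log(1/\rho_{n+1})$ uniformly for $h$ bounded away from $1/\gamma$. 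Collecting all logarithmic factors into $g(r)^{-\beta}$ for a fixed $\beta=\beta(\gamma,\alpha,\epsilon)$ and the residual losses into $r^{-\eps(r)}$ with $\eps(r)\downarrow 0$ proves \eqref{eq:mass_pple_spectrum1}. For \eqref{eq:mass_pple_spectrum2} take $r_n\eqdef\kappa\vartheta_{n+1}/2$: by the separation $B(\sigma,r_n)$ meets exactly $\Ti_\sigma^{(n+1)}$, so $\mu_{a,h}(B(\sigma,r_n))\leq\mu_{a,h}(\Ti_\sigma^{(n+1)})\lesssim\rho_{n+1}^{\gamma h-1}g(\rho_{n+1})^{-C_1}$ with $C_1=\tfrac1{\gamma-1}-(\gamma h-1)+\alpha\gamma$, and since $\vartheta_{n+1}^{1/(\gamma-1)}=\rho_{n+1}^{\gamma h-1}g(\rho_{n+1})^{-\alpha\gamma}$ this is $\leq r_n^{1/(\gamma-1)}g(r_n)^{-\beta}$ as soon as $\beta\geq\tfrac1{\gamma-1}-(\gamma h-1)$, which holds for all $h\in\Hi$ because $\gamma h-1\geq\gamma h_1-1>0$.

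The main obstacle I anticipate is the uniform bookkeeping: one must check that the separation of the good-for-$a$ subtrees is genuinely of order $\vartheta_{n+1}$ — this is where the precise form of the intermediate construction and of the event $\Li(\rho_n,\vartheta_{n+1})$ is used — and that all logarithmic and $C^n$ losses accrued over the (slowly growing) number of stages down to scale $r$ can be swallowed into a \emph{single} $\beta$ and a \emph{single} $\eps(\cdot)$ valid for every $a\in\ivoo{\epsilon,h(\Ti)-\epsilon}$ and every $h\in\Hi$ at once; this is possible precisely because $\Hi$ is bounded away from $1/\gamma$ and $(\rho_n)$ decays super-exponentially.
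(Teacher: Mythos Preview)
Your proposal is correct and follows essentially the same route as the paper: a Cantor-type construction of $G(a,h)$ from the nested collections $\Vbb_n(p_n(h))$, a weak-limit measure $\mu_{a,h}$ obtained by redistributing mass among children at each stage, the non-clustering bound (ii) of Lemma~\ref{lemma:usp_large_balls_spectrum1} to control $\#\{\text{stage-}n\text{ subtrees meeting }B(\sigma,r)\}$, a two-case split at $r=\vartheta$, and the choice $r_n\asymp\vartheta_n$ for \eqref{eq:mass_pple_spectrum2}. The paper's bookkeeping is organised slightly differently (it indexes so that $r\in(\rho_n,\rho_{n-1}]$ and writes the telescoped mass bound as $(\vartheta_n\vartheta_{n-1}\rho_{n-1}^{-1})^{1/(\gamma-1)}g(\rho_{n-1})^{-\eta}$ rather than $\rho_n^{\gamma h-1}g(\rho_n)^{-C}$), but the underlying observation---super-exponential decay of $(\rho_n)$ absorbs all but the last two stages into a single logarithmic factor---is identical.
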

\begin{proof}
  Set $h\in\Hi\subset\ivofb{\tfrac{1}{\gamma},\tfrac{1}{\gamma-1}}$, $a\in\ivoo{\epsilon,h(\Ti)-\epsilon}$ and let us begin with the construction of the set $G(a,h)$. For every $n\in\N$, let $h_n\rightarrow h$ be the dyadic approximation of $h$. Then, based on the collections $(\Vbb_n(h_n))_{n\geq n_0}$, define
  \[
    \forall n\geq n_0;\quad G(a,h_n,n) = \bigcup_{\Ti_\sigma\in \Vbb_n(k_n,h_n)} \Ti_\sigma\cap \Ti(a).
  \]
  where for every $n$, $k_n\in\N$ is such that $a\in\ivfo{k_n\rho_n,(k_n+1)\rho_n}$.
  Since the family $(\Vbb_n(h_n))_{n\geq n_0}$ is composed of nested subtrees, $(G(a,h_n,n))_{n\geq n_0}$ is a decreasing sequence for the inclusion, and we may define the limit $G(a,h)\eqdef\cap_{n\geq n_0} G(a,h_n,n)$.\vsp

  Let us now prove that for any $\sigma\in G(a,h)$, the local time has the expected behaviour. Set $\sigma\in G(a,h)$, $r\in\ivoo{0,\rho_{n_0}}$ and $n\in\N$ such that $r\in\ivfo{\rho_{n}, \rho_{n-1}}$. Due to the construction of $G(a,h)$, there exists $\Ti_{\sigma'}\in\Vbb_n(h_n)$ such that $\sigma\in \Ti_{\sigma'}\cap\Ti(a)$. In addition, the tree structure induce that
  \[
    \ell^a\pthb{ B(\sigma,2r) } \geq \ell^{a}\pthb{\Ti_{\sigma'}\cap\Ti(a)} \geq \rho_n^{h_n} g(\rho_n)^{-\alpha},
  \]
  where the last inequality is a consequence of the definition of $\Vbb_n(h_n)$. Therefore, we obtain
  \begin{align*}
    \frac{ \log \ell^a\pthb{ B(\sigma,2r) } }{ \log 2r } \leq h_n \frac{ \log \rho_n }{ \log 2r } -\alpha \frac{ \log g(\rho_n) }{ \log 2r } \leq h_n \frac{ \log \rho_n }{ \log 2\rho_n }.
  \end{align*}
  The last term clearly converges to $h$ as $r\rightarrow 0$, therefore proving the expected property on the set $G(a,h)$.\vsp

  In the last part of the proof, we describe the construction of an Hausdorff measure $\mu_{a,h}$ supported by the set $G(a,h)$, which will then provide the lower bound on the Hausdorff dimension. For that purpose, recall that $G(a,h)=\cap_{n \geq n_0} G(a,h_n,n)$, where the latter sequence is decreasing for the inclusion. Quite naturally, the simplest way to build a measure on such a set is to mimic the classic construction of the mass measure on the Cantor set.

  Let us begin by defining the probability measure $\mu_{n_0}$ on $\Ti(a)$:
  \begin{align}  \label{eq:constr_mass_measure0}
    \forall V\in\Bi(\Ti(a));\quad \mu_{n_0}(V) \eqdef c_0 \sum_{\Ti_{\sigma}\in \Vbb(k_{n_0},h_{n_0})}\, \frac{\ell^a(V\cap\Ti_{\sigma})}{\ell^a(\Ti(a)\cap\Ti_{\sigma})},
  \end{align}
  where the normalising constant $c_0$ is such that $\mu_{n_0}(\Ti(a)) = 1$ and $\Bi(\Ti(a))$ denotes the Borel sets of $\Ti(a)$.

  The sequence of probability measures $(\mu_n)_{n>n_0}$ is then easily defined by induction. Suppose $\mu_n$ has been properly constructed and is supported by $G(a,h,n)$, $\mu_{n+1}$ is then defined as following: for every $\Ti_\sigma\in\Vbb_n(k_n,h_n)$, the mass $\mu_n(\Ti_\sigma\cap\Ti(a))$ is equally distributed among the subtrees $\Ti_{\sigma'}\in\Vbb_{n+1}(k_{n+1},h_{n+1},\Ti_\sigma)$, using the principle described in the construction \eqref{eq:constr_mass_measure0} of $\mu_{n_0}$.

  The sequence of probability measures $(\mu_n)_{n\geq n_0}$ is clearly tight, as $\Ti(a)$ is a compact set, inducing the existence of a limit for extracted sequences. Then, the typical Cantor structure and Portemanteau lemma ensure the uniqueness of the limit $\mu_{a,h}$. In addition, $\mu_{a,h}$ is supported by $G(a,h)$.\vsp

  It remains to prove that this Hausdorff measure $\mu_{a,h}$ satisfies Equations~\eqref{eq:mass_pple_spectrum1} and \eqref{eq:mass_pple_spectrum2}. Let us set $\sigma\in G(a,h)$, $r\in\ivoo{0,\rho_{n_0}}$ and $n\in\N$ such that $r\in\ivof{\rho_{n}, \rho_{n-1}}$. There exists a unique $\Ti_{n-1}\in\Vbb_{n-1}(h_{n-1})$ such that $B(\sigma,2r)\cap\Ti(a)\subset \Ti_{n-1}\cap\Ti(a)$. To obtain a precise upper bound of $\ell^a(B(\sigma,2r))$, we need to estimate the number $N(\sigma,r)$ of elements $\Ti_n\in\Vbb_n(k_n,h_n, \Ti_{n-1})$ such that $B(\sigma,2r)\cap \Ti_n \neq \vset$. More precisely, since $r\in\ivof{\rho_{n}, \rho_{n-1}}$, we either have $\Ti_n \subset B(\sigma,2r)$ or $B(\sigma,2r)\cap \Ti_n = \vset$. Then, Lemma~\ref{lemma:usp_large_balls_spectrum1} entails that
  \begin{align}  \label{eq:bound_number_subtrees}
    N(\sigma,r) \leq 1 + \pthb{r\vartheta_n^{-1}}^{\tfrac{1}{\gamma-1}} g(r)^{-\beta},
  \end{align}
  for a constant $\beta>0$ independent of $n$ and $\sigma$, and recalling the notation introduced in Lemma~\ref{lemma:usp_large_balls_spectrum1}: $\vartheta_n = \rho_n^{(\gamma-1)(\gamma h_n-1)}g(\rho_n)^{-\alpha\gamma(\gamma-1)}$.
  Furthermore, the construction of the measure $\mu_{a,h}$ and the property~\eqref{eq:event_ppties_collections} of the collections $(\Vbb_n(h_n))_{n\geq n_0}$ ensure that
  \begin{align}  \label{eq:bound_mass_subtrees}
    \mu_{a,h}\pthb{ \Ti_n }
    \leq c_0 \prod_{k=n_0}^n \pthb{ \vartheta_k\rho_{k-1}^{-1} }^{\tfrac{1}{\gamma-1}}  g(\rho_{k-1})^{-2\epsilon}
    \leq c_1 \pthb{ \vartheta_n\vartheta_{n-1}\rho_{n-1}^{-1} }^{\tfrac{1}{\gamma-1}} g(\rho_{n-1})^{-\eta},
  \end{align}
  for some constant $\eta>0$ independent of $n$, $a$ and $h\in\Hi$. Note that the previous bound holds thanks to the exponential decrease of the sequence $(\rho_n)_{n\geq 1}$.
  Then, combining the two estimates \eqref{eq:bound_number_subtrees} and \eqref{eq:bound_mass_subtrees}, we obtain
  \begin{align*}
    \mu_{a,h}\pthb{ B(\sigma,2r) }
    &\leq N(\sigma,r)\cdot \mu_{a,h}\pthb{ \Ti_n } \\
    &\leq c_2 \pthb{ \vartheta_n\vartheta_{n-1}\rho_{n-1}^{-1} }^{\tfrac{1}{\gamma-1}} g(\rho_{n-1})^{-\eta} \cdot\pthB{ 1 + \pthb{r\vartheta_n^{-1}}^{\tfrac{1}{\gamma-1}} g(r)^{-\beta} }.
  \end{align*}
  We may now distinguish two cases: if $r\in\ivof{\rho_{n}, \vartheta_{n}}$, $\pthb{r\vartheta_n^{-1}}^{\tfrac{1}{\gamma-1}} \leq 1$ and thus,
  \begin{align*}
    \mu_{a,h}\pthb{ B(\sigma,2r) }
    \leq c_2 \pthb{ \vartheta_n\vartheta_{n-1}\rho_{n-1}^{-1} }^{\tfrac{1}{\gamma-1}} g(r)^{-\eta-\beta}
    \leq c_3 \,\vartheta_n^{\tfrac{1}{\gamma-1}} g(r)^{-\beta_0}
    \leq c_4 \,r^{\gamma h_n-1} g(r)^{-\beta_1},
  \end{align*}
  since $r\in\ivof{\rho_{n}, \vartheta_{n}}$ and $\vartheta_{n}<<\rho_{n-1}\leq\vartheta_{n-1}$.
  In the other hand, if $r\in\ivof{\vartheta_{n},\rho_{n-1}}$,
  \begin{align*}
    \mu_{a,h}\pthb{ B(\sigma,2r) }
    &\leq c_2 \pthb{ \vartheta_n\vartheta_{n-1}\rho_{n-1}^{-1} }^{\tfrac{1}{\gamma-1}} g(\rho_{n-1})^{-\eta} \cdot \pthb{r\vartheta_n^{-1}}^{\tfrac{1}{\gamma-1}} g(r)^{-\beta} \\
    &\leq c_3 \, r^{\gamma h_{n-1}-1}  g(r)^{-\beta_2} \cdot\pthbb{ \frac{r}{\rho_{n-1}} }^{1/(\gamma-1) - (\gamma h_{n-1}-1)} \\
    &\leq c_3 \,r^{\gamma h_{n-1} -1}  g(r)^{-\beta_2},
  \end{align*}
  since $\gamma h_{n-1} -1 \leq 1/(\gamma-1)$ and $r\leq \rho_{n-1}$.
  These last two inequalities concludes the proof of Equation~\eqref{eq:mass_pple_spectrum1}, as we know that $h_n\rightarrow h$ uniformly for every $h\in\Hi$.

  Let us now prove the second part \eqref{eq:mass_pple_spectrum2}. For that purpose, we simply consider the sequence $r_n\eqdef\vartheta_n=\rho_n^{(\gamma-1)(\gamma h_n -1 )} g(\rho_n)^{-\alpha\gamma(\gamma-1)}$. In this case, the construction of the collection $\Vbb_n(h_n)$ ensures that $N(\sigma,r_n)=1$, and thus
  \begin{align*}
    \mu_{a,h}\pthb{ B(\sigma,2r_n) }
    = \mu_{a,h}\pthb{ \Ti_n }
    \leq c_1 \pthb{ \vartheta_n\vartheta_{n-1}\rho_{n-1}^{-1} }^{\tfrac{1}{\gamma-1}} g(\rho_{n-1})^{-\eta}
    \leq r_n^{1/(\gamma-1)}  g(r_n)^{-\beta_3}.
  \end{align*}
  This last bound concludes the proof of the lemma.
\end{proof}

Based on the result established in Lemma~\ref{lemma:usp_construction_spectrum3}, we may now prove the lower bound of Theorem~\ref{th:levy_tree_upper_spectrum1}.
\begin{lemma}  \label{lemma:stable_trees_spectrum_lb1}
  $\Nb\pth{ \dt \Ti }$-a.e. for every nonempty set $F$ of $\ivoo{0,h(\Ti)}$,
  \begin{align*}
    \forall h\in\ivofb{\tfrac{1}{\gamma}, \tfrac{1}{\gamma-1}};\quad \dimH \pthb{ E_\ell(h,\Ti)\cap \Ti(F) } \geq \gamma h - 1 + \dimH F.
  \end{align*}
  Moreover,
  \begin{align*}
    \forall h\in\ivofb{\tfrac{1+\gamma}{\gamma}, \tfrac{\gamma}{\gamma-1}};\quad \dimH \,\pthb{ E_\mb(h,\Ti) \cap \Ti(F) } \geq \gamma (h-1) - 1 + \dimH F.
  \end{align*}
\end{lemma}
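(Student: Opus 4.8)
The plan is to reduce everything to the local time statement and then recover the mass measure one, and to prove the local time lower bound by \emph{fibring} the per-level measures of Lemma~\ref{lemma:usp_construction_spectrum4} over a Frostman measure carried by $F$. Fix $h\in\ivofb{\tfrac1\gamma,\tfrac1{\gamma-1}}$. The first step — the one I expect to be the crux — is to upgrade the conclusion of Lemma~\ref{lemma:usp_construction_spectrum4}, which on $G(a,h)$ only gives $\alpha_\ell(\sigma,\Ti)\le h$, to the exact equality $\alpha_\ell(\sigma,\Ti)=h$, together with a companion bound uniform in the level. Concretely I would show that for $\sigma\in G(a,h)$ one has $\ell^{u}\pthb{B(\sigma,r)}\le r^{h-o(1)}$ for every small $r$ and every level $u$ in an $r$-neighbourhood of $a$; this is extracted from the balanced structure built into the collections $\Vbb_n(h_n)$ — the exclusion of the sets $\Lambda(\cdot)$, which forces $\ell^{u}\pthb{B(\cdot,2r)}\le\overline{r}(r)\asymp r^{1/(\gamma-1)}g(r)^{-c}$ on the scale ranges between consecutive generations, together with the subtree–counting estimate~(ii) of Lemma~\ref{lemma:usp_large_balls_spectrum1}, which bounds how many generation-$n$ subtrees a ball can meet at the transition scales, and the fact that these conditions hold on level ranges (not single levels) since the subtrees in $\Vbb_n(h_n)$ are tall. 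Granting this, $\alpha_\ell(\sigma,\Ti)\ge h$ is immediate, so $\sigma\in E_\ell(h,\Ti)$; moreover $\mb\pthb{B(\sigma,r)}\le 2r\sup_{u}\ell^{u}\pthb{B(\sigma,r)}\le r^{h+1-o(1)}$, which with Proposition~\ref{prop:scaling_exponents} (giving $\alpha_\mb\le\alpha_\ell+1=h+1$) yields $\alpha_\mb(\sigma,\Ti)=h+1$, i.e. $\sigma\in E_\mb(h+1,\Ti)$. Since $\gamma(h+1-1)-1+\dimH F=\gamma h-1+\dimH F$, it then suffices to prove $\dimH\pthb{\bigcup_{a\in F}G(a,h)}\ge\gamma h-1+\dimH F$.

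For this I would fix a closed subinterval $\Hi=\ivffb{h_1,h_2}\subset\ivofb{\tfrac1\gamma,\tfrac1{\gamma-1}}$ and work on the single $\Nb$-a.e. event of Lemma~\ref{lemma:usp_construction_spectrum4}, on which the objects $(G(a,h),\mu_{a,h})$ exist simultaneously for all $a\in\ivoo{\eps,h(\Ti)-\eps}$ and all $h\in\Hi$; crucially this event does not involve $F$. Fix a nonempty $F\subset\ivoo{\eps,h(\Ti)-\eps}$, a value $h\in\Hi$, and a real $t<\dimH F$. Frostman's lemma (\cite{Falconer-2003}) furnishes a compact $K\subseteq F$ and a probability measure $\nu$ with $\supp\nu\subseteq K$ and $\nu\pthb{B(a,r)}\le r^{t}$ for all $a,r$. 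Since each $\mu_{a,h}$ is carried by $\Ti(a)$, the formula $m(A)\eqdef\int_{K}\mu_{a,h}(A)\,\nu(\dt a)$, $A$ Borel, defines a probability measure on $\Ti$ (measurability of $a\mapsto\mu_{a,h}$ is clear from the nested construction), and $m$ gives full mass to $\bigcup_{a\in K}G(a,h)$, which by the previous paragraph lies in $E_\ell(h,\Ti)\cap\Ti(F)$.

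It remains to bound balls. For $\sigma\in\Ti$ put $a_0=d(\rho,\sigma)$; any point of $B(\sigma,r)$ is at a level within $r$ of $a_0$, so $m\pthb{B(\sigma,r)}=\int_{(a_0-r,a_0+r)\cap K}\mu_{a,h}\pthb{B(\sigma,r)}\,\nu(\dt a)$. If the integrand is nonzero there is $\sigma'\in B(\sigma,r)\cap G(a,h)$, whence $B(\sigma,r)\cap\Ti(a)\subseteq B(\sigma',2r)$ and \eqref{eq:mass_pple_spectrum1} gives $\mu_{a,h}\pthb{B(\sigma,r)}\le(2r)^{\gamma h-1-\eps(2r)}g(2r)^{-\beta}$. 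Therefore $m\pthb{B(\sigma,r)}\le(2r)^{\gamma h-1-\eps(2r)}g(2r)^{-\beta}\,\nu\pthb{B(a_0,r)}\le c\,r^{\gamma h-1+t-\eps(2r)}g(2r)^{-\beta}$ for every $\sigma\in\Ti$, so that $\liminf_{r\to0}\tfrac{\log m(B(\sigma,r))}{\log r}\ge\gamma h-1+t$ throughout. The mass distribution principle then yields $\dimH\pthb{\bigcup_{a\in K}G(a,h)}\ge\gamma h-1+t$, hence $\dimH\pthb{E_\ell(h,\Ti)\cap\Ti(F)}\ge\gamma h-1+t$, and letting $t\uparrow\dimH F$ settles this choice of $h$ and $F$.

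For fixed $\Hi$ the construction of Lemma~\ref{lemma:usp_construction_spectrum4} is simultaneous in $h\in\Hi$ and $a$, and the measure $m$ above is a deterministic function of the data on that event, so the bound holds, off a single $\Nb$-null set, for every $h\in\Hi$ and every nonempty $F\subset\ivoo{\eps,h(\Ti)-\eps}$. Covering $\ivofb{\tfrac1\gamma,\tfrac1{\gamma-1}}$ by countably many closed subintervals $\Hi$, exhausting $\ivoo{0,h(\Ti)}$ by the intervals $\ivoo{\eps_k,h(\Ti)-\eps_k}$ with $\eps_k\downarrow0$ (so $\dimH\pthb{F\cap\ivoo{\eps_k,h(\Ti)-\eps_k}}\uparrow\dimH F$ for any $F$), and intersecting the corresponding countably many almost sure events gives the local time inequality for all $h$ and all nonempty $F\subset\ivoo{0,h(\Ti)}$; the mass measure inequality follows from the inclusion $G(a,h)\subseteq E_\mb(h+1,\Ti)$ of the first paragraph. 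As noted, the genuinely delicate point is that first paragraph: without the exact exponent the measure $m$ is only known to be carried by $F_\ell(h,\Ti)$, whose Hausdorff dimension is controlled by $\dimP F$ (Lemma~\ref{lemma:usp_spectrum_ub1}) rather than by $\dimH F$, and so it would not separate $E_\ell(h,\Ti)$ from the smaller-exponent sets when $F$ fails to be regular.
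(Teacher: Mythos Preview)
Your fibring construction in paragraphs 2--3---a Frostman measure $\nu$ on $F$ integrated against the per-level measures $\mu_{a,h}$ of Lemma~\ref{lemma:usp_construction_spectrum4}, with the ball estimate $m\pthb{B(\sigma,r)}\le c\,r^{\gamma h-1+t-o(1)}$---is exactly what the paper does.

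Where you diverge is in the passage from $G(a,h)\subset F_\ell(h,\Ti)$ to $E_\ell(h,\Ti)$. You propose to prove $G(a,h)\subset E_\ell(h,\Ti)$ outright by establishing $\ell^{u}\pthb{B(\sigma,r)}\le r^{h-o(1)}$, and you correctly flag this as the crux. But the ingredients you name do not deliver it: the exclusion of $\Lambda(\cdot)$ forces $\ell^{w}\pthb{B(\cdot,2r)}\le\overline r(r)\asymp r^{1/(\gamma-1)}$, not $r^{h}$, and only at the auxiliary levels $w$ where the balanced collections $\Vbb(w,\rho,\vartheta)$ sit, not at the target level $a$; the subtree-counting condition~(ii) of Lemma~\ref{lemma:usp_large_balls_spectrum1} bounds how many generation-$n$ subtrees meet a ball (which is what feeds into $\mu_{a,h}$), not $\ell^{a}$ of that ball; and the construction places only the lower bound $\inf_u\bk{\ell^u}(\Ti_n)\ge\rho_n^{h_n}$ on each planted subtree, with no matching upper bound. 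So even a single $\Ti_n$ could in principle contribute far more than $\rho_n^{h_n}$ to $\ell^{a}\pthb{B(\sigma,\rho_n)}$. Making this route work would require strengthening the construction (two-sided control on $\bk{\ell^u}(\Ti_n)$ and a transfer of the $\overline r(r)$ bound from level $w$ to level $a$), none of which is in the paper.

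The paper sidesteps all of this with a trick that also dissolves the worry in your last paragraph. Rather than showing $G(a,h)\subset E_\ell(h,\Ti)$, it removes the bad part: set $\widehat G(F,h)=G(F,h)\setminus\bigcup_{h'<h}E_\ell(h',\Ti)$ and show $\mu_{F,h}\pthb{\widehat G(F,h)}=1$. The observation you missed is that Lemma~\ref{lemma:usp_spectrum_ub1} is applied \emph{levelwise}, with the singleton $\{a\}$ in place of $F$, so that $\dimP\{a\}=0$ and one gets $\dimH\pthb{F_\ell(h',\Ti)\cap\Ti(a)}\le\gamma h'-1<\gamma h-1$ for each $h'<h$. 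Since $\mu_{a,h}\pthb{B(\sigma,r)}\le r^{\gamma h-1-o(1)}$, the mass distribution principle gives $\mu_{a,h}\pthb{F_\ell(h',\Ti)\cap\Ti(a)}=0$; integrating against $\nu(\dt a)$ yields $\mu_{F,h}\pthb{\bigcup_{h'<h}E_\ell(h',\Ti)\cap\Ti(F)}=0$, and $\dimP F$ never enters. The mass-measure inequality is handled the same way: Proposition~\ref{prop:scaling_exponents} gives $G(F,h)\subset F_\mb(h+1,\Ti)$, and one removes $\bigcup_{h'<h}E_\mb(h'+1,\Ti)$ via the levelwise upper bound of Lemma~\ref{lemma:usp_mspectrum_ub1}.
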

\begin{proof}
  We begin by obtaining the lower bound on the multifractal spectrum of the local time.
  Let us set $\Hi\subset\ivofb{\tfrac{1}{\gamma},\tfrac{1}{\gamma-1}}$, $F$ be a nonempty set of $\ivoo{\epsilon,h(\Ti)-\epsilon}$ and $h\in\Hi$. We may suppose that $\dimH F >0$, otherwise the inequality is direct consequence of the mass distribution principle and Lemma~\ref{lemma:usp_construction_spectrum4}. Then, for any $s < \dimH F$, according to \citet[Cor. 4.12]{Falconer-2003}, there exists a measure $\nu$ supported by $F$ such that
  \[
    \forall a\in F,\ \forall r>0;\quad \nu(B(a,r)) \leq c_0\,r^s.
  \]
  Let us denote by $G(F,h)$ the set $G(F,h) = \cup_{a\in F} G(a,h)$ and define a measure $\mu_{F,h}$ on it:
  \[
    \mu_{F,h}(\dt\sigma) = \int \nu(\dt a) \,\mu_{a,h}(\dt\sigma).
  \]
  Then, for every $\sigma\in G(F,h)$ and any $r>0$, Lemma~\ref{lemma:usp_construction_spectrum4} entails
  \begin{align*}
    \mu_{F,h}\pthb{ B(\sigma,r) }
    \leq \int_{\ivoo{h(\sigma)-r,h(\sigma)+r}} \nu(\dt a) \,\mu_{a,h}( B(\sigma,r) \cap \Ti(a) )
    \leq c_0\,r^{\gamma h-1-\eps(r)+s} g(r)^{-\beta}.
  \end{align*}
  In addition, we note that the upper bound presented in Lemma~\ref{lemma:usp_spectrum_ub1} induces that for any $h'<h$ and every $a>0$, $\mu_{a,h}\pthb{ E_\ell(h',\Ti)\cap \Ti(a) } = 0$, and thus $\mu_{F_,h}\pthb{ E_\ell(h',\Ti)\cap \Ti(F) } = 0$. Hence, defining
  \[
    \widehat G(F,h) = G(F,h) \setminus \bigcup_{h'<h} E_\ell(h',\Ti)\cap\Ti(F),
  \]
  we observe that $\widehat G(F,h) \subset E_\ell(h,\Ti)\cap \Ti(F) \subset E_\ell(h,\Ti)\cap \Ti(F)$. Therefore,
  \[
    \dimH\pthb{ E_\ell(h,\Ti)\cap \Ti(F) } \geq \dimH\pthb{ E_\ell(h,\Ti)\cap \Ti(F) } \geq \dimH \widehat G(F,h) \geq \gamma h-1+s,
  \]
  where the last inequality is a consequence of the classic mass distribution principle (we refer to \cite{Falconer-2003} for a complete reference on the subject). Considering a sequence of sets $F_n\subset F$ such that $s_n\rightarrow \dimH F$, we obtain the lower bound on the dimension of $E_\ell(h,\Ti)\cap \Ti(F)$. Finally, observing that $\lim_{\epsilon\rightarrow 0} \dimH F\cap\ivoo{\epsilon,h(\Ti)-\epsilon} = \dimH F$, we get the desired uniform lower bound.\vsp

  Using the previous construction, and arguments, we may also prove the lower bound on the multifractal spectrum of the mass measure. To begin with, we observe that Proposition~\ref{prop:scaling_exponents} implies that for any $h\leq\tfrac{1}{\gamma-1}$, $G(F,h)\subset F_\ell(h,\Ti)\cap\Ti(F) \subset F_\mb(h+1,\Ti)\cap\Ti(F)$. Then, defining
  \[
    \widetilde G(F,h) = G(F,h) \setminus \bigcup_{h'<h} E_\mb(h'+1,\Ti)\cap\Ti(F),
  \]
  we get similarly $\widetilde G(F,h) \subset E_\mb(h+1,\Ti)\cap \Ti(F)$ and therefore $\dimH\pthb{ E_\mb(h+1,\Ti)\cap \Ti(F) } \geq \dimH \widetilde G(F,h) \geq \gamma h-1+s$.
  As previously, we deduce the desired uniform bound from the former inequality.
\end{proof}

To end the proof of Theorem~\ref{th:levy_tree_upper_spectrum1}, we investigate the limit case $h=\tfrac{1}{\gamma}$ (resp. $h=\tfrac{1+\gamma}{\gamma}$).
\begin{lemma}  \label{lemma:stable_trees_spectrum_lb2}
  $\Nb\pth{ \dt \Ti }$-a.e. for every nonempty set $F$ of $\ivoo{0,h(\Ti)}$,
  \begin{align*}
    \dimH \pthb{ E_\ell(\tfrac{1}{\gamma},\Ti)\cap \Ti(F) } \geq \dimH F \quad\text{and}\quad \dimH \,\pthb{ E_\mb(\tfrac{1+\gamma}{\gamma},\Ti) \cap \Ti(F) } \geq \dimH F.
  \end{align*}
  In particular, $\Nb\pth{ \dt \Ti }$-a.e. for every level $a>0$, $E_\ell\pthb{\tfrac{1}{\gamma},\Ti}\cap \Ti(a)\neq\vset$ and $E_\mb\pthb{\tfrac{1+\gamma}{\gamma},\Ti}\cap \Ti(a)\neq\vset$.
\end{lemma}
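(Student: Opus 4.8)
# Proof Plan for Lemma~\ref{lemma:stable_trees_spectrum_lb2}

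The plan is to treat the boundary index $h=\tfrac{1}{\gamma}$ (and correspondingly $h=\tfrac{1+\gamma}{\gamma}$ for the mass measure) as a limit of the interior construction already carried out for $h\in\ivofb{\tfrac{1}{\gamma},\tfrac{1}{\gamma-1}}$. The interior lower bound from Lemma~\ref{lemma:stable_trees_spectrum_lb1} gives $\dimH\pthb{ E_\ell(h,\Ti)\cap\Ti(F) } \geq \gamma h - 1 + \dimH F$, and as $h\downarrow\tfrac{1}{\gamma}$ the right side tends to $\dimH F$; but one cannot simply ``take the limit'' since the exceptional sets $E_\ell(h,\Ti)$ change with $h$. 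Instead I would revisit the construction of Lemma~\ref{lemma:usp_construction_spectrum4} directly at $h=\tfrac{1}{\gamma}$. The key observation is that the whole machinery (Lemmas~\ref{lemma:usp_large_balls_spectrum1}--\ref{lemma:usp_construction_spectrum3}) was stated for $h\in\ivffb{\tfrac{1}{\gamma},\tfrac{1}{\gamma-1}}$ \emph{closed} at $\tfrac{1}{\gamma}$, with the convention that $\alpha<0$ there (so that $\vartheta=\delta^{(\gamma-1)(\gamma h-1)}g(\delta)^{-\alpha\gamma(\gamma-1)}$ still satisfies $\delta<\vartheta<\rho$, here because $\gamma h - 1 = 0$ forces the $g$-correction to do the work). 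So the collections $(\Vbb_n(h))_{n\geq n_0}$ and the Hausdorff measures $\mu_{a,h}$ can be built at the endpoint too.

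First I would run the construction of Lemma~\ref{lemma:usp_construction_spectrum4} with $\Hi$ a (degenerate or small) neighbourhood containing $h=\tfrac{1}{\gamma}$ and fix a negative $\alpha$; the mass bound \eqref{eq:mass_pple_spectrum1} then reads $\mu_{a,1/\gamma}(B(\sigma,r))\leq r^{-\eps(r)}g(r)^{-\beta}$, i.e. the measure of every small ball decays slower than any fixed positive power of $r$, but still faster than any fixed \emph{negative} power — which is exactly the statement that, combined with \eqref{eq:mass_pple_spectrum2} giving $\mu_{a,1/\gamma}(B(\sigma,r_n))\leq r_n^{1/(\gamma-1)}g(r_n)^{-\beta}$ along a subsequence, forces the pointwise local-time exponent to equal $\tfrac{1}{\gamma}$ for $\mu_{a,1/\gamma}$-a.e.\ $\sigma$ (the lower bound $\alpha_\ell\geq\tfrac{1}{\gamma}$ coming from the interior upper-bound Lemma~\ref{lemma:usp_spectrum_ub2}, the upper bound $\alpha_\ell\leq\tfrac{1}{\gamma}$ from the construction of $G(a,h)$ itself). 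Then, integrating $\mu_{a,1/\gamma}$ against a Frostman measure $\nu$ on $F$ with $\nu(B(a,r))\leq c_0 r^s$ for $s<\dimH F$ exactly as in Lemma~\ref{lemma:stable_trees_spectrum_lb1}, I get a measure $\mu_{F,1/\gamma}$ on $G(F,\tfrac1\gamma)=\cup_{a\in F}G(a,\tfrac1\gamma)$ with $\mu_{F,1/\gamma}(B(\sigma,r))\leq c_0\, r^{s-\eps(r)}g(r)^{-\beta}$, and the mass distribution principle yields $\dimH\pthb{G(F,\tfrac1\gamma)}\geq s$ for every $s<\dimH F$. Removing the sub-level exceptional sets (as in the definition of $\widehat G(F,h)$, using that $\mu_{a,1/\gamma}(E_\ell(h',\Ti)\cap\Ti(a))=0$ for $h'<\tfrac1\gamma$ by Lemma~\ref{lemma:usp_spectrum_ub2}, in fact these sets are empty) shows $G(F,\tfrac1\gamma)\subset E_\ell(\tfrac1\gamma,\Ti)\cap\Ti(F)$ up to a $\mu$-null set, giving the first inequality.

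For the mass measure at $h=\tfrac{1+\gamma}{\gamma}$, I would invoke Proposition~\ref{prop:scaling_exponents}: the relation \eqref{eq:ltime_mass_exponents} gives, on $G(F,\tfrac1\gamma)$ where $\alpha_\ell=\tfrac1\gamma\leq\tfrac{1}{\gamma-1}$, the bound $\alpha_\mb\leq\tfrac1\gamma+1=\tfrac{1+\gamma}{\gamma}$, while the upper-bound Lemma~\ref{lemma:usp_mspectrum_ub2} gives $\alpha_\mb\geq\tfrac{1+\gamma}{\gamma}$ wherever these sets are nonempty (more precisely $F_\mb(h,\Ti)\cap\Ti(F)$ is empty for $h<\tfrac{\gamma+1-\dimH F_{|\Ti}}{\gamma}$, so excising those exceptional sub-level sets from $G(F,\tfrac1\gamma)$ as in $\widetilde G(F,h)$ works verbatim). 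Hence $\widetilde G(F,\tfrac1\gamma)\subset E_\mb(\tfrac{1+\gamma}{\gamma},\Ti)\cap\Ti(F)$ and $\dimH\pthb{E_\mb(\tfrac{1+\gamma}{\gamma},\Ti)\cap\Ti(F)}\geq\dimH F$. Finally, applying the displayed inequalities with $F=\brc{a}$ a singleton — or rather with $F$ any set of positive dimension contained in an arbitrarily small neighbourhood of $a$, then letting that neighbourhood shrink — is not quite enough for the ``for every level $a$'' claim, so instead I would apply the main inequality to $F=\ivoo{0,h(\Ti)}$ itself, note that $\dimH\ivoo{0,h(\Ti)}=1>0$ so the sets are $\Nb$-a.e.\ nonempty on the whole tree, and then observe that the \emph{uniformity in $a$} of the construction (the collections $\Vbb_n(h)$ are indexed over all levels $k\rho_n\in\ivoo{\epsilon,h(\Ti)-\epsilon}$ simultaneously, cf.\ Lemma~\ref{lemma:usp_construction_spectrum4}'s ``for every $a\in\ivoo{\epsilon,h(\Ti)-\epsilon}$'') delivers a point of $E_\ell(\tfrac1\gamma,\Ti)\cap\Ti(a)$ for each such $a$, and letting $\epsilon\downarrow0$ covers every $a\in\ivoo{0,h(\Ti)}$.

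The main obstacle I anticipate is checking that the estimates of Lemmas~\ref{lemma:usp_large_balls_spectrum1}--\ref{lemma:usp_large_balls_spectrum2} and the chained bound \eqref{eq:bound_mass_subtrees} remain valid at $h=\tfrac1\gamma$ with $\gamma h-1=0$: one must verify that the exponents of the slowly-varying factors $g(\cdot)$ still line up with the right sign (this is where the sign condition $\alpha<0$ and the precise choice $\alpha>1+\gamma\epsilon$ used in Lemma~\ref{lemma:usp_construction_spectrum1} need reconciling — presumably one picks $\alpha$ in $\ivoo{1+\gamma\epsilon,\,?}$ and simultaneously arranges $\vartheta<\rho$ via $g$-corrections, which forces a careful but routine bookkeeping), and that the convergence $\tfrac{\log\ell^a(B(\sigma,2r))}{\log 2r}\to h_n\to\tfrac1\gamma$ is not spoiled by the $\alpha\log g(\rho_n)$ term — it is not, since $\log g(\rho_n)/\log\rho_n\to 0$. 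Everything else is a verbatim repetition of the interior argument.
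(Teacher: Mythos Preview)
Your plan is correct and matches the paper's approach: rerun the construction of Lemma~\ref{lemma:usp_construction_spectrum4} at $h=\tfrac1\gamma$ with $\alpha<0$, obtain a measure $\mu_a$ with $\mu_a(B(\sigma,r))\leq g(r)^{-\alpha\gamma-\beta}$ (the paper's display~\eqref{eq:mass_pple_spectrum1_bis}), excise the sub-$\tfrac1\gamma$ exponents via Lemma~\ref{lemma:usp_spectrum_ub2}, and integrate against a Frostman measure on $F$ as in Lemma~\ref{lemma:stable_trees_spectrum_lb1}.

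Your ``main obstacle'' paragraph is, however, confused on one point. You cannot reconcile $\alpha<0$ with the condition $\alpha>1+\gamma\epsilon$ from Lemma~\ref{lemma:usp_construction_spectrum1}; these are incompatible, and your guess ``pick $\alpha\in\ivoo{1+\gamma\epsilon,?}$'' is wrong. The condition $\alpha>1+\gamma\epsilon$ was specific to the interior case. At the boundary the resolution is twofold. First, the inductive subtrees now carry local time $\bk{\ell^{\kappa\rho_n}}(\Ti_\sigma)\geq\rho_n^{1/\gamma}g(\rho_n)^{-\alpha}$, not merely $\rho_n^{1/(\gamma-1)}g(\rho_n)^{-\alpha}$; feeding this stronger conditioning into Lemma~\ref{lemma:usp_balanced_balls} gives $\bk{\ell^{\kappa\rho_n}}\rho_n^{-1/(\gamma-1)}\geq\rho_n^{-1/(\gamma(\gamma-1))}\to\infty$, which restores the exponential decay even with $\alpha<0$. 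Second, the paper switches to a new sequence $(\rho_n)$ defined so that $\vartheta_n=g(\rho_n)^{-\alpha\gamma(\gamma-1)}\asymp\rho_{n-1}$ (rather than the old $\rho_n=2^{-\rho_{n-1}^{-1}}$), which collapses the intermediate scale $\vartheta_n$ onto the previous step and makes the chained bound~\eqref{eq:bound_mass_subtrees} go through with the purely logarithmic $\vartheta$. With these two adjustments the remaining bookkeeping is indeed routine, and taking $\abs{\alpha}$ large enough ensures both $-\alpha\gamma-\beta>1$ (so the cover argument of Lemma~\ref{lemma:usp_spectrum_ub2} gives $\mu_a$-null sub-level sets) and the packing-dimension estimate~\eqref{eq:mass_pple_spectrum2_bis}.
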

\begin{proof}
  As previously, we may first focus on the local time, and then deduce the equivalent property for the mass measure. Since the proof of this statement follows the same structure than $h\in\ivofb{\tfrac{1}{\gamma},\tfrac{1}{\gamma-1}}$, we only sketch the main steps.

  As outlined above, in the limit case $h=\tfrac{1}{\gamma}$, we must have $\alpha<0$. Furthermore, we introduce as well a decreasing sequence such that $(\rho_n)_{n\in\N}$ such that $\vartheta_n = g(\rho_n)^{-\alpha\gamma(\alpha-1)}\asymp\rho_{n-1}$. Then, since the conclusions of Lemmas \ref{lemma:usp_balanced_balls} and \ref{lemma:usp_large_balls_spectrum1} still hold, we can construct similarly  for every level $a\in\ivoo{0,h(\Ti)}$ a set $G(a)\subset F_\ell(\Ti)\cap\Ti(a)$ and a measure $\mu_a(\dt\sigma)$ supported by $G(a)$. Furthermore, the estimates presented in Lemma~\ref{lemma:usp_construction_spectrum4} are still valid, proving that
  \begin{align}  \label{eq:mass_pple_spectrum1_bis}
    \forall \sigma\in G(a),\ \forall r>0;\quad \mu_{a}\pthb{ B(\sigma,r) } \leq g(r)^{-\alpha\gamma-\beta}.
  \end{align}
  where $\beta>0$ is a fixed constant. Furthermore,
  \begin{align}  \label{eq:mass_pple_spectrum2_bis}
    \forall \sigma\in G(a),\ \forall n\in\N;\quad \mu_{a}\pthb{ B(\sigma,\vartheta_n) } \leq \vartheta_n^{\tfrac{1}{\gamma-1}+\tfrac{\beta}{\alpha\gamma(\gamma-1)}}.
  \end{align}
  Note that $\abs{\alpha}$ can be supposed to be sufficiently large such that $-\alpha\gamma-\beta > 1$ and $\tfrac{\beta}{\alpha\gamma(\gamma-1)} < \eps$ for any $\eps>0$.

  Then, we observe according to the cover pf $E_\ell(h,\Ti)\cap\Ti(F)$ constructed in Lemma~\ref{lemma:usp_spectrum_ub2}, for any $h<\tfrac{1}{\gamma}$ and every level $a>0$, $\mu_{a}\pthb{ E_\ell(h,\Ti)\cap\Ti(a) } = 0$. Consequently, we may follow the procedure presented in \ref{lemma:stable_trees_spectrum_lb1} and get $\dimH \pthb{ E_\ell(\tfrac{1}{\gamma},\Ti)\cap \Ti(F) } \geq \dimH F$. A similar extension of the previous proof also provides the desired bound on the mass measure. Finally, the previous construction also proves the uniform non-emptiness.
\end{proof}

To end this first part on the uniform lower bound, let us briefly prove the lower bound in Proposition~\ref{prop:packing_dim_spectrum}.
\begin{lemma}
  $\Nb\pth{ \dt \Ti }$-a.e. for all levels $a\in\ivoo{0,h(\Ti)}$,
  \begin{align*}
    \forall h\in\ivffb{\tfrac{1}{\gamma}, \tfrac{1}{\gamma-1}};\quad \dimP \,E_\ell(h,\Ti)\cap\Ti(a) \geq \frac{1}{\gamma-1}\quad\text{and}\quad \dimP \,E_\mb(h+1,\Ti)\cap\Ti(a) \geq \frac{1}{\gamma-1}.
  \end{align*}
\end{lemma}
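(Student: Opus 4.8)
The plan is to recycle, essentially verbatim, the sets and measures already constructed for the lower bound of Theorem~\ref{th:levy_tree_upper_spectrum1}, but to read off packing rather than Hausdorff information from them. Since $E_\ell(h,\Ti)\cap\Ti(a)\subseteq\Ti(a)$ and $\dimP\Ti(a)\leq\tfrac{1}{\gamma-1}$ by Lemma~\ref{lemma:usp_global_ub}, only the lower bound is at issue. First fix a subinterval $\Hi=\ivff{h_1,h_2}\subseteq\ivofb{\tfrac{1}{\gamma},\tfrac{1}{\gamma-1}}$, a small $\epsilon>0$, a level $a\in\ivoo{\epsilon,h(\Ti)-\epsilon}$ and $h\in\Hi$. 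As in the proof of Lemma~\ref{lemma:stable_trees_spectrum_lb1}, set $\widehat G(a,h)\eqdef G(a,h)\cap E_\ell(h,\Ti)\subseteq E_\ell(h,\Ti)\cap\Ti(a)$, where $G(a,h)$ and the probability measure $\mu_{a,h}$ carried by it are those produced in Lemma~\ref{lemma:usp_construction_spectrum4}. The gauge estimate \eqref{eq:mass_pple_spectrum1} together with the upper bound Lemma~\ref{lemma:usp_spectrum_ub1} show (exactly as in Lemma~\ref{lemma:stable_trees_spectrum_lb1}) that $\mu_{a,h}$ charges no subset of $\Ti(a)$ of Hausdorff dimension $<\gamma h-1$, hence $\mu_{a,h}\pth{\widehat G(a,h)}=\mu_{a,h}\pth{G(a,h)}=1$.

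The key new observation is to exploit the \emph{second} small-ball bound \eqref{eq:mass_pple_spectrum2} of Lemma~\ref{lemma:usp_construction_spectrum4}: there is a deterministic decreasing sequence $r_n\downarrow 0$ with $\mu_{a,h}\pth{B(\sigma,r_n)}\leq r_n^{1/(\gamma-1)}g(r_n)^{-\beta}$ for all $\sigma\in G(a,h)$ and all $n$. Taking logarithms, dividing by $\log r_n<0$, and using $\log g(r_n)/\log r_n\to 0$ yields
\[
  \forall\sigma\in\widehat G(a,h);\quad \limsup_{r\to 0}\frac{\log\mu_{a,h}\pth{B(\sigma,r)}}{\log r}\;\geq\;\tfrac{1}{\gamma-1}.
\]
Then I would invoke the packing counterpart of the mass distribution principle: if $\mu$ is a finite Borel measure on a complete separable metric space, $A$ Borel with $\mu(A)>0$, and $\limsup_{r\to0}\tfrac{\log\mu(B(x,r))}{\log r}\geq s$ for every $x\in A$, then $\dimP A\geq s$ (see e.g.\ \cite{Falconer-2003,Mattila-1995}). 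Applied on the compact metric space $\Ti(a)$ with $\mu=\mu_{a,h}$, $A=\widehat G(a,h)$, $s=\tfrac{1}{\gamma-1}$, this gives $\dimP\pth{E_\ell(h,\Ti)\cap\Ti(a)}\geq\dimP\widehat G(a,h)\geq\tfrac{1}{\gamma-1}$. Since the constructions of Lemma~\ref{lemma:usp_construction_spectrum4} are uniform in $h\in\Hi$ and $a\in\ivoo{\epsilon,h(\Ti)-\epsilon}$, letting $\Hi$ run through a countable exhausting family and $\epsilon\downarrow0$ removes all restrictions, except at the endpoint $h=\tfrac{1}{\gamma}$.

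For that endpoint I would use the construction of Lemma~\ref{lemma:stable_trees_spectrum_lb2}: there, with parameter $\alpha<0$, estimate \eqref{eq:mass_pple_spectrum2_bis} gives $\mu_a\pth{B(\sigma,\vartheta_n)}\leq\vartheta_n^{\tfrac{1}{\gamma-1}+\tfrac{\beta}{\alpha\gamma(\gamma-1)}}$, so the same computation produces $\dimP\pth{E_\ell(\tfrac{1}{\gamma},\Ti)\cap\Ti(a)}\geq\tfrac{1}{\gamma-1}+\tfrac{\beta}{\alpha\gamma(\gamma-1)}$ for every admissible $\alpha$; since $\abs{\alpha}$ may be taken arbitrarily large (as noted after \eqref{eq:mass_pple_spectrum2_bis}) and $E_\ell(\tfrac{1}{\gamma},\Ti)\cap\Ti(a)$ contains all the sets produced for the various $\alpha$, this forces $\dimP\pth{E_\ell(\tfrac{1}{\gamma},\Ti)\cap\Ti(a)}\geq\tfrac{1}{\gamma-1}$. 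Finally, for the mass measure, \eqref{eq:ltime_mass_exponents} gives $G(a,h)\subseteq F_\ell(h,\Ti)\cap\Ti(a)\subseteq F_\mb(h+1,\Ti)\cap\Ti(a)$, and, exactly as in Lemma~\ref{lemma:stable_trees_spectrum_lb1}, $\widetilde G(a,h)\eqdef G(a,h)\setminus\bigcup_{h'<h}E_\mb(h'+1,\Ti)$ satisfies $\widetilde G(a,h)\subseteq E_\mb(h+1,\Ti)\cap\Ti(a)$ and $\mu_{a,h}\pth{\widetilde G(a,h)}=1$ (now using Lemma~\ref{lemma:usp_mspectrum_ub1}); as $\mu_{a,h}$ is the same measure, the same $\limsup$ bound and the packing mass distribution principle give $\dimP\pth{E_\mb(h+1,\Ti)\cap\Ti(a)}\geq\tfrac{1}{\gamma-1}$, the endpoint $h+1=\tfrac{1+\gamma}{\gamma}$ being treated as above.

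I do not expect a deep obstacle: all the hard machinery — the uniform construction of $G(a,h)$, the measure $\mu_{a,h}$, and especially the subsequential small-ball estimate \eqref{eq:mass_pple_spectrum2} — is already in place. The two points that genuinely need care are (i) verifying that the $\limsup$ local-dimension condition really forces a packing-dimension lower bound in the metric-space setting of $\Ti(a)$ (a standard fact, but one that should be cited for general metric spaces and not just $\R^n$), and (ii) the $\abs{\alpha}\to\infty$ limiting argument that is needed to reach the exact value $\tfrac{1}{\gamma-1}$ at the endpoints $h=\tfrac{1}{\gamma}$ and $h+1=\tfrac{1+\gamma}{\gamma}$, where the Hausdorff dimension is $0$ yet the packing dimension is full.
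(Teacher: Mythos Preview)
Your proposal is correct and follows exactly the paper's approach: the paper's proof is a one-sentence reference to the subsequential small-ball estimate \eqref{eq:mass_pple_spectrum2} of Lemma~\ref{lemma:usp_construction_spectrum4} (resp.\ \eqref{eq:mass_pple_spectrum2_bis} for the endpoint) together with the packing mass distribution principle (Theorem~6.11 in \cite{Mattila-1995}), and you have simply fleshed out the details that the paper leaves implicit, including the full-measure argument for $\widehat G(a,h)$ and the $|\alpha|\to\infty$ endpoint treatment.
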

\begin{proof}
  The two lower bounds are a direct consequence of the second statement in Lemma~\ref{lemma:usp_construction_spectrum4} (or Equation~\ref{eq:mass_pple_spectrum2_bis} for the limit case) and the mass distribution principle for the packing dimension (see Theorem 6.11 in \citet{Mattila-1995}).
\end{proof}

\subsubsection{Proof of Theorem~\ref{th:levy_tree_upper_spectrum2} (lower bound)}

In the second part of this section, we present the proof of Theorem~\ref{th:levy_tree_upper_spectrum2}'s lower bound. The strategy adopted to tackle this question is similar to the proof of Theorem~\ref{th:levy_tree_upper_spectrum1} as we describe as well a constructive method to get simultaneously a collection of suitable Hausdorff measures related to a fixed Borel set $F\subset\ivoo{0,\infty}$.

Before obtaining the general statement presented in Theorem~\ref{th:levy_tree_upper_spectrum2}, we will start by considering the particular case of a set $F$ satisfying the following assumptions:
\begin{enumerate}[\it (i)]
  \item $F$ is a regular and compact set;
  \item $F$ satisfies the strong Frostman's lemma \eqref{eq:strong_frostman} with a probability measure $\mu_F$ supported by the set $F$;
  \item for any $n$ sufficiently large and every $I\in\Di_n$
  \begin{align}  \label{eq:ass_lower_bound}
    F\cap I^\circ = \vset\quad\text{or}\quad \mu_F\pth{F\cap I} \geq (n+1)^{-4}\delta_n.
  \end{align}
\end{enumerate}
In the rest of this section, we will say that such a set $F$ satisfies \emph{(i)-(iii)}. In addition, $b$ will denote a real number such that $F\subset\ivoo{0,b}$. The next lemma shows such a specific configuration can always be extracted.
\begin{lemma}  \label{lemma:stable_usp3_tech2}
  Suppose $F\subset\R$ is a regular Borel set satisfying the strong Frostman's lemma \eqref{eq:strong_frostman}. Then, there exists $F_\star \subset F$ verifying assumptions \emph{(i)-(iii)}.
\end{lemma}
\begin{proof}
  First note there exists $b>0$ sufficiently large such that $\mu_F(F\cap\ivoo{0,b})>0$. Then, defining $F_0=\supp F \cap \ivoo{0,b}$, the mass distribution principle and common properties of fractal dimension imply that $\dimH F_0=\dimP F_0=\dimH F=\dimP F$.

  To obtain $(iii)$, define for every $k\in\N$: $\Ei_k \eqdef \brc{I\in\Di_k : \mu_F(F_0\cap I) \leq k^{-2}\delta_k \text{ and } F_0\cap I^\circ\neq\vset }$. Then, for any $n\in\N$,
  \begin{align*}
    \mu_F\pthbb{ \bigcup_{k\geq n} \bigcup_{I\in\Ei_k} F_0\cap I }
    \leq \sum_{k\geq n} \sum_{I\in\Ei_k} \mu_F\pth{ F_0\cap I }
    \leq c_0\sum_{k\geq n} k^{-2}
    \longrightarrow_{n\rightarrow\infty} 0
  \end{align*}
  In particular, for $n$ large enough, the sum is smaller than $\mu_F(F_0)>0$. Hence, let us define
  \begin{align*}
    F_\star = F_0 \setminus \bigcup_{k\geq n}\bigcup_{I\in\Ei_k} I^\circ.
  \end{align*}
  and show it satisfies \emph{(i)-(iii)} with the normalised measure $\mu_{F_\star}(\dt x) = \tfrac{\mu_{F}(\dt x\cap F_\star)}{\mu_{F}(F_\star)}$. $F_\star$ is clearly compact, and since it still satisfies the strong Frostman's lemma, it is as well regular. Suppose now $J\in\Di_n$ such that $F_\star\cap J^\circ\neq\vset$. According to the previous construction, $J\notin\Ei_n$, and thus,
  \begin{align*}
    \mu_{F}(J\cap F_\star) = \mu_F(J\cap F_0) - \mu_F\pthbb{ \bigcup_{k\geq n+1} \bigcup_{I\in\Ei_k} F_0\cap I\cap J }.
  \end{align*}
  There exists a partition $\Pi(J)\subset\cup_{k\geq n+1} \Ei_k$ of non-overlapping intervals such that
  \begin{align*}
    \bigcup_{k\geq n+1} \bigcup_{I\in\Ei_k} F_0\cap I\cap J = \bigcup_{I\in\Pi(J)} F_0\cap I.
  \end{align*}
  Consequently,
  \begin{align*}
    \mu_F\pthbb{ \bigcup_{k\geq n+1} \bigcup_{I\in\Ei_k} F_0\cap I\cap J }
    = \sum_{I\in\Pi(J)} \mu_F(F_0\cap I) \leq \sum_{k\geq n+1} k^{-2}\delta_k \,\#\pthb{\Pi(J)\cap\Ei_k} \leq (n+1)^{-2}\delta_n,
  \end{align*}
  since $\sum_{k\geq n+1} \delta_k \,\#\pthb{\Pi(J)\cap\Ei_k} \leq \delta_n$. Therefore,
  \begin{align*}
    \mu_{F_\star}(J\cap F_\star) \geq c_0\pthb{ n^{-2} - (n+1)^{-2} } \delta_n \geq n^{-4} \delta_n
  \end{align*}
  which proves the desired property \emph{(iii)}.
\end{proof}

As previously, we start by proving a few technical lemmas which will be necessary to the construction of a proper collection of measures. For that purpose, let us introduce a few useful notations, starting with the events $\Ai(v,w,\rho,h)$:
\begin{align*}
  \Ai(v,w,\rho,h) = \brcB{\Ti : \forall u\in\ivff{v,w};\  \bk{\ell^u}(\Ti) \in \ivffb{ \rho^{h}, 2\rho^{h} } },
\end{align*}
In addition, we will write $\Ai(v,\rho,h)\eqdef \Ai(v,v,\rho,h)$ and $\Ai(\rho,h)\eqdef \Ai(\kappa\rho,\rho/\kappa,\rho,h)$ to designate the simpler forms. Then, we define the collection of subtrees:
\begin{align*}
  \Tbb(a,\delta,h) = \brcB{ \Ti_\sigma\in\Tbb(a,\delta) : \forall u\in\ivff{\kappa\delta,\delta/\kappa};\  \bk{\ell^u}(\Ti_\sigma)  \in \ivffb{ \rho^{h}, 2\rho^{h} } }
\end{align*}
Recalling that that $\Di(\delta) = \brcb{ \ivff{k\delta,(k+1)\delta} : k\in\N}$, we also introduce the following random collection of intervals:
\begin{align*}
  \Di(\delta,h) = \brcb{I_k\in\Di(\delta) : \Tbb((k-1)\delta,\delta,h) \neq \vset },
\end{align*}
where $I_k$ stands for the interval $\ivff{k\delta,(k+1)\delta}$.

Finally, we set in the rest of this section $\epsilon>0$ and a closed interval $\Hi=\ivff{h_0,h_1}\subset\ivoob{0,\tfrac{1}{\gamma}}$ such that $\gamma h_0 - 1+s>2\epsilon$.\vsp

In the following key lemma, we present how to properly modify the measure $\mu_F(\dt x)$ in order to then construct recursively a collection of desired Hausdorff measures necessary to the proof of Theorem~\ref{th:levy_tree_upper_spectrum3}.
\begin{lemma}  \label{lemma:stable_usp3_tech0}
  Suppose $F$ satisfies \emph{(i)-(iii)}, $\rho,\delta>0$, $h_\star\in\ivof{0,\tfrac{1}{\gamma}}$ and $h\in\Hi$ such that $\delta|\rho$ and $\delta\leq 2^{-1/\rho}$. We introduce the following random measure on $F\cap\ivff{\rho,2\rho}$
  \begin{align*}
    \nu(\dt x) \eqdef \rho^{-h_\star}\delta^{\gamma h-1}\sum_{I\in\Di(\delta,h)} \indi_{\brc{x\in I\cap F\cap \ivff{\rho,2\rho}}} \, \mu_F(\dt x)
  \end{align*}
  We aim to control to the behaviour of the previous measure in terms of mass distribution and mass conservation, defining for that purpose the following event:
  \begin{align*}
    \Bi(\rho,\delta,h,h_\star) = \brcB{\Ti :\
    & \nu(F) \geq c_\nu\,\mu_F\pthb{\ivff{\rho,2\rho}} \text{ and } \\
    &\forall x\in F,\ \forall r\in\ivff{\delta,\rho} : \nu(B(x,r)) \leq g(r)^{-\beta}r^{\gamma h-1+s-\epsilon} \rho^{1-\gamma h} },
  \end{align*}
  where $c_\nu>0$ is chosen sufficiently small and $\beta>3+2\epsilon$.

  Then, the measure of the event $\Bi(\rho,\delta,h,h_\star)^c$ is bounded as following
  \begin{align*}
    \Nb_{\kappa\rho}\pthb{ \Bi(\rho,\delta,h,h_\star)^c \cap \Ai(\rho,h_\star) } \leq c_0\,\brcB{ \exp\pthb{ -\mu_F\pthb{\ivff{\rho,2\rho}}\delta^{-\epsilon} } + \exp\pthb{-g(\rho)^{-1-\epsilon}} },
  \end{align*}
  where the constant $c_0>0$ is independent of $\delta$, $\rho$, $h$ and $h_\star$. In addition, we note that if $F\cap\ivff{\rho,2\rho}=\vset$, $\Nb_{\kappa\rho}\pthb{ \Bi(\rho,\delta,h,h_\star)^c \cap \Ai(\rho,h_\star) } = 0$.
\end{lemma}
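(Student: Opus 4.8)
The plan is to verify separately the two clauses defining $\Bi(\rho,\delta,h,h_\star)$ — the mass-conservation bound $\nu(F)\geq c_\nu\,\mu_F(\ivff{\rho,2\rho})$ and the mass-distribution bound $\nu(B(x,r))\leq g(r)^{-\beta}r^{\gamma h-1+s-\epsilon}\rho^{1-\gamma h}$ — and to show that on $\Ai(\rho,h_\star)$ their failures account respectively for the two terms on the right-hand side. If $F\cap\ivff{\rho,2\rho}=\vset$ then $\nu\equiv 0$ and $\mu_F(\ivff{\rho,2\rho})=0$, so both clauses hold deterministically and the probability is $0$; assume henceforth $F\cap\ivff{\rho,2\rho}\neq\vset$.

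The common preliminary is the conditional probability that a relevant $\delta$-interval $I_k=\ivff{k\delta,(k+1)\delta}\subset\ivff{\rho,2\rho}$ (one with $F\cap I_k^\circ\neq\vset$) belongs to $\Di(\delta,h)$. By the branching property, given $\Gi_{(k-1)\delta}$ the number of subtrees rooted at level $(k-1)\delta$ whose local time stays in $\ivff{\delta^h,2\delta^h}$ on $\ivff{\kappa\delta,\delta/\kappa}$ is Poisson with parameter $\bk{\ell^{(k-1)\delta}}\,\Nb\bigl(\bk{\ell^u}\in\ivff{\delta^h,2\delta^h}\ \forall u\in\ivff{\kappa\delta,\delta/\kappa}\bigr)$, and combining the self-similarity of stable trees with the two tail estimates of Lemmas~\ref{lemma:local_time_sup_tails} and~\ref{lemma:local_time_inf_tail0} one obtains $\Nb(\cdots)\asymp\delta^{1-\gamma h}$. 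Since $\kappa$ is small and $I_k\subset\ivff{\rho,2\rho}$ forces $(k-1)\delta\in\ivff{\kappa\rho,\rho/\kappa}$, on $\Ai(\rho,h_\star)$ one has $\bk{\ell^{(k-1)\delta}}\asymp\rho^{h_\star}$, whence $\Nb_{\kappa\rho}(I_k\in\Di(\delta,h)\mid\Gi_{(k-1)\delta})\asymp p:=\rho^{h_\star}\delta^{1-\gamma h}\to 0$. Writing $X_k=\indi_{\{I_k\in\Di(\delta,h)\}}$ and $w_k=\mu_F(I_k\cap F)$, the dependence among the $X_k$ is handled as follows: $\{I_k\in\Di(\delta,h)\}$ is measurable with respect to the tree below level $(k-1)\delta+\delta/\kappa$, so after splitting the relevant indices into $\ceil{1/\kappa}$ classes whose members are pairwise $\ceil{1/\kappa}$-separated, within a class — ordered by increasing level and conditioned successively on $\Gi_{(k-1)\delta}$ and on $\Ai(\rho,h_\star)$ (through the $\Gi_{(k-1)\delta}$-measurable event $\{\bk{\ell^{(k-1)\delta}}\geq\rho^{h_\star}\}$) — the $X_k$ become a sequence whose conditional success probability stays between two fixed multiples of $p$; an exponential supermartingale then replaces a Chernoff bound for the weighted sums $\sum_k w_k X_k$, both from below and from above. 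Identifying these $\sigma$-fields correctly is the delicate point of the whole argument; everything else is exponent bookkeeping governed by $\gamma h_0-1+s>2\epsilon$ (hence also $\gamma h-1+\dimH F>2\epsilon$) and $\delta\leq 2^{-1/\rho}$.

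For mass conservation, $\nu(F)=\rho^{-h_\star}\delta^{\gamma h-1}\sum_k w_k X_k$ with the sum over all relevant $I_k\subset\ivff{\rho,2\rho}$; its conditional mean on $\Ai(\rho,h_\star)$ is at least $c\,p\,\mu_F(\ivff{\rho,2\rho})$, and the supermartingale lower bound, with $w_{\max}\leq\mu_F(B(\cdot,\delta))\leq\delta^{\dimH F-\epsilon'}$ from the strong Frostman's lemma (with $\epsilon'<\epsilon$ at our disposal), yields a failure probability $\leq\exp\bigl(-c\,p\,\mu_F(\ivff{\rho,2\rho})\,\delta^{-\dimH F+\epsilon'}\bigr)$. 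Since $\gamma h-1+s>2\epsilon$ on $\Hi$ and $s<\dimH F$, the exponent $1-\gamma h-\dimH F+\epsilon'+\epsilon$ is negative, so $\delta\leq 2^{-1/\rho}$ makes $p\,\delta^{-\dimH F+\epsilon'}=\rho^{h_\star}\delta^{1-\gamma h-\dimH F+\epsilon'}$ exponentially large in $1/\rho$, dominating $\rho^{-1/\gamma}\geq\rho^{-h_\star}$ and hence exceeding $\delta^{-\epsilon}$ for $\rho$ small, uniformly in $h_\star\leq1/\gamma$; the failure probability is therefore $\leq\exp(-\mu_F(\ivff{\rho,2\rho})\delta^{-\epsilon})$. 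Since $\rho^{-h_\star}\delta^{\gamma h-1}p=1$, off that event $\nu(F)\geq\tfrac12 c\,\mu_F(\ivff{\rho,2\rho})$, which fixes $c_\nu$.

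For the mass-distribution bound, by monotonicity of $r\mapsto\nu(B(x,r))$ and of $x\mapsto$(nearest grid point) it suffices to control $\nu(B(x,r))$ for $x$ on the $\delta$-grid and $r$ a dyadic multiple of $\delta$ in $\ivff{\delta,\rho}$, i.e. over $O\bigl((\rho/\delta)\log(\rho/\delta)\bigr)$ pairs. Fixing $B=B(x,r)$, write $\nu(B)=\rho^{-h_\star}\delta^{\gamma h-1}\sum_{I_k\cap B\neq\vset}w_k X_k$ and apply the supermartingale upper bound; the conditional mean is $\leq C p\,\mu_F(B(x,2r))\leq Cp\,(2r)^{\dimH F-\epsilon'}$, so outside an event of probability $\leq\exp(-cK\log(1/\delta))=\delta^{cK}$ one has $\sum_{I_k\cap B\neq\vset}w_k X_k\leq K\max\bigl(p\,(2r)^{\dimH F-\epsilon'},\ \delta^{\dimH F-\epsilon'}\log(1/\delta)\bigr)$. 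In the first regime $\nu(B)\leq K(2r)^{\dimH F-\epsilon'}$, and since $r\leq\rho$, $\gamma h<1$ and $\dimH F-s>0$, the surplus power $\rho^{\dimH F-s+\epsilon-\epsilon'}\to 0$ is absorbed by $g(r)^{-\beta}\geq(\log1/\rho)^\beta$, giving $\nu(B)\leq g(r)^{-\beta}r^{\gamma h-1+s-\epsilon}\rho^{1-\gamma h}$ for $\rho$ small; in the second regime $\nu(B)\leq K\rho^{-h_\star}\delta^{\gamma h-1+\dimH F-\epsilon'}\log(1/\delta)$, which since $\gamma h-1+\dimH F-\epsilon'>\epsilon$ and $\delta\leq 2^{-1/\rho}$ is super-polynomially small in $1/\rho$, hence below the (at most polynomially small) target, using $\beta>1$ to absorb the logarithmic factors. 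A union bound over the $O((\rho/\delta)\log(\rho/\delta))$ pairs turns $\delta^{cK}$ into a total failure probability $\leq\exp(-g(\rho)^{-1-\epsilon})$ for $\rho$ small and $K$ a large absolute constant, which completes the estimate and yields the stated bound on $\Nb_{\kappa\rho}(\Bi(\rho,\delta,h,h_\star)^c\cap\Ai(\rho,h_\star))$.
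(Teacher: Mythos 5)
Your proposal follows essentially the same route as the paper: split the $\delta$-grid intervals into a bounded number of classes to decouple the indicators $X_k$, estimate the conditional probability $\Nb_{\kappa\rho}(I_k\in\Di(\delta,h)\mid\Gi_{(k-1)\delta})\asymp\rho^{h_\star}\delta^{1-\gamma h}$ on $\Ai(\rho,h_\star)$ via the branching property, and then run a conditional exponential-moment (Chernoff/supermartingale) bound on the weighted sum $\sum w_k X_k$ in both directions, with $\lambda\asymp\delta^{-s+\epsilon}$ and the strong Frostman bound on $w_{\max}$, followed by a union over dyadic scales/positions. The only blemish is the parenthetical assertion ``$s<\dimH F$'': in the paper's setup $F$ is regular and $s=\dimH F$, so that inequality is false, but it plays no real role — your exponent bookkeeping goes through using $\epsilon'<\epsilon$ and $\gamma h-1+s>2\epsilon$ exactly as you compute.
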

\begin{proof}
  Let us first obtain an upper bound on the event $\brc{\Ti : \exists x\in F, \exists r\in\ivff{\delta,\rho} : \nu(B(x,r)) \geq g(r)^{-\beta}\pth{r\rho^{-1}}^{\gamma h-1+s} }$. Observe that the previous event is included in
  \begin{align*}
    \brcb{\Ti : \exists \delta_n\in\ivff{\delta,\rho}, \exists J\in\Di_n : \nu(J) \geq c_\star\,g(\delta_n)^{-\beta}\pth{\delta_n\rho^{-1}}^{\gamma h-1+s} },
  \end{align*}
  for a constant $c_\star$ independent of $\delta$, $\rho$, $h$ and $h_\star$.

  Hence, let us now set $v>0$ and $r\in\ivff{\delta,\rho}$ such that $J\eqdef\ivff{v,v+r}\subset\ivff{\rho,2\rho}$. We then decompose the measure $\nu$ into two separate components: define $\Di_i(\delta,h) = \brc{I_k\in\Di(\delta,h) : k\text{ mod } 2=0}$, where $i\in\brc{0,1}$, and
  \[
    \forall i\in\brc{0,1};\quad\nu_i(\dt x) = \rho^{-h_\star}\delta^{\gamma h-1}\sum_{I\in\Di_i(\delta,h)} \indi_{\brc{x\in I\cap F\cap \ivff{\rho,2\rho}}} \, \mu_F(\dt x).
  \]
  We may easily observe that $\nu=\nu_0+\nu_1$ and thus, $\brc{ \nu(J) \geq 2z } \subset\brc{ \nu_{0}(J) \geq z }\cup \brc{ \nu_{1}(J) \geq z }$ for any $z\geq 0$. As a consequence, we only need to understand the tail behaviour of these two measures.
  Then, for any $z,\lambda>0$
  \begin{align*}
    \Nb_{\kappa\rho}\pthb{ \nu_{0}(J) \geq z\cap \Ai(\rho,h_\star) }
    &= \Nb_{\kappa\rho}\pthb{ \exp\pthb{ \lambda \rho^{h_\star}\delta^{1-\gamma h}\nu_{0}(J) } \geq \exp\pthb{ \rho^{h_\star}\delta^{1-\gamma h}\lambda z } \cap \Ai(\rho,h_\star) } \\
    &\leq \exp\pthb{ -\rho^{h_\star}\delta^{1-\gamma h}\lambda z } \Nb_{\kappa\rho}\pthb{ \exp\pthb{ \lambda \rho^{h_\star}\delta^{1-\gamma h}\nu_{0}(J) } \indi_{\Ai(\rho,h_\star)} }.
  \end{align*}
  Owing to the definition of the measure $\nu_{0}$, the second term is given by
  \begin{align*}
    \Nb_{\kappa\rho}\pthb{ \exp\pthb{ \lambda \rho^{h_\star}\delta^{1-\gamma h}\nu_{0}(J) } \indi_{\Ai(\rho,h_\star)} } =
    \Nb_{\kappa\rho}\pthbb{ \prod_{I_k\in\Di_0(\delta,h)}\exp\pthb{ \lambda \mu_F(I_k\cap J) } \indi_{\Ai(\rho,h_\star)} }.
  \end{align*}
  Let $m\in 2\N$ be the largest even integer such that $I_m\eqdef\ivff{m\delta,(m+1)\delta}\subset J$. Setting $v_m=(m-1)\delta$, the previous expression is then equal to
  \begin{align*}
    &\frac{v(v_m)}{v(\kappa\rho)}\Nb_{v_m}\pthbb{ \prod_{I_k\neq I_m\in\Di_0(\delta,h)}\exp\pthb{ \lambda \mu_F(I_k\cap J) } \Nb_{v_m}\pthcb{ \exp\pthb{ \lambda \mu_F(I_m\cap J) } \indi_{I_m\in\Di_0(\delta,h)} \indi_{\Ai(\rho,h_\star)} }{\Gi_{v_m}} } \\
    &\leq \frac{v(v_m)}{v(\kappa\rho)}\Nb_{v_m}\pthbb{ \prod_{I_k\neq I_m\in\Di_0(\delta,h)}\exp\pthb{ \lambda \mu_F(I_k\cap J) } \indi_{\Ai(\kappa\rho,v_m,\rho,h_\star)} \Nb_{v_m}\pthcb{ \exp\pthb{ \lambda \mu_F(I_m\cap J)  \indi_{I_m\in\Di_0(\delta,h)} } }{\Gi_{v_m}} }.
  \end{align*}
  We recall that $I_m\in\Di_0(\delta,h)$ if and only if $\Tbb(v_m,\delta,h) \neq \vset$. For any $w\in\ivff{\kappa\rho,\rho/\kappa}$, owing to the branching property, under $\Nb_w$ and given $\Gi_w$, $Z(w,\delta,h)=\# \Tbb(w,\delta,h)$ is a Poisson random variable parametrized by $\bk{\ell^w}\Nb\pthb{\Ai(\delta,h)} \asymp \bk{\ell^{w}} \delta^{1-\gamma h}$ using Lemma~\ref{lemma:local_time_inf_tail1}. Hence,
  \begin{align*}
    \Nb_{w}\pthcb{ \Tbb(w,\delta,h)\neq\vset }{\Gi_{w}} \leq 1 - \exp\pthb{-c_1\bk{\ell^{w}} \delta^{1-\gamma h}}
    \leq c_1 \bk{\ell^{w}} \delta^{1-\gamma h}.
  \end{align*}
  As a consequence, $\Nb_{w}\pthcb{ \Tbb(w,\delta,h)\neq\vset }{\Gi_{w}} \indi_{\Ai(\kappa\rho,w,\rho,h_\star)} \leq 2c_1 \, \rho^{h_\star} \delta^{1-\gamma h}$,
  which entails
  \begin{align*}
    \Nb_{v_m}\pthcB{ \exp\pthb{ \lambda \mu_F(I_m\cap J)  \indi_{I_m\in\Di_0(\delta,h)} } }{\Gi_{v_m}}\indi_{\Ai(\kappa\rho,v_m,\rho,h_\star)}
    &\leq 1+ 2c_1 \, \rho^{h_\star} \delta^{1-\gamma h} \pthb{\e^{\lambda\mu_F(I_m\cap J)}-1} \\
    &\leq 1+c_2 \, \rho^{h_\star} \delta^{1-\gamma h} \lambda\mu_F(I_m\cap J),
  \end{align*}
  assuming that $\lambda$ is chosen such that $\lambda\mu_F(I_m\cap J) \leq c$, for some $c>0$ independent of $m$, $\delta$ and $\rho$.
  Hence, by induction on $m\in2\N$, we prove that $\Nb_{\kappa\rho}\pthb{ \exp\pthb{ \lambda \rho^{h_\star}\delta^{1-\gamma h}\nu_{0}(J) } \indi_{\Ai(\rho,h_\star)} }$ is upper bounded by
  \begin{align*}
    \Nb_{\kappa\rho}\pthb{ \Ai(\kappa\rho,\rho,h_\star) }\prod_{I_k\in\Di_0(\delta),k\in2\N}\brcB{ 1+c_2 \,\rho^{h_\star} \delta^{1-\gamma h} \lambda\mu_F(I_k\cap J) }.
  \end{align*}
  The logarithm of the product term is then itself bounded above by
  \begin{align*}
    \sum_{I_k\in\Di_0(\delta),k\in2\N} \log\pthb{ 1+c_2 \, \rho^{h_\star} \delta^{1-\gamma h} \lambda\mu_F(I_k\cap J) } \
    \leq c_2 \, \rho^{h_\star} \delta^{1-\gamma h} \lambda\mu_F(J),
  \end{align*}
  using to the common inequality $\log(1+y)\leq y$. Combining the previous estimates, we get
  \begin{align*}
    \Nb_{\kappa\rho}\pthb{ \nu_{0}(J) \geq z\cap \Ai(\rho,h_\star) }
    \leq &\exp\brcB{ -\rho^{h_\star}\delta^{1-\gamma h}\lambda \pthb{ z - c_2 \, \mu_F(J) } },
  \end{align*}
  noting that $\Nb_{\kappa\rho}\pthb{ \Ai(\kappa\rho,\rho,h_\star) } \leq 1$. Then, according to the strong Frostman's lemma on $F$, if $\rho$ is sufficiently small, for every $I\in\Di(\delta)$, $\mu_F(I) \leq \delta^{s-\epsilon}$ where $s=\dimH F$. Hence, we may set $\lambda = \delta^{-s+\epsilon}$ and $z = g(r)^{-\beta}r^{s-\epsilon} \pthb{r\rho^{-1}}^{\gamma h-1}$. Then, since $\gamma h-1 < 0$ and $r\leq\rho$, $\mu_F(J) \leq r^{s-\epsilon}\pthb{r\rho^{-1}}^{\gamma h-1} \leq g(r)^{\beta} z$. Consequently,
  \begin{align*}
    \Nb_{\kappa\rho}\pthb{ \nu_{0}(J) \geq z\cap \Ai(\rho,h_\star) }
    &\leq \exp\brcB{ -c_3\,\rho^{h_\star}g(r)^{-\beta}\pthb{\delta r^{-1}}^{1-\gamma h-s+\epsilon}\rho^{1-\gamma h} }.
  \end{align*}
  Since $\gamma h - 1+s>2\epsilon$, for any $\rho$ small enough, the function $r\mapsto\rho^{h_\star}g(r)^{-\beta}\pthb{\delta r^{-1}}^{1-\gamma h-s+\epsilon}\rho^{1-\gamma h}$ reaches its minimum on $\ivff{\delta,\rho}$ at $r=\delta$, and is thus bounded by $\rho^{h_\star+1-\gamma h}g(\delta)^{-\beta} \leq g(\delta)^{-\beta+2}$ as $\delta \leq 2^{-1/\rho}$. Therefore, $\Nb_{\kappa\rho}\pthb{ \nu_{0}(J) \geq z\cap \Ai(\rho,h_\star) }
    \leq \exp\pthb{ -g(\delta)^{-\beta+2} }$ and
  \begin{align*}
    \sum_{n\in\N:\delta_n\in\ivff{\delta,\rho}} \sum_{J\in\Di_n : J\subset\ivff{\rho,2\rho}} &\Nb_{\kappa\rho}\pthb{ \nu_{0}(J) \geq g(\delta_n)^{-\beta} \delta_n^s \pthb{\delta_n\rho^{-1}}^{\gamma h-1}\cap \Ai(\rho,h_\star) } \\
    &\leq \sum_{n:\delta_n\in\ivff{\delta,\rho}} 2\rho \delta_n^{-1} \exp\pthb{ -g(\delta)^{-\beta+2} }
    \leq \exp\pthb{ -g(\delta)^{-1-\epsilon} },
  \end{align*}
  for any $\rho$ sufficiently small. An equivalent bound holds as well on the measure $\nu_{1}(\dt x)$, therefore proving the first part of our statement.\vsp

  To obtain the second part of the lemma, we proceed similarly. Let us first note there exists $i\in\brc{0,1}$ such that
  \begin{align*}
    \sum_{I_k\in\Di(\delta) : k\text{ mod } 2=i} \mu_F(I_k\cap\ivff{\rho,2\rho}) \geq \tfrac{1}{2}\mu_F(\ivff{\rho,2\rho}).
  \end{align*}
  Without any loss of generality we may assume that $i=0$ and simply observe that $\brc{ \nu(F) \leq z } \subset \brc{ \nu_{0}(F) \leq z }$. Moreover, for any $z,\lambda>0$
  \begin{align*}
    &\Nb_{\kappa\rho}\pthb{ \nu_{0}(F) \leq z \cap \Ai(\rho,h_\star) } \\
    &= \Nb_{\kappa\rho}\pthb{ \exp\pthb{ -\lambda \rho^{h_\star}\delta^{1-\gamma h}\nu_{0}(F) } \geq \exp\pthb{ -\rho^{h_\star}\delta^{1-\gamma h}\lambda z } \cap \Ai(\rho,h_\star) } \\
    &\leq \exp\pthb{ \rho^{h_\star}\delta^{1-\gamma h}\lambda z } \Nb_{\kappa\rho}\pthb{ \exp\pthb{ -\lambda \rho^{h_\star}\delta^{1-\gamma h}\nu_{0}(F) } \indi_{\Ai(\rho,h_\star)} }.
  \end{align*}
  The last term corresponds to
  \begin{align*}
    \Nb_{\kappa\rho}\pthb{ \exp\pthb{ -\lambda \rho^{h_\star}\delta^{1-\gamma h}\nu_{0}(F) } \indi_{\Ai(\rho,h_\star)} } =
    \Nb_{\kappa\rho}\pthbb{ \prod_{I_k\in\Di_0(\delta,h)}\exp\pthb{ -\lambda \mu_F(I_k\cap\ivff{\rho,2\rho}) } \indi_{\Ai(\rho,h_\star)} }.
  \end{align*}
  Similarly to the first part of the proof, for any $I_m\in\Di_0(\delta,h)$,
  \begin{align*}
    \Nb_{v_m}\pthcb{ \Tbb(v_m,\delta,h)\neq\vset }{\Gi_{v_m}} \geq 1 - \exp\pthb{-c_1\bk{\ell^{w}} \delta^{1-\gamma h}}
  \end{align*}
  and therefore
  \begin{align*}
    &\Nb_{v_m}\pthcb{ \exp\pthb{ -\lambda \mu_F(I_m\cap\ivff{\rho,2\rho}) \indi_{I_m\in\Di_0(\delta,h)} } }{\Gi_{v_m}}\indi_{\Ai(\kappa\rho,v_m,\rho,h_\star)} \\
    &\leq 1+c_2 \,\rho^{h_\star} \delta^{1-\gamma h} \pthb{\e^{-\lambda\mu_F(I_m\cap\ivff{\rho,2\rho})}-1} \\
    &\leq 1 - c_3 \, \rho^{h_\star} \delta^{1-\gamma h} \lambda\mu_F(I_m\cap\ivff{\rho,2\rho}).
  \end{align*}
  Hence, by induction, $\Nb_{\kappa\rho}\pthb{ \exp\pthb{ -\lambda \rho^{h_\star}\delta^{1-\gamma h}\nu_{0}(F) } \indi_{\Ai(\rho,h_\star)} }$ is bounded by
  \begin{align*}
    \prod_{I_k\in\Di_0(\delta),k\in2\N}\brcB{ 1-c_3 \, \rho^{h_\star} \delta^{1-\gamma h} \lambda\mu_F(I_k\cap \ivff{\rho,2\rho}) }.
  \end{align*}
  The logarithm of the previous term then satisfies
  \begin{align*}
    \sum_{I_k\in\Di_0(\delta),k\in2\N} \log\pthb{ 1-c_3 \rho^{h_\star} \delta^{1-\gamma h} \lambda\mu_F(I_k\cap\ivff{\rho,2\rho}) } \
    &\leq -c_3 \rho^{h_\star} \delta^{1-\gamma h} \sum_{I_k\in\Di_0(\delta),k\in2\N} \lambda\mu_F(I_k\cap\ivff{\rho,2\rho}) \\
    &\leq -c_4 \rho^{h_\star} \delta^{1-\gamma h} \lambda\mu_F(\ivff{\rho,2\rho}).
  \end{align*}
  Since we may as well set $\lambda=\delta^{-s+\epsilon}$,
  \begin{align*}
    \Nb_{\kappa\rho}\pthb{ \nu_{0}(F) \leq z\cap \Ai(\rho,h_\star) }
    \leq &\exp\brcB{ \rho^{h_\star}\delta^{1-\gamma h-s+\epsilon} \pthb{ z - c_4 \mu_F(\ivff{\rho,2\rho}) } }.
  \end{align*}
  Hence, setting $z=c_\nu\,\mu_F(\ivff{\rho,2\rho})$, where $c_\nu>0$ is chosen sufficiently small, we obtain
  \begin{align*}
    \Nb_{\kappa\rho}\pthb{ \nu_{0}() \leq c_\nu\,\mu_F(\ivff{\rho,2\rho}) \cap \Ai(\rho,h_\star) }
    \leq \exp\pthb{ -c_\nu\rho^{h_\star}\delta^{1-\gamma h-s+\epsilon} \mu_F(\ivff{\rho,2\rho}) }
    \leq \exp\pthb{ -\mu_F(\ivff{\rho,2\rho}) \delta^{-\epsilon} }
  \end{align*}
  recalling $1-\gamma h-s < -2\epsilon$.
\end{proof}
Note that even though the measure $\nu$ presented in Lemma~\ref{lemma:stable_usp3_tech0} has a random support, its restriction to intervals $I\in\Di(\delta,h)$ corresponds, up to a constant, to the deterministic measure $\mu_F$.

Similarly to the proof of Theorem~\ref{th:levy_tree_upper_spectrum1}, we now set a decreasing sequence $(\rho_n)_{n\in\N}$ such that $\rho_{n} = 2^{-\rho_{n-1}^{-1}}$ and $\rho_0=1$. Furthermore, we recall that $(\Hi_n)_{n\in\N}$ correspond to the approximation collections of elements in $\Hi=\ivff{h_0,h_1} \subset\ivoob{0,\tfrac{1}{\gamma}}$, where the latter is supposed to satisfy $\gamma h_0-1+s > 2\epsilon$.

We prove in the next lemma the main ingredient to our construction by induction of a collections of proper measures.
\begin{lemma}  \label{lemma:stable_usp3_tech3}
  Suppose $F$ satisfies \emph{(i)-(iii)}. $\Nb$-a.e., there exists $n_0(\Ti)$ such that for every $n\geq n_0$, every $h_n\in\Hi_n$ and all $j\rho_n\in\ivoo{0,b}$
  \begin{align*}
    \Ti_\sigma\in\Tbb(j\rho_n,\kappa\rho_n)\cap \Ai(\rho_n,h_n)\Longrightarrow\Ti_\sigma\in\Bi(\rho_n,\rho_{n+1},h_{n+1},h_n),
  \end{align*}
  where $h_{n+1} \eqdef h_n i$ and $i\in\brc{0,1}$.
\end{lemma}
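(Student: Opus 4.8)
The argument follows the template of Lemma~\ref{lemma:usp_construction_spectrum2}: a first--moment bound on the number of misbehaving subtrees at each dyadic scale $\rho_n$, via the branching property and the tail estimate of Lemma~\ref{lemma:stable_usp3_tech0}, then a union bound over root levels and over $\Hi_{n+1}$, and Borel--Cantelli. I would fix $b>0$ with $F\subset\ivoo{0,b}$ and, for $n,j\in\N$ with $j\rho_n\in\ivoo{0,b}$, for $h_n\in\Hi_n$ and $h_{n+1}=h_n i\in\Hi_{n+1}$ with $i\in\brc{0,1}$ (identifying $\Hi_{n+1}$ with a finite subset of $\Hi$), writing $\Bi\eqdef\Bi(\rho_n,\rho_{n+1},h_{n+1},h_n)$ for brevity, set $\Ni(j,n,h_{n+1}) \eqdef \#\brcB{ \Ti_\sigma\in\Tbb(j\rho_n,0) : \Ti_\sigma\in\Ai(\rho_n,h_n)\cap\Bi^c }$, where $\Bi$ is read off the subtree $\Ti_\sigma$, i.e.\ with $F$ shifted by $j\rho_n$ so that the width--$\rho_n$ slab relevant to $\Ti_\sigma$ becomes $\ivff{\rho_n,2\rho_n}$; the bound of Lemma~\ref{lemma:stable_usp3_tech0} is uniform under such translations, and its requirements $\rho_{n+1}\mid\rho_n$ and $\rho_{n+1}\leq 2^{-1/\rho_n}$ hold by the very definition of the sequence $(\rho_n)_{n\in\N}$. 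Since $\Ai(\rho_n,h_n)$ forces $\bk{\ell^{\kappa\rho_n}}(\Ti_\sigma)\geq\rho_n^{h_n}>0$, hence $h(\Ti_\sigma)>\kappa\rho_n$, one has $\Ai(\rho_n,h_n)\cap\Tbb(j\rho_n,0)=\Ai(\rho_n,h_n)\cap\Tbb(j\rho_n,\kappa\rho_n)$, so it suffices to show that $\Nb$-a.e.\ $\Ni(j,n,h_{n+1})=0$ for all such triples once $n\geq n_0(\Ti)$.

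By the branching property, under $\Nb_{j\rho_n}$ and given $\Gi_{j\rho_n}$ the variable $\Ni(j,n,h_{n+1})$ is Poisson with parameter $\bk{\ell^{j\rho_n}}\Nb\pthb{\Ai(\rho_n,h_n)\cap\Bi^c}$, so $\Nb_{j\rho_n}\pthcb{\Ni(j,n,h_{n+1})\geq1}{\Gi_{j\rho_n}}\leq\bk{\ell^{j\rho_n}}\Nb\pthb{\Ai(\rho_n,h_n)\cap\Bi^c}$. Taking the $\Nb_{j\rho_n}$--expectation, using $\Nb_{j\rho_n}\pthb{\bk{\ell^{j\rho_n}}}=v(j\rho_n)^{-1}$ and $\Nb(A)=v(j\rho_n)\Nb_{j\rho_n}(A)$ for $A\subset\brc{\bk{\ell^{j\rho_n}}>0}$, gives $\Nb\pthb{\Ni(j,n,h_{n+1})\geq1}\leq\Nb\pthb{\Ai(\rho_n,h_n)\cap\Bi^c}$. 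As $\Ai(\rho_n,h_n)\subset\brc{\bk{\ell^{\kappa\rho_n}}>0}$, this last quantity equals $v(\kappa\rho_n)\,\Nb_{\kappa\rho_n}\pthb{\Bi^c\cap\Ai(\rho_n,h_n)}$, which Lemma~\ref{lemma:stable_usp3_tech0} (with $\rho=\rho_n$, $\delta=\rho_{n+1}$, $h=h_{n+1}$, $h_\star=h_n$) bounds by $c_0\,v(\kappa\rho_n)\brcb{\exp\pthb{-\mu_F(I_{n,j})\rho_{n+1}^{-\epsilon}}+\exp\pthb{-g(\rho_n)^{-1-\epsilon}}}$, where $I_{n,j}$ is the relevant width--$\rho_n$ slab; moreover it vanishes when $F$ avoids $I_{n,j}$.

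It remains to verify summability of $\sum_n\sum_{h_{n+1}\in\Hi_{n+1}}\sum_{j:\,j\rho_n\in\ivoo{0,b}}\Nb\pthb{\Ni(j,n,h_{n+1})\geq1}$, which is where the fast sequence $(\rho_n)$ is essential: there are $\asymp b\rho_n^{-1}$ root levels, and $\rho_n^{-1}=2^{1/\rho_{n-1}}$ is doubly exponential in $n$, while $\#\Hi_{n+1}=2^{n+1}$. For the $g$--term: since $v(\kappa\rho_n)\asymp\rho_n^{-1/(\gamma-1)}=\exp(c/\rho_{n-1})$ and $g(\rho_n)^{-1-\epsilon}\asymp(1/\rho_{n-1})^{1+\epsilon}$, the product $2^{n+1}\rho_n^{-1}v(\kappa\rho_n)\exp\pthb{-g(\rho_n)^{-1-\epsilon}}$ behaves like $2^{n+1}\exp\pthb{c'/\rho_{n-1}-c''(1/\rho_{n-1})^{1+\epsilon}}\to 0$, hence is summable. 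For the $\mu_F$--term: when $F$ avoids $I_{n,j}$ the contribution is zero, and otherwise $I_{n,j}$ is a dyadic interval of generation $\rho_{n-1}^{-1}\in\N$ (as $\rho_n=2^{-1/\rho_{n-1}}$), so for $n$ large assumption \emph{(iii)} yields $\mu_F(I_{n,j})\geq(\rho_{n-1}^{-1}+1)^{-4}\rho_n$; since $\rho_{n+1}=2^{-1/\rho_n}$ is astronomically smaller than $\rho_n$, it follows that $\mu_F(I_{n,j})\rho_{n+1}^{-\epsilon}\geq(\rho_{n-1}^{-1}+1)^{-4}2^{\epsilon/\rho_n-1/\rho_{n-1}}\to\infty$ doubly exponentially, dwarfing the $2^{n+1}\rho_n^{-1}v(\kappa\rho_n)$ prefactor. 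Both pieces being summable, Borel--Cantelli produces the desired $n_0(\Ti)$. The one genuine difficulty is precisely this bookkeeping with the doubly-exponential sequence: one must carry the measure--change factors $v(\kappa\rho_n)$, $v(j\rho_n)$ so that the sum over the $\asymp\rho_n^{-1}$ levels converges, and recognize $I_{n,j}$ as a dyadic interval of generation $\rho_{n-1}^{-1}$ so that hypothesis \emph{(iii)} applies (while, for the slabs missing $F$, the contribution is exactly zero by the final sentence of Lemma~\ref{lemma:stable_usp3_tech0}).
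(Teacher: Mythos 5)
Your proof is correct and follows essentially the same route as the paper: a first-moment bound on the number of subtrees in $\Ai(\rho_n,h_n)\cap\Bi^c$ at each level $j\rho_n$ via the branching property and the tail estimate of Lemma~\ref{lemma:stable_usp3_tech0}, a union bound over $j$, over $\Hi_{n+1}$, and over $n$, then Borel--Cantelli. Your bookkeeping is in fact a bit more careful than the paper's (you verify the $\rho_{n+1}\mid\rho_n$ and $\rho_{n+1}\leq 2^{-1/\rho_n}$ hypotheses of Lemma~\ref{lemma:stable_usp3_tech0}, invoke assumption~(iii) with the correct dyadic generation $\rho_{n-1}^{-1}$ rather than $n$, and explicitly use the final remark of Lemma~\ref{lemma:stable_usp3_tech0} for slabs avoiding $F$), but the argument is structurally identical.
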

\begin{proof}
  For any $n\in\N$, $h_{n+1}\in\Hi_{n+1}$ and $j\geq 1$, we define the r.v.
  \begin{align*}
    \Ni(j,n,h_{n+1}) = \brcb{ \Ti_\sigma \in \Tbb(j\rho_n\,\kappa\rho_n) : \Ti_\sigma \in \Bi(\rho_n,\rho_{n+1},h_{n+1},p_n(h_{n+1}))^c \cap \Ai(\rho_n,p_n(h_{n+1})) },
  \end{align*}
  where $p_n$ stands for the canonical projection $p_n:\Hi_{n+1}\rightarrow\Hi_n$,
  and
  \begin{align*}
    \Ni(n,h_{n+1}) = \sum_{j\rho_n\in\ivoo{0,b}} \Ni(j,n,h_{n+1}).
  \end{align*}
  Recall that for any $j\geq 1$ such that $\ivoo{j\rho_n,(j+1)\rho_n}\cap F\neq\vset$, assumption (iii) on $F$ entails: $\mu_F\pthb{\ivoo{j\rho_n,(j+1)\rho_n}\cap F\neq\vset} \geq (n+1)^{-4}\delta_n$. Hence, using the branching property and Lemma~\ref{lemma:stable_usp3_tech0}, we get
  \begin{align*}
    \Nb_{j\rho_n}\pthcb{ \Ni(j,n,h_{n+1}) \geq 1 }{\Gi_{j\rho_n}}
    &\leq \bk{\ell^{j\rho_n}} \Nb\pthB{ \Bi(\rho_n,\rho_{n+1},h_{n+1},p_n(h_{n+1}))^c \cap \Ai(\rho_n,p_n(h_{n+1})) } \\
    &\leq \bk{\ell^{j\rho_n}} v(\rho_n) \brcB{ \exp\pthb{ -\rho_{n+1}^{-\epsilon/2} } +\exp\pthb{-g(\rho_{n+1})^{-1-\epsilon}} }.
  \end{align*}
  As a consequence,
  \begin{align*}
    \Nb\pthb{ \Ni(n,h_{n+1}) \geq 1 }
    &\leq \sum_{j\rho_n\in\ivoo{0,b}} \Nb\pthb{  \Ni(j,n,h_{n+1}) \geq 1 } \\
    &\leq \sum_{j\rho_n\in\ivoo{0,b}} c_0\,v(\rho_n) \exp\pthb{-g(\rho_{n+1})^{-1-\epsilon}}
    \leq c_1\rho_n^{-\tfrac{\gamma}{\gamma-1}} \exp\pthb{ -g(\rho_n)^{-1-\epsilon} }.
  \end{align*}
  and
  \begin{align*}
    \sum_{n\in\N}\Nb\pthbb{ \bigcup_{h_{n+1}\in\Hi_{n+1}} \brcb{\Ni(n,h_{n+1}) \geq 1} } \leq c_1\, \sum_{n\in\N} 2^{n+2}\rho_n^{-\tfrac{\gamma}{\gamma-1}} \exp\pthb{ -g(\rho_n)^{-1-\epsilon} } < \infty.
  \end{align*}
  Borel--Cantelli lemma then entails the desired result.
\end{proof}

Finally, we also need a lemma which allows us to initialise the construction by induction.
\begin{lemma}  \label{lemma:stable_usp3_tech4}
  Suppose $F$ satisfies \emph{(i)-(iii)}. $\Nb_b(\dt\Ti)$-a.e., there exists $n_0(\Ti)$ such that for all $n\geq n_0$,
  \begin{align*}
    \forall h_n\in\Hi_n,\ \exists I_k\in\Di_n(F);\quad \Tbb((k-1)\rho_n,\kappa\rho_n)\cap \Ai(\rho_n,h_n) \neq\vset.
  \end{align*}
\end{lemma}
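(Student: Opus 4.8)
The plan is to exploit that, at each scale $\rho_n$, the set $F$ meets an enormous number of dyadic intervals — at least $\rho_n^{-(s-\epsilon)}$ of them, with $s=\dimH F$ — whereas $\Hi_n$ contains only $2^{n+1}$ elements, so that for a fixed $h_n\in\Hi_n$ the probability of finding \emph{no} admissible subtree rooted along $F$ at scale $\rho_n$ can be made doubly‑exponentially small in $n$; this comfortably absorbs the union bound over $\Hi_n$ and the Borel--Cantelli sum over $n$.

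First I would fix $\eta\in\ivoo{0,b/2}$ with $\mu_F\pthb{F\cap\ivff{\eta,b-\eta}}\geq\tfrac12$, possible since $\mu_F$ is a probability measure carried by $F\subset\ivoo{0,b}$. By the strong Frostman's lemma, for $n$ large every $I\in\Di(\rho_n)$ has $\mu_F(I)\leq\rho_n^{s-\epsilon}$, so at least $\tfrac14\rho_n^{-(s-\epsilon)}$ intervals $I_k\in\Di(\rho_n)$ satisfy $I_k\subset\ivff{\eta,b-\eta}$ and $I_k\cap F\neq\vset$. Grouping these into runs of $\ceil{1/\kappa}$ consecutive $\Di(\rho_n)$‑intervals and retaining one candidate from every second non‑empty run yields a set $\mathcal K_n$ of candidate indices with $M_n\eqdef\#\mathcal K_n\geq c_\kappa\rho_n^{-(s-\epsilon)}$ whose root levels $a_k\eqdef(k-1)\rho_n$ are pairwise at distance at least $\rho_n/\kappa$; the point of this spacing is that, for $k<k'$ in $\mathcal K_n$, the event $\brcb{\Tbb(a_k,\kappa\rho_n)\cap\Ai(\rho_n,h)=\vset}$ is $\Gi_{a_{k'}}$‑measurable, being determined by the subtrees rooted at $a_k$ up to height $\rho_n/\kappa$.

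Two probabilistic inputs are needed. \emph{(a)} By the branching property, given $\Gi_{a_k}$ on $\brc{\bk{\ell^{a_k}}>0}$, the number $N_k(h)$ of subtrees $\Ti_\sigma$ rooted at $a_k$ with $\Ti_\sigma\in\Ai(\rho_n,h)$ is Poisson with parameter $\bk{\ell^{a_k}}\Nb\pthb{\Ai(\rho_n,h)}$, and any such $\Ti_\sigma$ lies automatically in $\Tbb(a_k,\kappa\rho_n)$. \emph{(b)} A lower bound $\Nb\pthb{\Ai(\rho_n,h)}\geq c_F\,\rho_n^{1-\gamma h}$ uniform in $h\in\Hi$: writing $\Nb\pthb{\Ai(\rho_n,h)}=v(\kappa\rho_n)\Nb_{\kappa\rho_n}\pthb{\Ai(\rho_n,h)}$, I would bound below by the product of $\Nb_{\kappa\rho_n}\pthb{\bk{\ell^{\kappa\rho_n}}\in\ivff{\tfrac54\rho_n^{h},\tfrac74\rho_n^{h}}}$, which is $\asymp\rho_n^{\gamma/(\gamma-1)-\gamma h}$ by Lemma~\ref{lemma:local_time_tail} and self‑similarity, and the conditional probability that $\bk{\ell^u}$ stays in $\ivff{\rho_n^{h},2\rho_n^{h}}$ throughout $\ivff{\kappa\rho_n,\rho_n/\kappa}$; by Ray--Knight the latter equals $\inf_{x}\mathbb P_x\bigl(X_u\in\ivff{\rho_n^{h},2\rho_n^{h}}\ \forall u\leq\rho_n(\tfrac1\kappa-\kappa)\bigr)$ over $x\in\ivff{\tfrac54\rho_n^{h},\tfrac74\rho_n^{h}}$ for a stable CSBP $X$, and this tends to $1$ uniformly in $h\in\Hi$ because $\rho_n\ll(\rho_n^{h})^{\gamma-1}$ — a short‑time estimate obtained from Lemma~\ref{lemma:csbp_inf_lower_tail} for the downward deviation and from non‑explosion of $X$ for the upward one. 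Since $v(\kappa\rho_n)\asymp\rho_n^{-1/(\gamma-1)}$, (b) follows.

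Fixing $h_n\in\Hi_n$ and a rational $c_0>0$, set $G_{c_0}=\brcb{\inf_{u\in\ivff{\eta/2,b-\eta}}\bk{\ell^u}\geq c_0}$, enumerate the root levels of $\mathcal K_n$ as $a_{(1)}<\dots<a_{(M_n)}$, and peel them off from the top: since $\prod_{j<i}\indi_{\brc{N_{a_{(j)}}(h_n)=0}}$ and $\indi_{\brc{\bk{\ell^{a_{(i)}}}\geq c_0}}$ are $\Gi_{a_{(i)}}$‑measurable, input (a) gives
\begin{align*}
  \Nb\pthbb{\bigcap_{i=1}^{M_n}\pthb{\brc{N_{a_{(i)}}(h_n)=0}\cap\brc{\bk{\ell^{a_{(i)}}}\geq c_0}}}\leq v(\tfrac\eta2)\,\e^{-c_0M_n\Nb(\Ai(\rho_n,h_n))}.
\end{align*}
As $G_{c_0}\subset\bigcap_i\brc{\bk{\ell^{a_{(i)}}}\geq c_0}$, and since $M_n\Nb(\Ai(\rho_n,h_n))\geq c_Fc_\kappa\,\rho_n^{\,1-\gamma h_n-(s-\epsilon)}\geq c\,\rho_n^{-\epsilon}$ uniformly in $h_n$ (using $\gamma h_0-1+s>2\epsilon$), a union bound over $\Hi_n$ shows that $G_{c_0}$ intersected with ``some $h_n\in\Hi_n$ admits no candidate'' has $\Nb$‑mass at most $2^{n+1}v(\tfrac\eta2)\e^{-c\rho_n^{-\epsilon}}$. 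Because $\rho_n^{-1}$ grows like a tower function, $\sum_n 2^{n+1}\e^{-c\rho_n^{-\epsilon}}<\infty$, and since these events lie inside $\brc{h(\Ti)>b-\eta}$, a set of finite $\Nb$‑mass, Borel--Cantelli applies. To remove $G_{c_0}$ I would use that under $\Nb_b$ the process $u\mapsto\bk{\ell^u}$ is a spectrally positive stable CSBP that reaches $0$ only continuously, hence stays bounded away from $0$ on the compact set $\ivff{\eta/2,b-\eta}\subset\ivfo{0,h(\Ti)}$; thus $\Ti\in G_{c_0}$ for some rational $c_0>0$, $\Nb_b$‑a.e., and a countable union over $c_0$ concludes. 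The main obstacle is input (b): one needs a lower bound on $\Nb(\Ai(\rho_n,h))$ that is merely polynomially small in $\rho_n$ and uniform over $h\in\Hi$, which is exactly why the local time must be started in the interior of the band and run only over a time short compared with $(\rho_n^h)^{\gamma-1}$; the dependence across levels, by contrast, is rendered harmless by the $\ceil{1/\kappa}$‑block construction and the top‑down peeling.
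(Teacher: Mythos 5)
Your argument follows essentially the same route as the paper's: restrict $F$ to an interior band on which the local time is bounded below (your $G_{c_0}$ plays the role of the paper's $\Bi(\eps,b-\eps)$), use the strong Frostman lemma to secure $\geq c\,\rho_n^{-s+\epsilon}$ candidate intervals, decouple consecutive candidates by a spacing of order $\rho_n/\kappa$, peel off the branching-property Poisson bound level by level from the top, take a union bound over the $2^{n+1}$ elements of $\Hi_n$, and close with Borel--Cantelli, the tower decay of $\rho_n$ absorbing everything. Where you differ it is by being somewhat more explicit: you handle general $\kappa$ via $\lceil 1/\kappa\rceil$-blocks whereas the paper's mod-$2$ parity argument implicitly relies on $\rho_n/\kappa\le 2\rho_n$, and you isolate and sketch the uniform lower bound $\Nb\pthb{\Ai(\rho_n,h)}\geq c\,\rho_n^{1-\gamma h}$ (which must also control the upper constraint $\bk{\ell^u}\le 2\rho_n^h$, not just the infimum) — a fact the paper invokes without a separate statement.
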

\begin{proof}
  As the proof is a simpler version of the coming Lemma~\ref{lemma:stable_usp2_tech0}, we only focus on the main arguments.
  Without any loss of generality, we assume that $F\subset\ivoo{\eps,b-\eps}$, where $\eps>0$ is sufficiently small.
  Since the local time is càdlàg, $\brc{h(\Ti)>b} \subset \lim_{\ell_\star\rightarrow 0} \brcb{\inf_{a\in\ivff{\eps,b-\eps}} \bk{\ell^a} \geq \ell_\star }$. Hence, let us set $\ell_\star >0$ and define
  \begin{align*}
    \forall u,v\in\ivoo{0,b};\quad \Bi(u,v) = \brcB{\Ti : \inf_{a\in\ivff{u,v}} \bk{\ell^a} \geq \ell_\star }.
  \end{align*}
  In addition, for any $n\in\N$ and $k\geq 1$, let $Y(k,n) = \#\pthb{\Tbb((k-1)\rho_n,\kappa\rho_n)\cap \Ai(\rho_n,h_n)}$. Then, due to the branching property, we observe
  \begin{align*}
    \Nb_{(k-1)\rho_n}\pthcb{ Y(k,n)=0 \cap \Bi(\eps,(k+1)\rho_n) }{\Gi_{(k-1)\rho_n}} \leq \exp\pthb{-c_0\ell_\star \rho_n^{1-\gamma h_n} } \indi_{\Bi(\eps,(k-1)\rho_n)}.
  \end{align*}
  Therefore, for any $i\in\brc{0,1}$, we get by induction
  \begin{align*}
    \Nb_b\pthbb{ \Bi(\eps,b-\eps) \cap \bigcap_{I_k\in\Di_{n,i}(F)} Y(k,n)=0 } \leq c_1\exp\pthb{-c_0\ell_0 \,\#\Di_{n,i}(F)\rho_n^{1-\gamma h_n} },
  \end{align*}
  where $\Di_{n,i}(F) = \brc{I_k\in\Di_n(F) : k\text{ mod }2 = i}$. Note that for some $i\in\brc{0,1}$, $\#\Di_{n,i}(F)\geq \#\Di_{n}(F) / 2  \geq c\,\mu_F(F) \rho_n^{-s+\epsilon}$. Consequently,
  \begin{align*}
    \sum_{h_n\in\Hi_n} \Nb_b\pthbb{ \Bi(\eps,b-\eps) \cap \bigcap_{I_k\in\Di_{n}(F)} Y(k,n)=0 }
    &\leq c_1 2^n\exp\pthb{-c_2 \mu_F(F) \rho_n^{1-\gamma h_n-s+\epsilon} } \\
    &\leq c_12^n\exp\pthb{-c_2 \mu_F(F) \rho_n^{-\epsilon} }.
  \end{align*}
  Hence, due to Borel--Cantelli lemma, on the event $\Bi(\eps,b-\eps)$, there exists $n_0(\Ti)$ such that for all $n\geq n_0$ and all $h_n\in\Hi_n$, there is $I_k\in\Di_n(F)$ such that $\Tbb((k-1)\rho_n,\kappa\rho_n)\cap \Ai(\rho_n,h_n) \neq\vset$. We conclude the proof of the lemma by considering the limits $\ell_\star\rightarrow 0$ and $\eps\rightarrow 0$.
\end{proof}

Using the previous lemmas, we may now present the construction of a proper collection of measures on the set $\brcb{ a\in F : F_\ell(h,\Ti)\cap \Ti(a) \neq \vset }$.
\begin{lemma}  \label{lemma:usp_th3_cstr1}
  Suppose $F$ satisfies \emph{(i)-(iii)}. $\Nb_b$-a.e. and for every $h\in\Hi$, there exists a nonempty compact set $\Ii(h)$ such that
  \begin{align*}
    \Ii(h) \subset \brcb{ a\in F : F_\ell(h,\Ti)\cap \Ti(a) \neq \vset }.
  \end{align*}
  In addition, there is a probability measure $\mu_h$ supported by $\Ii(h)$ such that for all $a\in\Ii(h)$,
  \begin{align*}
    \forall r\in\ivoo{0,r_0};\quad \mu_h(B(a,r)) \leq r^{\gamma h-1+s-\eps(r)} g(r)^{-\eta},
  \end{align*}
  where $\eps(\cdot)$ is a positive non-decreasing function satisfying $\lim_{\eps\rightarrow 0} \eps(r) = 0$, and $r_0>0$ and $\eta>0$ are independent of $h$ and $a$.
\end{lemma}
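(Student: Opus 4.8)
The plan is to mimic the Cantor-type construction used for Theorem~\ref{th:levy_tree_upper_spectrum1}, but now working \emph{on the level set parameter} $a\in F$ rather than inside a single level set. First I would set up the sequence $(\rho_n)_{n\in\N}$ with $\rho_n=2^{-\rho_{n-1}^{-1}}$, $\rho_0=1$, and the binary-tree approximation $\Hi_n$ of the closed interval $\Hi=\ivff{h_0,h_1}\subset\ivoob{0,\tfrac1\gamma}$ with $\gamma h_0-1+s>2\eps$. By Lemma~\ref{lemma:stable_usp3_tech2} it is enough to treat a set $F$ satisfying \emph{(i)-(iii)}, so I assume this throughout and keep a fixed probability measure $\mu_F$ on $F$. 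The construction produces, by induction on $n\geq n_0(\Ti)$, a nested family of finite collections of subtrees $\brcb{\Ti_\sigma}$ rooted at levels $k\rho_n\in\ivoo{0,b}$, each belonging to $\Tbb((k-1)\rho_n,\kappa\rho_n)\cap\Ai(\rho_n,h_n)$, for every dyadic $h_n\in\Hi_n$ simultaneously; the initialisation at level $n_0$ is furnished by Lemma~\ref{lemma:stable_usp3_tech4}, and the inductive step — that every such subtree at stage $n$ automatically lies in $\Bi(\rho_n,\rho_{n+1},h_{n+1},h_n)$ and hence gives rise to the next stage — is exactly Lemma~\ref{lemma:stable_usp3_tech3}. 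Along each such subtree one has $\bk{\ell^u}(\Ti_\sigma)\in\ivffb{\rho_n^{h_n},2\rho_n^{h_n}}$ on $\ivff{\kappa\rho_n,\rho_n/\kappa}$, so in the limit, for a vertex $\sigma$ sitting on level $a$ inside all the nested subtrees, $\liminf_{r\to 0}\tfrac{\log\ell^a(B(\sigma,r))}{\log r}\leq h$ provided $h_n\to h$; choosing the dyadic approximation $h_n=p_n(h)$ of $h\in\Hi$ gives a point of $F_\ell(h,\Ti)\cap\Ti(a)$ for every $a$ retained by the construction. The set $\Ii(h)$ is then the decreasing intersection $\cap_n$ of the (closed, by compactness of $F$ and of $\Ti$) sets of retained levels, which is nonempty and compact, and by construction $\Ii(h)\subset\brcb{a\in F:F_\ell(h,\Ti)\cap\Ti(a)\neq\vset}$.

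Next I would build the measure $\mu_h$ on $\Ii(h)$ by the standard Cantor mass-splitting, using at each stage the renormalised measure $\nu$ from Lemma~\ref{lemma:stable_usp3_tech0}. Concretely, having distributed mass $\mu_n$ over the retained dyadic intervals $I\in\Di(\rho_n,h_n)$ at stage $n$, I split the mass of each such $I$ among the retained subintervals $I'\in\Di(\rho_{n+1},h_{n+1})$ proportionally to $\mu_F(I'\cap I)$ (this is precisely the structure of $\nu$ in Lemma~\ref{lemma:stable_usp3_tech0}, whose restriction to intervals of $\Di(\delta,h)$ is deterministic, up to a constant, equal to $\mu_F$). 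The measures $\mu_n$ are tight on the compact $\ivff{0,b}$, and the Cantor structure together with Portmanteau gives a unique weak limit $\mu_h$, which after normalisation is a probability measure supported by $\Ii(h)$. The mass-conservation half of the event $\Bi(\rho_n,\rho_{n+1},h_{n+1},h_n)$, namely $\nu(F)\geq c_\nu\mu_F(\ivff{\rho_n,2\rho_n})$, ensures that the recursive splitting never loses a definite proportion of the mass, so that the limit is genuinely a probability measure and not the zero measure; here I also use the lower bound in assumption \emph{(iii)} to control $\mu_F(I)$ from below along the dyadic scales.

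For the upper ball estimate I would argue exactly as in the proof of Lemma~\ref{lemma:usp_construction_spectrum4}: fix $a\in\Ii(h)$, $r\in\ivof{\rho_n,\rho_{n-1}}$, let $I_{n-1}\in\Di(\rho_{n-1},h_{n-1})$ be the unique stage-$(n-1)$ interval containing $B(a,r)\cap F$, and count the number $N(a,r)$ of stage-$n$ retained subintervals meeting $B(a,r)$; the second bullet of the event $\Bi(\rho_{n-1},\rho_n,h_{n-1},h_{n-2})$ gives $\nu(B(a,r))\leq g(r)^{-\beta}r^{\gamma h-1+s-\eps}\rho^{1-\gamma h}$ at each scale, and iterating the mass-splitting bounds yields $\mu_h(I_{n-1})\leq c\,\rho_{n-1}^{\gamma h_{n-1}-1+s}g(\rho_{n-1})^{-\eta}$ up to the usual logarithmic losses absorbed by the exponential decay of $(\rho_n)$. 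Combining $\mu_h(B(a,r))\leq N(a,r)\cdot\mu_h(I_{n-1})$ and distinguishing $r\in\ivof{\rho_n,\vartheta_n}$ from $r\in\ivof{\vartheta_n,\rho_{n-1}}$ exactly as before (using $\gamma h-1<0$ and $r\leq\rho_{n-1}$) gives $\mu_h(B(a,r))\leq r^{\gamma h-1+s-\eps(r)}g(r)^{-\eta}$ with $\eps(r)\to 0$ and $\eta$, $r_0$ independent of $h$ and $a$; uniformity in $h\in\Hi$ comes from $h_n\to h$ uniformly on the compact interval $\Hi$. The main obstacle, as in Theorem~\ref{th:levy_tree_upper_spectrum1}, is to carry out the double induction so that \emph{all} the dyadic exponents $h_n\in\Hi_n$ are handled by the \emph{same} random collections (hence the $2^n$ factors in Lemmas~\ref{lemma:stable_usp3_tech3} and \ref{lemma:stable_usp3_tech4}, which are killed by the doubly-exponential decay of $\rho_n^{-1}$ inside $\exp(-g(\rho_n)^{-1-\epsilon})$), and to make sure the measure $\mu_h$ is built from a limit that does not depend on the random support of $\nu$ — which is why Lemma~\ref{lemma:stable_usp3_tech0} was stated so that $\nu$ agrees with the deterministic $\mu_F$ on the retained intervals. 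The extension from a set satisfying \emph{(i)-(iii)} to a general regular $F$ satisfying the strong Frostman's lemma, and then the passage to the actual iso-Hölder sets $E_\ell(h,\Ti)$ and to the mass measure, are postponed to the subsequent lemmas and are not needed here.
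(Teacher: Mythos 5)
Your proposal follows the paper's proof essentially step by step: same nested Cantor construction of $\Ibb(n)$ and $\Tbb(n)$ via Lemmas~\ref{lemma:stable_usp3_tech4} (initialisation) and \ref{lemma:stable_usp3_tech3} (inductive step), same identification of a vertex $\sigma\in\cap_n\Ti_n\cap\Ti(a)$ witnessing $F_\ell(h,\Ti)\cap\Ti(a)\neq\vset$, and same Cantor mass-splitting construction $\mu_{n+1}(\dt a\cap I_n)=\tfrac{\mu_n(I_n)}{\nu(I_n)}\nu(\dt a\cap I_n)$ with the mass-conservation bullet of $\Bi(\cdot)$ controlling the renormalising constant.

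One point worth flagging: in the ball estimate you have imported some machinery from the proof of Lemma~\ref{lemma:usp_construction_spectrum4} that is not present in (and not needed for) this proof. There is no $\vartheta_n$ in the construction here, and there is no separate counting of $N(a,r)$; the event $\Bi(\rho_n,\rho_{n+1},h_{n+1},h_n)$ already bounds $\nu(B(a,r))$ uniformly over $r\in\ivff{\rho_{n+1},\rho_n}$, so the estimate is obtained directly from
\begin{align*}
\mu_h\pthb{B(a,r)} = \frac{\mu_n(I_n)}{\nu(I_n)}\,\nu\pthb{B(a,r)}
\end{align*}
together with the inductive bound $\tfrac{\mu_n(I_n)}{\nu(I_n)}\leq c_1\,g(\rho_n)^{-\eta}\rho_n^{\gamma h_n-1}$; no case distinction on $r$ is required. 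Also note a small indexing slip: the relevant event at the step from scale $\rho_n$ to $\rho_{n+1}$ is $\Bi(\rho_n,\rho_{n+1},h_{n+1},h_n)$, not $\Bi(\rho_{n-1},\rho_n,h_{n-1},h_{n-2})$. Neither issue affects the overall validity of your plan.
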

\begin{proof}
  Recall that due to the previous Lemma~\ref{lemma:stable_usp3_tech3}, $\Nb_b$-a.e. for any $n\geq n_0$, every $h_n\in\Hi_n$ and all $j\rho_n\in\ivoo{0,b}$
  \begin{align*}
    \Ti_\sigma\in\Tbb(j\rho_n,\kappa\rho_n)\cap \Ai(\rho_n,h_n)\Longrightarrow\Ti_\sigma\in\Bi(\rho_n,\rho_{n+1},h_{n+1},h_n),
  \end{align*}
  where $h_{n+1} \eqdef h_n i$ and $i\in\brc{0,1}$.
  In addition, for any $n\geq n_0$, Lemma~\ref{lemma:stable_usp3_tech4} states that  $\Nb_b$-a.e. there exists $I_k\in\Di_n(F)$ such that  $\Tbb((k-1)\rho_n,\kappa\rho_n)\cap \Ai(\rho_n,h_n) \neq\vset$.

  Let us start with the construction of the set $\Ii(h)$, where $h\in\Hi$ is fixed. Similarly to the proof of Lemma~\ref{lemma:usp_construction_spectrum4}, we simultaneously define by induction the collections $(\Ibb(n))_{n\in\N}$ of nested dyadic intervals and the collections $(\Tbb(n))_{n\in\N}$ of nested subtrees.

  Then, let $I_k$ be the interval satisfying Lemma~\ref{lemma:stable_usp3_tech4}, with $n=n_0$, and define $\Ibb(n_0) \eqdef \brcb{ \ivff{k\rho_{n_0},(k+1)\rho_{n_0}} }$ and $\Tbb(n_0) \eqdef \brc{ \Ti_\sigma }$, where $\Ti_\sigma$ is an element of the non-empty collection $\Tbb((k-1)\rho_{n_0},\kappa\rho_{n_0})\cap \Ai(\rho_{n_0},h_{n_0})$.

  Let us now suppose that $\Ibb(n)$ and $\Tbb(n)$ have been properly defined for a given $n\geq n_0$. For any $I_n\in\Ibb(n)$ and the corresponding $\Ti_n\in\Tbb(n)$, we set:
  \[
    \Ibb(n+1,I_n) \eqdef \brcb{I\in\Di(\rho_{n+1},h_{n+1},\Ti_n) : I\subset I_n }
  \]
  using notations introduced in Lemma~\ref{lemma:stable_usp3_tech0}. $\Tbb(n+1,\Ti_n)$ is then defined as the collection of subtrees naturally associated to every $I_{n+1}\in\Ibb(n+1,I_n)$. Using the previous notations, we also set
  \[
    \Ibb(n+1) = \bigcup_{I_n\in\Ibb(n)} \Ibb(n+1,I_n) \quad\text{and}\quad \Tbb(n+1) = \bigcup_{\Ti_n\in\Tbb(n)} \Tbb(n+1,\Ti_n).
  \]
  The previous construction ensures that for every $\Ti_{n+1}\in\Tbb(n+1)$,
  \[
    \forall u\in\ivff{\kappa\rho_{n+1},\rho_{n+1}/\kappa};\quad \bk{\ell^u}(\Ti_{n+1}) \in \ivffb{\delta^{h_{n+1}}_{n+1}, 2\delta^{h_{n+1}}_{n+1} },
  \]
  which therefore proves the consistency of the induction. Finally, we may define the set $\Ii(h)$ as following
  \[
    \Ii(h) = \bigcap_{n\geq n_0} \Ii(h,n) \quad\text{where } \Ii(h,n) \eqdef \bigcup_{I_n\in\Ibb(n)} I_n.
  \]
  Since $(\Ii(h,n))_{n\in\N}$ is a decreasing sequence of compact set, $\Ii(h)$ is readily compact and non-empty.\vsp

  We may know prove that the set $\Ii(h)$ satisfies the expected properties. For any $a\in\Ii(h)$, the definition of $\Ii(h)$ and the compactness of $F$ ensures that $a\in F$. In addition, there exists a sequence $(\Ti_n)_{n\in\N}$ of embedded subtrees such that for every $n\geq n_0$, $\Ti_n\in\Tbb(j\rho_n,\kappa\rho_n)$ for some $j\in\N$, $a\in j\rho_n+\ivff{\kappa\rho_n,\rho_n/\kappa}$ and $\inf_{u\in\ivff{\kappa\rho_{n},\rho_{n}/\kappa}} \bk{\ell^u}(\Ti_{n}) \geq \delta^{h_n}_{n}$. The compactness of the previous subtrees ensures the existence of $\sigma\in\cap_{n\geq n_0} \Ti_{n}\cap\Ti(a)$, and due to the previous properties,
  \[
    \forall n\in\N;\quad \ell^a(B(\sigma,4\rho_n)) \geq \rho_n^{h_n}.
  \]
  The last bound clearly implies that $\sigma\in F_\ell(h,\Ti)\cap \Ti(a)$ and $\Ii(h) \subset \brcb{ a\in F : F_\ell(h,\Ti)\cap \Ti(a) \neq \vset }$.\vsp

  Let us now present the construction of the probability measure $\mu_h$ on the set $\Ii(h)$. Once more, we proceed similarly to the proof of Lemma~\ref{lemma:usp_construction_spectrum4} and define a converging sequence $(\mu_n)_{n\geq n_0}$ by induction, relying on the modification of $\mu_F$ presented in Lemma~\ref{lemma:stable_usp3_tech0}. We begin by setting
  \[
    \mu_{n_0}(\dt a) = c_0\, \mu_F(\dt a\cap I_{n_0}),
  \]
  where $I_{n_0}$ is the only interval in $\Ibb(n_0)$ and $c_0$ is a normalising constant such that $\mu_{n_0}(F)=1$. Then, given $\mu_n$ supported by $\Ii(h,n)\cap F$, we simply construct $\mu_{n+1}$ as following: for every $I_n\in\Ibb(n)$, we consider the definition of $\nu$ presented in Lemma~\ref{lemma:stable_usp3_tech0} using the parameters $\rho=\rho_n$, $\delta=\rho_{n+1}$, $h_\star=h_n$ and $h=h_{n+1}$. The measure $\mu_{n+1}$ is then defined on the interval $I_n$ by:
  \begin{align*}
    \mu_{n+1}(\dt a \cap I_n) = \frac{\mu_n(I_n)}{\nu(I_n)}\,\nu(\dt a\cap I_n).
  \end{align*}
  Note the construction by induction based on Lemmas~\ref{lemma:stable_usp3_tech0} and \ref{lemma:stable_usp3_tech3} is licit as the restriction of $\mu_n(\dt a)$ to $I_n$ is up to a multiplicative constant the deterministic measure $\mu_F(\dt a\cap I_n)$. In addition, since $\nu(I_n) \geq c_\nu\mu_F(I_n) = c\,\mu_n(I_n)$, $\nu(I_n)=0$ only if $\mu_n(I_n)=0$, therefore proving the consistency of the definition of $\mu_{n+1}(\dt a \cap I_n)$. To obtain a proper mass distribution, one needs to bound more precisely the renormalising constant:
  \begin{align*}
    \frac{\mu_n(I_n)}{\nu(I_n)} = \frac{\mu_n(I_n)}{\mu_F(I_n)}\cdot \frac{\mu_F(I_n)}{\nu(I_n)} \leq c_\nu^{-1} \frac{\mu_n(I_n)}{\mu_F(I_n)}.
  \end{align*}
  The latter term can be estimated based on the definition of $\nu$:
  \begin{align*}
    \frac{\mu_{n+1}(I_{n+1})}{\mu_F(I_{n+1})}
    = \frac{\mu_{n}(I_{n})}{\nu(I_n)} \,\rho_n^{-h_n} \rho_{n+1}^{\gamma h_{n+1}-1}
    \leq c_\nu^{-1} \frac{\mu_{n}(I_n)}{\mu_F(I_n)} \,\rho_n^{-h_n} \rho_{n+1}^{\gamma h_{n+1}-1}.
  \end{align*}
  Hence, by induction and owing the exponential convergence of $(\rho_n)_{n\in\N}$, there exist two constants $\eta>0$ and $c_1>0$ such that
  \begin{align*}
    \forall n\geq n_0;\quad \frac{\mu_{n}(I_n)}{\mu_F(I_n)} \leq c_1\,g(\rho_n)^{-\eta} \,\rho_{n}^{\gamma h_{n}-1}\quad\text{and}\quad \frac{\mu_n(I_n)}{\nu(I_n)} \leq c_1\,g(\rho_n)^{-\eta}\,\rho_{n}^{\gamma h_{n}-1}.
  \end{align*}
  Every probability measure $\mu_n$ is clearly supported by the set $\Ii(h,n)$. The Cantor structure of the latter and the Portmanteau theorem then ensure the convergence of the sequence $(\mu_n)_{n\in\N}$ to a unique measure $\mu_h$ supported by $\Ii(h)$.\vsp

  Finally, in the last part of the proof, let us prove $\mu_h$ satisfies a proper mass distribution principle. Let $a\in\Ii(h)$, $\epsilon>0$ and $r>0$ sufficiently small. There exists $n\in\N$ such that $r\in\ivfo{\rho_{n+1},\rho_{n}}$. In addition, without any loss of generality, we may suppose that $B(a,r)\subset I_{n}$, for some $I_{n}\in \Di_{n}(F)$ (otherwise, simply consider the intersection with the later). Then, the construction described in Lemma~\ref{lemma:stable_usp3_tech0} and the previous estimates entail
  \begin{align*}
    \mu_h(B(a,r)) = \frac{\mu_n(I_n)}{\nu(I_n)}\,\nu(B(a,r))
    &\leq c_1\,g(\rho_n)^{-\eta}\,\rho_{n}^{\gamma h_{n}-1} \cdot g(r)^{-\beta} r^{\gamma h_{n+1}-1+s-\epsilon} \rho_n^{-\gamma h_{n+1}+1} \\
    &\leq c_1\,g(r)^{-\eta-\beta}\,r^{\gamma h_{n+1}-1+s + \gamma(h_n-h_{n+1})-\epsilon}.
  \end{align*}
  Finally, since we know that $h_n\rightarrow h$ uniformly on the interval $\Hi$ and the previous inequality holds for any $\epsilon>0$, we obtain the desired bound on $\mu_h(B(a,r))$.
\end{proof}

We may now finally prove the lower bound of Theorem~\ref{th:levy_tree_upper_spectrum2}.
\begin{proof}[Proof of Theorem~\ref{th:levy_tree_upper_spectrum2} (lower bound)]
  Let us first observe that it is sufficient to prove for any $b>0$ that $\Nb_b$-a.e.
  \begin{align*}
    \forall h\in\ivoob{\tfrac{1-\dimH F_b}{\gamma}, \tfrac{1}{\gamma}};\quad \dimH \,\pthb{ E_\ell(h,\Ti)\cap \Ti(F_b) } \geq \gamma h - 1 + \dimH F_b
  \end{align*}
  where $F_b=F\cap\ivoo{0,b}$. Note that $F_b$ still satisfies a strong Frostman's lemma \eqref{eq:strong_frostman} if $F$ does. In addition, since the height function $h:\Ti\rightarrow\R_+$ is Lipschitz, the previous bound is a corollary of the following lower bound:
  \begin{align*}
    \forall h\in\ivoob{\tfrac{1-\dimH F_b}{\gamma}, \tfrac{1}{\gamma}};\quad \dimH \,\brcb{ a\in F_b : E_\ell(h,\Ti)\cap \Ti(F_b) \neq \vset } \geq \gamma h - 1 + \dimH F_b.
  \end{align*}
  Hence, let us set $b>0$, $s=\dimH F_b$, $\Hi\subset\ivoob{\tfrac{1-s}{\gamma}, \tfrac{1}{\gamma}}$ and $F_\star \subset F_b$ satisfying \emph{(i)-(iii)}. Then, using the notation introduced in the previous Lemma~\ref{lemma:usp_th3_cstr1} and the bound presented in Lemma~\ref{lemma:usp_spectrum_ub2}, we note that
  \begin{align*}
    \mu_h\pthbb{ \bigcup_{h'<h} \brcb{ a\in F_\star : E_\ell(h',\Ti)\cap \Ti(F_\star) \neq \vset } } = 0.
  \end{align*}
  Hence, setting
  \begin{align*}
    \widehat \Ii(h) = \Ii(h) \setminus \bigcup_{h'<h} \brcb{ a\in F_\star : E_\ell(h',\Ti)\cap \Ti(F_\star) \neq \vset },
  \end{align*}
  we observe that $\widehat\Ii(h) \subset \brcb{ a\in F_\star : E_\ell(h,\Ti)\cap \Ti(F_\star) \neq \vset }$ and $\mu_h\pthb{\widehat\Ii(h)} \in\ivoo{0,\infty}$. As a consequence, the mass distribution principle and Lemma~\ref{lemma:usp_th3_cstr1} entails that $\Nb_b$-a.e.
  \begin{align*}
    \forall h\in\ivoob{\tfrac{1-s}{\gamma}, \tfrac{1}{\gamma}};\quad \dimH \,\brcb{ a\in F_\star : E_\ell(h,\Ti)\cap \Ti(F_\star) \neq \vset } \geq \dimH \,\widehat \Ii(h) \geq \gamma h - 1 + \dimH F_\star,
  \end{align*}
  therefore proving the lower bound on the multifractal spectrum of the local time. The mass measure case is treated similarly using the property $F_\ell(h,\Ti)\subset F_\mb(h+1,\Ti)$.
\end{proof}

\subsubsection{Proof of Theorem~\ref{th:levy_tree_upper_spectrum3} (upper bound)}

The proof of the upper bound of Theorem~\ref{th:levy_tree_upper_spectrum3} is split into two technical lemmas and is mainly inspired by the work of \citet{Khoshnevisan.Peres.ea-2000}. To begin with, we investigate the case of a well-behaving compact set.
\begin{lemma}  \label{lemma:stable_usp2_tech0}
  Suppose $b>0$ and $F\subset\ivoo{0,b}$ is a compact set such that
  \begin{align*}
    \text{for all open sets $V$ s.t. }V\cap F\neq\vset\text{, }\dimBu (F\cap V) \geq s,
  \end{align*}
  for some $s>0$. Then, $F_\ell(\tfrac{1-s}{\gamma},\Ti)$ and $F_\mb(\tfrac{1+\gamma-s}{\gamma},\Ti)$ are $\Nb_b$-a.s. dense in $\Ti(F)$.
\end{lemma}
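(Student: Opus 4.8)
The plan is to show that $\Nb_b$-a.s. the set $\brcb{a\in F : F_\ell(\tfrac{1-s}{\gamma},\Ti)\cap\Ti(a)\neq\vset}$ meets every open interval $V$ with $V\cap F\neq\vset$, and then lift this to density in $\Ti(F)$ using the fact that level sets are covered by balls rooted at dyadic levels. The key observation is that $h=\tfrac{1-s}{\gamma}$ is precisely the critical index where the expected number of ``good'' subtrees of height $\asymp\delta_n$ with local time $\geq\delta_n^h$ rooted at levels in $\Di_n(F\cap V)$ is of order $\#\Di_n(F\cap V)\,\delta_n^{1-\gamma h}=\#\Di_n(F\cap V)\,\delta_n^{s}$, which stays bounded below away from zero along a subsequence on which $\#\Di_n(F\cap V)\gtrsim\delta_n^{-s+\eps}$, guaranteed by $\dimBu(F\cap V)\geq s$.

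First I would fix a countable basis $\brc{V_j}$ of open intervals meeting $F$ and, for each $j$, work on the event $\brc{h(\Ti)>b}$ (of positive $\Nb$-measure). For every $n$, let $Y_j(n)$ be the number of subtrees $\Ti_\sigma$ rooted at some level $k\delta_n$ with $I_k\in\Di_n(F\cap V_j)$ satisfying $\inf_{u\in\ivff{\kappa\delta_n,\delta_n/\kappa}}\bk{\ell^u}(\Ti_\sigma)\geq\delta_n^h g(\delta_n)^{-\alpha}$ for a suitable $\alpha$; conditionally on $\brc{h(\Ti)>b}$ and the truncated trees, $Y_j(n)$ dominates a sum of independent Poisson variables whose total parameter, by Lemma~\ref{lemma:local_time_inf_tail0} and the branching property, is $\asymp \ell_\star\,\#\Di_n(F\cap V_j)\,\delta_n^{1-\gamma h}g(\delta_n)^{\alpha\gamma}$ on the event $\brc{\inf_{\ivff{\eps,b-\eps}}\bk{\ell^a}\geq\ell_\star}$. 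Since $1-\gamma h=s$ and $\#\Di_n(F\cap V_j)\geq\delta_n^{-s+\eps}$ infinitely often (upper box dimension $\geq s$ forces this along a subsequence), the parameter is $\gtrsim\delta_n^{-\eps}g(\delta_n)^{\alpha\gamma}\to\infty$ along that subsequence, so by a Chernoff bound $\prb{Y_j(n)=0}$ is summable along it; Borel--Cantelli then gives $Y_j(n)\geq1$ for some arbitrarily large $n$, $\Nb_b$-a.s. Letting $\ell_\star\to0$ and $\eps\to0$ removes the auxiliary restrictions.

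Next, from $Y_j(n)\geq1$ for arbitrarily large $n$ I would extract, by compactness of $F$ and of the subtrees, a point $a\in F\cap V_j$ and a vertex $\sigma\in\Ti(a)$ with $\ell^a(B(\sigma,c\delta_n))\geq\delta_n^h g(\delta_n)^{-\alpha}$ for infinitely many $n$, which forces $\alpha_\ell(\sigma,\Ti)\leq h=\tfrac{1-s}{\gamma}$, i.e. $\sigma\in F_\ell(\tfrac{1-s}{\gamma},\Ti)\cap\Ti(a)$ with $a\in V_j$. Intersecting over the countable family $\brc{V_j}$ yields $\Nb_b$-a.s. density of $\brcb{a\in F : F_\ell(\tfrac{1-s}{\gamma},\Ti)\cap\Ti(a)\neq\vset}$ in $F$, and since each such vertex $\sigma$ lies in $\Ti(V_j\cap F)$ and every point of $\Ti(F)$ is within $2\delta_n$ of a vertex of $\Ti(a)$ for $a$ in a dense subset of $F$, density of the witnessing vertices in $\Ti(F)$ follows. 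The mass measure statement is then immediate from Proposition~\ref{prop:scaling_exponents}, since $\alpha_\ell(\sigma,\Ti)\leq\tfrac{1-s}{\gamma}\leq\tfrac{1}{\gamma-1}$ implies $\alpha_\mb(\sigma,\Ti)\leq\tfrac{1+\gamma-s}{\gamma}$, so the same vertices witness density of $F_\mb(\tfrac{1+\gamma-s}{\gamma},\Ti)$.

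The main obstacle I anticipate is the interchange of the ``infinitely often'' quantifier (inherent in $\dimBu(F\cap V)\geq s$, which only gives a subsequence $n_k$ with $\#\Di_{n_k}(F\cap V)$ large) with the countable union over basis sets $V_j$ and the compactness extraction of a single limiting vertex: one must be careful that the subsequence along which $Y_j(n)\geq1$ may depend on $j$ and on $\omega$, so the Borel--Cantelli argument has to be run separately for each $j$ with its own summable subsequence, and the diagonal/compactness step producing $(a,\sigma)$ must only use that infinitely many scales are good, not a fixed sequence. A secondary technical point is justifying that the subtrees realizing $Y_j(n)\geq1$ are genuinely nested enough—or rather that one does not need nestedness here, only a convergent subsequence of roots and a limiting vertex—which is lighter than the full Hausdorff-measure construction of the earlier lemmas and is why this lemma admits the ``only sketch the main arguments'' treatment referenced in the proof of Lemma~\ref{lemma:stable_usp3_tech4}.
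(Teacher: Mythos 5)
Your Borel--Cantelli count $Y_j(n)$ and the use of $\dimBu(F\cap V_j)\geq s$ to get a large parameter along a subsequence are in the spirit of the paper's argument, but the closing step of your proposal has a genuine gap. The paper does \emph{not} extract a limiting vertex by compactness. It introduces the \emph{open} sets
\[
\Ti(n) = \bigcup_{j\geq 1}\,\bigcup_{\Ti_\sigma\in\Tbb(j\delta_n,\delta_n,\ell_n)}\brcb{\sigma'\in\Ti_\sigma : h(\sigma',\Ti_\sigma)\in\ivoo{\delta_n,2\delta_n}}
\quad\text{and}\quad
\Ti^\star(n)=\bigcup_{k\geq n}\Ti(k),
\]
shows $\Nb_b$-a.s. that every $\Ti^\star(n)$ is dense in $\Ti(F)$ (this is where the $Y(k,n)\geq 1$ Borel--Cantelli estimate enters, applied on the event $\brc{\inf_{\ivff{\eps,\delta-\eps}}\bk{\ell^u}\geq\ell_\star}$), and then invokes \emph{Baire's category theorem}: the $G_\delta$ set $\cap_n\Ti^\star(n)\subset F_\ell(\tfrac{1-s}{\gamma},\Ti)$ is then automatically dense in $\Ti(F)$.

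Your proposal replaces Baire category with a compactness extraction: from $Y_j(n_k)\geq 1$ along a subsequence you hope to obtain a single pair $(a,\sigma)$ satisfying $\ell^a(B(\sigma,c\delta_{n_k}))\geq\delta_{n_k}^h g(\delta_{n_k})^{-\alpha}$ for infinitely many $k$. This step does not go through. The subtrees realizing $Y_j(n_k)\geq 1$ at different scales $\delta_{n_k}$ are scattered through the tree and are not nested, so a vertex $\sigma$ obtained as a limit of roots (or of vertices) lies inside only finitely many of them; and the local-time lower bound at each scale $\delta_{n_k}$ does not pass to the limit because $\ell^{a}$ depends on $a$ and is only \cadlag. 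Your own caveat --- that ``one does not need nestedness here, only a convergent subsequence of roots and a limiting vertex'' --- is precisely the assertion that fails: without either a nesting construction (the heavier machinery of Lemma~\ref{lemma:usp_construction_spectrum4}, which this lemma is specifically designed to avoid) or the Baire-category shortcut, there is no limiting vertex carrying the required small-ball estimate at infinitely many scales.

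A secondary remark: the claim that $Y_j(n)$ ``conditionally dominates a sum of independent Poisson variables'' glosses over the fact that subtrees rooted at consecutive levels are not conditionally independent in the naive sense; the paper handles this via a conditional induction over dyadic intervals of the same parity (the collections $\Di_{n,i}$), which produces the exponential bound $\exp(-c_0\ell_\star\,\#\Di_{n,i}\,\ell_n^{-\gamma})$ that makes the Borel--Cantelli sum converge.
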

\begin{proof}
  For every $a>0$ and $n\in\N$, let us define the following collection of subtrees
  \begin{align*}
    \Tbb(a,\delta_n,\ell_n) = \brcB{ \Ti_\sigma\in\Tbb(a,\delta_n) : \inf_{u\in\ivff{\kappa\delta_n,\delta_n/\kappa}} \bk{\ell^u}(\Ti_\sigma) \geq \ell_n\delta_n^{1/\gamma}}.
  \end{align*}
  where $\kappa=\tfrac{1}{2}$ in the following and $(\ell_n)_{n\in\N}$ is a positive sequence depending on $F$ such that $\lim_{n\rightarrow\infty} \log(\ell_n) / \log(\delta_n) = -\tfrac{s}{\gamma}$. In addition, define the following subsets of $\Ti$:
  \begin{align*}
    \Ti(n) = \bigcup_{j\geq 1} \,\bigcup_{\Ti_\sigma\in{\Tbb}(j\delta_n,\delta_n,\ell_n)} \brcb{\sigma'\in\Ti_\sigma : h(\sigma',\Ti_\sigma)\in\ivoo{\delta_n,2\delta_n}} \quad\text{and}\quad \Ti^\star(n) = \bigcup_{k=n}^\infty \Ti(k).
  \end{align*}
  where $h(\sigma',\Ti_\sigma)$ denotes the height of $\sigma'$ in $\Ti_\sigma$.
  Briefly, $\Ti(n)$ gathers nodes in $\Ti$ which belong to subtrees $\Ti_\sigma\in {\Tbb}(j\delta_n,\delta_n,\ell_n)$, i.e. with a large local time at scale $\delta_n$. Since the height function is a continuous map, $\brc{\sigma'\in\Ti_\sigma : h(\sigma',\Ti_\sigma)\in\ivoo{\delta_n,2\delta_n}}$, and thus $\Ti(n)$ and $\Ti^\star(n)$, are clearly open sets. Moreover, the property satisfied by the sequence $(\ell_n)_{n\in\N}$ imply that $\cap_{n\in\N} \Ti^\star(n) \subset F_\ell(\tfrac{1-s}{\gamma},\Ti)$.

  We aim to prove that $\cap_{n\in\N}\Ti^\star(n)$ is dense in $\Ti(F)$. Due to Baire's category theorem, it is sufficient to prove that $\Nb_b$-a.e., $\Ti^\star(n)$ is dense in $\Ti(F)$ for every $n\in\N$. Hence, let $V\subset\Ti$ be an open set such that $V\cap\Ti(F)\neq\vset$. Without loss of generality, we may suppose that $V$ is a truncated subtree rooted at a level $a$ and of height $\delta>0$: $V=\tr(\Ti_\sigma,\delta)\setminus\brc{\sigma}$. According to the branching property, we know that given $Z(a,\delta)$, subtrees rooted at level $a$ are independently distributed following the measure $\Nb_\delta(\dt\Ti')$.
  As a consequence, it is sufficient to prove that if $F_a\cap\ivoo{0,\delta}\neq \vset$, where $F_a=F-a$, then for any $n\in\N$, $\Ti^\star(n)\cap\Ti(F_a\cap\ivoo{0,\delta})\neq\vset$ $\Nb_\delta(\dt\Ti)$-a.e.

  Let us set $a>0$ and $\delta>0$ such $F_a\cap\ivoo{0,\delta}\neq\vset$. $\eps>0$ can be chosen sufficiently small such that $F_a\cap\ivoo{\eps,\delta-\eps}\neq\vset$.
  Moreover, as $\dimBu (F_a\cap\ivoo{\eps,\delta-\eps}) \geq s$, there exists a subset $\N_E\subset \N$ such that
  \begin{align*}
    \lim_{n\in\N_E\rightarrow\infty} \frac{\log \Ni_n}{\log1/\delta_n} = s\quad\text{where }\ \Ni_n\eqdef\#\Di_n(F_a\cap\ivoo{\eps,\delta-\eps}).
  \end{align*}
  Since the local time on stable trees is càdlàg, $\brc{h(\Ti)>\delta} \subset \lim_{\ell_\star\rightarrow 0} \brcb{\inf_{u\in\ivff{\eps,\delta-\eps}} \bk{\ell^u}(\Ti) \geq \ell_\star }$. Therefore, let us set $\ell_\star>0$ and define the collection of events:
  \begin{align*}
    \forall u,v\in\ivff{\eps,\delta-\eps};\quad \Bi(u,v) = \brcB{\Ti : \inf_{w\in\ivff{u,v}} \bk{\ell^w}(\Ti) \geq \ell_\star }.
  \end{align*}
  Finally, for any $n\in\N$ and $k\geq 1$, let $Y(k,n) \eqdef \#\Tbb((k-1))\delta_n,\delta_n,\tfrac{1-s}{\gamma})$. Then, when $k\delta_n\in\ivoo{\eps,\delta-\eps}$, due to the branching property and Lemma~\ref{lemma:local_time_inf_tail0}, we observe that
  \begin{align*}
    \Nb_{(k-1)\delta_n}\pthcb{ Y(k,n)=0 \cap \Bi(\eps,(k+1)\delta_n) }{\Gi_{(k-1)\delta_n}}
    &\leq \exp\pthb{-c_0 \bk{\ell^{(k-1)\delta_n}} \ell_n^{-\gamma} } \indi_{\Bi(\eps,(k-1)\delta_n)} \\
    &\leq \exp\pthb{-c_0\ell_\star \ell_n^{-\gamma} } \indi_{\Bi(\eps,(k-1)\delta_n)}.
  \end{align*}
  Therefore, by induction, for any $i\in\brc{0,1}$
  \begin{align*}
    \Nb_\delta\pthbb{ \Bi(\eps,\delta-\eps) \cap \bigcap_{I_k\in\Di_{n,i}} Y(k,n)=0 } \leq c_1\exp\pthb{-c_0\ell_\star \#\Di_{n,i} \ell_n^{-\gamma} },
  \end{align*}
  where $\Di_{n,i} \eqdef \brc{I_k\in\Di_n(F_a\cap\ivoo{\eps,\delta-\eps}) : k\text{ mod }2 = i}$. Since $\Ni_n = \#\Di_{n,0}+\#\Di_{n,1}$, the latter bound entails
  \begin{align*}
    \Nb_\delta\pthbb{ \Bi(\eps,\delta-\eps) \cap \bigcap_{I_k\in\Di_n(F_a\cap\ivoo{\eps,\delta-\eps})} Y(k,n)=0 } \leq c_1\exp\pthb{-c_2 \Ni_n \ell_n^{-\gamma} }.
  \end{align*}
  We may now define precisely the sequence $\ell$: $\ell_n=g(\delta_n)\Ni_n^{1/\gamma} \geq 1$ for any $n\in\N_E$ and $\ell_n=\delta_n^{-s/\gamma}$ for any $n\in\N\setminus\N_E$. It clearly satisfies the condition $\lim_{n\rightarrow\infty} \log(\ell_n) / \log(\delta_n) = \tfrac{-s}{\gamma}$, proving that
  \begin{align*}
    \sum_{n\in\N_E} \Nb_\delta\pthbb{ \Bi(\eps,\delta-\eps) \cap \bigcap_{I_k\in\Di_n(E_a\cap\ivoo{\eps,\delta-\eps})} Y(k,n)=0 } \leq \sum_{n\in\N_E} c_1\exp\pthb{-c_2 g(\delta_n)^{-1/\gamma} } < \infty.
  \end{align*}
  Borel--Cantelli lemma therefore implies that on the event $\Bi(\eps,\delta-\eps)\cap\brc{h(\Ti)>\delta}$, for every $n\in\N$ sufficiently large, there exists $I_k\in\Di_n(F_a\cap\ivoo{\eps,\delta-\eps})$ such that $Y(k,n)\geq 1$. As a consequence, letting $\eps\rightarrow 0$, for every $n\in\N$ and every open set $V$ such that $V\cap\Ti(F)\neq\vset$, then $V\cap\Ti(F)\cap\Ti^\star(n)\neq\vset$ $\Nb_b$-a.e. The latter clearly shows that $\cap_{n\in\N}\Ti^\star(n)$, and thus $F_\ell(\tfrac{1-s}{\gamma},\Ti)$ is $\Nb_b$-a.e. dense in $\Ti(F)$. Finally, since $F_\ell(\tfrac{1-s}{\gamma},\Ti)\subset F_\mb(\tfrac{1+\gamma-s}{\gamma},\Ti)$, the same result also holds on $F_\mb(\tfrac{1+\gamma-s}{\gamma},\Ti)$.
\end{proof}

We may now obtain the complete upper bound of Theorem~\ref{th:levy_tree_upper_spectrum3}.
\begin{lemma}  \label{lemma:stable_usp2_tech1}
  Suppose $F\subset\ivoo{0,\infty}$ is an analytic set. Then, $\Nb\pth{ \dt \Ti }$-a.e.,
  \begin{align}
    \inf_{\sigma\in\Ti(F)} \alpha_\ell(\sigma,\Ti) \leq \frac{1-\dimP \,F_{|\Ti}}{\gamma}.
  \end{align}
  In addition, if $F$ is an analytic set such that for every $a>0$, $F\cap\ivoo{0,a}$ has positive packing measure or is empty, then $\Nb\pth{ \dt \Ti }$-a.e. the infimum is realized: $E_\ell(h,\Ti)\cap\Ti(F)\neq\vset$ where $h=\gamma^{-1}\pthb{1-\dimP \,F_{|\Ti}}$.

  Finally, the same two properties hold as well with the mass measure scaling exponent $\alpha_\mb(\sigma,\Ti)$.
\end{lemma}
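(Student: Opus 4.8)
The plan is to reduce Lemma~\ref{lemma:stable_usp2_tech1} to Lemma~\ref{lemma:stable_usp2_tech0} by a purely deterministic extraction of a suitable compact subset of $F$, followed by the routine reduction to rational levels that handles the randomness of $h(\Ti)$. All of the stable-tree probability that is needed is already packaged in Lemma~\ref{lemma:stable_usp2_tech0}; the remaining work is measure-theoretic.

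First I would treat the upper estimate $\inf_{\sigma\in\Ti(F)}\alpha_\ell(\sigma,\Ti)\leq\tfrac{1-\dimP F_{|\Ti}}{\gamma}$. Fix a rational $s$ and a rational $b>0$ with $\dimP\pthb{F\cap\ivoo{0,b}}>s$; then $F\cap\ivoo{0,b}$ is analytic with $\mathcal{P}^s\pthb{F\cap\ivoo{0,b}}=\infty$, so by the subset theorem of Joyce and Preiss (see \citet{Mattila-1995,Khoshnevisan.Peres.ea-2000}) it contains a compact set $K$ with $0<\mathcal{P}^s(K)<\infty$. Set $K_\star=\supp\pthb{\mathcal{P}^s|_K}$, a nonempty compact subset of $\ivoo{0,b}$. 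For every open $V$ meeting $K_\star$ one has $\mathcal{P}^s(K_\star\cap V)>0$ by definition of the support, hence the packing pre-measure satisfies $\mathcal{P}_0^s(K_\star\cap V)\geq\mathcal{P}^s(K_\star\cap V)>0$ and therefore $\dimBu(K_\star\cap V)\geq s$. Thus $K_\star$ fulfils the hypothesis of Lemma~\ref{lemma:stable_usp2_tech0} with parameter $s$, and that lemma yields, $\Nb_b$-a.e., that $F_\ell(\tfrac{1-s}{\gamma},\Ti)$ and $F_\mb(\tfrac{1+\gamma-s}{\gamma},\Ti)$ are dense in $\Ti(K_\star)$; the latter is nonempty because $K_\star\subset\ivoo{0,b}$ and $h(\Ti)>b$ under $\Nb_b$. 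Since $\Ti(K_\star)\subset\Ti(F)$, we conclude $\inf_{\sigma\in\Ti(F)}\alpha_\ell(\sigma,\Ti)\leq\tfrac{1-s}{\gamma}$ and $\inf_{\sigma\in\Ti(F)}\alpha_\mb(\sigma,\Ti)\leq\tfrac{1+\gamma-s}{\gamma}$. Taking a countable union over the admissible rational pairs $(s,b)$, and using the countable stability of the packing dimension to write $\dimP F_{|\Ti}=\sup\{\dimP(F\cap\ivoo{0,b}):b\in\Q,\,b<h(\Ti)\}$, we obtain $\Nb$-a.e. that both inequalities hold for every rational $s<\dimP F_{|\Ti}$; letting $s\uparrow\dimP F_{|\Ti}$ gives the two displayed bounds.

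For the realisation statement I would run the same construction at the critical exponent. Assume $F\cap\ivoo{0,a}$ is empty or has positive finite packing measure $\mathcal{P}^{\dimP(F\cap\ivoo{0,a})}$ for every $a>0$, and suppose $F_{|\Ti}\neq\vset$ (otherwise there is nothing to prove). Writing $d=\dimP F_{|\Ti}$, the hypothesis at $a=h(\Ti)$ gives $0<\mathcal{P}^{d}(F_{|\Ti})<\infty$; since $\mathcal{P}^d$ is countably additive on Borel sets and $F_{|\Ti}=\bigcup_n\pthb{F\cap\ivoo{0,h(\Ti)(1-1/n)}}$, there is a rational $b<h(\Ti)$ with $0<\mathcal{P}^{d}\pthb{F\cap\ivoo{0,b}}<\infty$, whence $\dimP\pthb{F\cap\ivoo{0,b}}=d$. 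Now $\mathcal{P}^d|_{F\cap\ivoo{0,b}}$ is a finite Borel measure on a Polish space, so by inner regularity (analytic sets being universally measurable) there is a compact $K\subset F\cap\ivoo{0,b}$ with $\mathcal{P}^d(K)>0$; forming $K_\star=\supp\pthb{\mathcal{P}^d|_K}$ as above and applying Lemma~\ref{lemma:stable_usp2_tech0} with $s=d$, we obtain $\Nb_b$-a.e. a vertex $\sigma\in\Ti(K_\star)\subset\Ti(F)$ with $\alpha_\ell(\sigma,\Ti)\leq\tfrac{1-d}{\gamma}$ and, likewise, one with $\alpha_\mb\leq\tfrac{1+\gamma-d}{\gamma}$. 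Finally Lemma~\ref{lemma:usp_spectrum_ub2} forces $F_\ell(h,\Ti)\cap\Ti(F)=\vset$ for $h<\tfrac{1-\dimP F_{|\Ti}}{\gamma}$, and Lemma~\ref{lemma:usp_mspectrum_ub2} does the same for the mass measure, so these vertices actually attain equality; a last countable union over rational $b$ makes the conclusion hold $\Nb$-a.e.

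The only genuinely delicate ingredient is the deterministic extraction of $K_\star$: it rests on the Joyce--Preiss theorem on the existence of compact subsets of finite positive packing measure together with the elementary observation that a set carrying positive packing pre-measure has upper box dimension at least $s$. Everything else --- the Chernoff and Laplace-transform estimates, the branching property, the Baire-category argument --- is already contained in Lemma~\ref{lemma:stable_usp2_tech0}, and the passage from the $\Nb_b$-a.e. statements to a single $\Nb$-a.e. statement is pure bookkeeping with rational parameters and the countable stability of $\dimP$.
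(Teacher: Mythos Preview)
Your proof is correct and follows essentially the same route as the paper: reduce to Lemma~\ref{lemma:stable_usp2_tech0} via the Joyce--Preiss extraction of a compact subset that is ``homogeneous'' in the sense $\dimBu(K_\star\cap V)\geq s$ for every open $V$ meeting it, then handle the random height by a countable union over rational levels $b$ and use Lemmas~\ref{lemma:usp_spectrum_ub2} and~\ref{lemma:usp_mspectrum_ub2} to pin the exponent at the critical value. The only cosmetic difference is that you build $K_\star$ explicitly as the support of $\mathcal{P}^s|_K$ and verify the box-dimension lower bound by hand, whereas the paper simply invokes the form of the Joyce--Preiss theorem that delivers such a set directly.
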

\begin{proof}
  Let us set $b>0$ and  $s\in\ivoo{0,\dimP \,F\cap\ivoo{0,b}}$. As proved by \citet{Joyce.Preiss-1995}, there exists $F_\star\subset F$ such that for any open set $V$ intersecting $F_\star$, $\dimBu F_\star\cap V \geq s$. Then, due to Lemma~\ref{lemma:stable_usp2_tech0}, $\Nb_b$-a.e. $F_\ell(h,\Ti)\cap\Ti(F)\neq\vset$ where $h=\gamma^{-1}\pth{1-s}$. Hence, letting $s\rightarrow \dimP \,F\cap\ivoo{0,b}$, we obtain
  \begin{align*}
    \Nb_b\text{-a.e.}\quad \inf_{\sigma\in\Ti(F)} \alpha_\ell(\sigma,\Ti) \leq \frac{1-\dimP \,F\cap\ivoo{0,b}}{\gamma}.
  \end{align*}
  As a consequence, since the latter is satisfied for any $b\in\Q_+$ and $\lim_{b\rightarrow h(\Ti)} \dimP F\cap\ivoo{0,b} = \dimP F_{|\Ti}$, we get the desired upper bound
  \begin{align*}
    \Nb(\dt\Ti)\text{-a.e.}\quad \inf_{\sigma\in\Ti(F)} \alpha_\ell(\sigma,\Ti)  \leq \frac{1-\dimP \,F_{|\Ti} }{\gamma}.
  \end{align*}
  Moreover, using Proposition~\ref{prop:scaling_exponents}, we obtain an equivalent result with the mass measure scaling exponent.\vsp

  Let us now prove the second part of the lemma: suppose that for any $b>0$, $F\cap\ivoo{0,b}$ is empty or has positive packing measure. The first case is  trivial, hence, let us set $b>0$ such $\Pii^s(E\cap\ivoo{0,b})\in\ivoo{0,\infty}$, for some $s>0$ depending on $b$. Still according to the work of \citet{Joyce.Preiss-1995}, there exists a compact subset $F_\star\subset F$ which satisfies the assumption: for every open set $V$ intersecting $F_\star$, $\dimBu F_\star\cap V \geq s$. As a consequence, we may apply Lemma~\ref{lemma:stable_usp2_tech0} and obtain: $\Nb_b$-a.e., $F_\ell(\tfrac{1-s}{\gamma},\Ti)\cap\Ti(F)\neq\vset$. The common property of the packing dimension $\lim_{b\rightarrow h(\Ti)} \Pii^s(F\cap\ivoo{0,b}) = \Pii^s(F_{|\Ti})$ entails  that $\Nb$-a.e. $F_\ell(h,\Ti)\cap\Ti(F)\neq\vset$, where $h=\gamma^{-1}\pthb{1-\dimP \,F_{|\Ti}}$. Finally, according to Lemma~\ref{lemma:usp_spectrum_ub2}, $F_\ell(h',\Ti)\cap\Ti(F)=\vset$ for any $h'<h$, therefore proving the desired result. In addition, still using Proposition~\ref{prop:scaling_exponents}, we also get $F_\mb(h+1,\Ti)\cap\Ti(F)\neq\vset$.
\end{proof}


\section*{Acknowledgements}
The author would like to specially thank Leonid Mytnik for his inspiring discussions and comments on the geometry of random trees and superprocesses.

\renewcommand{\bibfont}{\normalfont\small}
\setlength{\bibsep}{2pt}

\end{document}